\tikzstyle{vertex}=[
\tikzstyle{printersafe}=[decoration={snake,amplitude=0pt}]
\DeclareFontFamily{U}{wncy}{}
\DeclareFontShape{U}{wncy}{m}{n}{<->wncyr10}{}
\DeclareSymbolFont{mcy}{U}{wncy}{m}{n}
\DeclareMathSymbol{\Sh}{\mathord}{mcy}{"58} 
\newcommand{\rank}{\operatorname{rank}}
\newcommand{\vol}{\operatorname{vol}}
\newcommand{\rar}{\rightarrow}	
\newcommand{\drar}{\dashrightarrow}
\renewcommand{\qq}{\mathbb{Q}}
\newcommand{\zz}{\mathbb{Z}}
\newcommand{\nn}{\mathbb{N}}
\newcommand{\rr}{\mathbb{R}}
\newcommand{\setbaseslf}{\mathfrak B^{d-1}_{\text{log Fano}}}
\newcommand{\logfanoeps}{\mathfrak D^{d-1, \epsilon}_{\text{log Fano}}}
\newcommand{\logfanoepszero}{\mathfrak{D}^{d-1, , \epsilon_0}_{\text{log Fano}}}
\newcommand{\setbases}{\mathfrak B^{2, \celliptic}_{\text{LCY, RC}}}
\newcommand{\setenr}{\mathfrak B^2_{Enr, DV}}
\newcommand{\familykodtwo}{\mathfrak F^{3,v,C}_{\kappa=2}}
\newcommand{\familyellcy}{\mathfrak F^3_{\text{CY, ell}}}
\newcommand{\familyellcyrat}{\mathfrak F^3_{\text{CY, ell, rat}}}
\newcommand{\familyellcyenr}{\mathfrak F^3_{\text{CY, ell, Enr}}}
\newcommand{\familyellfano}{\mathfrak F^{d,  C}_{\text{CY, ell, LF}}}
\newcommand{\specialpoint}{p_{S_{D_{\deg=d}, \eta} - D \subs 0,\eta.}}
\newcommand{\celliptic}{\mathcal C_{ell}}
\newcommand{\nsr}{{\rm N}^1_{\mathbb R}}
\newcommand{\nsdr}{{\rm N}_{1, \mathbb R}}
\DeclareMathOperator{\Supp}{Supp}
\def\O#1.{\mathcal {O}_{#1}}			
\def\pr #1.{\mathbb P^{#1}}				
\def\af #1.{\mathbb A^{#1}}			
\def\ses#1.#2.#3.{0\to #1\to #2\to #3 \to 0}	
\def\xrar#1.{\xrightarrow{#1}}			
\def\K#1.{K_{#1}}						
\def\bA#1.{\mathbf{A}_{#1}}			
\def\bM#1.{\mathbf{M}_{#1}}
\def\bN#1.{\mathbf{N}_{#1}}
\def\bL#1.{\mathbf{L}_{#1}}				
\def\bB#1.{\mathbf{B}_{#1}}				
\def\bK#1.{\mathbf{K}_{#1}}			
\def\subs#1.{_{#1}}					
\def\sups#1.{^{#1}}						
\def \neone#1.{\overline{{\rm NE}(#1)}}
  		\newtheorem{theorem}{Theorem}[section]
  		\newtheorem{lemma}[theorem]{Lemma}
  		\newtheorem{proposition}[theorem]{Proposition}
  		\newtheorem*{conconj}{Cone conjecture}
  		\newtheorem{corollary}[theorem]{Corollary}
\theoremstyle{definition}
  		\newtheorem{notation}[theorem]{Notation}
  		\newtheorem{definition}[theorem]{Definition}
  		\newtheorem{example}[theorem]{Example}
        \newtheorem{remark}[theorem]{Remark}
\theoremstyle{remark}
\numberwithin{equation}{section}
\begin{document}

\title{Boundedness of elliptic Calabi--Yau threefolds}

\keywords{Calabi--Yau threefolds, elliptic fibrations, boundedness, minimal model program, Kawamata--Morrison cone conjecture}

\author[S.~Filipazzi]{Stefano Filipazzi}
\address{
EPFL, SB MATH CAG, MA C3 625 (B\^{a}timent MA), Station 8, CH-1015 Lausanne, Switzerland
}
\email{stefano.filipazzi@epfl.ch}

\author[C.D.~Hacon]{Christopher D. Hacon}
\address{
Department of Mathematics, University of Utah, Salt Lake City, UT 84112, USA
}
\email{hacon@math.utah.edu}

\author[R.~Svaldi]{Roberto Svaldi}
\address{Dipartimento di Matematica ``F. Enriques'', Universit\`a degli Studi di Milano, Via Saldini 50, 20133 Milano (MI), Italy}
\email{roberto.svaldi@unimi.it}

\subjclass[2020]{
Primary 14E30, 14J27, 14J32, 
Secondary 14D06.
}

\begin{abstract}
We show that elliptic Calabi--Yau threefolds form a bounded family.
We also show that the same result holds for minimal terminal threefolds of Kodaira dimension $2$, upon fixing the rate of growth of pluricanonical forms and the degree of a multisection of the Iitaka fibration.
Both of these hypotheses are necessary to prove the boundedness of such a family.
\end{abstract}

\thanks{
SF was partially supported by ERC starting grant \#804334.
CH was partially supported by NSF research grants no:  DMS-1952522, DMS-1801851 and by a grant from the Simons Foundation; Award Number: 256202.
RS gratefully acknowledges support from the European Union's Horizon 2020 research and innovation programme under the Marie Sk\l{}odowska-Curie grant agreement No. 842071 and from the ``Programma per giovani ricercatori Rita Levi Montalcini'' of the Italian Ministry of University and Research (MUR).
RS is a member of GNSAGA of INDAM}

\maketitle

\setcounter{tocdepth}{1}

\tableofcontents

\section{Introduction}
Throughout this paper, we work over the field of complex numbers $\mathbb C$.

Normal projective varieties with numerically trivial canonical bundle (in short, $K$-trivial varieties) and mild singularities are one of the fundamental building blocks in the birational classification of projective varieties and they play a prominent role in many areas of research. 
It is well known that their birational geometry is rather rich and subtle, and many phenomena in this context are yet to be fully understood.
Among $K$-trivial varieties, an important and rich class that still defies our understanding is given by Calabi--Yau varieties, i.e., projective varieties $X$ with $\qq$-factorial terminal singularities, $K_X \sim 0$ and $h^i(\mathcal O _X)=0$ for $0< i<\dim X$.

A fundamental and long-standing question, originally due to M. Reid and S.-T. Yau, see, for example,~\cites{reid, Yau}, is whether Calabi--Yau threefolds have finitely many topological types.
From the point of view of birational geometry, one could try to answer affirmatively the above question by showing that Calabi--Yau threefolds are parametrized by finitely many algebraic families of deformations.
In general, $K$-trivial varieties certainly do not form finitely many algebraic families: in dimension 3, it suffices to consider, for example, the case of products of K3 surfaces and elliptic curves -- although this example does not contradict the conjecture on the finiteness of topological types in a fixed dimension.
Hence, we cannot drop the condition on the vanishing of the middle cohomology of the structure sheaf.
An interesting and important result towards a definitive answer to the question posed by Reid and Yau is due to Gross~\cite{Gro94}:
he showed that there exist finitely many projective families $\mathcal X_i \rar \mathcal S_i \rar T_i$ over finite type schemes $T_i$ such that for any elliptic Calabi--Yau fibration $f \colon X \rar S$, whose base $S$ is a rational surface, there is a closed point $t$ in some $T_i$ such that $X$ (resp. $S$) is birationally isomorphic to the fiber $\mathcal X_t$ (resp. $\mathcal S_t$) over $t$ and these birational isomorphism can be chosen so that they identify $f$ with the induced fibration $\mathcal X_t \rar \mathcal S_t$.
Even better, it is not hard to show that the birational map $X \dashrightarrow \mathcal X_t$ is an isomorphism in codimension 1, and hence can be decomposed into a finite sequence of flops.
We summarize this property by saying that elliptic Calabi--Yau threefolds form a bounded family modulo flops.
Recently, Wilson~\cites{Wil17, Wil20} has proven some new results in the context of boundedness of Calabi--Yau threefolds at large.

The class of elliptic Calabi--Yau threefolds is of central importance in the study of Calabi--Yau threefolds in general:
indeed, it is expected, based on known examples, that Calabi--Yau threefolds of large Picard rank are always elliptically fibered, perhaps after flopping a finite number of curves.
Thus, this approach may eventually show that Calabi--Yau threefolds of large Picard rank have finitely many topological types.
When the base $S$ of an elliptic Calabi--Yau $f \colon X\to S$ is not rational, then it is birational to a (possibly singular) Enriques surface and $f$ is isotrivial, see~\cite{Gr91}*{Theorem~3.1}. 
Thus, the birational geometry of these fibrations is well understood.
Even better, by work of Koll\'ar and Larsen \cite{KL09}*{Theorem 14}, it is known that $X$ becomes a product of a K3 or Abelian surface with an elliptic curve, after a quasi-\'etale cover, see also~\cite{Nak88}*{Appendix}.

Since Calabi--Yau threefolds may have infinitely many models that are isomorphic in codimension 1, it is not clear whether the result of Gross implies the boundedness of topological types for these Calabi--Yau varieties.
However, another celebrated conjecture, the Kawamata--Morrison Conjecture, predicts that the isomorphism types of such models are just finitely many distinct ones.
Kawamata \cite{Kaw97} proved a weaker version of this conjecture in the elliptically fibered case: 
given an elliptic threefold $f \colon X\to S$, he showed that, up to isomorphism over $S$, there are only finitely many models of $f$ over $S$ isomorphic in codimension 1, cf. Theorem~\ref{thm KM conj general}.
Hence, Kawamata's result offers a first hint towards proving boundedness of the topological types elliptic Calabi--Yau varieties starting from Gross's theorem.

We give a complete and affirmative answer to Yau's question for elliptic Calabi--Yau threefolds.
In this paper, an elliptic Calabi--Yau threefold will be a terminal 
$\mathbb Q$-factorial 
projective variety 
$X$ 
with 
$K_X \sim 0$ 
and
$H^i(X, \mathcal O_X)=0$
for 
$i=1,2$,
which is moreover endowed with a morphism with connected fibers
$f \colon X \to S$
of relative dimension 1 -- which immediately implies, as 
$K_X\sim 0$
that the generic fiber is a curve of genus $1$.
We do not require any further assumptions on the morphism $f$, besides that on its relative dimension.
The surface $S$ is only assumed to be normal;
then, the canonical bundle formula and the assumptions on the singularities of $X$ immediately imply that $S$ has klt singularities.

\begin{theorem} 
\label{thm intro}
The set $\familyellcy$ of elliptic Calabi--Yau threefolds forms a bounded family.
\end{theorem}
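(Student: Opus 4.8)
The plan is to bootstrap from the two results recalled above: Gross's theorem, which places all elliptic Calabi--Yau threefolds with rational base into finitely many families \emph{modulo flops}, and Kawamata's relative cone theorem, which bounds the small modifications of a \emph{fixed} elliptic threefold over its base. Concretely, by \cite{Gro94} there are finitely many families $\mathcal X_i \rar \mathcal S_i \rar T_i$ over schemes of finite type such that every elliptic Calabi--Yau threefold $f\colon X \rar S$ with $S$ rational is isomorphic in codimension one, over $S$, to a closed fibre $\mathcal X_{i,t} \rar \mathcal S_{i,t}$; and by \cite{Kaw97} (cf. Theorem~\ref{thm KM conj general}), for each fixed $t$ only finitely many such marked small modifications of $\mathcal X_{i,t}$ over $\mathcal S_{i,t}$ occur, up to isomorphism over $\mathcal S_{i,t}$. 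The whole task is thus to make this finiteness \emph{uniform} over each $T_i$ and to collect the resulting models into finitely many families. The case $S$ non-rational is treated separately: there $S$ is birational to an Enriques surface with Du Val singularities and $f$ is isotrivial, so by Koll\'ar--Larsen \cite{KL09} a quasi-\'etale cover of $X$ is a product of an elliptic curve with a K3 or Abelian surface; bounding the order of the (codimension-one free) Galois action and using that the relevant surfaces vary in bounded families then exhibits these $X$ inside finitely many families as well.

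For the rational-base case, one first stratifies each $T_i$ into finitely many locally closed pieces and passes to finite \'etale covers so that over each stratum the relative N\'eron--Severi groups $\nsr(\mathcal X_{i,t}/\mathcal S_{i,t})$ are identified (the relevant local system becomes trivial), compatibly with the intersection pairing, with the relative nef, movable, and pseudo-effective cones, and with the discriminant and moduli data of $f$. Fixing a stratum and a base point $t$, Kawamata's theorem furnishes a rational polyhedral fundamental domain $\Pi$ for the action of the group of pseudo-automorphisms of $\mathcal X_{i,t}$ over $\mathcal S_{i,t}$ on $\overline{\operatorname{Mov}}(\mathcal X_{i,t}/\mathcal S_{i,t})$, and the small modifications of $\mathcal X_{i,t}$ over $\mathcal S_{i,t}$ correspond to the finitely many nef chambers meeting $\Pi$. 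A spreading-out and Noetherian-induction argument then shows that $\Pi$, its chamber decomposition, and the group action can be taken locally constant on the stratum, so the number of small modifications of a fibre is bounded uniformly over $T_i$.

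Next, for each chamber one picks a $\qq$-divisor class in its interior and runs a relative minimal model program for $\mathcal X_i \rar \mathcal S_i$ over the stratum directed by this class --- equivalently, forms the relative $\operatorname{Proj}$ of the associated divisorial ring, finitely generated in families after shrinking the base --- obtaining a new family whose closed fibres are precisely the small modifications of the original fibres labelled by that chamber. These fibres are automatically terminal, $\qq$-factorial, and $K$-trivial, hence Calabi--Yau threefolds, being isomorphic in codimension one to Calabi--Yau threefolds. Adjoining these finitely many families, together with those from the non-rational case, to the Gross families yields finitely many families in which every elliptic Calabi--Yau threefold occurs as a closed fibre, which is Theorem~\ref{thm intro}. (An alternative to invoking Gross, in the rational case, is to bound the base directly: by the canonical bundle formula $\K S. + B_S + M_S \equiv 0$, with $(S,B_S,M_S)$ a generalized log Calabi--Yau surface that is $\epsilon$-log canonical for a universal $\epsilon$ and has discriminant coefficients in a fixed finite set, hence bounded; then $L = (R^1 f_* \O X.)^{\vee}$ and the Weierstrass sections $a_4 \in H^0(S, L^{\otimes 4})$, $a_6 \in H^0(S, L^{\otimes 6})$ vary in bounded families, reducing one again to uniformly bounding the $\qq$-factorial terminalizations of a bounded family of Weierstrass models.)

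The crux --- and the expected main obstacle --- is exactly the uniformity in the second and third steps: showing that the cone-conjecture fundamental domain, and hence the finite set of small modifications, varies constructibly in the family rather than jumping arbitrarily from fibre to fibre. This requires controlling the behaviour of $\nsr(\mathcal X_{t}/\mathcal S_{t})$, of the relative movable cone and its wall-and-chamber structure, and of the pseudo-automorphism group under specialization, together with a careful use of relative good minimal models and relative Zariski decompositions in families. A further standard but genuine subtlety is that a family all of whose fibres are $\qq$-factorial need not itself be $\qq$-factorial, so several arguments must be run fibrewise or after passing to suitable local models, and one must check that the finitely many families assembled at the end remain flat with Calabi--Yau fibres.
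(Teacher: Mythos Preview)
Your overall architecture is right: start from boundedness modulo flops, then use a cone-conjecture/finiteness-of-models statement to pass to honest boundedness. But the specific mechanism you propose for the second step has a genuine gap, and the paper's route is different in an essential way.

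You apply Kawamata's finiteness theorem \emph{fibrewise} to each $\mathcal X_{i,t}\to\mathcal S_{i,t}$ and then assert that ``a spreading-out and Noetherian-induction argument'' makes the fundamental domain $\Pi$, its chamber decomposition, and the pseudo-automorphism group locally constant along the stratum. You correctly flag this as ``the crux --- and the expected main obstacle'', but you do not actually supply the argument, and in fact no such spreading-out is available in general: the group $\mathrm{Bir}(\mathcal X_t/\mathcal S_t)$ can jump, and the relative movable and nef cones can genuinely change under specialization (the paper's \S\ref{section example}, based on Wilson's example, exhibits precisely this phenomenon; cf.\ Remark~\ref{rem.W.example}). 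So ``$\Pi$ locally constant'' is not something one can simply read off.

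The paper sidesteps this entirely. Instead of making the fibrewise finiteness uniform, it proves the relative cone conjecture for Calabi--Yau fiber spaces of relative dimension one in \emph{all} dimensions (Theorem~\ref{thm KM conj general}), and then applies it once to the \emph{total space} $\mathcal X\to\mathcal Y$, yielding finitely many marked minimal models $\mathcal X_1,\ldots,\mathcal X_k$ over $\mathcal Y$. To show that every $X$ appears as a fibre of some $\mathcal X_i$, one must lift a class $D_t$ in the interior of $\psi_\ast\overline A(X/Y)\subset\overline M(\mathcal X_t)$ to a class $\mathcal D\in\overline M(\mathcal X/T)$; this is the content of Theorem~\ref{thm cones}(3), and it is exactly here that the Calabi--Yau hypotheses $h^1(\mathcal O_{X_0})=h^2(\mathcal O_{X_0})=0$ enter, via Theorem~\ref{t-BT}. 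One then runs a $\mathcal D$-MMP on the total space; by \cite{HMX18}*{\S 3} it restricts to a $D_t$-MMP on the fibre, and by the cone conjecture for the total space the output is isomorphic over $\mathcal Y$ to some $\mathcal X_i$, whence $X\cong\mathcal X_{i,t}$. This is packaged as Theorem~\ref{metatheorem}. Your proposal never isolates either of these two ingredients --- the higher-dimensional cone theorem for the total family, and the inclusion $\overline M(\mathcal X_0)\subset\overline M(\mathcal X/T)$ --- and without them the passage from fibrewise to familywise finiteness does not go through.

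A secondary point: your treatment of the non-rational base is too quick. Invoking \cite{KL09} gives a quasi-\'etale product structure, but to get boundedness (even birational) one still needs to bound the Du Val Enriques bases themselves (Theorem~\ref{bdd enriques}) and to control the torsors via the Tate--Shafarevich group (Propositions~\ref{Jac is CY}--\ref{finite Shafarevich}, Theorem~\ref{enriques 3fold bir bdd}); ``bounding the order of the Galois action'' does not by itself give this. The paper devotes a substantial subsection to it.
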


Our result includes also the case of those elliptic fibrations whose base is non rational; in such case, the bases of the fibration is a surface with at worst Du Val singularities whose minimal resolution is an Enriques surface.
Following the philosophy introduced above, we show that there exist a finite number of algebraic families such that any elliptic Calabi--Yau threefold appears as the fiber of one of the families.

Theorem~\ref{thm intro} immediately yields the following important corollary proving the finiteness of the topological types of elliptic Calabi--Yau threefolds, answering a classical question in string theory, see \cite{g23} for a detailed account of the consequences of boundedness in string theory.

\begin{corollary}
\label{cor.ell.top}
There are only finitely many topological types of elliptic Calabi--Yau threefolds.
\end{corollary}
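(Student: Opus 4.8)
The plan is to deduce this purely formally from Theorem~\ref{thm intro}, using the standard principle that a bounded family of varieties realizes only finitely many homeomorphism types. First I would unwind the definition of boundedness: there exist a scheme $T$ of finite type over $\cc$ (which we may take reduced) and a projective morphism $\mathcal X \rar T$ such that every elliptic Calabi--Yau threefold is isomorphic to some closed fibre $\mathcal X_t$. By generic flatness and Noetherian induction one stratifies $T$ into finitely many locally closed subsets; replacing $T$ by the disjoint union of these strata and $\mathcal X$ by the corresponding base change, we may assume in addition that $\mathcal X \rar T$ is flat and that each connected component of $T$ is smooth and irreducible. This does not change the set of isomorphism classes of fibres.

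Next I would invoke stratification theory for the morphism $\mathcal X \rar T$. Choose a Whitney stratification of $\mathcal X$ for which $\mathcal X \rar T$ restricts to a submersion onto each stratum of $T$; then, by Thom's first isotopy lemma (equivalently, by Verdier's theorem on the generic topological triviality of algebraic morphisms), after a further finite refinement of the stratification of the base the family $\mathcal X \rar T$ is, over a neighbourhood of every point of every stratum of $T$, a topologically locally trivial fibre bundle. In particular, for $t$ and $t'$ lying in the same connected component of $T$, the fibres $\mathcal X_t$ and $\mathcal X_{t'}$ are homeomorphic (compatibly with their stratifications, hence in particular with their singular loci). Since $T$ is of finite type it has finitely many connected components, so the set $\{\mathcal X_t\}_{t\in T}$ — and therefore the set of all elliptic Calabi--Yau threefolds — realizes only finitely many homeomorphism types, which is the assertion.

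I expect the only genuine subtlety to be the passage from "algebraic family" to "topological fibre bundle", since the members of $\familyellcy$ are genuinely singular — they carry $\qq$-factorial terminal singularities — so Ehresmann's fibration theorem does not apply verbatim and one must go through the Thom--Mather machinery; one should also take a moment to check that the isotopy lemma yields homeomorphisms of the total spaces that match up the strata, so that "topological type" is being compared in the intended sense (e.g.\ including the analytic type of the isolated terminal singularities). Everything else is routine: the reduction steps rely only on generic flatness and the Noetherianity of $T$, and the counting of connected components is immediate from finite type.
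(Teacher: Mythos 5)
Your argument is correct and is essentially the paper's own proof, merely unwound: the paper simply cites Theorem~\ref{thm intro} together with Verdier's generalization of Ehresmann's theorem (\cite{Ver76}*{Corollaire 5.1}), which is precisely the stratification/Thom--Mather isotopy machinery you describe for handling the singular fibres. You correctly identified both the needed reference and the one genuine subtlety (that Ehresmann does not apply verbatim because the fibres are singular).
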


Most of the methods developed to tackle Theorem \ref{thm intro} can be used to study the boundedness of elliptically fibered varieties in general.
For instance, they naturally apply to minimal $n$-folds $X$ with $\kappa(X)=n-1$, as their Iitaka fibration $f \colon X \rar Y$ is an elliptic fibration.
This circle of ideas has been explored by the first-named author in \cite{Fil20}, where necessary and sufficient conditions for boundedness modulo flops are settled.
By further exploring the methods of the proof of Theorem \ref{thm intro}, we are also able to improve the criteria in \cite{Fil20} to criteria for honest boundedness in the case of threefolds of Kodaira dimension 2.
Let us recall that for a divisor $D$ on a normal algebraic variety, we define ${\rm vol}_{k}(D)= \lim_{m \to \infty} \frac{h^0(X, mD)}{\frac{m^k}{k!}}$ and this value is strictly positive (and finite) exactly when the Iitaka dimension of $D$ is $k$.

\begin{theorem} 
\label{thm intro2}
Fix a positive integer $d$ and a positive real number $v$. 
The set 
\[
\familykodtwo
\coloneqq 
\left\{
X \
\middle \vert \
\begin{array}{l}
\text{$X$ is a projective $\qq$-factorial terminal threefold}, \ \text{$K_X$ is nef}, \kappa (K_X)=2,
\\
{\rm vol}_{2}(K_X)=v, 
\text{and the Iitaka fibration of $X$ admits a degree $C$ rational multisection}
\end{array}
\right\}
\] 
forms a bounded family.
 \end{theorem}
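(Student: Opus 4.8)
We outline the strategy. Let $X\in\familykodtwo$ and let $f\colon X\rar Y$ be its Iitaka fibration. Since $K_X$ is nef and $\kappa(X)=2$, abundance in dimension three gives that $K_X$ is semiample, so taking $Y$ to be the canonical model of $X$ the morphism $f$ is defined by $|mK_X|$ for $m$ sufficiently divisible and $K_X\sim_{\qq}f^{*}A$ for an ample $\qq$-divisor $A$ on the normal projective surface $Y$, with $A^2={\rm vol}_2(K_X)=v$. Because $K_X\cdot F=0$ and $\kappa(F,K_X|_F)=0$ for a general fibre $F$, the map $f$ is an elliptic fibration. Applying the canonical bundle formula to $f$ produces a generalized klt pair $(Y,B_Y+\mathbf M_Y)$ with $K_Y+B_Y+\mathbf M_Y\sim_{\qq}A$ ample, the coefficients of $B_Y$ lying in a fixed DCC set determined by Kodaira's list of fibre types, and $\mathbf M$ a b-nef b-divisor with $N\mathbf M$ b-Cartier for a universal constant $N$.

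The plan is first to bound the base $Y$ and then to bound the elliptic fibration over it. For the base, observe that $(Y,B_Y,\mathbf M)$ is a two-dimensional generalized klt pair whose generalized log canonical divisor is ample of fixed volume $v$, with DCC boundary coefficients and bounded moduli denominator; hence, by the boundedness of generalized log canonical models of fixed volume (the two-dimensional instance of boundedness results in the spirit of Hacon--McKernan--Xu and Birkar already used in the proof of Theorem~\ref{thm intro}), the pairs $(Y,\Supp B_Y)$ form a log bounded family, and after one further bounded birational modification so does $(Y,B_Y+M_Y)$. In particular we fix a very ample divisor $H$ on $Y$ of bounded degree, coming from a bounded family of surfaces containing all such $Y$.

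To bound the fibration, we pass to the relative Jacobian $j\colon J\rar Y$, which has a section and hence a Weierstrass model: a divisor in the $\pp^2$-bundle $\pp(\mathcal O_Y\oplus\ls^{\otimes 2}\oplus\ls^{\otimes 3})$ cut out by $y^2=x^3+a_4x+a_6$ with $a_4\in H^0(Y,\ls^{\otimes 4})$, $a_6\in H^0(Y,\ls^{\otimes 6})$, where $\ls=(R^1f_{*}\mathcal O_X)^{-1}$ is the fundamental line bundle. Since $12\ls$ is linearly equivalent to the discriminant and, by Kodaira's analysis of singular fibres, $\ls$ differs from $\mathbf M_Y$ and a combination of components of $B_Y$ by a divisor of bounded $H$-degree, the degree of $\ls$ is bounded; therefore the $\pp^2$-bundle and the Weierstrass divisor inside it vary in a bounded family, and so does $J$. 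Now $X$ is a torsor under $J$ over the generic point of $Y$, and the existence of a degree $C$ multisection forces $C$ times the class of $X$ in the relevant Tate--Shafarevich group to vanish; together with the boundedness of $Y$ and of $\ls$, this leaves only finitely many possibilities for $X$ as such a torsor. Since $X$ is moreover a $\qq$-factorial terminal model of the corresponding torsor, the finiteness of models isomorphic in codimension one over a fixed base (Theorem~\ref{thm KM conj general}), made uniform over the bounded family of bases as in the proof of Theorem~\ref{thm intro}, shows that $X$ ranges in a bounded family.

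The main obstacle will be this final step: the preceding arguments directly give only boundedness modulo flops and crepant modifications — essentially the conclusion of \cite{Fil20} — and upgrading this to honest boundedness requires a version of Kawamata's finiteness theorem \cite{Kaw97} that is uniform across the bounded family, together with careful control of the relationship between $X$ and the various birational models of $Y$ on which the moduli part $\mathbf M$ descends. We expect this, rather than the (by now standard) boundedness of the base, to be where the real work lies; the hypotheses fixing $v$ and $C$ enter precisely to make the base and the Tate--Shafarevich twist bounded, respectively.
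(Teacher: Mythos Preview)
Your route to boundedness modulo flops, via Weierstrass models and the Tate--Shafarevich group, is more hands-on than what the paper actually does (it simply invokes \cite{Fil20}*{Theorem 1} and \cite{Fil18}*{Theorem 1.14} as black boxes), but it is in the right spirit and could be made to work. The real issue is the final paragraph.

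You correctly identify that the hard step is upgrading from boundedness modulo flops to honest boundedness, and you propose to do this ``as in the proof of Theorem~\ref{thm intro}''. This is where the gap is. The mechanism in Theorem~\ref{thm intro} (equivalently, Theorem~\ref{metatheorem}) runs through Theorems~\ref{t-BT} and~\ref{thm cones}: one needs $h^1(X,\O X.)=h^2(X,\O X.)=0$ so that the class group is locally constant in the family, whence every divisor class on a special fibre $\mathcal X_0$ lifts to a class on $\mathcal X/T$, and a flop of $\mathcal X_0$ can be realised by running an MMP on the family. For a minimal terminal threefold with $\kappa=2$, one generally has $h^2(X,\O X.)\neq 0$ (e.g.\ $X=E\times C\times C$ with $g(C)\geq 2$), so the Picard rank jumps and there is no reason a divisor class on $\mathcal X_0$ extends. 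The argument of Theorem~\ref{thm intro} simply does not apply here, and the paper says so explicitly in the strategy section.

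What the paper uses instead is Theorem~\ref{T-extflop}, a threefold-specific result resting on Koll\'ar--Mori's local deformation theory of flops \cite{KM92}*{Theorem~11.10}: a flop of a terminal threefold extends analytically over a polydisc in the base regardless of any cohomological hypotheses, and one then globalises this by a careful comparison with an MMP on the total space (Steps 3--8 of the proof of Theorem~\ref{T-extflop}). This is the missing idea. Once you have Corollary~\ref{C-extflop} in hand, combining it with Theorem~\ref{thm KM conj general} applied to $\mathcal X\rar\mathcal S$ finishes exactly as you sketch; but the extension of flops is the substantive new input, not a formality borrowed from the Calabi--Yau case.
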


We stress that the conditions in Theorem \ref{thm intro2} are all necessary.
Indeed, as discussed in \cite{Fil20}*{\S~3}, given a family $\pi \colon \mathcal{X} \rar T$ of minimal $n$-folds of Kodaira dimension $n-1$, up to a stratification of $T$, their Iitaka fibrations deforms along the family, that is, $\pi$ factors as $\mathcal{X} \rar \mathcal{Y} \rar T$ and for any $t \in T$, $\mathcal X_t \rar \mathcal Y_t$ is the Iitaka fibration of $\mathcal X_t$.
Consequently, up to a further stratification, Kodaira's formula for the canonical bundle of $\mathcal{X}_t \rar \mathcal{Y}_t$ is obtained by restriction of the formula for $\mathcal{X} \rar \mathcal{Y}$.
Furthermore, up to an additional stratification, a rational multisection of $\mathcal{X} \rar \mathcal{Y}$ induces a rational multisection of $\mathcal{X}_t \rar \mathcal{Y}_t$.

The following examples will show that these conditions are not vacuously satisfied, but they need to be imposed and they are independent of each other.
%In particular, the conditions in Theorem \ref{thm intro2} are optimal.

\begin{example}
In this example, we produce an unbounded class of smooth minimal threefolds of Kodaira dimension 2 with bases belonging to a bounded family.
The unboundedness will follow from the fact that the elliptic threefolds in our construction do not admit a multisection of bounded degree, and $\mathrm{vol}_2(X)$ does not belong to a finite set.

Fix an elliptic curve $E$.
Let us consider the diagonal action of $\zz / n \zz$ on $E \times \pr 1.$ given as the translation by an element of order $n$ on $E$ and the action of a primitive $n$-th root of unity on $\pr 1.$.
Then, the induced action on $E \times \pr 1.$ has no fixed points and we obtain the following commutative diagram
\[
\xymatrix{
E \times \mathbb{P}^1
\ar[rr]^{\phi} \ar[d]_f 
& &
S_n\coloneqq (E \times \mathbb{P}^1) / (\mathbb Z / n \mathbb Z) \ar[d]^{g_n}\\
\pr 1. \ar[rr]^{\psi}
& &
\mathbb{P}^1= \mathbb P ^1 / (\mathbb Z / n \mathbb Z).
}
\]
By construction, $g_n$ has fibers of multiplicity $n$ over $\{ 0 \}$ and $\{ \infty \}$.
Thus, Kodaira's canonical bundle formula for surfaces implies that $K_{S_n} \sim_\mathbb{Q} g_n^\ast (K_{\pr 1.}+ (1-\frac{1}{n})(\{ 0 \} + \{\infty\}))$.
Let $C$ be a genus $2$ curve obtained by as a degree 2 cover $h \colon C \to \pr 1.$ branched away from $\{ 0 \}$ and $\{ \infty \}$.
Taking the base change of $g_n$ by $h$, we obtain a surface $T_n \coloneqq S_n \times_{\pr 1.} C$ with a morphism $l_n \colon T_n \rar C$ such that $\kappa(T_n)=1$ and $K_T \sim_\mathbb{Q}h^\ast(K_C+(1-\frac{1}n)(p+q+r+s))$, where $p,q,r,s$ are the preimages of $\{0\}$ and $\{ \infty \}$ on $C$.
The divisor $K_C+(1-\frac{1}n)(p+q+r+s)$ has degree $2+4(1-\frac{1}{n})$.
As mentioned above, when considering the Iitaka fibration in a bounded family of minimal models, Kodaira's formula for the canonical bundle is obtained by restriction, up to a suitable stratification.
Thus, the fact that the log pairs $(C,(1-\frac{1}n)(p+q+r+s))$ have coefficients varying in an infinite set implies that corresponding surfaces $T_n$ do not belong to a bounded family.
As the fibration $T_n \rar C$ has fibers of multiplicity $n$, that cannot admit a multisection of degree less than $n$.
Thus, as $n$ varies, the fibrations $g_n$ do not admit a common upper bound for the minimal degree of a multisection.
Then, taking the product of $T_n \rar C$ with $C$, we get an example of a smooth minimal threefold $X_n$ with $\kappa (X_n)=2$ and with the same properties as above.
Indeed, if $W_n$ is a multisection of $X_n \rar C \times C$, for a general choice of $c \in C$, $W_n$ induces a multisection of $X_n \times_{C\times C} \{c\} \times C \rar \{c\} \times C$ which is isomorphic to $T_n \rar C$.
Hence, although the bases of the Iitaka fibrations of the $X_n$ are all isomorphic and thus trivially belong to a bounded family, on the other hand, $\mathrm{vol}_2(X_n)$ does attain infinitely many distinct values, and there is no lower bound for the degree of a rational multisection of $X_n \rar C \times C$.
\end{example}

\begin{example}
In this example, we show that the conditions of Theorem \ref{thm intro2} 
are independent of each other.

Let $C_n$ be a smooth curve of genus $n$, and let $E$ be an elliptic curve.
Then, $C_n \times C_n \times E \rar C_n \times C_n$ is a smooth minimal elliptic threefold of Kodaira dimension 2 admitting a section.
Furthermore, $\mathrm{vol}_2(C_n \times C_n \times E)=(2n-2)^2$ depends on $n$.
Thus, the set of the varieties $C_n$ is not bounded, showing that it is not sufficient to just assume the existence of an upper bound on the degree of a multisection of the Iitaka fibration to prove the boundedness of the minimal terminal $n$-folds of Kodaira dimension $n-1$.

Fix a curve $C$ with $g(C) \geq 2$, and let $E$ be an elliptic curve.
Then, by \cite{Fil20}*{Example 3.1}, there exists a set of smooth surfaces $f_n \colon S_n \rar C$ with the following properties: $f_n$ is smooth, isotrivial, and $f_n$ does not admit a multisection of degree less than $n$.
Setting $X_n \coloneqq S_n \times C$ and $g_n \colon X_n \rar C \times C$ the induced map, then $X_n$ is a smooth minimal threefold with $\kappa(X_n)=2$, $\mathrm{vol}_2(K_{X_n})=(2g(C)-2)^2$ fixed, whereas $X_n$ does not admit a rational multisection of degree less than $n$.
Hence, this example in turn shows that it is not sufficient to just assume the existence of an upper bound on $\mathrm{vol}_{n-1}(Y)$ to prove the boundedness of the minimal terminal $n$-folds $Y$ of Kodaira dimension $n-1$.
\end{example}

\subsection*{Strategy of proof} 
In the context of Theorem~\ref{thm intro} and Theorem~\ref{thm intro2}, we shall consider a set of elliptically fibered varieties $\mathfrak{F}$ that is known to be bounded modulo flops.
Furthermore, we can assume that these flops preserve the elliptic fibration.
More precisely, we shall assume that there exists a family $\xymatrix{\pi \colon \mathcal{X} \ar[r]^{\tilde{f}} & \mathcal{S} \ar[r]^{\tilde{g}} & T}$ of projective morphisms of quasi-projective varieties such that $\pi$ is a flat family of threefolds, $\tilde g$ is a flat family of surfaces, and for every fibration $f \colon X \rar S \in \mathfrak{F}$, there exists $t \in T$ such that the following diagram holds
\[
\xymatrix{
X \ar@{-->}[rrrrrr]_{\text{sequence of $K_X$-flops}}  \ar[d]^{f}
& & & & & &
\mathcal X_t \ar[d]^{\tilde{f}\vert_{\mathcal X_t}}
\\
S \ar[rrrrrr]_{\text{isomorphism}} 
& & & & & &
\mathcal S_t.
}
\]

In this setup, Kawamata \cite{Kaw97} showed that each $X \rar S$ admits only finitely many relatively minimal models over $S$, up to isomorphism.
That is, while there may be infinite sequences of flops over $S$ and thus infinitely many marked minimal models, these models belong to finitely many isomorphism classes of $S$-schemes.
In view of this, our strategy will be to show that also $\mathcal{X} \rar \mathcal{S}$ admits only finitely many relatively minimal models, up to isomorphism over $\mathcal{S}$, and that every $X \rar S$ in $\mathfrak{F}$ appears as a fiber of one of those finitely many models of $\mathcal{X} \rar \mathcal{S}$.
More precisely, we shall prove the following two steps:
\begin{itemize}
    \item[(i)] generalize the results of \cite{Kaw97} to relatively minimal elliptic fibrations of arbitrary dimension; and
    \item[(ii)] argue that, under suitable geometric assumptions, every sequence of $\K \mathcal{X}_t.$-flops $\mathcal{X}_t \drar \mathcal{X}'_t$ relative to $\mathcal{S}_t$ can be lifted to a sequence of $\K \mathcal{X}.$-flops $\mathcal{X} \drar \mathcal{X}'$ relative to $\mathcal{S}$.
\end{itemize}

Step (i) guarantees that $\mathcal{X} \rar \mathcal{S}$ admits only finitely many relatively minimal models, $\mathcal{X}_1,\ldots , \mathcal{X}_k$, up to isomorphism over $\mathcal{S}$, see \S~\ref{section cone conj}.
Then, step (ii) guarantees that each fibration in $\mathfrak{F}$ appears as the fiber over a closed point of $\mathcal{X}_i \rar T$ for some $1 \leq i \leq k$.
Indeed, let $X \rar S$ be an element of $\mathfrak{F}$.
Then, by assumption, there is a closed fiber $\mathcal{X}_t \rar \mathcal{S}_t$ such that $\mathcal{S}_t = S$ and $\mathcal{X}_t \drar X$ decomposes as a sequence of $\K \mathcal{X}_t.$-flops over $S=\mathcal{S}_t$.
Then, by (ii), we can lift this sequence as a sequence of $\K \mathcal{X}.$-flops $\mathcal{X} \drar \mathcal{X}'$ over $\mathcal{S}$.
Then, by construction, we have that $X$ is isomorphic to the fiber $\mathcal{X}'_t$.
Since $\mathcal{X}'$ is isomorphic over $\mathcal{S}$, and hence over $T$, to $\mathcal{X}_i$ for some $1 \leq i \leq k$, it follows that $X$ is isomorphic to $\mathcal{X}_{i,t}$, showing boundedness as desired.

In general, it is hard to show that a flop can be lifted from a special fiber of a family, as the Picard rank of the fibers can jump countably many times.
On the other hand, the Calabi--Yau condition guarantees that, under a suitable base change, the Picard rank remains constant in a family, allowing for identification between the relative N\'eron--Severi group and the N\'eron--Severi group of each fiber.
This is worked out in \S~\ref{section deform}.
The results of \S~\ref{section cone conj} and \S~\ref{section deform} are then combined to prove Theorem \ref{metatheorem}, which represents a general criterion to prove boundedness for Calabi--Yau varieties that form a bounded family up to flops.
Then, Theorem \ref{thm intro} immediately follows from Theorem \ref{metatheorem}.

The case of Theorem \ref{thm intro2} is different, as it is not true in general that threefolds of Kodaira dimension 2 have locally constant Picard rank.
To circumvent this issue, we shall use results of Koll\'ar and Mori showing that flops of terminal threefolds are locally unobstructed, see \cite{KM92}*{Theorem 11.10}.
Thus, while Theorem \ref{thm intro} relies on arguments that are valid in higher dimension, the proof of Theorem \ref{thm intro2} is special to the case of threefolds.

\subsection*{Acknowledgements} The authors would like to thank Burt Totaro for kindly suggesting the proof of Theorem \ref{t-BT}, which is a generalization of \cite{Tot12}*{Theorem 4.1}.
The authors wish to thank Joaqu\'in Moraga for reading a preliminary draft of this work.
They also thank Antonella Grassi, Talon Stark, and Isabel Stenger for useful comments on the first version of this work.
Lastly, the authors would like to thank the anonymous referee for useful comments and suggestions that helped the authors improve the clarity of this work.

\section{Preliminaries}

\subsection{Terminology and conventions}
\label{term.subs}
Throughout this paper, we will work over $\mathbb C$.
For anything not explicitly addressed in this subsection, we refer the reader to~\cites{KM98, Kol13}.

\subsection{Notation on morphisms and maps}
\label{sect:not.maps}
A \emph{contraction} is a projective morphism $f\colon X \rar Y$ of quasi-projective varieties with $f_\ast  \O X. = \O Y.$. 
If $X$ is normal, then so is $Y$ and the fibers of $f$ are connected.

A \emph{fiber space} is a contraction $f\colon X \rar Y$ of normal quasi-projective varieties with $\dim X > \dim Y$.
Given a fiber space $f \colon X \rar Y$, we define
\begin{align*}
\mathrm{Bir}(X/Y) \coloneqq 
\left\{
\phi \in \mathrm{Bir}(X) 
\ \vert \ 
f \circ \phi = f
\right \}
\quad \text{and} \quad
\mathrm{Aut}(X/Y) \coloneqq 
\left\{
\psi \in \mathrm{Aut}(X) 
\ \vert \ 
f \circ \psi = f
\right \}.
\end{align*}
There exists a natural identification $\mathrm{Bir}(X/Y)=\mathrm{Bir}(X_\eta)$, where $\eta$ is the generic point of $Y$, see \cite{Han91} -- $\mathrm{Bir}(X/Y)$ is denoted by $\mathrm{Bir}_Y(X)$ in \cite{Han91}.
More precisely, the $k$-points of $\mathrm{Bir}(X/Y)$ are identified with the $k(Y)$-points of $\mathrm{Bir}(X_\eta)$.
Hence, if $f \colon X \rar Y$ is an elliptic fibration, then $\mathrm{Bir}(X/Y)=\mathrm{Aut}_{k(Y)}(X_\eta)$.

We will need the following simple result.
\begin{lemma} \label{lemma_rel_bir}
Let $f \colon X \rar Y$ be a contraction of normal varieties.
Assume that that $f$ admits a factorization
\begin{align*}
\xymatrix{X \ar[r]^g \ar@/^14pt/[rr]^{f} & Y' \ar[r]^h & Y}
\end{align*}
where $h \colon Y' \rar Y$ be a birational contraction.
Then, 
$\mathrm{Bir}(X/Y)=\mathrm{Bir}(X/Y')$ 
and
$\mathrm{Aut}(X/Y)=\mathrm{Aut}(X/Y')$.
\end{lemma}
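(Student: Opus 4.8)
The plan is to show that the two inclusions $\mathrm{Bir}(X/Y')\subseteq \mathrm{Bir}(X/Y)$ and $\mathrm{Aut}(X/Y')\subseteq\mathrm{Aut}(X/Y)$ are obvious, and the reverse inclusions are the content of the lemma. For the trivial direction, if $\phi$ is birational (resp.\ an automorphism) of $X$ with $g\circ\phi=g$, then composing with $h$ gives $f\circ\phi=h\circ g\circ\phi=h\circ g=f$, so $\phi\in\mathrm{Bir}(X/Y)$ (resp.\ $\mathrm{Aut}(X/Y)$). The point is therefore to prove that any $\phi\in\mathrm{Bir}(X/Y)$ automatically satisfies $g\circ\phi=g$.

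The key step is to pass to generic points and use the identification $\mathrm{Bir}(X/Y)=\mathrm{Bir}(X_\eta)$ recalled just above, together with the fact that $h\colon Y'\to Y$ is birational. First I would observe that since $h$ is a birational contraction of normal varieties, it restricts to an isomorphism over a dense open subset $U\subseteq Y$ whose preimage contains the generic points; in particular $k(Y)=k(Y')$ and the generic point $\eta$ of $Y$ is identified with the generic point $\eta'$ of $Y'$, and $X_\eta=X_{\eta'}$ as schemes over this common function field. Under this identification, $\mathrm{Bir}(X/Y)=\mathrm{Bir}(X_\eta)=\mathrm{Bir}(X_{\eta'})=\mathrm{Bir}(X/Y')$, and the latter two agree as sets of $k(Y)=k(Y')$-birational self-maps because the generic fiber is literally the same object. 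The automorphism statement follows identically, using $\mathrm{Aut}(X/Y)=\mathrm{Aut}_{k(Y)}(X_\eta)$ and the corresponding statement over $Y'$; note that automorphisms over $Y$ are precisely those elements of $\mathrm{Bir}(X/Y)$ that are biregular on $X$, a condition that does not reference the base, so the equality of the birational groups immediately gives equality of the automorphism subgroups.

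The main obstacle, such as it is, is purely bookkeeping: one must be slightly careful that ``$f$ admits a factorization $f=h\circ g$'' forces $g$ itself to be a contraction and that $g_*\mathcal O_X=\mathcal O_{Y'}$, so that the identification $\mathrm{Bir}(X/Y')=\mathrm{Bir}(X_{\eta'})$ is legitimately available; this is automatic because $f_*\mathcal O_X=\mathcal O_Y$ and $h_*\mathcal O_{Y'}=\mathcal O_Y$ force $h_*(g_*\mathcal O_X)=\mathcal O_Y$, and since $h$ is birational with $Y'$ normal this yields $g_*\mathcal O_X=\mathcal O_{Y'}$. One might alternatively give a cleaner, purely geometric argument avoiding generic points: any $\phi\in\mathrm{Bir}(X/Y)$ commutes with $f$, hence $g\circ\phi$ and $g$ are two rational maps $X\dashrightarrow Y'$ that agree after composing with the birational morphism $h$; since $h$ is birational it is generically injective, so $g\circ\phi=g$ as rational maps. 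I expect to present the generic-point version since it is the language already set up in the paragraph preceding the lemma, but either route is short.
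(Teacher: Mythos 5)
Your proof is correct and in substance the same as the paper's: both reduce to the dense open subset (equivalently, the generic point) over which $h$ is an isomorphism, the paper by checking directly that $g\circ\phi=g$ on $X_U$, you by invoking $\mathrm{Bir}(X/Y)=\mathrm{Bir}(X_\eta)$ and noting $X_\eta=X_{\eta'}$ since $k(Y)=k(Y')$. Your closing ``purely geometric'' variant is literally the paper's argument.
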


\begin{proof}
Fix $\phi  \in \mathrm{Bir}(X/Y)$.
Since $f = h \circ g$, then $\mathrm{Bir}(X/Y') \subset \mathrm{Bir}(X/Y)$ -- this inclusion holds even when $Y' \rar Y$ is not a birational morphism.
Let $U \subset Y$ be an open subset over which $h$ is an isomorphism and let $X_U \coloneqq X \times_Y U$.
Then, by construction, $g= g \circ \phi$ on $X_U$.
Thus, $\phi \in \mathrm{Bir}(X/Y')$.
This proves $\mathrm{Bir}(X/Y)=\mathrm{Bir}(X/Y')$.
Finally, $\mathrm{Aut}(X/Y)=\mathrm{Aut}(X/Y')$ follows immediately.
\end{proof}

Let 
$r \colon X \dashrightarrow X'$ 
be a birational map of quasi-projective varieties.
We say that 
$r$ 
is an isomorphism in codimension 1 if there exists closed Zariski subvarieties 
$Z \subset X$, 
$Z' \subset X'$
of codimension at least 
$2$
such that 
$r$ 
induces an isomorphism 
between 
$X \setminus Z$ 
and 
$X' \setminus Z'$.

Let 
$f \colon X \to Y$, 
$f' \colon X' \to Y$
be morphisms (with the same target variety $Y$).
Then an isomorphism in codimension 1 
$r \colon X \dashrightarrow X'$
is said to be an isomorphism in codimension 1 over $Y$, if the naturally induced diagram 
\begin{align*}
    \xymatrix{
X 
\ar@{-->}[rr]^{r}
\ar[dr]^{f}
& &
X'
\ar[dl]^{f'}
\\
& Y &
}
\end{align*}
commutes.

\subsection{Divisors}
Let $\mathbb{K}$ denote $\zz$, $\qq$, or $\rr$. We say that $D$ is a \emph{$\mathbb{K}$-divisor} on a variety $X$ if we can write $D = \sum \subs i=1. ^n d_i P_i$ where $d_i \in \mathbb{K}$, $n \in \nn$ and the $P_i$ are prime Weil divisors on $X$ for all $i=1, \ldots, n$. 
We say that $D$ is $\mathbb{K}$-Cartier if it can be written as a $\mathbb{K}$-linear combination of $\zz$-divisors that are Cartier.
The \textit{support} of a $\mathbb{K}$-divisor $D=\sum_{i=1}^n d_iP_i$ is the union of the prime divisors appearing in the formal sum $\mathrm{Supp}(D)= \sum_{i=1}^n P_i$.
In all of the above, if $\mathbb{K}= \zz$, we will systematically drop it from the notation.

Given a $\mathbb K$-divisor $D$ and a prime divisor $P$ in the support of $D$, we denote by $\mu_P (D)$ the coefficient of $P$ in $D$.
Given $D = \sum_{P \text{ prime}} \mu_{P_i}(D) P_i$ on a normal variety $X$, and a morphism $\pi \colon X \to Z$, we define the vertical (resp. horizontal) part $D^v$ (resp. $D^h$) of $D$ by
\begin{align*}
D^v \coloneqq \sum_{\pi(P_i) \subsetneqq Z} \mu_{P_i}(D) P_i, 
\qquad
D^h \coloneqq \sum_{\pi(P_i) = Z} \mu_{P_i}(D) P_i.
\end{align*}
Let $D_1$ and $D_2$ be $\mathbb K$-divisors on $X$ and let $\pi \colon X \rar Z$ be a projective morphism of normal varieties.
We write $D_1 \sim_{\mathbb K , \pi} D_2$ if there is a $\mathbb{K}$-Cartier divisor $L$ on $Z$ such that $D_1 - D_2 \sim _{\mathbb K}f^\ast L$.
Equivalently, we may also write $D_1 \sim_{\mathbb{K}, Z} D_2$, or $D_1 \sim_{\mathbb{K}} D_2$ over $Z$.
Similarly, if $Z= \mathrm{Spec}(k)$, where $k$ is the ground field, we omit $Z$ from the notation.
In case $D_1$ and $D_2$ are $\mathbb K$-Cartier, we say that $D_1$ and $D_2$ are numerically equivalent over $Z$ if $D_1 \cdot C = D_2 \cdot C$ for every curve $C \subset X$ such that $\pi(C)$ is a point, and we write $D_1 \equiv_\pi D_2$, or, alternatively, $D_1 \equiv_Z D_2$.
If $\mathbb{K}=\zz$, we omit it from the notation.

\subsection{Cones of divisors} \label{cones.of.divs}
Let $f \colon X \rar Y$ be a projective morphism of varieties.
We denote by $\nsr(X/Y)$ the real vector space generated by Cartier divisors on $X$ modulo numerical equivalence on curves in $X$ that are contracted by $f$.
It is a finite-dimensional vector space, and its dimension is denoted by $\rho(X/Y)$.
\\
We denote by $V(X/Y)$ the $\mathbb R$-subspace of $\nsr(X/Y)$ generated by the classes of vertical divisors and by $v(X/Y)$ its dimension.
\\
We denote by $A(X/Y)$ the cone of $f$-ample divisors and by $\overline{A}(X/Y)$ its closure, that is, the cone of $f$-nef divisors.
Similarly, we denote by $B(X/Y)$ the cone of $f$-big divisors and by $\overline{B}(X/Y)$ its closure, that is, the cone of $f$-pseudo-effective divisors.
\\
A Cartier divisor $D$ on $X$ is $f$-movable if we have $f_\ast  \O X.(D) \neq 0$ and the codimension of the support of $\mathrm{coker}(f^\ast f_\ast (\O X. (D))\rar \O X. (D))$ is at least 2.
We denote by $\overline{M}(X/Y)$ the closed cone of $f$-movable divisors: this cone is the closure of the cone generated by $f$-movable divisors.
\\
We denote by $B^e(X/Y)$ the cone of $f$-effective divisors, and we set $A^e(X/Y) \coloneqq \overline{A}(X/Y) \cap B^e(X/Y)$ and $M^e(X/Y) \coloneqq \overline{M}(X/Y) \cap B^e(X/Y)$.

\begin{lemma} 
\label{tower movable}
Let $f \colon X \rar Y$ and $g \colon Y \rar Z$ be contractions of normal varieties.
Let $L$ be a line bundle on $X$ that is movable over $Z$.
Then, $L$ is movable over $Y$.
\end{lemma}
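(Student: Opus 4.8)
The plan is to unwind the definition of movability and reduce the statement to a comparison of two natural evaluation maps. Set $h \coloneqq g \circ f \colon X \rar Z$ and write $\mathcal L \coloneqq \mathcal O_X(L)$; let $\mathrm{ev}_f \colon f^\ast f_\ast \mathcal L \rar \mathcal L$ and $\mathrm{ev}_h \colon h^\ast h_\ast \mathcal L \rar \mathcal L$ be the counits of the respective adjunctions. First, since $h_\ast \mathcal L = g_\ast(f_\ast \mathcal L)$ and $h_\ast \mathcal L \neq 0$ by the hypothesis that $L$ is movable over $Z$, we immediately get $f_\ast \mathcal L \neq 0$. It therefore remains to bound from below the codimension of $\Supp(\coker(\mathrm{ev}_f))$.

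The key point is that, for the composition of the adjunctions $(f^\ast, f_\ast)$ and $(g^\ast, g_\ast)$, the counit of $(h^\ast, h_\ast) = (f^\ast g^\ast,\, g_\ast f_\ast)$ factors as
\[
h^\ast h_\ast \mathcal L \;=\; f^\ast\bigl(g^\ast g_\ast(f_\ast \mathcal L)\bigr) \xrightarrow{\; f^\ast(\varepsilon) \;} f^\ast f_\ast \mathcal L \xrightarrow{\; \mathrm{ev}_f \;} \mathcal L ,
\]
where $\varepsilon \colon g^\ast g_\ast(f_\ast \mathcal L) \rar f_\ast \mathcal L$ is the counit of $(g^\ast, g_\ast)$ and the composite equals $\mathrm{ev}_h$; this is the standard triangle-identity computation for a composition of adjoint functors, which I would include as a one-line verification. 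It follows that the image of $\mathrm{ev}_h$, regarded as a subsheaf of $\mathcal L$, is contained in the image of $\mathrm{ev}_f$. Hence $\coker(\mathrm{ev}_f)$ is a quotient of $\coker(\mathrm{ev}_h)$, and therefore $\Supp(\coker(\mathrm{ev}_f)) \subseteq \Supp(\coker(\mathrm{ev}_h))$.

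Since $L$ is movable over $Z$, the set $\Supp(\coker(\mathrm{ev}_h))$ has codimension at least $2$ in $X$, and by the inclusion just established the same holds for $\Supp(\coker(\mathrm{ev}_f))$. Combined with $f_\ast \mathcal L \neq 0$, this is exactly the assertion that $L$ is movable over $Y$. The only step requiring genuine care is the factorization of $\mathrm{ev}_h$ through $\mathrm{ev}_f$; all the remaining assertions are formal. In particular, this argument uses neither that $f$ and $g$ are contractions nor that $X$, $Y$, $Z$ are normal — these are simply the ambient conventions of the section.
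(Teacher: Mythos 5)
Your proposal is correct and follows essentially the same route as the paper's own proof: both factor the counit $(g\circ f)^\ast(g\circ f)_\ast \mathcal L \rar \mathcal L$ through $f^\ast$ applied to the counit $g^\ast g_\ast(f_\ast\mathcal L) \rar f_\ast\mathcal L$ followed by $f^\ast f_\ast\mathcal L \rar \mathcal L$, and conclude that $\Supp(\coker(\mathrm{ev}_f))\subseteq\Supp(\coker(\mathrm{ev}_h))$. You are a bit more explicit on two points the paper glosses over --- the non-vanishing $f_\ast\mathcal L\neq 0$ and the observation that the argument is purely categorical and does not use the contraction/normality hypotheses --- but the core argument is identical.
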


\begin{proof}
We wish to show that the cokernel of the natural morphism $f^\ast f_\ast L \rar L$ is supported in codimension at least $2$.
By assumption, $f_\ast L$ is a coherent sheaf on $Y$, and there is a natural morphism
\begin{equation} \label{eq coker 1}
g^\ast g_\ast (f_\ast L) \rar f_\ast  L.
\end{equation}
By assumption, the cokernel of the natural morphism
\begin{equation} \label{eq coker 2}
(g \circ f)^\ast (g \circ f)_\ast L \rar L
\end{equation}
has codimension at least 2.
By \eqref{eq coker 1}, the morphism in \eqref{eq coker 2} factors as
\begin{align*}
f^\ast g^\ast g_\ast f_\ast L \rar f^\ast f_\ast L \rar L,
\end{align*}
and the claim follows.
\end{proof}

\begin{lemma} \label{hyperplane movable}
Let $f \colon X \rar Y$ be a contraction of normal varieties.
Let $L$ be a line bundle on $X$ that is movable over $Y$.
Let $H$ be a general member of a basepoint-free linear system on $Y$, and let $X_H \coloneqq X \times_Y H$.
Then, $L\vert_{X_H}$ is movable over $H$.
\end{lemma}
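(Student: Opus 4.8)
The plan is to deduce this from Lemma~\ref{tower movable} together with base change and cohomology-and-base-change arguments. First I would set up notation: let $g \colon Y \rar Z$ be the contraction given by the basepoint-free linear system on $Y$ defining $H$, i.e. $Z = \mathbb{P}^N$ (or the image of $Y$), so that $H = g^{-1}(H_0)$ for a general hyperplane $H_0 \subset Z$. By Lemma~\ref{tower movable}, $L$ is movable over $Z$; that is, the cokernel of $(g \circ f)^\ast (g \circ f)_\ast L \rar L$ is supported in codimension at least $2$. Let $\mathcal{F} \coloneqq (g \circ f)_\ast L$, a coherent sheaf on $Z$. The goal is to restrict the inclusion $(g \circ f)^\ast \mathcal{F} \rar L$ to the preimage $X_H = (g \circ f)^{-1}(H_0)$ and argue that it remains an inclusion whose cokernel is supported in codimension at least $2$ in $X_H$, and that $\mathcal{F}\vert_{H_0}$ computes the pushforward of $L\vert_{X_H}$ under $X_H \rar H$, after which a second application of Lemma~\ref{tower movable} (or a direct argument) over $H_0 = \Spec$-type base finishes the job.

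The key steps, in order, are as follows. (1) Choosing $H_0$ general, arrange that $H_0$ avoids the (proper closed) locus where $\mathcal{F}$ fails to be locally free, or at least meets it properly, and that pulling back the exact sequence $0 \rar (g \circ f)^\ast \mathcal{F} \rar L \rar \mathcal{Q} \rar 0$ along the regular embedding $X_H \hookrightarrow X$ stays left-exact — this uses that a general member of a basepoint-free system is a nonzerodivisor on each of the finitely many associated/$\mathrm{Tor}$-relevant sheaves, i.e. $H_0$ avoids the associated primes of $\mathcal{Q}$ and of the kernel sheaves in the Koszul-type computation. (2) Conclude that $\mathcal{Q}\vert_{X_H} = \coker\bigl((g \circ f)^\ast \mathcal{F}\vert_{X_H} \rar L\vert_{X_H}\bigr)$, and since $\Supp \mathcal{Q}$ has codimension at least $2$ in $X$ and $H$ is general, $\Supp(\mathcal{Q}\vert_{X_H}) = \Supp \mathcal{Q} \cap X_H$ has codimension at least $2$ in $X_H$. (3) Identify $(g \circ f)^\ast \mathcal{F}\vert_{X_H}$ with a map factoring through $f\vert_{X_H}^\ast (f\vert_{X_H})_\ast (L\vert_{X_H})$: for this I need $(f\vert_{X_H})_\ast(L\vert_{X_H}) \neq 0$ and that the natural map from $\mathcal{F}\vert_{H_0}$ to it has torsion-free-by-codimension cokernel. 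For a general $H$, cohomology and base change gives $(g_\ast f_\ast L)\vert_{H_0} \rar (g\vert_H)_\ast\bigl((f_\ast L)\vert_H\bigr)$ an isomorphism (or at least an isomorphism in codimension $\leq 1$), and $(f_\ast L)\vert_H \rar (f\vert_{X_H})_\ast(L\vert_{X_H})$ likewise, since $H$ avoids the non-flat locus and the loci where base change fails. (4) Combining, the map $f\vert_{X_H}^\ast(f\vert_{X_H})_\ast(L\vert_{X_H}) \rar L\vert_{X_H}$ has cokernel supported in codimension at least $2$, which is precisely movability of $L\vert_{X_H}$ over $H$.

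The main obstacle I expect is step (3): controlling pushforward under restriction to $H$, i.e. showing $(f_\ast L)\vert_H$ really agrees (up to codimension-$1$ phenomena, which is all we need since movability is a codimension-$2$ condition) with $(f\vert_{X_H})_\ast(L\vert_{X_H})$, and similarly for the further pushforward to $Z$ versus $H_0$. This is a standard cohomology-and-base-change / generic-flatness argument, but it requires care because $X$ and $Y$ need not be smooth and the sheaves involved are only coherent, not locally free; the saving grace is that we may shrink to the locus where everything is flat and where $R^1$-pushforwards vanish after twisting, since removing a codimension-$\geq 2$ closed subset of $X_H$ is harmless for the conclusion. A clean way to package all of this is: pass to the generic point $\eta_{H_0}$ of $H_0$, where movability over $H$ becomes movability over the field $k(H_0)$ of the restriction $X_{\eta_{H_0}} \times_{H} H = X_{\eta}$-type fiber, and note movability is preserved by the (flat, generically) base change $Z \leftarrow H_0$; then spread out back to an open subset of $H_0$ meeting $H$ and use generality of $H$ to ensure this open subset is all of $H_0$ up to codimension $\geq 1$. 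Once the base-change bookkeeping is done, the rest is formal from Lemma~\ref{tower movable}.
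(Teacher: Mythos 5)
Your proposal heads in the right general direction (cohomology and base change, pullback along the Cartesian square), but it introduces unnecessary machinery and, more importantly, frames the base-change step as an obstacle when the paper's proof shows it is not one. You do not need the detour through $Z = \mathbb{P}^N$ and Lemma~\ref{tower movable}: the paper works directly with the Cartesian square
\[
\xymatrix{
X_H \ar[r]^v \ar[d]_g & X \ar[d]^f \\
H \ar[r]^u & Y,
}
\]
so $L|_{X_H}=v^\ast L$ and the movability hypothesis says $f^\ast f_\ast L \to L$ is surjective off a closed $V$ of codimension $\geq 2$.

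The key point you are missing is that you never need the base-change morphism to be an isomorphism (or even an isomorphism in codimension one), and you never need the short exact sequence $0 \to (gf)^\ast\mathcal{F} \to L \to \mathcal{Q} \to 0$ to stay exact under restriction. Those are exactly the places your step (1) and step (3) run into trouble, and they are avoidable. One only needs that pullback is right exact, so $v^\ast f^\ast f_\ast L \to v^\ast L$ is surjective off $V\cap X_H$ (codimension $\geq 2$ since $H$ is general), and that cohomology and base change always provides a natural morphism $u^\ast f_\ast L \to g_\ast v^\ast L$ in that one direction, without any flatness, isomorphism, or generic-vanishing hypotheses. Since $v^\ast f^\ast f_\ast L = g^\ast u^\ast f_\ast L$, you get the factorization
\[
v^\ast f^\ast f_\ast L = g^\ast u^\ast f_\ast L \longrightarrow g^\ast g_\ast v^\ast L \longrightarrow v^\ast L,
\]
and surjectivity of the composition off a codimension-$\geq 2$ set forces surjectivity of the second map off that same set, which is precisely movability of $v^\ast L$ over $H$. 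Your ``main obstacle'' in step (3) — controlling $(f_\ast L)|_H$ versus $(f|_{X_H})_\ast(L|_{X_H})$ — therefore evaporates: the inequality in the wrong direction that base change might introduce cannot hurt you, because you only propagate surjectivity forward through a composition, not backward.
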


\begin{proof}
Let us consider the Cartesian diagram
\begin{align*}
\xymatrix{
X_H \ar[rr]^v \ar[d]^g
& & 
X \ar[d]^{f}\\
H \ar[rr]^u
& &
Y.  
}
\end{align*}
By assumption, the natural morphism
\begin{align*}
f^\ast f_\ast L \rar L
\end{align*}
is surjective outside a subset $V \subset X$ of codimension at least 2.
Since $H$ is a general element of a basepoint-free linear series, then $X_H$ is a general element of the free linear series $\vert f^\ast H \vert$, by the projection formula.
Thus, we may assume that $V \cap X_H$ has codimension at least 2 in $X_H$.
By construction, we have $L_{X_H} \coloneqq L\vert_{X_H} = v^\ast L$, and we need to show that
\begin{align*}
g^\ast g_\ast L_{X_H} \rar L_{X_H}
\end{align*}
is surjective outside a set of codimension 2.
Since the pull-back is a right exact functor, we have that
\begin{align*}
v^\ast f^\ast f_\ast L \rar v^\ast L
\end{align*}
is surjective outside $V \cap X_H$, which has codimension at least 2 in $X_H$.

Since $f \circ v= u \circ g$, we have that $v^\ast f^\ast f_\ast L = g^\ast u^\ast f_\ast L$.
By cohomology and base change \cite{Har77}*{Remark III.9.3.1}, there is a natural morphism
\begin{align*}
u^\ast f_\ast L \rar g_\ast  v^\ast L.
\end{align*}
Thus, if we consider the pull-back to $X_H$, we have morphisms
\begin{align*}
v^\ast f^\ast f_\ast L =g^\ast u^\ast f_\ast L \rar g^\ast g_\ast v^\ast L \rar v^\ast L.
\end{align*}
Since the composition is surjective outside of $V \cap X_H$, then so is $g^\ast g_\ast v^\ast L \rar v^\ast L$.
This concludes the proof.
\end{proof}

\begin{lemma} \label{lemma divisors}
Let $f \colon X \rar Y$ be a contraction of quasi-projective varieties.
Let $\gamma \in B(X/Y)$, and let $(\gamma_i)_{i \in \nn} \subset \nsr(X/Y)$ be a sequence converging to $\gamma$.
Then, there exist Weil $\mathbb R$-divisors $D$ and $D_i$ on $X$, and $i \in \nn$, such that:
\begin{enumerate}
\item $[D]=\gamma$, $[D_i]=\gamma_i$;
\item there exists a reduced divisor $\Theta$ on $X$ such that for all $i \in \nn$, the support of $D_i$ is contained in $\Theta$;
\item $(D_i) \subs i \in \nn.$ converges to $D$ in the vector space of $\rr$-Weil divisors;
\item $D \geq 0$ and $D_i \geq 0$ for $i \gg 0$.
\end{enumerate}

Moreover, if $X$ is $\mathbb Q$-factorial and $(X, 0)$ is klt, then there exists a positive real number $\epsilon$ such that 
$(X, \epsilon D)$ is klt and for all $i \gg 0$, $(X, \epsilon D_i)$ is klt.
\end{lemma}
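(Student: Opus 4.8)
The plan is to reduce the big class $\gamma$ to an $f$-ample class plus one fixed effective divisor, to realise the $f$-ample part inside a \emph{simplicial} subcone of the relative nef cone spanned by finitely many $f$-ample $\qq$-Cartier classes that carry fixed effective representatives, and finally to get the klt statement by an openness argument performed on a single log resolution of the resulting common support.

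\textbf{Reduction.} Since $B(X/Y)$ is open and contains $\gamma$, and $f$ is projective, I would first choose an $f$-ample $\qq$-Cartier divisor $A$ with $[A]$ so small that $\gamma-[A]\in B(X/Y)\subseteq B^e(X/Y)$; then $\gamma-[A]=[E]$ for some $\rr$-divisor $E\geq 0$, so $\gamma=[A]+[E]$. The class $[A]$ lies in the interior $A(X/Y)$ of $\overline A(X/Y)$ (which is full-dimensional, as $f$ is projective), so I can choose $f$-ample $\qq$-Cartier divisors $A_1,\dots,A_\rho$, with $\rho=\rho(X/Y)$, which we may take ample on $X$ (adding a large pull-back from $Y$ does not change the class in $\nsr(X/Y)$), whose classes form a basis of $\nsr(X/Y)$ and satisfy $[A]=\sum_j s_j[A_j]$ with every $s_j>0$. (Fixing a basis $(e_l)$ of $\nsr(X/Y)$ with $[A]=\sum_l e_l$ and putting $[A_j]\coloneqq(1-\tfrac\delta\rho)[A]+\delta e_j$ for small $\delta>0$, one checks the $[A_j]$ form a basis lying in the open cone $A(X/Y)$ with $[A]=\sum_j\tfrac1\rho[A_j]$; then perturb to rational classes, so $[A]=\sum_j s_j[A_j]$ with $s_j$ close to $\tfrac1\rho$.) For $k\gg 0$ each $|kA_j|$ is very ample; fix $G_j\in|kA_j|$, so $G_j\geq 0$ and $[G_j]=k[A_j]$ in $\nsr(X/Y)$.

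\textbf{Construction of $\Theta$, $D$, $D_i$.} Set $\Theta\coloneqq\Supp(E)+\sum_j\Supp(G_j)$ (reduced) and $D\coloneqq E+\sum_j\tfrac{s_j}{k}G_j$; then $D\geq 0$, $\Supp(D)\subseteq\Theta$, and $[D]=[E]+\sum_j s_j[A_j]=\gamma$. For the sequence: $\gamma_i-[E]\to[A]$, which lies in the interior of the simplicial cone $\operatorname{cone}([A_1],\dots,[A_\rho])$, so for $i\geq i_0$ we may write $\gamma_i-[E]=\sum_j s_j^{(i)}[A_j]$ with $s_j^{(i)}$ the \emph{unique} coordinates in this basis; then $s_j^{(i)}\to s_j>0$, so $s_j^{(i)}>0$ for $i\geq i_0$, and I set $D_i\coloneqq E+\sum_j\tfrac{s_j^{(i)}}{k}G_j$, whence $D_i\geq 0$, $\Supp(D_i)\subseteq\Theta$, $[D_i]=\gamma_i$, and $D_i\to D$. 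For the finitely many $i<i_0$, since $[G_1],\dots,[G_\rho]$ span $\nsr(X/Y)$ I would write $\gamma_i=\sum_j b_j^{(i)}[G_j]$ and put $D_i\coloneqq\sum_j b_j^{(i)}G_j$ (no positivity is needed there). This yields (1)--(4); altering finitely many initial terms does not affect the convergence $D_i\to D$.

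\textbf{The klt statement.} Assume $X$ is $\qq$-factorial and $(X,0)$ is klt, so all of $D$, $D_i$ are $\rr$-Cartier. Fix a log resolution $\pi\colon X'\rar X$ of $(X,\Theta)$ and let $F_1,\dots,F_r$ be its exceptional prime divisors together with the strict transforms of the components of $\Theta$. For $\Delta\geq 0$ supported on $\Theta$, writing $K_{X'}=\pi^*(K_X+\Delta)+\sum_s a(F_s;X,\Delta)F_s$, the pair $(X,\Delta)$ is klt exactly when $a(F_s;X,\Delta)>-1$ for every $s$, and each $a(F_s;X,\Delta)$ is an affine function of the coefficients of $\Delta$. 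Since $a(F_s;X,0)>-1$ for all $s$, for $0<\epsilon\ll 1$ the coefficients of $\epsilon D$ are $<1$ and $a(F_s;X,\epsilon D)=a(F_s;X,0)-\epsilon\,\mult_{F_s}(\pi^*D)>-1$, hence $(X,\epsilon D)$ is klt; and because $\mult_{F_s}(\pi^*D_i)\to\mult_{F_s}(\pi^*D)$ and the coefficients of $D_i$ converge to those of $D$, the same estimate (after shrinking $\epsilon$ once) shows $(X,\epsilon D_i)$ is klt for all $i\gg 0$.

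\textbf{Expected main obstacle.} The real work is the second step: producing a support $\Theta$ \emph{independent of $i$} while simultaneously keeping $D_i\to D$ and $D_i\geq 0$ for $i\gg 0$. The simplicial-cone device is precisely what reconciles these, converting ``$\gamma_i$ near the $f$-ample class $[A]$'' into ``$\gamma_i-[E]$ has strictly positive, continuously varying coordinates against the fixed finite family $G_1,\dots,G_\rho$.'' The only delicate input in the reduction is that a class that is big over $Y$ is effective over $Y$, which is what lets $E$ exist. The third step is then a routine openness-of-klt argument; the single point to verify is that one log resolution of $\Theta$ controls the discrepancies of every $D_i$ at once.
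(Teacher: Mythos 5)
Your proof is correct and takes essentially the same approach as the paper's: both peel off a fixed effective representative $E$ (the paper's $G$) of $\gamma$ minus a small relatively ample class and express that ample class, and the nearby $\gamma_i-[E]$, as nonnegative combinations of a fixed basis of ample effective divisors. The klt step is phrased slightly differently — you track discrepancies on a single log resolution of $(X,\Theta)$, while the paper shows $(X,\epsilon(G+\sum_j 2H_j))$ is klt for one small $\epsilon$ and invokes monotonicity — but both arguments in fact establish $(X,\epsilon D_i)$ klt for $i\gg 0$, which is presumably what the displayed statement intends.
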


\begin{proof}
Since $f$ is projective and $\nsr(X/Y)$ is finite-dimensional, we may choose a basis $[H_1],\ldots,[H_k]$ for $\nsr(X/Y)$ such that each $H_j$ is ample and effective.
Since bigness is an open condition, up to rescaling, we may assume that $\gamma-\sum_{j=1}^k [H_j]$ is big over $Y$.
Thus, since $\gamma-\sum_{j=1}^k [H_j]$ is big over $Y$, there exists a divisor $G \geq 0$ representing $\gamma-\sum_{j=1}^k [H_j]$ that is big on $X$.
We set $D \coloneqq G + \sum \subs j=1.^k H_j$.
Since $\gamma_i \to \gamma$ and $[H_1],\ldots,[H_k]$ constitute a basis for $\nsr(X/Y)$, then, for $i \gg 0$, $\gamma_i$ is contained in
\begin{align*}
\left \{ 
\gamma+\sum \subs j=1.^k a_j[H_j]
\ \middle\vert \ 
\forall j , -1\leq a_j \leq 1 
\right\}.
\end{align*}
Thus, for $i \gg 0$, $\gamma_i$ can be represented by the divisor $D_i \coloneqq G+\sum \subs j=1. ^k \lambda_{i,j} H_j$, where $0 \leq \lambda_{i,j} \leq 2$ and $\lambda_{i,j} \to 1$ for all $j$.

We now prove the last claim. 
Since we can assume that $X$ is $\mathbb Q$-factorial and $(X, 0)$ is klt, then there exists $\epsilon>0$ such that
$(X, \epsilon (G+\sum_{j=1}^k 2H_j))$
is klt.
In particular, for any of choice of real numbers 
$c_j \in [0, 2]$, 
$j=1, \dots, k$, then also 
$(X, \epsilon (G+\sum_{j=1}^k c_jH_j))$
is klt.
Taking $c_j=1$ $\forall j$ and taking $c_j=\lambda_
{i, j}$, for all $i\gg 0$ show that the last claim holds.
\end{proof}

\begin{lemma} \label{SES vertical divisors}
Let $(X,\Delta)$ be a klt pair and $f \colon X \rar Y$ be a contraction of normal varieties. 
Assume that $\K X.+\Delta \sim \subs \qq, f. 0$ and that $f$ factors as $X \xrightarrow{g} Y' \xrightarrow{\pi} Y$, where $\pi$ is a birational morphism of normal varieties.
Then, there is a short exact sequence
\begin{align*}
0 \rar \nsr(Y'/Y) \rar V(X/Y) \rar V(X/Y')  \rar 0.
\end{align*}
\end{lemma}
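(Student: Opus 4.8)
The plan is to exhibit explicit maps realizing the three terms, to check that the outer two are injective and surjective by essentially formal arguments about vertical divisors, and to reduce the whole statement to one nontrivial point — that a vertical divisor numerically trivial over $Y'$ is linearly trivial over $Y'$ — which is where the assumption $K_X+\Delta\sim_{\qq,f}0$ is used. Note first that, since $\pi$ is a morphism, $K_X+\Delta\sim_{\qq,f}0$ gives $K_X+\Delta\sim_{\qq,g}0$, so $g\colon X\to Y'$ is itself a klt-trivial fibration.

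For the left map I would take pull-back $g^{\ast}\colon\nsr(Y'/Y)\to\nsr(X/Y)$. It is well defined because $g_{\ast}$ sends an $f$-contracted curve to a $\pi$-contracted curve or to $0$; its image lies in $V(X/Y)$ because, $\pi$ being birational, any prime divisor $P\subset X$ whose image $g(P)$ is a prime divisor $D'\subset Y'$ has $f(P)=\pi(D')\subsetneq Y$, so $g^{\ast}$ of a Cartier divisor on $Y'$ is $f$-vertical; and it is injective because if $g^{\ast}\beta\equiv_{Y}0$ then for any $\pi$-contracted curve $C\subset Y'$, picking a curve $C'\subset X$ dominating $C$ under $g$ — which is $f$-contracted since $f(C')=\pi(C)$ is a point — the projection formula forces $\beta\cdot C=0$, hence $\beta=0$ in $\nsr(Y'/Y)$. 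For the right map, since every $g$-contracted curve is $f$-contracted, the identity on $\mathbb R$-Cartier divisors descends to a surjection $\nsr(X/Y)\twoheadrightarrow\nsr(X/Y')$; it restricts to a surjection $V(X/Y)\to V(X/Y')$ because, $\pi$ being birational, a prime divisor $P\subset X$ is $f$-vertical if and only if it is $g$-vertical ($g(P)=Y'$ would force $f(P)=\pi(Y')=Y$, while $g(P)\subsetneq Y'$ forces $f(P)=\pi(g(P))\subsetneq Y$), so the two vertical subspaces are the images of the same space of $\mathbb R$-Cartier divisors under the two quotient maps. Finally $g^{\ast}\beta$ is numerically trivial over $Y'$, so the composite $\nsr(Y'/Y)\to V(X/Y)\to V(X/Y')$ vanishes; together with injectivity of the first map and surjectivity of the second, the statement becomes equivalent to the inclusion $\ker\big(V(X/Y)\to V(X/Y')\big)\subseteq g^{\ast}\nsr(Y'/Y)$.

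To prove this inclusion I would take an $\mathbb R$-Cartier divisor $D$ on $X$ with $f$-vertical support and $D\equiv_{Y'}0$, and invoke the standard fact that a vertical $\mathbb R$-Cartier divisor numerically trivial over the base of a klt-trivial fibration is $\mathbb R$-linearly equivalent to the pull-back of an $\mathbb R$-Cartier divisor on the base. Concretely, one reduces to $\qq$-divisors (the kernel is cut out by rational equations, hence spanned by $\qq$-classes) and then, working over the generic point of each prime divisor $B\subset Y'$ — base-changing to $\Spec\mathcal{O}_{Y',B}$ and using the negativity of the intersection form on the fibres of a klt-trivial fibration — shows that $D$ restricts there to a rational multiple of $g^{\ast}B$, and patches these contributions to $D\sim_{\qq}g^{\ast}(\sum_{B}\lambda_{B}B)$. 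Granting this, $D\sim_{\rr}g^{\ast}L'$ for an $\mathbb R$-Cartier divisor $L'$ on $Y'$ (which is $\mathbb R$-Cartier because $D$ is and $g$ is a contraction), so $[D]=g^{\ast}[L']$ with $[L']\in\nsr(Y'/Y)$, which would complete the argument.

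I expect the main obstacle to be precisely this last fact: everything else is bookkeeping with vertical divisors and the birationality of $\pi$. The delicate point in its proof is the behaviour of $D$ over loci of codimension $\geq 2$ in $Y'$ and over positive-dimensional fibres of $g$ — ruling out ``stray'' components there and pinning down the right multiples is exactly what $K_{X}+\Delta\sim_{\qq,g}0$ controls, through the mechanism underlying Kodaira's canonical bundle formula. A secondary point to watch, when $X$ is not $\qq$-factorial, is the distinction between Weil and $\mathbb R$-Cartier representatives of the classes spanning $V(X/Y)$, which should be settled (for instance via Lemma~\ref{lemma divisors}) before running the argument.
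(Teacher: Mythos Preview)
Your treatment of the two outer maps --- $g^{\ast}$ on the left, the tautological quotient on the right --- and of the vanishing of the composite is exactly what the paper does, and your observation that $f$-vertical and $g$-vertical coincide because $\pi$ is birational is the right reason for surjectivity on the right.

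The divergence is at the one nontrivial point, the inclusion $\ker\big(V(X/Y)\to V(X/Y')\big)\subset g^{\ast}\nsr(Y'/Y)$. You propose to argue ``locally over $Y'$'': over each prime divisor $B\subset Y'$ use negativity on the fibre to identify $D$ with a multiple of $g^{\ast}B$, then worry separately about components over $\operatorname{codim}\geq 2$. This can be made to work --- after subtracting $g^{\ast}L'$ the remainder is $g$-very exceptional and numerically trivial over $Y'$, so Birkar's negativity lemma for very exceptional divisors forces it to vanish --- but your sketch stops short of saying this, and the canonical-bundle-formula allusion does not by itself close the $\operatorname{codim}\geq 2$ gap. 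The paper takes a shorter and more uniform route: after adding $f^{\ast}A$ with $A$ ample on $Y$ it assumes $D\geq 0$, notes that $(X,\Delta+\epsilon D)$ is klt with $K_X+\Delta+\epsilon D\equiv_{Y'}0$, and then cites abundance for klt pairs with vertical boundary over the base (\cite{HX13}*{Theorem 1.1} and \cite{Lai11}*{Proposition 2.4}) to conclude that $D$ is semi-ample over $Y'$, hence $D\sim_{\qq,Y'}0$. Your approach trades the abundance input for a more elementary but two-step argument (codimension one, then very exceptional); the paper's buys a one-line finish at the cost of the heavier citation.
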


\begin{proof}
The morphism $V(X/Y) \rar V(X/Y')$ is naturally induced by the equivalence relations of numerical equivalence over $Y$ and $Y'$, respectively, as any curve $C \subset X$ that is vertical over $Y'$ is also vertical over $Y$.
Since a divisor on $X$ is vertical over $Y$ if and only if it is vertical over $Y'$, this morphism is clearly surjective.

The morphism $\nsr(Y'/Y) \rar V(X/Y)$ is induced by $g^\ast $.
By considering curves in $X$ that are vertical for $\pi \circ g$ but not for $g$, and using the projection formula, it follows that $\nsr(Y'/Y) \rar V(X/Y)$ is injective.
It also follows immediately that $g^\ast \nsr(Y'/Y)$ is contained in $\ker (V(X/Y) \rar V(X/Y'))$.

Now, let $D$ be a divisor such that $[D] \in \ker (V(X/Y) \rar V(X/Y'))$:
to conclude, we need to show that $[D] \in g^\ast  \nsr(Y'/Y)$.
By the definition of the relative N\'{e}ron--Severi group, without loss of generality, we may assume that $D$ is a $\qq$-divisor vertical over both $Y$ and $Y'$.
Possibly adding the pull-back of a sufficiently ample divisor on $Y$, we may assume that $D$ is effective.
Hence, for $0 < \epsilon \ll 1$, the log pair $(X,\Delta + \epsilon D)$ is klt and $\K X.+ \Delta + \epsilon D \equiv_{g} 0$.
Thus, $X$ is a minimal model for $(X,\Delta +\epsilon D)$ over $Y'$.
Since $D$ is vertical and $\K X.+\Delta \sim \subs \qq,g. 0$, it follows from \cite{HX13}*{Theorem 1.1} and \cite{Lai11}*{Proposition 2.4}, that $X$ is a good minimal model for $(X,\Delta + \epsilon D)$:
in particular, $D$ is semi-ample over $Y'$, and thus, $D \sim_{\mathbb Q, g} 0$.
Hence, $[D] \in g^\ast \nsr(Y'/Y)$.
\end{proof}

\subsection{Boundedness} We now recall the notion of boundedness for a set of log pairs, and we introduce a suitable notion of boundedness for fibrations.
First, we recall the notion of log pair.
A \emph{log pair} $(X,B)$ is the datum of a normal quasi-projective variety and an $\rr$-divisor $B$, called \emph{boundary}, such that $\K X. + B$ is $\rr$-Cartier and $0 \leq B \leq \Supp (B)$.

\begin{definition}
\label{def.bounded.set.pairs}
Let $\mathfrak{D}$ be a set of projective log pairs.
\begin{enumerate}
    \item 
We say that $\mathfrak{D}$ is \emph{log bounded} if there exist a log pair $(\mathcal{X},\mathcal{B})$ and a projective morphism $\pi \colon \mathcal{X} \rar T$, where $T$ is of finite type, such that for any log pair $(X,B) \in \mathfrak{D}$ there exist a closed point $t \in T$ and an isomorphism $f_t \colon \mathcal{X}_t \rar X$ such that $(f_{t})_{\ast} \mathcal{B}_t= B$.
    \item 
We say that $\mathfrak{D}$ is \emph{log birationally bounded} if there exist a log pair $(\mathcal{X},\mathcal{B})$ and a projective morphism $\pi \colon \mathcal{X} \rar T$, where $T$ is of finite type, such that for any log pair $(X,B) \in \mathfrak{D}$ there exist a closed point $t \in T$ and a birational map $f_t \colon \mathcal{X}_t \dashrightarrow X$ such that $\Supp(\mathcal B _t)= \Supp((f_{t}^{-1})_\ast \Supp(B) + E)$, where $E$ is the exceptional divisor of $f_t$.
    \item 
If $\mathfrak D$ is log birationally bounded and for any log pair $(X, B) \in \mathfrak D$ the map $f_t$ in (2) is an isomorphism in codimension 1, then we say that $\mathfrak{D}$ is \emph{log bounded in codimension 1}.
\end{enumerate}
When a set 
$\mathfrak D$ 
of log pairs is actually a set of varieties, i.e., for any pair 
$(X, \Delta) \in \mathfrak D$,
the condition 
$\Delta=0$ 
is satisfied, then we shall say that 
$\mathfrak{D}$ 
is \emph{bounded}, (resp. \emph{birationally bounded}, \emph{bounded in codimension 1}) rather than  \emph{log bounded}, (resp. \emph{log birationally bounded}, \emph{log bounded in codimension 1}).
\end{definition}

\begin{definition} \label{def bdd fibrations}
Let $\mathfrak{F}$ be a set of triples $((X,B),(Y,D),\phi)$, where $(X,B)$ and $(Y,D)$ are projective log pairs and $\phi \colon X \rar Y$ is a contraction.
\begin{enumerate}
    \item
We say that $\mathfrak{F}$ is \emph{log bounded} if there exist log pairs $(\mathcal{X},\mathcal{B})$, $(\mathcal{Y},\mathcal{D})$, a variety of finite type $T$, and a commutative diagram of projective morphisms
\begin{align}
\label{diag.bound.fibr}
\xymatrix{
\mathcal X \ar[rr]^{\sigma} \ar[rd]^{\pi} 
& &
\mathcal Y \ar[dl]_{\rho}
\\
& T &
}
\end{align}
such that for any $((X,B),(Y,D),\phi) \in \mathfrak{F}$, there is a closed point $t \in T$ together with morphisms $f_t \colon \mathcal{X}_t \rar X$ and $g_t \colon \mathcal{Y}_t \rar Y$ inducing a commutative diagram
\begin{align*}
\xymatrix{
\mathcal X_t \ar[rr]^{f_t} \ar[d]_{\sigma\vert_{\mathcal X_t}}
& &
X \ar[d]^\phi 
\\
\mathcal Y_t \ar[rr]^{g_t}
& &
Y
}
\end{align*}
such that $(X,B) \cong (\mathcal{X}_t,\mathcal{B}_t)$  and $(Y,D) \cong (\mathcal{Y}_t,\mathcal{D}_t)$.
\item 
We say that $\mathfrak F$ is \emph{log birationally bounded} if there exist log pairs $(\mathcal{X},\mathcal{B})$, $(\mathcal{Y},\mathcal{D})$, a variety of finite type $T$, the same commutative diagram as in~\eqref{diag.bound.fibr} holds and for any $((X,B), (Y,D),\phi) \in \mathfrak{F}$, there is a closed point $t \in T$ together with birational  morphisms $f_t \colon \mathcal{X}_t \dashrightarrow X$ and $g_t \colon \mathcal{Y}_t \dashrightarrow Y$ inducing a commutative diagram
\begin{align*}
\xymatrix{
\mathcal X_t \ar@{-->}[rr]^{f_t} \ar[d]_{\sigma\vert_{\mathcal X_t}}
& &
X \ar[d]^\phi 
\\
\mathcal Y_t \ar@{-->}[rr]^{g_t}
& &
Y
}
\end{align*}
such that $\Supp(\mathcal B _t)$ contains the strict transform of $\Supp(B)$ and all the $f_t$ exceptional divisors) and  $\Supp(\mathcal D _t)$ contains the strict transform of $\Supp(D)$ and all the $g_t$ exceptional divisors.
\item 
If $\mathfrak F$ is log birationally bounded and the maps $f_t$, $g_t$ in (2) are isomorphisms in codimension 1, we say that $\mathfrak{F}$ is \emph{log bounded in codimension 1}.
\end{enumerate}
\end{definition}

When in a set 
$\mathfrak F$ 
of triples, for any triple 
$((X, B), (Y, D), \phi) \in \mathfrak F$, 
the condition 
$B=0=D$ 
is satisfied, then we say that 
$\mathfrak F$ 
is \emph{bounded}, (resp. \emph{birationally bounded}, \emph{bounded in codimension 1}) rather than  \emph{log bounded}, (resp. \emph{log birationally bounded}, \emph{log bounded in codimension 1}).

\subsection{Crepant birational models}
The following statement is known to the experts and follows from \cite{BCHM}*{Theorem E}.
For the reader's convenience, we include a short proof.

\begin{lemma}[Finiteness of crepant models] \label{finiteness models}
Let $(Y,\Delta )$ be a klt pair with $Y$ quasi-projective.
Let us consider the set $\mathcal M$ of all  $\nu \colon \widehat Y\to Y$ projective birational morphisms of normal varieties such that if $K_{\widehat Y}+\Delta _{\widehat Y}=\nu ^\ast (K_Y+\Delta _Y)$, then $\Delta _{\widehat Y}\geq 0$.
Then $\mathcal M$ is finite.
\end{lemma}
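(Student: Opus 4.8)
The plan is to stratify $\mathcal M$ by which divisors are extracted, to bound the pool of possible such divisors using a terminalization, and then to invoke finiteness of minimal models over the base $Y$ from \cite{BCHM}. First I would record what the hypothesis means: if $K_{\widehat Y}+\Delta_{\widehat Y}=\nu^{\ast}(K_Y+\Delta_Y)$, then the coefficient in $\Delta_{\widehat Y}$ of a $\nu$-exceptional prime divisor $E$ is $-a(E;Y,\Delta_Y)$, so $\nu\in\mathcal M$ if and only if $\nu$ extracts only divisorial valuations over $Y$ of discrepancy $\le 0$ with respect to $(Y,\Delta_Y)$. Thus each $\nu\in\mathcal M$ is determined by the subset of such valuations it extracts together with its small birational geometry over $Y$.

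To bound the pool of extractable divisors I would use that $(Y,\Delta_Y)$ is klt to produce, by \cite{BCHM}, a $\mathbb Q$-factorial terminalization $\mu\colon Z\to Y$: a $\mathbb Q$-factorial projective birational morphism with $K_Z+\Delta_Z=\mu^{\ast}(K_Y+\Delta_Y)$, $\Delta_Z\ge 0$, and $(Z,\Delta_Z)$ terminal. Since $(Z,\Delta_Z)$ is terminal, no exceptional divisorial valuation over $Y$ of discrepancy $\le 0$ can have nontrivial center on $Z$, so the set $\mathcal E$ of all such valuations equals the finite set of $\mu$-exceptional prime divisors; in particular $\mu\in\mathcal M$ and it extracts all of $\mathcal E$, while every $\nu\in\mathcal M$ extracts a subset $\mathcal E\setminus\Sigma_{\nu}$ for some $\Sigma_{\nu}\subseteq\mathcal E$.

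Given $\nu\colon\widehat Y\to Y$ in $\mathcal M$, put $E_{\Sigma}:=\sum_{E\in\Sigma_{\nu}}\widetilde E$ on $Z$. The induced map $\phi:=\nu^{-1}\circ\mu\colon Z\dashrightarrow\widehat Y$ over $Y$ extracts nothing, since every prime divisor of $\widehat Y$ is already a divisor on $Z$, so $\phi$ is a birational contraction over $Y$ contracting exactly $\Sigma_{\nu}$. For $0<t\ll 1$ the pair $(Z,\Delta_Z+tE_{\Sigma})$ is klt; because $K_Z+\Delta_Z\equiv 0$ over $Y$ we get $K_{\widehat Y}+\phi_{\ast}(\Delta_Z+tE_{\Sigma})=K_{\widehat Y}+\Delta_{\widehat Y}\equiv 0$ over $Y$, and for every $\phi$-exceptional $E$ (so $E\in\Sigma_{\nu}$) the discrepancy strictly increases: $a(E;Z,\Delta_Z+tE_{\Sigma})=a(E;Y,\Delta_Y)-t<a(E;Y,\Delta_Y)=a(E;\widehat Y,\Delta_{\widehat Y})$. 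Hence $\widehat Y$, via $\phi$, is a minimal model of $(Z,\Delta_Z+tE_{\Sigma})$ over $Y$. If $\widehat Y$ is not $\mathbb Q$-factorial I would first replace it by its small $\mathbb Q$-factorialization, still an element of $\mathcal M$; since a fixed $\mathbb Q$-factorial model has only finitely many small contractions over $Y$ (again by \cite{BCHM}), it suffices to bound the $\mathbb Q$-factorial members of $\mathcal M$.

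Finally, since $\mu$ is birational every divisor on $Z$ is big over $Y$, so for each of the finitely many subsets $\Sigma\subseteq\mathcal E$ and any small $t>0$ the klt pair $(Z,\Delta_Z+tE_{\Sigma})$ has only finitely many minimal models over $Y$ by \cite{BCHM}; as the conditions singling out these models (namely that $\phi$ contract exactly $\Sigma$, the nefness over $Y$ being automatic) do not involve $t$, the finite set obtained does not depend on $t$. Combined with the previous paragraph, every $\mathbb Q$-factorial element of $\mathcal M$ occurs among these finitely many models, so $\mathcal M$ is finite. The crux is this last step: one must invoke the correct finiteness-of-minimal-models statement from \cite{BCHM} and check that the birationality of $\mu$ supplies the bigness-over-$Y$ hypothesis it needs; the finiteness of $\mathcal E$ via a terminalization and the reduction to the $\mathbb Q$-factorial case are routine consequences of the minimal model program.
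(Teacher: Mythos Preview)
Your argument is correct and follows the same underlying scheme as the paper: pass to a $\mathbb Q$-factorial terminalization and then invoke finiteness of models from \cite{BCHM}. The paper, however, is considerably more direct. It observes that for any $\nu\colon\widehat Y\to Y$ in $\mathcal M$, the pair $(\widehat Y,\Delta_{\widehat Y})$ is automatically a \emph{weak log canonical model} of the single pair $(Y',\Delta')$ over $Y$ (here $K_{\widehat Y}+\Delta_{\widehat Y}\equiv_Y 0$ is nef and all discrepancies agree, so the non-strict inequality is trivially satisfied), and then applies \cite{BCHM}*{Theorem~E} once to conclude. By contrast, you stratify over the finitely many subsets $\Sigma\subseteq\mathcal E$, perturb by $tE_{\Sigma}$ to force \emph{strict} discrepancy inequalities, reduce separately to the $\mathbb Q$-factorial case, and then appeal to finiteness of (log terminal) minimal models for each stratum. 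All of this extra work is precisely what the notion of weak log canonical model is designed to absorb: it allows equality in the discrepancy comparison and does not require $\mathbb Q$-factoriality of the target. Your route is a valid workaround if one only has finiteness of $\mathbb Q$-factorial log terminal models available, but once Theorem~E is on the table the stratification and perturbation are unnecessary. Incidentally, your justification that a fixed $\mathbb Q$-factorial model has only finitely many small contractions over $Y$ is itself most cleanly seen via the same Theorem~E, which somewhat undercuts the point of avoiding it earlier.
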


\begin{proof}
Let $A_Y$ be an ample effective divisor on $Y$.
For any model 
$\nu \colon \widehat Y \to Y$
in 
$\mathcal M$, 
we set 
$A_{\widehat Y} := \nu^\ast A_Y$.

Let 
$(Y',\Delta')$ 
be a 
$\qq$-factorial 
terminal model of
$(Y,\Delta)$ 
realized by a birational contraction 
$r \colon Y' \to Y$
such that 
$K_{Y'}+\Delta'=r^\ast(K_Y+\Delta)$-- such model exists by 
\cite{BCHM}*{Corollary 1.4.3}.
By the construction in {\it loc.cit.}, for any model
$\nu \colon \widehat Y \to Y$
in 
$\mathcal M$, 
there exists a birational contraction 
$r_{\nu} \colon Y' \dashrightarrow \widehat Y$
such that 
\begin{align*}
\xymatrix{
Y'
\ar@{-->}[r]^{r_{\nu}}
\ar@/^{1.5pc}/[rr]^{r}
&
\widehat{Y}
\ar[r]^{\nu}
&
Y
},
\end{align*}
and 
$r_{\nu}^\ast(K_{\widehat Y}+\Delta_{\widehat Y})= K_{Y'}+\Delta'$, 
and also
$r_{\nu}^\ast A_{\widehat Y} = r^\ast A_Y$.
Here the pull-back under the birational map 
$r_\nu$ 
is well-defined, since 
$r_\nu$ 
is a birational contraction, i.e., 
$r_\nu$ 
does not contract any divisor.

Let us write 
$A_{Y'}
\sim_{\mathbb Q} 
H_{Y'} + E_{Y'}$, 
where 
$H_{Y'}$
is a
$r$-ample
and effective
$\mathbb Q$-divisor, 
while
$E_{Y'}$ 
is effective.
Since 
$(Y', \Delta')$
is klt by construction, then there exists a positive rational number 
$0< \epsilon \ll 1$
such that
$(Y', \Delta'+\epsilon(H_{Y'} + E_{Y'}))$
is still klt.
Then, given any pair
$({\widehat Y},\Delta _{\widehat Y})$ 
with 
$\nu \colon \widehat Y\to Y$ 
which belongs the set
$\mathcal{M}$ 
defined in the statement of the lemma, then 
\begin{align}
\label{eqn:weak.lc.model}
r_{\nu}^\ast (K_{\widehat Y}+ \Delta_{\widehat Y} + \epsilon A_{\widehat Y}) = 
K_{Y'}+\Delta'+\epsilon A_{Y'}
\sim_{\mathbb Q}
K_{Y'}+\Delta'+\epsilon(H_{Y'} + E_{Y'}).
\end{align}
In particular, 
\eqref{eqn:weak.lc.model}
implies that 
$(\widehat Y, \Delta_{\widehat Y} + \epsilon (r_{\nu, \ast}H_{Y'} + r_{\nu, \ast}E_{Y'}))$
is a weak log canonical model of 
$(Y', \Delta'+\epsilon(H_{Y'} + E_{Y'}))$
relatively over 
$Y$,
in the sense of 
\cite{BCHM}*{Definition 3.6.6}.
Then
\cite{BCHM}*{Theorem E}
implies the finiteness of all 
possible distinct weak log canonical models of 
$(Y', \Delta'+\epsilon(H_{Y'} + E_{Y'}))$, 
which in turn also proves the finiteness of the models contained in 
$\mathcal M$.
\end{proof}

\begin{proposition} 
\label{prop models}
Let $X$ be a projective klt variety admitting an elliptic fibration $f \colon X \rar Y$.
Assume that $\K X. \sim \subs \qq,Y. 0$.
There exist finitely many birational morphisms $h_i \colon Y_i \to Y$, $i=1, \dots, n$ such that the following property holds:
\\
given a commutative diagram
\begin{align}
\label{diag.finite.models}
\xymatrix{
X \ar@{-->}[rr]^{\phi} \ar[dr]^f 
& & 
X' \ar[dl]^{f'} \ar[dr]^g & 
\\
 & Y & & Y' \ar[ll]^h
}
\end{align}
where $\phi$ is an isomorphism in codimension 1 over $Y$, and $h$ is birational, then there exists $1\leq i \leq n$ such that $Y'=Y_i$ and $h=h_i$.
\end{proposition}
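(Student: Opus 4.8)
The plan is to push the data of the diagram down to the base $Y$ via the canonical bundle formula, identify each admissible $Y'$ with a crepant birational model of a fixed klt pair on $Y$, bound which divisors over $Y$ can ever be extracted, and conclude by finiteness of crepant models.

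First I would apply the canonical bundle formula to $f$. Writing $K_X\sim_\qq f^\ast L_Y$ for a $\qq$-Cartier $L_Y$ on $Y$, since $(X,0)$ is klt this produces a klt pair $(Y,B_Y)$ and the moduli $\qq$-b-divisor $\mathbf M$, which is b-nef (even b-semiample, since $f$ is elliptic), with $K_Y+B_Y+M_Y\sim_\qq L_Y$ where $M_Y=\mathbf M_Y$; for every birational $\widehat Y\to Y$ one has the discriminant b-divisor $\mathbf B$ and $K_{\widehat Y}+\mathbf B_{\widehat Y}+\mathbf M_{\widehat Y}\sim_\qq(\text{pullback of }L_Y)$, with $\mathbf B$ and $\mathbf M$ determined by the fibration in codimension one over the base. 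Now fix a diagram as in~\eqref{diag.finite.models}. As $\phi$ is an isomorphism in codimension one over $Y$, $K_{X'}=\phi_\ast K_X\sim_\qq f'^\ast L_Y$, and on a common resolution the difference $p^\ast K_X-q^\ast K_{X'}$ is exceptional and numerically trivial over $Y$, hence zero by the negativity lemma; so $\phi$ is crepant and $X'$ is klt and $\qq$-Gorenstein. Since $h$ is birational, $K_{X'}\sim_{\qq,Y'}0$, so the canonical bundle formula applies to $g$, and because $\phi$ and $h$ do not alter the fibration in codimension one it yields the \emph{same} $\mathbf B,\mathbf M$ and $K_{Y'}+\mathbf B_{Y'}+\mathbf M_{Y'}\sim_\qq h^\ast L_Y$. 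Consequently the difference
\begin{equation*}
\bigl(K_{Y'}+\mathbf B_{Y'}+\mathbf M_{Y'}\bigr)-h^\ast\bigl(K_Y+B_Y+M_Y\bigr)
\end{equation*}
is $\qq$-Cartier, $h$-exceptional, and numerically trivial over $Y$, so by the negativity lemma it vanishes: $(Y',\mathbf B_{Y'})$ with nef part $\mathbf M$ is a generalized crepant model of the generalized klt pair $(Y,B_Y;\mathbf M)$ over $Y$, and $(Y',\mathbf B_{Y'})$ is klt.

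Next I would bound the exceptional valuations. By upper semicontinuity of fibre dimension, the locus $\Sigma\subsetneq X$ where $f$ has fibres of dimension $\ge 2$ is a proper closed subset, so its components of dimension $\dim X-1$ --- finitely many, say $P_1,\dots,P_m$ --- are exactly the prime divisors $P$ of $X$ with $\codim_Y f(P)\ge 2$. If $F$ is an $h$-exceptional prime divisor of some $Y'$ as above, then, $g$ being a contraction, some prime component $D$ of $g^{-1}(F)$ dominates $F$, so $f'(D)=h(F)$ has codimension $\ge 2$ in $Y$; hence the strict transform of $D$ on $X$ is one of the $P_i$, and $\ord_F$ is the divisorial valuation over $Y$ underlying $\ord_{P_i}\vert_{K(Y)}$. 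Therefore the set $\mathcal V$ of divisorial valuations over $Y$ that can occur as an $h$-exceptional divisor in some diagram~\eqref{diag.finite.models} is finite. To finish, fix a log resolution $\mu\colon W\to Y$ of $(Y,B_Y)$ extracting all of $\mathcal V$: for any admissible $Y'$ the prime divisors in $\mathrm{Ex}(h)$ realize valuations in $\mathcal V$ while the remaining prime divisors of $Y'$ are strict transforms from $Y$, so $W\dashrightarrow Y'$ is a birational contraction over $Y$; by the vanishing above $K_{Y'}+\mathbf B_{Y'}+\mathbf M_{Y'}$ is nef over $Y$, and the coefficients of $\mathbf B_{Y'}$ along $\mathrm{Ex}(h)$ are the fixed coefficients of $\mathbf B$. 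Thus each admissible $Y'$ is a generalized weak log canonical model over $Y$ of one of finitely many generalized klt pairs supported on $W$ (indexed by the extracted subset of $\mathcal V$, with fixed boundary coefficients), and the finiteness of generalized weak log canonical models --- the generalized-pair analogue of Lemma~\ref{finiteness models}, proved in the same way via a $\qq$-factorial terminalization and \cite{BCHM}*{Theorem E} --- yields finitely many $Y'$; these, with their maps to $Y$, are the required $h_1,\dots,h_n$.

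The hard part is this last step, and specifically that the moduli part $M_Y$ need not be a boundary: its coefficients can exceed $1$ (for instance $M_Y$ has coefficient $n/12$ along a divisor over which $f$ has a fibre of Kodaira type $I_n$, so $n\ge 12$ already breaks the bound), so one cannot simply absorb $\mathbf M$ into $B_Y$ and quote Lemma~\ref{finiteness models} verbatim --- carrying $\mathbf M$ along as a generalized nef part is the clean remedy. Alternatively one can try to exploit the b-semiampleness of $\mathbf M$ to replace $M_Y$ by an honest $\Theta\ge 0$ with $\Theta\sim_\qq M_Y$, $(Y,B_Y+\Theta)$ klt, and the log pullback of $B_Y+\Theta$ to every admissible $Y'$ effective, and then apply Lemma~\ref{finiteness models} to $(Y,B_Y+\Theta)$ directly; arranging this near the finitely many centres of $\mathcal V$ takes some care.
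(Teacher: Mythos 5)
Your proof is correct, and it takes a genuinely different route from the paper's. Both arguments begin with the canonical bundle formula and the observation that $\phi$ is crepant over $Y$, so that each admissible $Y'$ carries a generalized crepant model $(Y',\mathbf B_{Y'};\mathbf M)$ of $(Y,B_Y;\mathbf M)$ with effective boundary. The paper then pursues precisely the alternative you sketch at the end: using the b-semiampleness of the moduli part, it picks a general member $N''\in|12\mathbf M_{Y''}|$ on a resolution $Y''$ of a terminalization where $\mathbf M$ descends, pushes it down to an effective $N\sim_\qq M_Y$ on $Y$ with $(Y,B_Y+N)$ klt, and applies Lemma~\ref{finiteness models} verbatim to this honest klt pair; the genericity of $N''$ (Bertini, performed once on a single high model) handles all admissible $Y'$ simultaneously, so there is no need for the extra care you anticipate near the centres of $\mathcal V$. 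You instead carry $\mathbf M$ as a generalized nef part throughout and appeal to a generalized-pair analogue of finiteness of crepant models; that is valid, but it requires a generalized-pair version of \cite{BCHM}*{Theorem E} as an input, which the paper sidesteps. A genuine bonus of your route is the explicit geometric bound on the exceptional valuations in $\mathcal V$ via the codimension-one components of the locus where $f$ has fibres of dimension $\geq 2$; the paper obtains such a bound only implicitly through the terminalization inside Lemma~\ref{finiteness models}, so your observation, which is independent of the canonical bundle formula, is worth recording. (One small imprecision: the $n/12$ attached to an $I_n$ fibre contributes to the global degree of $M_Y$ via $\deg J$ rather than appearing as a local coefficient along the discriminant, but the conclusion you draw from it --- that $M_Y$ cannot naively be absorbed into $B_Y$ --- stands.)
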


The definition of a relative isomorphism of codimension 1 can be found in 
\S~\ref{sect:not.maps}.

\begin{proof}
The canonical bundle formula, see, for example,~\cite{Amb05}, guarantees the existence of a generalized log pair\footnote{For the theory of generalized log pairs and their canonical bundle formula we refer the reader to~\cite{FS.conn}.} $(Y,B_Y+M_Y)$ with generalized klt singularities such that $\K X. \sim f^\ast (\K Y. + B_Y+M_Y)$.
Furthermore, given a commutative diagram as in~\eqref{diag.finite.models},
then
\begin{enumerate} \setcounter{enumi}{-1}
    \item the canonical bundle formula for $g$ provides a generalized pair $(Y',B_{Y'}+M_{Y'})$ such that $h^\ast (K_Y+B_Y+M_Y)=K_{Y'}+B_{Y'}+M_{Y'}$, $B_Y=h_\ast B_{Y'}$, and $M_Y=h_\ast M_{Y'}$;
	\item 
there exists an effective $\mathbb Q$-divisor 
$N \sim_\mathbb{Q} M_Y$ 
independent of 
$Y'$ 
such that, setting
$\Delta_Y\coloneqq B_Y+N$, 
then 
$(Y, \Delta_Y)$ 
is klt; and
	\item
the log pull-back $(Y',\Delta_{Y'})$ of $(Y,\Delta_Y)$ to $Y'$ satisfies $\Delta_{Y'} \geq 0$. 
\end{enumerate}
Item (0) immediately follows from the canonical bundle formula and the definition of generalized pair.
To achieve (1) and (2), we argue as follows.
Consider a resolution $\xymatrix{Y'' \ar[r] & Y_{\rm term} \ar[r] & Y}$ of a terminalization $Y_{\rm term}$ of the generalized pair $(Y, B_Y+M_Y)$ where the trace $M_{Y''}$ of the moduli part on $Y''$ descends.
Since $B_{Y'}$ is effective on any model as in~\eqref{diag.finite.models} and $Y_{\rm term}$ is a generalized terminalization for $(Y,B_Y+M_Y)$, we have that $Y_{\rm term} \drar Y'$ is a birational contraction.

Let $\phi \colon Y'' \rar Y$ denote the induced morphism, and write $\K Y''. + B \subs Y''. + M \subs Y''.=\phi^\ast (K_Y+B_Y+M_Y)$.
Since $(Y,B_Y+M_Y)$ is generalized klt, then $(Y'',B \subs Y''.)$ is sub-klt.
By~\cite{PS09}*{Example~7.16}, $12M_{Y''}$ is globally generated divisor on $Y''$.
By Bertini's theorem, we may choose a general element $12N'' \in \vert 12M_{Y''} \vert$ such that $(Y'',B \subs Y''. + N'')$ is sub-klt.
Then, by construction, $(Y,B_Y+N)$ is a klt pair, where $N$ denotes the push-forward of $N''$ to $Y$.
Then, (1) follows.

By construction, $(Y'',B \subs Y''. + N'')$ is the log pull-back of $(Y,\Delta_Y)$, and $Y'' \drar Y'$ is a rational contraction.
Therefore, the log pull-back $(Y',\Delta_{Y'})$ of $(Y,\Delta_Y)$ coincides with the sub-pair obtained by pushing forward the sub-pair $(Y'',B \subs Y''. + N'')$ via the rational map $Y'' \drar Y'$.
Then, by construction, $\Delta \subs Y'.$ is the sum of $B_{Y'}$, which is effective by (0), and the strict transform of $N''$.
In particular, $\Delta_{Y'}$ is effective, and (2) follows.

In particular, as $(Y,\Delta_Y)$ is klt, it follows from Lemma \ref{finiteness models}, that there are finitely many log pairs $(Y',\Delta \subs Y'.)$ that can arise in the above construction.
\end{proof}

\begin{lemma} 
\label{movable bir 1}
Let $\pi \colon Y' \rar Y$ be a birational contraction, where $Y'$ is $\qq$-factorial.
Assume there exists a boundary $\Delta' \geq 0$ on $Y'$ such that $(Y',\Delta')$ is klt and $\K Y'. + \Delta' \sim_{\qq,\pi} 0$.
Then, $M(Y'/Y)=\overline{M}(Y'/Y)$.
\end{lemma}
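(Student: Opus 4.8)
The plan is to identify $\overline{M}(Y'/Y)$ with a finite union of closed subcones, each of which is already contained in $M(Y'/Y)$; the two cones must then coincide. Concretely, I will show $\overline{M}(Y'/Y)=\bigcup_{j}(\chi_j)_\ast\overline{A}(Y'_j/Y)$, where $\chi_j\colon Y'\drar Y'_j$ runs over the finitely many small $\qq$-factorial modifications of $Y'$ over $Y$, and that every relatively nef class on each $Y'_j$ is movable over $Y$. To begin, since $\pi$ is birational every $\qq$-Cartier $\qq$-divisor on $Y'$ is big over $Y$; in particular $\Delta'$ is big over $Y$, so $(Y',\Delta')$ is a $\qq$-factorial klt pair with boundary big over $Y$, to which the relative minimal model program of \cite{BCHM} applies, and, as $K_{Y'}+\Delta'\sim_{\qq,\pi}0$ is nef over $Y$, the variety $Y'$ is itself a minimal model of $(Y',\Delta')$ over $Y$. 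Pushing the boundary forward we get a klt pair $(Y,\Delta_Y)$, $\Delta_Y\coloneqq\pi_\ast\Delta'$, for which $\pi$ is crepant: with a compatible choice of canonical divisors, $K_{Y'}+\Delta'-\pi^\ast(K_Y+\Delta_Y)$ is $\pi$-exceptional and $\sim_{\qq,\pi}0$, hence vanishes by the negativity lemma.

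Now for finiteness and the nef-implies-movable statement. If $\chi_j\colon Y'\drar Y'_j$ is a small $\qq$-factorial modification over $Y$, with induced morphism $\pi_j\colon Y'_j\to Y$ and $\Delta_j\coloneqq(\chi_j)_\ast\Delta'\geq 0$, then transporting the crepant identity along the isomorphism in codimension $1$ gives $K_{Y'_j}+\Delta_j=\pi_j^\ast(K_Y+\Delta_Y)$, so $\pi_j$ is a crepant birational morphism of $(Y,\Delta_Y)$ with effective boundary. By Lemma~\ref{finiteness models} there are only finitely many of these, say $Y'=Y'_1,\dots,Y'_n$; conversely, every minimal model of $(Y',\Delta')$ over $Y$ is joined to $Y'$ by a birational contraction which, since $K_{Y'}+\Delta'$ is itself nef over $Y$, is an isomorphism in codimension $1$ by the negativity lemma, so the $Y'_j$ are exactly the minimal models of $(Y',\Delta')$ over $Y$. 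Next, for a $\pi_j$-nef $\qq$-Cartier divisor $N$ on $Y'_j$, the class $N-(K_{Y'_j}+\Delta_j)\sim_{\qq,\pi_j}N$ is nef and big over $Y$, so $N$ is semiample over $Y$ by the relative base-point-free theorem; hence some $mN$ is relatively globally generated, so $N$ is movable over $Y$, and --- $\chi_j$ being an isomorphism in codimension $1$ --- its strict transform on $Y'$ is movable over $Y$. Since $\overline{A}(Y'_j/Y)$ is rational polyhedral by \cite{BCHM}, it is spanned by finitely many such rational $\pi_j$-nef classes, so $(\chi_j)_\ast\overline{A}(Y'_j/Y)\subseteq M(Y'/Y)$ for every $j$.

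It remains to prove $\overline{M}(Y'/Y)\subseteq\bigcup_{j=1}^n(\chi_j)_\ast\overline{A}(Y'_j/Y)$. As relatively ample classes are movable, $M(Y'/Y)$ is full-dimensional, so it suffices to place each rational class $\alpha$ in the interior of $\overline{M}(Y'/Y)$ into one of these cones and then pass to the closure (the cones $(\chi_j)_\ast\overline{A}(Y'_j/Y)$ being closed). Such an $\alpha$ is big over $Y$, hence --- after scaling --- represented by an effective integral divisor $D'$; for $0<\epsilon\ll 1$ the pair $(Y',\Delta'+\epsilon D')$ is klt with boundary big over $Y$ and $K_{Y'}+\Delta'+\epsilon D'\sim_{\qq,\pi}\epsilon D'$, so the $(K_{Y'}+\Delta'+\epsilon D')$-MMP over $Y$ terminates by \cite{BCHM}. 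Because $[D']=\alpha$ lies in the interior of the movable cone, no divisorial contraction can occur along this MMP (a divisorial contraction would force the transform of $D'$ to be negative on a covering family of curves inside the contracted divisor, impossible for a class that stays in the interior of the movable cone), so the output $Y'_j$ is a small $\qq$-factorial modification of $Y'$ over $Y$, hence one of $Y'_1,\dots,Y'_n$, and the strict transform of $D'$ on $Y'_j$ is $\pi_j$-nef; thus $\alpha\in(\chi_j)_\ast\overline{A}(Y'_j/Y)$. Combining the two inclusions gives $M(Y'/Y)\subseteq\overline{M}(Y'/Y)=\bigcup_{j=1}^n(\chi_j)_\ast\overline{A}(Y'_j/Y)\subseteq M(Y'/Y)$, so $M(Y'/Y)=\overline{M}(Y'/Y)$.

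The step I expect to be the main obstacle is the last one --- passing from ``$\alpha$ is movable over $Y$'' to ``$\alpha$ becomes nef on a small $\qq$-factorial modification of $Y'$ over $Y$'' --- which requires the reduction to an effective representative, the relative termination statement of \cite{BCHM}, and the check that the resulting MMP contracts no divisor. The remaining ingredients (the relative base-point-free theorem, the negativity lemma, and the finiteness of crepant models from Lemma~\ref{finiteness models}) are then bookkeeping.
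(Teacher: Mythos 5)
Your proposal takes a genuinely different route from the paper. The paper argues directly on a single class: given $[D']\in\overline{M}(Y'/Y)$, it picks an effective representative, runs a $(K_{Y'}+\Delta'+\epsilon D')$-MMP over $Y$ (which terminates because every divisor is relatively big over the birational base), and concludes that the nef output is semi-ample, hence in $M$; the crucial point is that the MMP contracts no divisor, which the paper proves by approximating $[D']$ with a sequence $[D'_i]\in M(Y'/Y)$ and observing that the finitely many steps of the $D'$-MMP are, for $i\gg0$, steps of a $D'_i$-MMP, while a $D'_i$-MMP over $Y$ cannot contract a divisor since $D'_i$ is movable. In particular, the paper never needs to enumerate the SQMs of $Y'$ over $Y$ or control the shapes of their nef cones. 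You instead build the chamber decomposition $\overline{M}(Y'/Y)=\bigcup_j(\chi_j)_\ast\overline{A}(Y'_j/Y)$ (via Lemma~\ref{finiteness models}) and try to show each chamber lands in $M(Y'/Y)$, running the MMP only for interior rational classes and then passing to closures.

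There is a gap at the point where you conclude $(\chi_j)_\ast\overline{A}(Y'_j/Y)\subseteq M(Y'/Y)$ from the assertion that $\overline{A}(Y'_j/Y)$ is rational polyhedral ``by \cite{BCHM}.'' That is not a consequence of \cite{BCHM}: since $K_{Y'_j}+\Delta_j\equiv_Y 0$, the relative cone theorem gives no extremal rays (the null face $\overline{NE}(Y'_j/Y)_{K_{Y'_j}+\Delta_j\geq 0}$ is the entire relative cone of curves), and $(Y'_j,\Delta_j)$ is not relatively log Fano. Rational polyhedrality of $\overline{A}(Y'_j/Y)$ in this relative log Calabi--Yau birational setting is close to the relative Kawamata--Morrison cone conjecture and is not known in general. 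Without it, your argument only yields that the \emph{rational} nef classes on $Y'_j$ are movable, hence $\overline{A}(Y'_j/Y)\subseteq\overline{M}(Y'_j/Y)$, which is not the needed inclusion into $M$; ``passing to the closure'' then gives only the tautology $\overline{M}\subseteq\overline{M}$. The issue is repairable without polyhedrality: apply the relative base-point-free theorem in its $\rr$-divisor form to every nef $\rr$-Cartier class $N$ (here $N-(K_{Y'_j}+\Delta_j)\sim_{\rr,Y}N$ is nef and trivially big over $Y$), so $N$ is a positive $\rr$-combination of semi-ample $\qq$-divisors, hence in $M(Y'_j/Y)$, which gives the inclusion you want. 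With that substitution your argument is sound --- your no-divisorial-contraction step for an interior class is essentially the same approximation-by-movable-classes mechanism the paper uses --- but the extra scaffolding (the finitely many SQMs and the chamber decomposition) is more machinery than the paper's direct two-step argument needs.
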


\begin{proof}
Let $[D'] \in \overline{M}(Y'/Y)$.
Since $\pi$ is birational, $D'$ is relatively big and we may assume that $D' \geq 0$.
Since $[D']\in \overline{M}(Y'/Y)$, there exists a sequence of divisors $D'_i \geq 0$ such that $[D'_i] \in M(Y'/Y)$ and $[D'_i] \to [D']$ in $\nsr(Y'/Y)$.
For $0 < \epsilon \ll 1$, the log pair $(Y',\Delta' + \epsilon D')$ is klt, and
\begin{align*}
\K Y'. + \Delta ' + \epsilon D' \sim_{\qq,\pi} \epsilon D'.
\end{align*}
In particular, we may run a $D'$-MMP with scaling over $Y$, and this terminates with a good model $Y'' \rar Y$, since $\pi$ is birational.
Let $D''$ be the push-forward of $D'$ to $Y''$.
Thus, $D''$ is semi-ample over $Y$ and $[D''] \in M(Y''/Y)$.

To conclude, it suffices to show that $Y' \drar Y''$ is an isomorphism in codimension 1.
For this purpose, we observe that, as the MMP $Y' \drar Y''$ has finitely many steps, and since $[D'_i]$ converges to $[D']$ in $\nsr(Y'/Y)$, it follows that $Y' \drar Y''$ is a composition of steps of the $D'_i$-MMP over $Y$, for all $i \gg 1$.
Since each $[D'_i]$ is in $M(Y'/Y)$, the MMP is forced to be an isomorphism in codimension 1.
\end{proof}

\begin{lemma} 
\label{movable bir 2}
Let $\pi \colon Y' \rar Y$ be a birational contraction of $\qq$-factorial normal varieties.
Assume there exists a boundary $\Delta' \geq 0$ on $Y'$ such that $(Y',\Delta')$ is klt and $\K Y'. + \Delta' \sim_{\qq,\pi} 0$.
Let $E'_1, \ldots, E'_k$ denote the prime $\pi$-exceptional divisors.
Then, the classes of the $E'_i$ form a basis of $\nsr(Y'/Y)$.
Furthermore, if $[D'] \in M(Y'/Y)$, and we write $[D']=\sum a_i [E'_i]$, then $a_i \leq 0$ for all $i$.
\end{lemma}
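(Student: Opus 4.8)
The plan is to prove the two assertions separately, using only that both $Y$ and $Y'$ are $\qq$-factorial, together with the negativity lemma.

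First I would show that the $[E'_i]$ span $\nsr(Y'/Y)$. Since $Y'$ is $\qq$-factorial, every prime divisor on $Y'$ defines a class in $\nsr(Y'/Y)$, and these classes generate it; so it is enough to treat a prime divisor $P'$. If $P'$ is $\pi$-exceptional it is one of the $E'_i$. Otherwise $P \coloneqq \pi_\ast P'$ is a prime divisor on $Y$, hence $\qq$-Cartier, and $\pi^\ast P = P' + \sum_i c_i E'_i$ for suitable $c_i \in \qq$, so that $[P'] = -\sum_i c_i [E'_i]$ because $\pi^\ast P \equiv_\pi 0$. For linear independence I would invoke the negativity lemma: if $\sum_i a_i E'_i \equiv_\pi 0$ with $a_i \in \rr$, then $\pm\sum_i a_i E'_i$ are numerically $\pi$-trivial and $\pi$-exceptional, so the negativity lemma gives $\sum_i a_i E'_i \geq 0$ and $-\sum_i a_i E'_i \geq 0$, forcing $a_i = 0$ for all $i$. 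Hence $k = \rho(Y'/Y)$ and the $[E'_i]$ form a basis of $\nsr(Y'/Y)$.

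For the second assertion, let $D'$ be a Cartier divisor on $Y'$ that is movable over $Y$, and write $[D'] = \sum_i a_i [E'_i]$. Since $L \coloneqq \pi_\ast D'$ is $\qq$-Cartier and $D' - \pi^\ast L$ has zero pushforward, the latter divisor is supported on the $E'_i$; comparing classes with the first part gives $D' = \pi^\ast L + \sum_i a_i E'_i$. Now fix an index $i_0$ and an affine open $U \subset Y$ meeting $\pi(E'_{i_0})$. By definition, movability of $D'$ means that $\pi^\ast \pi_\ast \mathcal O_{Y'}(D') \rar \mathcal O_{Y'}(D')$ is surjective outside a closed set of codimension at least $2$, in particular at the generic point of $E'_{i_0}$; unwinding this at that generic point I would produce $h \in k(Y) = k(Y')$ with $G' \coloneqq \divv_{\pi^{-1}(U)}(h) + D'\vert_{\pi^{-1}(U)} \geq 0$ and $\mu_{E'_{i_0}}(G') = 0$. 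Since $\divv_{Y'}(h) = \pi^\ast \divv_Y(h)$, this rewrites as $G' = \pi^\ast L' + \sum_i a_i E'_i$ with $L' \coloneqq (L + \divv_Y h)\vert_U$ a $\qq$-Cartier divisor on $U$; from $G' \geq 0$ I get $L' = \pi_\ast G' \geq 0$, hence $\pi^\ast L' \geq 0$, and therefore $0 = \mu_{E'_{i_0}}(G') = \mu_{E'_{i_0}}(\pi^\ast L') + a_{i_0} \geq a_{i_0}$. Letting $i_0$ range over all indices completes the proof.

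I expect the only delicate point to be the passage from the sheaf-theoretic definition of $f$-movability to the existence of an effective divisor relatively linearly equivalent to $D'$ with vanishing coefficient along the chosen exceptional divisor $E'_{i_0}$: this must be verified on stalks at the generic point of $E'_{i_0}$, after shrinking $Y$ to an affine neighbourhood of $\pi(E'_{i_0})$, and it is exactly there that the codimension-$\geq 2$ hypothesis on the cokernel is used. Everything else is formal bookkeeping with pushforwards and pullbacks of Weil divisors, together with the negativity lemma in its $\rr$-divisor form; notably, the klt and relative Calabi--Yau hypotheses are not needed for this lemma.
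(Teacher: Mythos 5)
Your proof is correct, but it takes a genuinely different route from the paper's. For the first assertion, the paper runs a $(K_{Y'}+\Delta'+\sum \epsilon E'_i)$-MMP over $Y$ using \cite{BCHM}, observes that the MMP must contract each $E'_i$ (they lie in the relative stable base locus) and terminate with $Y$ itself, and concludes $\rho(Y'/Y)=k$ by counting divisorial contractions; this makes essential use of the klt and relative Calabi--Yau hypotheses. You instead show spanning by writing $\pi^\ast \pi_\ast P' = P' + \sum c_i E'_i$ for non-exceptional primes $P'$ (using $\qq$-factoriality of $Y$) and linear independence by applying the negativity lemma twice to a relatively numerically trivial exceptional $\rr$-divisor --- a direct and standard argument that avoids the MMP entirely. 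For the second assertion, the paper passes through Lemma~\ref{movable bir 1} to replace $Y'$ by an isomorphic-in-codimension-one model on which $D'$ becomes relatively semi-ample, and then applies the negativity lemma; again the klt/CY hypotheses are needed to run the relevant MMP. You instead unwind the sheaf-theoretic definition of $\pi$-movability at the generic point $\eta$ of each $E'_{i_0}$: since $\mathcal O_{Y',\eta}$ is a DVR and $\mathcal O_{Y'}(D')_\eta$ is free of rank one, surjectivity of $\pi^\ast \pi_\ast \mathcal O_{Y'}(D') \to \mathcal O_{Y'}(D')$ at $\eta$ forces some germ of $\pi_\ast\mathcal O_{Y'}(D')$ to achieve the minimal valuation, and this produces the required $h$ with $\divv(h)+D'$ effective and vanishing along $E'_{i_0}$ over a suitable open of $Y$; the rest is bookkeeping with $\pi^\ast L$ and effectivity of pullbacks of effective $\qq$-Cartier divisors under birational morphisms. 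Your observation that the klt and relative Calabi--Yau assumptions are not in fact needed is correct: they appear in the statement because they hold in every application within the paper, and the paper's MMP-based proof uses them, but your argument establishes the lemma in the greater generality of an arbitrary birational contraction of $\qq$-factorial normal varieties. One small bookkeeping point you implicitly handle but should be aware of: your stalk-level argument is stated for $\pi$-movable Cartier divisors $D'$, while a general element of $M(Y'/Y)$ is a nonnegative real combination of such classes; the conclusion $a_i \leq 0$ then follows by linearity, which is consistent with what you wrote.
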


\begin{proof}
For $0 < \epsilon \ll 1$, the log pair $(Y',\Delta' + \sum_{i=1}^k \epsilon E'_i)$ is klt and big over $Y$; 
therefore it admits a good minimal model $\widetilde{Y}$ over $Y$. 
Since the $E_i'$ are contained in the stable base locus of $K_{Y'}+\Delta' + \sum_{i=1}^k \epsilon E'_i$, it follows that  $\widetilde{Y} \to Y$ is a small birational morphism of $\mathbb Q$-factorial varieties and hence an isomorphism.
But then $\rho (Y'/Y)=k$ as it is well known that every divisorial contraction contracts an irreducible exceptional divisor.

Fix $[D'] \in M(Y'/Y)$.
We can write $D'=\sum a_i E'_i$.
As $D'$ is movable and $\K Y'. + \Delta' \sim_{\qq,\pi}0$, up to replacing $Y'$ with a model that is isomorphic in codimension 1, we may assume that $D'$ is semi-ample over $Y$.
Then, since $D'$ is supported on the exceptional locus of $Y' \rar Y$, by the negativity lemma, it follows that $D' \leq 0$.
\end{proof}

\subsection{Calabi--Yau fiber spaces}
\label{ssect:cy.fibre.sp}
Let $X$ and $Y$ be normal quasi-projective varieties, and let $f \colon X \rar Y$ be a projective morphism.
We say that $f \colon X \rar Y$ is a {\it Calabi--Yau fiber space} if the following conditions hold:
\begin{itemize}
\item[(CYF1)] $X$ is terminal and $\qq$-factorial;
\item[(CYF2)] $f$ is a contraction; and
\item[(CYF3)] $\K X. \equiv_f 0$.
\end{itemize}

\begin{remark}
In view of \cite{Gon13}*{Theorem 1.2}, \cite{HX13}*{Theorem 1.1}, and \cite{Lai11}*{Proposition 2.4}, condition (CYF3) is equivalent to $\K X. \sim_{\qq,f} 0$.
\end{remark}

For a given Calabi--Yau fiber space $f \colon X \to Y$, a {\it relatively minimal model} of $X$ over $Y$ (or, of $f$) is a contraction $f' \colon X' \to Y$ such $X'$ is terminal, $\qq$-factorial, $K_{X'} \equiv_{f'}0$, and $X'$ is birationally equivalent to $X$.
It is a well-known fact that, if $f' \colon X' \rar Y$ is a relatively minimal model of $f$, then $X$ and $X'$ are isomorphic in codimension 1.
Furthermore, $X$ and $X'$ are connected by a sequence of $\K X.$-flops over $Y$, see~\cite{Kaw08}.

Given two Calabi--Yau fiber spaces $f \colon X \to Y$, $f' \colon X' \to Y$ with $X, X'$ birationally equivalent, let $\alpha \colon X' \drar X$ be the isomorphism in codimension 1 over $Y$.
We refer to $\alpha$ as the {\it marking} of the minimal model $f'$.
A {\it marked minimal model} of 
$f$ 
is the datum of an ordered couple 
$(f' \colon X' \to Y, \alpha)$ 
where 
$f'$ 
is a relatively minimal model of 
$f$ 
together with the marking 
$\alpha$.
A {\it marked birational model} of 
$Y$ 
is the datum of a birational projective morphism 
$r \colon Y' \rar Y$.

Let $f \colon X \rar Y$ be a Calabi--Yau fiber space and $(f' \colon X' \rar Y,\alpha)$ be a marked minimal model.
Then, $\alpha_\ast$ induces an isomorphism between $\nsr(X'/Y)$ and $\nsr(X/Y)$ such that 
\begin{align*}
\alpha_\ast  B(X'/Y)=B(X/Y), \quad
\alpha_\ast  M(X'/Y)=M(X/Y), \quad 
\text{ and }  \quad 
\alpha_\ast V(X'/Y)=V(X/Y).
\end{align*}
We define
\begin{align*}
A(X/Y,\alpha) \coloneqq \alpha_\ast  A(X'/Y), \quad
\overline{A}(X/Y,\alpha) \coloneqq \alpha_\ast  \overline{A}(X'/Y), \quad
\text{ and } \quad
A^e(X/Y,\alpha) \coloneqq \alpha_\ast  A^e(X'/Y).
\end{align*}
By~\cite{Kaw97}*{Lemma 1.5}, 
$A(X/Y,\alpha) \cap A(X/Y) \neq \emptyset$
if and only if $\alpha$ is an isomorphism.

Having introduced this notation, it is easy to show that, assuming the termination of a relative MMP for any element of $M^e(X/Y)$, we have 
\begin{align}
\label{eq:decomp.mov.cone}
M^e(X/Y)=
\bigcup_{(f' \colon X'\rar Y,\alpha)} A^e(X/Y,\alpha),
\end{align}
where $(f' \colon X'\rar Y,\alpha)$ runs over all distinct $\qq$-factorial marked relatively minimal models of $f$.
In particular, \eqref{eq:decomp.mov.cone} holds true whenever $\dim X - \dim Y \leq 3$;
see, e.g., \cite{Fil20}*{Theorem 1.5}.

\begin{notation}
We call the decomposition in~\eqref{eq:decomp.mov.cone} the \emph{chamber decomposition} of ${M}^e(X/Y)$.
We call each cone ${A}^e(X/Y,\alpha)$ in~\eqref{eq:decomp.mov.cone} a \emph{chamber} of the decomposition.
\end{notation}

Let $f \colon X \rar Y$ be a Calabi--Yau fiber space, and let 
\begin{align*}
\xymatrix{
X \ar[r]^{g} \ar@/^15pt/[rr]^{f} &W \ar[r]^{h} &Y}
\end{align*}
be a factorization of $f$ such that $h$ is a contraction of normal varieties which is not an isomorphism. 
In particular, $g$ is itself a Calabi--Yau fiber space.
Moreover, 
$g^\ast \colon \nsr(W/Y) \rar \nsr(X/Y)$ 
is injective, and 
$g^\ast  {A}^e(W /Y) =
g^\ast \nsr(W/Y) \cap  {A}^e(X/Y)$ 
is an \emph{extremal face} of ${A}^e(X/Y)$. 
Analogously, if $(f'\colon X' \rar Y, \alpha)$ is a marked minimal model of $f$ factoring as
\begin{align}
\label{eqn.fact.extr.face1}
\xymatrix{
X' \ar[r]^{g'} \ar@/^15pt/[rr]^{f'} &W' \ar[r]^{h'} &Y},
\end{align}
where $h'$ is a contraction of normal varieties which is not an isomorphism, then 
$\alpha_\ast \circ (g')^\ast \colon \nsr(W'/Y) \rar \nsr(X/Y)$ 
is injective.
If $\dim(X')>\dim(W')$ or if $g'$ is a birational morphism that contracts at least one divisor, 
$\alpha_\ast ((g')^\ast {A}^e(W' /Y)) =
\alpha_\ast ((g')^\ast \nsr(W'/Y)) \cap  {M}^e(X/Y)$ 
is an \emph{extremal face} of ${M}^e(X/Y)$.
If $g'$ is a small birational morphism, $\alpha_\ast ((g')^\ast {A}^e(W' /Y))$ is a cone intersecting the interior of ${M}^e(X/Y)$ and is called {\it wall}.
The terminology comes from the following observation: if $g'\colon X' \rar W'$ is a small contraction with $\rho(X'/W')=1$, then $\alpha_\ast ((g')^\ast {A}^e(W' /Y))$ is the wall separating the chambers corresponding to 
$(f'\colon X' \rar Y,\alpha)$ and $((f')^+\colon (X')^+ \rar Y,\alpha^+)$,
where $(g')^+ \colon (X')^+ \rar W'$ is the flop of $g'$.
\begin{notation}
\label{notation.extr.faces}
With the notation and assumptions introduced above, if $g'$ in~\eqref{eqn.fact.extr.face1} is a birational morphism that contracts at least a divisor, we say that 
$\alpha_\ast (g')^\ast {A}^e(W' /Y))$ 
is an extremal face of ${M}^e(X/Y)$ corresponding to a birational contraction.
If $\dim X' > \dim W'$, we say that 
$\alpha_\ast (g')^\ast {A}^e(W' /Y)$ 
is an extremal face of ${M}^e(X/Y)$ corresponding to a fiber space structure.
\end{notation}

If $f \colon X \rar Y$ is a Calabi--Yau fiber space, as $X$ is terminal and minimal over $Y$, then for any $\phi \in \mathrm{Bir}(X/Y)$, $\phi \colon X\dashrightarrow X$ is a small birational map over $Y$.
Hence, $\phi_\ast$ induces a bijection on the lattice of Weil divisors and on its quotient modulo numerical equivalence $\lbrace \text{Weil divisors on $Y$} \rbrace / \equiv_Y \subset \nsr(X/Y)$.
Thus, there exists a natural induced representation 
\begin{align*}
\xymatrix @R=0.1pc 
{
\sigma \colon \mathrm{Bir}(X/Y) \ar[r] &
\mathrm{GL}(\nsr(X/Y),\mathbb{Z})
\\
\phi \ar@{|->}[r] &
\phi_\ast.
}
\end{align*}
Moreover, $\phi_\ast$ preserves the subspace $V(X/Y)$ and permutes the chambers of the partition of ${M}^e(X/Y)$ given in~\eqref{eq:decomp.mov.cone}.

The following result shows that the chamber decomposition of ${M}^e(X/Y)$, cf.~\eqref{eq:decomp.mov.cone}, is well behaved in the part of the movable cone of a Calabi--Yau fiber space consisting of big divisors.
The result generalizes~\cite{Kaw97}*{Theorem 2.6}.

\begin{lemma} 
\label{lemma chambers}
Let $f \colon X \rar Y$ be a Calabi--Yau fiber space.
Then, the decomposition
\begin{align}
\label{eqn:decomp.chambers}
M^e(X/Y) \cap B(X/Y)=
M(X/Y) \cap B(X/Y)=
\bigcup_{\substack{(f' \colon X'\rar Y,\alpha) 
%\text{ is a}
\\
\text{marked minimal model of $f$}}}
A^e(X/Y,\alpha) \cap B(X/Y)
\end{align}
is locally finite in the open cone $B(X/Y)$.
\end{lemma}

By local finiteness of the decomposition in
\eqref{eqn:decomp.chambers},
we  simply mean that if 
$\Sigma$ 
is a closed convex cone contained in 
$\lbrace 0 \rbrace \cup B(X/Y)$, 
then there exist only finitely many relatively minimal models 
$(f'_i \colon X'_i \rar Y, \alpha_i)$, $i=1, \dots, k$ 
such that the cones 
$A^e(X/Y,\alpha_i) \cap (\lbrace 0 \rbrace \cup B(X/Y))$ 
intersect $\Sigma$.

\begin{proof}
By definition, $M^e(X/Y) \supset M(X/Y)$.
Therefore, the inclusion $M^e(X/Y) \cap B(X/Y)\supset M(X/Y) \cap B(X/Y)$ is clear, so we will prove the reverse inclusion.
Suppose that $[D]\in M^e(X/Y) \cap B(X/Y)$, we must show that $[D]\in M(X/Y)$.
Since $[D]\in B^e(X/Y)$,  there is $\Delta \geq 0$ such that $\Delta \sim_{\rr,Y} \alpha D$, for some positive real number $\alpha$, and $(X,\Delta)$ is terminal.
Then, for a general point $y \in Y$, the divisor $\Delta_y$ is $f$-big and $\K X_y. + \Delta_y \sim_\qq \Delta_y$.
By \cite{BCHM}, the general fiber $(X_y,\Delta_y)$ has a good minimal model.
By \cite{HMX18}*{Theorem 1.2} and \cite{HX13}*{Theorem 1.1}, $(X,\Delta)$ has a  good minimal model $\phi  \colon X\dasharrow X'$ for $(X,\Delta)$ over $Y$ and in particular $\phi _\ast D$ is semi-ample over $Y$.
By continuity, there is a movable divisor $D'$ (sufficiently close to $D$ in $\nsr(X/Y)$) such that $\phi$ is given by a sequence of $D'$-flips and divisorial contractions.
Suppose that $\phi$ contracts a divisor $F$; then $F$ is in the stable base locus of $D'$, which is impossible as $D'$ is movable.
Thus $\phi$ is small and hence $D=\phi ^{-1}_\ast \phi _\ast D $ is also movable.

Hence, if $[D]\in \Sigma$, then $[D]$ is contained in the interior of a rational polyhedral cone spanned by effective big  $\qq$-divisors $D_i$ such that $\epsilon D_i\sim _{\qq,Y}\Delta _i$ for some rational number $0< \epsilon \ll 1$ where $(X,\Delta _i)$ is klt.
Thus, we may apply finiteness of models \cite{BCHM}*{Theorem E}.
Since, for all $i$, $D_i$ is relatively movable if and only if the corresponding minimal models do not contract any divisors, then the claim now follows easily.
\end{proof}

The following result is a generalization of \cite{Kaw97}*{Lemma 3.3.(2)}.
See also \cite{GW}*{Theorem 40} for a similar statement.

\begin{lemma} 
\label{lemma_vertical_divs}
Let $X$ be a $\qq$-factorial terminal variety, and let $f \colon X \rar Y$ be a Calabi--Yau fiber space of relative dimension 1.
There exists a marked minimal model $(f'\colon X' \rar Y,\alpha)$ of $f$ together with a factorization 
\begin{align*}
\xymatrix{X' \ar[r]^{g'} \ar@/^14pt/[rr]^{f'} & Y' \ar[r]^{h'} & Y}
\end{align*} such that 
\begin{enumerate}
\item 
$Y'$ is $\qq$-factorial;
\item
$h'$ is birational; and 
\item 
every prime divisor in $X'$ vertical over $Y'$ dominates a divisor in $Y'$.
\end{enumerate}
\end{lemma}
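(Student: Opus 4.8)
The plan is to produce the model $(f' \colon X' \rar Y, \alpha)$ by choosing a suitable movable divisor class on $X$ and running an MMP, then transporting the factorization via the canonical bundle formula. Concretely, let $(Y, B_Y + M_Y)$ be the generalized klt pair produced by the canonical bundle formula with $\K X. \sim f^\ast(\K Y. + B_Y + M_Y)$, as in the proof of Proposition~\ref{prop models}. First I would pick a prime divisor $P$ on $Y$ that is "horizontal-free", i.e.\ such that every prime divisor of $X$ vertical over $Y$ dominates a divisor of $Y$ already on $X$ itself; if $X \rar Y$ already has this property we are done with $Y' = Y$. In general it fails exactly because $f$ may have reducible fibres over a codimension-$\geq 2$ locus of $Y$, whose components do not dominate any divisor of $Y$. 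The idea is to blow up $Y$ along such loci to turn those fibre components into divisors dominating exceptional divisors of $Y' \rar Y$, and simultaneously run a relative MMP on $X$ to restore minimality.

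The key steps, in order, are: (1) Choose an effective divisor $N \sim_\qq M_Y$ with $(Y, \Delta_Y \coloneqq B_Y + N)$ klt, as in Proposition~\ref{prop models}. (2) Let $Y_{\mathrm{term}} \to Y$ be a $\qq$-factorial terminalization of $(Y, \Delta_Y)$ and form the main component $Z$ of $X \times_Y Y_{\mathrm{term}}$, with its induced elliptic fibration $Z \rar Y_{\mathrm{term}}$ and its birational map $Z \drar X$; since $\K X. \sim_{\qq, Y} 0$, $Z$ is again a Calabi--Yau fibre space over $Y_{\mathrm{term}}$ after running a relative $\K Z.$-MMP over $Y_{\mathrm{term}}$, which yields a marked minimal model $\alpha_0 \colon X'_0 \drar X$ with fibration $X'_0 \rar Y_{\mathrm{term}}$. (3) The point of passing to the terminalization is that a prime divisor of $X'_0$ vertical over $Y_{\mathrm{term}}$ that does not dominate a divisor of $Y_{\mathrm{term}}$ must map into the locus over which $Y_{\mathrm{term}} \to Y$ is an isomorphism and where $f$ is equidimensional (over the smooth, equidimensional locus a vertical divisor automatically dominates a divisor downstairs). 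So the remaining "bad" vertical divisors live over a closed subset of $Y_{\mathrm{term}}$ of codimension $\geq 2$. (4) Blow up $Y_{\mathrm{term}}$ (and resolve) to obtain $h' \colon Y' \to Y_{\mathrm{term}} \to Y$ with $Y'$ $\qq$-factorial such that every such bad divisor now dominates an $h'$-exceptional prime divisor of $Y'$; take the main component of $X'_0 \times_{Y_{\mathrm{term}}} Y'$ and run a relative $\K{}.$-MMP over $Y'$ to get the final Calabi--Yau fibre space $g' \colon X' \rar Y'$, with $X' \drar X$ still an isomorphism in codimension one over $Y$ (hence a marking $\alpha$), and $h' \colon Y' \to Y$ birational. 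Here Lemma~\ref{lemma_rel_bir} ensures that $X' \drar X$ remaining an isomorphism in codimension one over $Y'$ is the same as over $Y$.

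I expect the main obstacle to be making step (3)–(4) precise: controlling \emph{which} vertical divisors of the minimal model fail property (iii) and guaranteeing that a single finite-type blow-up $Y' \to Y$ resolves all of them without creating new ones after the relative MMP. One has to argue that the MMP over $Y'$ does not contract the newly created horizontal-over-$Y'$ fibre components back into non-dominating ones, and that no further bad vertical divisors appear — this is where the relative dimension one hypothesis is essential, since in a relative dimension one elliptic fibration the fibre components are themselves curves and their behaviour under MMP is rigid (a $\K{}.$-flop over $Y'$ is small and an isomorphism in codimension one, so it cannot destroy the property once achieved). A clean way to package this is to run, instead of an arbitrary MMP, a $D$-MMP over $Y'$ for $D$ an ample divisor pulled back appropriately, and invoke Lemma~\ref{lemma_vertical_divs}'s predecessor \cite{Kaw97}*{Lemma 3.3.(2)} as the relative-dimension-one, $Y = \mathrm{pt}$ input, bootstrapping to the relative setting by base change and noetherian induction on $Y$. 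The termination and good-model inputs are all available from \cite{HX13}*{Theorem 1.1} and \cite{Lai11}*{Proposition 2.4} combined with the Calabi--Yau hypothesis, exactly as in Lemma~\ref{SES vertical divisors}.
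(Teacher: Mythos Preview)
Your approach diverges substantially from the paper's, and there is a genuine gap at steps (3)--(4).

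The paper argues by induction on $\rho(X/Y)$ and never forms a fiber product or blows up $Y$ directly. Given a prime divisor $E\subset X$ with $\codim_Y f(E)\geq 2$, one fixes a very ample $A$ on $Y$, considers the sub-series $V\subset |A|$ of members through $f(E)$, and for general $H\in V$ decomposes $f^\ast H=D_1+D_2$ with every component of $D_1$ dominating a divisor of $Y$ and every component of $D_2$ mapping to codimension $\geq 2$. The point is that $D_1$ is $f$-movable (it is the moving part of a linear system pulled back from $Y$). Running a $(\K X.+\epsilon D_1)$-MMP over $Y$ makes the strict transform of $D_1$ nef, hence semi-ample over $Y$ since the relative dimension is one, and this produces a non-trivial factorization $\widetilde X\to\widetilde Y\to Y$ with $\widetilde Y\to Y$ birational. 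Then $\rho(\widetilde X/\widetilde Y)<\rho(X/Y)$ and induction finishes; $\qq$-factoriality of the final $Y'$ comes from \cite{Fil20}*{Proposition~2.9}. The higher model $Y'$ thus arises as the ample model of a divisor already on a flop of $X$, so one stays inside the class of marked minimal models throughout.

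Your step (3) is not justified: terminalizing $(Y,\Delta_Y)$ extracts exactly the divisors of non-positive discrepancy for that pair, which has no direct relation to equidimensionality of the fibration. A bad vertical divisor of $X'_0$ can sit over the exceptional locus of $Y_{\mathrm{term}}\to Y$ just as easily as over its isomorphism locus. More seriously, step (4) is a moving target. Each time you blow up the base along the image of a bad vertical divisor and take a main-component fiber product, the new exceptional divisors on $Y'$ can have reducible preimages upstairs, creating \emph{new} bad vertical divisors after the relative MMP. You recognise this as the ``main obstacle'' but give no termination mechanism; noetherian induction on $Y$ does not apply since the base is getting larger, and invoking \cite{Kaw97}*{Lemma~3.3.(2)} by base change does not furnish a decreasing invariant. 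The paper's induction on $\rho(X/Y)$ is precisely what supplies termination, and the construction of the movable $D_1$ from the linear system through $f(E)$ is the missing idea in your sketch.
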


\begin{proof}
We will proceed by induction on the relative Picard number $\rho(X/Y)$.
If $\rho(X/Y)=1$ there is nothing to prove, as every vertical prime divisor is relatively numerically trivial and, thus, it is numerically the pull-back of a $\qq$-divisor on $Y$.
Thus, we can assume that $\rho(X/Y) > 1$ and that the conclusions of the lemma hold for all Calabi--Yau fiber spaces of Picard rank lower than $\rho(X/Y)$.

Let $E \subset X$ be a prime divisor such that $\mathrm{codim}(f(E)) \geq 2$.
Let us consider a sufficiently positive very ample divisor $A$ on $Y$. 
Fix a general element $H$ of the non-complete sub-series $V \subset |A|$ of divisors in $|A|$ containing $f(E)$.
Passing to a higher multiple of $A$, if needed, we may assume that the sub-series $V$ is non-empty and has no fixed divisor.
Writing $f^\ast H=D_1+D_2$, where each component of $D_1$ dominates a divisor on $Y$, while the image of each prime component of $D_2$ has codimension at least $2$ on $Y$, then, $D_1 \neq 0$, and $E \leq D_2$. 
By construction, $D_1$ is $f$-movable, as it is a general member of the moving part of the linear series obtained by pull-back.\footnote{ To see this, it suffices to consider the linear series $\vert W \vert =f^\ast \vert V \vert$: two general elements $T_1, T_2 \in \vert W \vert$ have the same multiplicity along any prime divisor $D$ on $X$ such that $f(D)\subseteq f(E)$ and they share no other component; 
thus, $T_1=F_1+D_2$ and $T_2=F_2+D_2$, where the support of $D_2$ is mapped to $f(E)$; hence, $F_1\sim F_2$ and they share no prime divisor, hence 
$\vert F_1 \vert$ 
is movable. 
It then suffices to take $D_1 \coloneqq F_1$.}
Thus, by running a $(\K X. +  \epsilon D_1)$-MMP over $Y$, where $0 < \epsilon \ll 1$, this must terminate with a model $\widetilde f \colon \widetilde X \rar Y$ where the strict transform $\widetilde D_1$ of $D_1$ is relatively nef over $Y$.
Since $\dim(Y)=\dim(\widetilde X)-1$, then $\widetilde D_1$ is semi-ample over $Y$, cf.~\cite{Fil20}*{Theorem 1.5}.
In particular, $\widetilde D_1$ induces a morphism $\widetilde g \colon X \rar \widetilde Y$ over $Y$:
as $\widetilde D_1$ is vertical over $Y$, then $\widetilde Y \rar Y$ is birational and $\widetilde D_1$ is trivial over $\widetilde Y$.
Since $[D_1] \neq 0 \in \nsr(X/Y)$, then $\widetilde Y$ is not isomorphic to $Y$.
Thus, $\rho(X/Y)=\rho(\widetilde X/Y)>\rho(\widetilde X/ \widetilde Y)$, and the claim follows by the inductive hypothesis applied to $X' \rar Y'$.
Finally, the $\qq$-factoriality of $Y'$ follows from~\cite{Fil20}*{Proposition 2.9}.
\end{proof}

\begin{lemma} \label{lemma_cones}
Let $(X,\Delta)$ be a $\qq$-factorial log canonical pair and let $f \colon X \rar Y$ be a contraction. 
Assume that $\K X. + \Delta \sim \subs \qq,f. 0$ and that $f$ admits a factorization
\begin{align*}
\xymatrix{X \ar[r]^g \ar@/^14pt/[rr]^{f} & Z \ar[r]^h & Y}.
\end{align*}
Let $\gamma \colon X \drar X'$ be a sequence of $(\K X. + \Delta)$-flops over $Z$, and let $f' \colon X' \rar Y$ and $g' \colon X' \rar Z$ be the induced morphisms.
Then, 
\begin{align*}
g^\ast \nsr(Z/Y)=\gamma^{-1}_\ast((g')^\ast \nsr(Z/Y)).
\end{align*}
Furthermore, all the cones inside $\nsr(Z/Y)$ are identified by this identity.
\end{lemma}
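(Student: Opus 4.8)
The plan is to observe that $\gamma$, being a composition of flops over $Z$, is an isomorphism in codimension one which commutes with the maps down to $Z$, and that under such an identification pull-back of divisors from $Z$ is preserved; everything then follows formally.

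First I would record the two elementary facts behind the argument. Since each $(\K X. + \Delta)$-flop over $Z$ is a small birational map, the composition $\gamma \colon X \drar X'$ is an isomorphism in codimension one, and $X'$ is again $\qq$-factorial; hence $\gamma_\ast$ identifies the groups of Weil divisors of $X$ and $X'$ and descends to a linear isomorphism $\gamma_\ast \colon \nsr(X/Y) \rar \nsr(X'/Y)$, exactly as for marked minimal models of a Calabi--Yau fiber space (cf. \S\ref{ssect:cy.fibre.sp}). Moreover, because the flops are carried out over $Z$, one has $g = g' \circ \gamma$ as rational maps, and $\gamma$ restricts to an isomorphism between the complements $X \setminus V$ and $X' \setminus V'$ of closed subsets $V \subset X$, $V' \subset X'$ of codimension at least two.

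Next I would prove the equality $\gamma_\ast \circ g^\ast = (g')^\ast$ of maps $\nsr(Z/Y) \rar \nsr(X'/Y)$; it suffices to check it on the class of a Cartier divisor $D$ on $Z$. Since $g = g' \circ \gamma$, the isomorphism $\gamma \colon X \setminus V \rar X' \setminus V'$ carries the restriction of $g^\ast D$ to the restriction of $(g')^\ast D$ (both being pull-backs of $D$ along compatible morphisms to $Z$). Thus $\gamma_\ast(g^\ast D)$ and $(g')^\ast D$ are Weil divisors on the normal variety $X'$ that agree outside the codimension-two set $V'$, hence they coincide. Extending $\rr$-linearly and passing to numerical classes gives the claim.

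Finally, composing with $\gamma^{-1}_\ast = (\gamma_\ast)^{-1}$ yields $g^\ast = \gamma^{-1}_\ast \circ (g')^\ast$, and therefore $g^\ast \nsr(Z/Y) = \gamma^{-1}_\ast\big((g')^\ast \nsr(Z/Y)\big)$, which is the first assertion. For the last sentence, recall that $g^\ast$ and $(g')^\ast$ are injective because $g$ and $g'$ are contractions, so $\nsr(Z/Y)$ is realized as a subspace of both $\nsr(X/Y)$ and $\nsr(X'/Y)$; since the identity $\gamma_\ast \circ g^\ast = (g')^\ast$ holds at the level of vector spaces, it holds on every subset, so in particular $\gamma^{-1}_\ast\big((g')^\ast C\big) = g^\ast C$ for every cone $C \subseteq \nsr(Z/Y)$, e.g. for $\overline{A}(Z/Y)$, $\overline{M}(Z/Y)$, $\overline{B}(Z/Y)$, $B^e(Z/Y)$ and their intersections. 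I expect the only point requiring genuine care to be the descent of $\gamma_\ast$ to numerical classes rather than just divisor classes — i.e. that an isomorphism in codimension one over $Y$ induces a well-defined isomorphism of relative N\'eron--Severi spaces — but this is standard for sequences of flops and is exactly the identification already used in \S\ref{ssect:cy.fibre.sp}, so I would simply invoke it.
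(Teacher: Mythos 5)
Your proof is correct and lands on the same key identity $\gamma_\ast \circ g^\ast = (g')^\ast$ that the paper uses; the mechanics differ slightly. The paper reduces by induction to a single flop $\gamma \colon X \drar X'$ and uses the factorization of $g,g'$ through the flopping contraction $X''$ (writing $g^\ast = \phi^\ast(g'')^\ast$ and $(g')^\ast = \psi^\ast(g'')^\ast$) to deduce $\gamma_\ast g^\ast D = (g')^\ast D$ from the flop construction, whereas you bypass the induction entirely and argue directly from the fact that $\gamma$ restricts to an isomorphism between codimension-$\geq 2$ complements compatible with the two projections to $Z$. Both arguments are valid and yield the same conclusion; yours is marginally more direct since it does not need to break $\gamma$ into single flops, while the paper's one-flop reduction makes the compatibility $\psi^\ast = \gamma_\ast \circ \phi^\ast$ visible from the defining diagram of a flop. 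Your closing observation — that a linear identification of the subspaces $g^\ast\nsr(Z/Y)$ and $(g')^\ast\nsr(Z/Y)$ automatically carries over every cone sitting inside $\nsr(Z/Y)$ — is exactly what the paper means by "this equality also implies the claim about the cones."
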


\begin{proof}
By induction, it suffices to show the statement for one flop.
Thus, we may assume that $X$ admits a small contraction 
$\phi \colon X \rar X''$ 
over $Z$ such that 
$\gamma \colon X \drar X'$ arises as the flop of $\phi$.
Let $g'' \colon X'' \rar Z$, $\psi \colon X' \rar X''$ be the induced morphisms.
Since $g^\ast =\phi^\ast  \circ (g'')^\ast $ and $(g')^\ast =\psi^\ast  \circ (g'')^\ast $, it suffices to show that, if $D$ is an $\rr$-Cartier divisor on $X''$, then $\psi^\ast D= \alpha_\ast \phi^\ast D$, which follows at once from the construction.
This equality also implies the claim about the cones of $\nsr(Z/Y)$.
\end{proof}

\begin{lemma} \label{lemma_cones2}
Let $f \colon X \rar Y$ be a Calabi--Yau fiber space.
Assume that $f$ admits a factorization
\begin{align*}
\xymatrix{X \ar[r]^g \ar@/^14pt/[rr]^{f} & \widetilde{Y} \ar[r]^h & Y}
\end{align*}
such that $h$ is a birational contraction with $\widetilde{Y}$ $\qq$-factorial.
Let $(\widetilde{Y} , \Delta \subs \widetilde{Y}.)$ be a klt log pair such that $K_X\sim_\mathbb{R} g^\ast(\K \widetilde{Y}.+ \Delta \subs \widetilde{Y}.)$, and let $\beta \colon \widetilde{Y} \drar \widetilde{Y}^+$ be a $(\K \widetilde{Y}. + \Delta \subs \widetilde{Y}.)$-flop over $Y$.
Then, there exists a marked minimal model $(f^+\colon X^+ \to Y, \alpha)$ of $f$ together with a commutative diagram
\begin{align}
    \label{diag:comm.diag.2}
\xymatrix{
X^+ \ar@{-->}[rrrr]^{\alpha} \ar[d]_{g^+}
& & & &
X \ar[d]^{g}
\\
\widetilde Y^+ \ar@{-->}[rrrr]^{\beta^{-1}} \ar[drr]_{h^+}
& & & &
\widetilde Y \ar[dll]^{h}
\\
& & Y. & &
}
\end{align}
In particular, 
$
g^\ast \nsr(\widetilde{Y}/Y)= 
\alpha_\ast ((g^+)^\ast \nsr(\widetilde{Y}^+/Y) )
\subset \nsr(X/Y).
$
\end{lemma}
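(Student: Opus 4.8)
The plan is to obtain $X^+$ as a relatively minimal model of $X$ over the base $\widetilde Z$ of the flopping contraction underlying $\beta$, singled out via the chamber decomposition~\eqref{eq:decomp.mov.cone} of a movable cone, and then to recognise its fibration over $Y$ as the composition with $\widetilde Y^+$. First I would fix the data of the flop: a $(K_{\widetilde Y}+\Delta_{\widetilde Y})$-flop of $\widetilde Y$ over $Y$ comes with a small birational contraction $p\colon\widetilde Y\to\widetilde Z$ over $Y$ and a $\qq$-factorial model $p^+\colon\widetilde Y^+\to\widetilde Z$, so that $h$ factors as $\widetilde Y\xrightarrow{p}\widetilde Z\xrightarrow{r}Y$, one has $K_{\widetilde Y}+\Delta_{\widetilde Y}\equiv_p 0$, and $\beta\colon\widetilde Y\drar\widetilde Y^+$ is the induced birational map. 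Shrinking $\widetilde Z$ — harmless, since everything that follows is local over $\widetilde Z$ — we may assume $K_{\widetilde Y}+\Delta_{\widetilde Y}\sim_{\qq,p}0$. Set $q\coloneqq p\circ g\colon X\to\widetilde Z$; from $K_X\sim_\rr g^\ast(K_{\widetilde Y}+\Delta_{\widetilde Y})$ and $K_{\widetilde Y}+\Delta_{\widetilde Y}\equiv_p 0$ one gets $K_X\equiv_q 0$, so $q$ is a Calabi--Yau fiber space and $K_X\sim_{\qq,q}0$. Finally I would choose a general effective $p^+$-ample $\qq$-divisor $H^+$ on $\widetilde Y^+$ and set $D\coloneqq\beta^{-1}_\ast H^+\geq 0$ on $\widetilde Y$; it is $\qq$-Cartier since $\widetilde Y$ is $\qq$-factorial, and since $\beta_\ast D=H^+$ is $p^+$-ample we have $[D]\in M(\widetilde Y/\widetilde Z)=\overline M(\widetilde Y/\widetilde Z)$, the equality by Lemma~\ref{movable bir 1}.

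I would then build $X^+$. Put $N\coloneqq g^\ast D\geq 0$ on $X$. For $m$ sufficiently divisible every relative section of $mN$ over $\widetilde Z$ is pulled back from $mD$ (projection formula and $g_\ast\mathcal O_X=\mathcal O_{\widetilde Y}$), so the fixed part of $|mN|_{/\widetilde Z}$ equals $g^\ast$ of that of $|mD|_{/\widetilde Z}$, which vanishes; hence $[N]\in M(X/\widetilde Z)\subset\overline M(X/\widetilde Z)$. Choosing a general $q$-ample $A_X$ and $0<\epsilon\ll 1$, the class $[N]+\epsilon[A_X]$ lies in $M^e(X/\widetilde Z)\cap B(X/\widetilde Z)$, hence, by Lemma~\ref{lemma chambers}, in the interior of some chamber $A^e(X/\widetilde Z,\alpha)$ of~\eqref{eq:decomp.mov.cone}; let $(q^+\colon X^+\to\widetilde Z,\alpha)$ be the corresponding marked minimal model, so that $\alpha\colon X^+\drar X$ is an isomorphism in codimension one over $\widetilde Z$. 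Writing $N^+$ for the strict transform of $N$ on $X^+$, from $[N]+\epsilon[A_X]\in\alpha_\ast A^e(X^+/\widetilde Z)$ and $\epsilon\to 0$ one reads off that $N^+$ is $q^+$-nef.

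The key point — and the step I expect to be the main obstacle — is that $N^+$ is $q^+$-semi-ample. Note that $N^+$ is vertical over $\widetilde Z$, since $q^+$ sends each of its prime components onto the codimension-one image of a component of $D$. As $(X^+,\epsilon N^+)$ is terminal for $0<\epsilon\ll1$ and $K_{X^+}+\epsilon N^+\equiv_{q^+}\epsilon N^+$ is $q^+$-nef, $X^+$ is a relatively minimal model of $(X^+,\epsilon N^+)$ over $\widetilde Z$, and $K_{X^+}\sim_{\qq,q^+}0$. Exactly as in the proof of Lemma~\ref{SES vertical divisors}, I would then invoke \cite{HX13}*{Theorem~1.1} together with \cite{Lai11}*{Proposition~2.4} to conclude that $X^+$ is in fact a \emph{good} relatively minimal model of $(X^+,\epsilon N^+)$ over $\widetilde Z$; in particular $N^+$ is $q^+$-semi-ample.

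It then remains to identify the target and assemble the diagram. Let $g^+\colon X^+\to W^+$ over $\widetilde Z$ be the contraction defined by a sufficiently divisible multiple of $N^+$. Since $\alpha$ is an isomorphism in codimension one over $\widetilde Z$ and $p,p^+$ are small, for such $m$ one has $q^+_\ast\mathcal O_{X^+}(mN^+)=q_\ast\mathcal O_X(mN)=p_\ast\mathcal O_{\widetilde Y}(mD)=\mathcal O_{\widetilde Z}(m\,p_\ast D)=\mathcal O_{\widetilde Z}(m\,p^+_\ast H^+)=p^+_\ast\mathcal O_{\widetilde Y^+}(mH^+)$ (using $\beta_\ast D=H^+$), so that $\operatorname{Proj}_{\widetilde Z}$ and the $p^+$-ampleness of $H^+$ give $W^+\cong\widetilde Y^+$ over $\widetilde Z$, i.e.\ $g^+\colon X^+\to\widetilde Y^+$. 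Setting $h^+\coloneqq r\circ p^+$ and $f^+\coloneqq h^+\circ g^+$, one checks that $(f^+\colon X^+\to Y,\alpha)$ is a marked minimal model of $f$ ($X^+$ is $\qq$-factorial terminal, $K_{X^+}\equiv_{f^+}0$ because $K_{X^+}\equiv_{q^+}0$, and $\alpha$ is an isomorphism in codimension one over $Y$) and that~\eqref{diag:comm.diag.2} commutes: the identity $g\circ\alpha=\beta^{-1}\circ g^+$ follows by composing with the birational morphism $p$, since $p\circ(g\circ\alpha)=q\circ\alpha=q^+=p^+\circ g^+=p\circ(\beta^{-1}\circ g^+)$, while $h\circ\beta^{-1}=r\circ p\circ\beta^{-1}=r\circ p^+=h^+$. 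The displayed equality $g^\ast\nsr(\widetilde Y/Y)=\alpha_\ast((g^+)^\ast\nsr(\widetilde Y^+/Y))$ is then immediate: for a $\qq$-Cartier divisor $D'$ on $\widetilde Y^+$ one has $(g^+)^\ast D'=\alpha^\ast g^\ast\beta^\ast D'$ on the dense open where $\alpha,\beta$ are isomorphisms, whence $\alpha_\ast((g^+)^\ast D')=g^\ast((\beta^{-1})_\ast D')$ (total transforms agree with strict transforms for divisors under isomorphisms in codimension one), so $\alpha_\ast\circ(g^+)^\ast=g^\ast\circ(\beta^{-1})_\ast$ on $\nsr(\widetilde Y^+/Y)$, and $(\beta^{-1})_\ast$ is an isomorphism onto $\nsr(\widetilde Y/Y)$.
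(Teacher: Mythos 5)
Your construction is a genuinely different route from the paper's: the paper simply outsources the existence of $(f^+,\alpha)$ and of the commuting diagram to \cite{Fil20}*{Proposition 2.9}, whereas you build $X^+$ directly by running an MMP over the base $\widetilde Z$ of the flopping contraction, identifying the ample model of the transformed $N^+$ with $\widetilde Y^+$, and then deducing the statement about N\'eron--Severi groups from the commutativity of the diagram. The final identification of $W^+$ with $\widetilde Y^+$ via the comparison of $q_\ast\mathcal O_X(mN)$ and $p^+_\ast\mathcal O_{\widetilde Y^+}(mH^+)$ and the computation of the equality of subspaces are correct as written.

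There is, however, a gap in the step where you assert $[N]\in M(X/\widetilde Z)$. You claim that the relative fixed part of $|mN|_{/\widetilde Z}$ equals $g^\ast$ of that of $|mD|_{/\widetilde Z}$, but this fails if $g$ contracts a prime divisor $P\subset X$ to a locus of codimension at least two in $\widetilde Y$ whose image $g(P)$ is contained in the relative base locus of $|mD|$ over $\widetilde Z$ (which, since $D$ is the strict transform of an ample under the flop, is precisely the indeterminacy locus of $\beta$ in $\widetilde Y$). In that case every section of $mN$ over $\widetilde Z$ vanishes along $P$, so $P$ lies in the fixed part of $|mN|$, $[N]\notin\overline M(X/\widetilde Z)$, and adding $\epsilon[A_X]$ does not help; the subsequent application of Lemma~\ref{lemma chambers} is then unjustified. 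Nothing in the hypotheses of Lemma~\ref{lemma_cones2} excludes such $g$-very exceptional divisors — indeed, ruling them out is exactly what Lemma~\ref{lemma_vertical_divs} is for, and it is only achieved after replacing $X$ by another relatively minimal model, which you do not do. The gap is repairable: since $K_X\sim_{\qq,\widetilde Z}0$ after shrinking, for any divisorial contraction $\pi\colon X_i\to X_{i+1}$ arising in a $(\K X.+\epsilon N)$-MMP over $\widetilde Z$ one has $K_{X_i}=\pi^\ast K_{X_{i+1}}$ and hence $a(E,X_{i+1},0)=0$ for $E=\operatorname{Exc}(\pi)$, while terminality of $(X_{i+1},\epsilon N_{i+1})$ forces $a(E,X_{i+1},\epsilon N_{i+1})>0$ and therefore $a(E,X_{i+1},0)\geq a(E,X_{i+1},\epsilon N_{i+1})>0$, a contradiction — so every step of such an MMP is a flop, and one can run it directly (termination and semi-ampleness of the output by \cite{HX13} and \cite{Lai11}, as you already invoke) without first entering the movable cone. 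You would need to replace the chamber argument in your Step 5 by this direct observation.
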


\begin{proof}
The existence of the marked minimal model and of the diagram in~\eqref{diag:comm.diag.2} follows from~\cite{Fil20}*{Proposition 2.9}.

To prove the final claim, it suffices to notice that the linear map $\alpha_\ast \colon \nsr({X}^+/Y) \rar \nsr(X/Y)$ is an isomorphism, as $\alpha$ is an isomorphism in codimension 1;
similarly, $\beta_\ast \colon \nsr(\widetilde{Y}/Y) \rar \nsr(\widetilde{Y}^+/Y)$ is also an isomorphism;
the conclusion then follows from the commutativity of~\eqref{diag:comm.diag.2}.
\end{proof}

\begin{corollary} \label{corollary finite fiber spaces general}
Let $f \colon X \rar Y$ be a Calabi--Yau fiber space of relative dimension 1.
There exist only finitely many extremal faces of ${M}^e(X/Y)$ corresponding to fiber space structures.
\end{corollary}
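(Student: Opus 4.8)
The plan is to reduce the statement to Proposition~\ref{prop models}, which bounds the possible bases of such fiber space structures, and then to use Lemma~\ref{lemma_cones} and Lemma~\ref{lemma_rel_bir} to eliminate the remaining dependence on the marking. To begin, I would unwind the definitions: by Notation~\ref{notation.extr.faces}, an extremal face of $\overline{M}(X/Y)$ corresponding to a fiber space structure has the form $\alpha_\ast\bigl((g')^\ast\overline{A}(W'/Y)\bigr)$ for a marked minimal model $(f'\colon X'\rar Y,\alpha)$ of $f$ together with a factorization $X'\xrightarrow{g'}W'\xrightarrow{h'}Y$ in which $h'$ is a contraction, not an isomorphism, and $\dim X'>\dim W'$. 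Since $f'$ is birationally equivalent to $f$ it has relative dimension one, so $\dim X'=\dim Y+1$; as $\dim Y\le\dim W'<\dim X'$, this forces $\dim W'=\dim Y$. Hence $h'$ is a nontrivial birational contraction and $g'$ is a Calabi--Yau fiber space of relative dimension one.

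Next I would invoke Proposition~\ref{prop models} for $f\colon X\rar Y$ — which applies since $X$ is klt (being terminal), $f$ is an elliptic fibration and $K_X\sim_{\qq,Y}0$ — applied to the diagram assembled from $\alpha^{-1}\colon X\drar X'$ (an isomorphism in codimension one over $Y$), $g'\colon X'\rar W'$, and the birational morphism $h'\colon W'\rar Y$. This yields $W'=Y_i$ and $h'=h_i$ for one of the finitely many morphisms $h_i\colon Y_i\rar Y$, $i=1,\dots,n$, furnished by that proposition; so the base of any fiber-space-type extremal face lies in this finite list.

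It then remains to show that, for each fixed $i$, there is at most one extremal face of $\overline{M}(X/Y)$ corresponding to a fiber space structure with base $Y_i$. I would take two such, arising from $(X_1'\xrightarrow{g_1'}Y_i,\alpha_1)$ and $(X_2'\xrightarrow{g_2'}Y_i,\alpha_2)$. Each $g_j'$ is a Calabi--Yau fiber space over $Y_i$ — the $X_j'$ are terminal and $\qq$-factorial, with $K_{X_j'}$ numerically trivial over $Y$ and hence over $Y_i$ since $h_i$ is birational — and $X_1',X_2'$ are birational, so by \cite{Kaw08} there is an isomorphism in codimension one $\gamma\colon X_1'\drar X_2'$ over $Y_i$ realized as a sequence of $K_{X_1'}$-flops over $Y_i$. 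By Lemma~\ref{lemma_cones} with $Z=Y_i$, the isomorphism $\gamma_\ast$ carries $(g_1')^\ast\overline{A}(Y_i/Y)$ onto $(g_2')^\ast\overline{A}(Y_i/Y)$, so the second face equals $(\alpha_2\circ\gamma)_\ast\bigl((g_1')^\ast\overline{A}(Y_i/Y)\bigr)$. Now $\alpha_1$ and $\alpha_2\circ\gamma$ are both isomorphisms in codimension one over $Y$ from $X_1'$ to $X$, so $\psi\coloneqq\alpha_1^{-1}\circ\alpha_2\circ\gamma$ belongs to $\mathrm{Bir}(X_1'/Y)$. This is the step I expect to require the most care: because $h_i$ is birational, Lemma~\ref{lemma_rel_bir} gives $\mathrm{Bir}(X_1'/Y)=\mathrm{Bir}(X_1'/Y_i)$, so $\psi$ is a birational self-map of $X_1'$ over $Y_i$; as $X_1'$ is terminal and minimal over $Y_i$, it is an isomorphism in codimension one with $g_1'\circ\psi=g_1'$, whence $\psi_\ast$ fixes the subspace $(g_1')^\ast\nsr(Y_i/Y)$, and a fortiori the cone $(g_1')^\ast\overline{A}(Y_i/Y)$, pointwise. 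Therefore the two faces coincide.

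Combining the two previous steps, there are at most $n$ extremal faces of $\overline{M}(X/Y)$ corresponding to fiber space structures, which is the corollary. The essential point — and the potential pitfall — is the last step: a priori such a face depends on the chosen marking $\alpha$, and $\mathrm{Bir}(X/Y)$ permutes these faces, so one might fear infinitely many of them; the resolution is that, the base $Y_i$ being birational over $Y$, the marking ambiguity is forced to lie in $\mathrm{Bir}(X_1'/Y_i)$, which acts trivially on the pulled-back nef cone, so that the face in fact depends only on $Y_i$.
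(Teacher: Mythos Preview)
Your proof is correct and follows essentially the same approach as the paper's: both use Proposition~\ref{prop models} to reduce to finitely many bases $Y_i$, then Lemma~\ref{lemma_cones} to show the face is independent of the choice of model over a fixed $Y_i$, and Lemma~\ref{lemma_rel_bir} to eliminate the dependence on the marking via $\mathrm{Bir}(X'/Y)=\mathrm{Bir}(X'/Y_i)$. Your version is more explicit in unwinding why the face depends only on $Y_i$ (introducing $\gamma$ via \cite{Kaw08} and then the self-map $\psi$), whereas the paper compresses this into the single observation that $g^\ast\overline{A}(\widetilde{Y}/Y)$ is $\mathrm{Bir}(X/Y)$-invariant; but the content is the same.
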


Let us recall, cf. Notation~\ref{notation.extr.faces}, that, since the relative dimension of $f$ is 1, an extremal face of ${M}^e(X/Y)$ corresponding to a fiber space structure is an extremal face of ${M}^e(X/Y)$ of the form $\alpha_\ast ((g')^\ast {A}^e(Y'/Y))$, where $(f' \colon X' \rar Y, \alpha)$ is a marked minimal model of $X \rar Y$ together with a factorization
\begin{align*}
\xymatrix{
X' \ar@/^15pt/[rr]^{f'} \ar[r]^{g'} & Y' \ar[r]^{h'} & Y
}
\end{align*}
such that $h'$ is birational.

\begin{proof}
By Proposition~\ref{prop models}, there exist finitely many birational morphisms $\widetilde{Y} \rar Y$ such that, if $\widetilde{X}$ is a relatively minimal model for $X \rar Y$, then $\widetilde{X} \rar Y$ factors through $\widetilde{Y}$.

Fix  one such choice of $\widetilde X$ and $\widetilde{Y}$, and let $g \colon \widetilde{X} \rar \widetilde{Y}$ be the corresponding morphism.
By Lemma \ref{lemma_rel_bir}, $g^\ast {A}^e(\widetilde{Y}/Y) \subset \nsr(X/Y)$ is invariant under the action of $\mathrm{Bir}(X/Y)$;
notice that here we are identifying $\nsr(X/Y)$ and $\nsr(\widetilde{X}/Y)$.
By Lemma \ref{lemma_cones}, if $h \colon \widehat{X} \rar \widetilde{Y}$ is another model whose structure morphism over $Y$ factors through $\widetilde{Y}$, we have $h^\ast {A}^e(\widetilde{Y}/Y)=g^\ast {A}^e(\widetilde{Y}/Y)$.
Then, by the finiteness of the models $\widetilde{Y}$, the claim follows.
\end{proof}

\subsection{Calabi--Yau varieties}\label{def_CY} 
A normal projective variety $X$ is a \emph{Calabi--Yau variety} if 
\begin{itemize}
	\item[(CY1)]
$K_X \sim 0$;
	\item[(CY2)]
$X$ has terminal $\qq$-factorial singularities; and
	\item[(CY3)]
$h^i(X,\O X.)=0$, for $0< i < \dim (X)$.
\end{itemize}

Some authors define Calabi--Yau varieties using instead of (CY1) the slightly weaker condition that the canonical bundle is a torsion rank 1 divisorial sheaf.
Passing to the index $1$ cover of such variety, one can always reduce to the case where the canonical bundle is trivial.
Nonetheless, this reduction may affect conditions (CY2-3).

In our treatment, the condition (CY1) will be used to guarantee that any elliptic Calabi--Yau $f \colon X \to S$ can be reconstructed via the Tate--Shafarevich group (over a big open set of $S$) of the associated Jacobian fibration $j \colon J(X) \to S$, see \S~\ref{sect.ell.3folds}. 
In order to do that, we need to know that over any codimension 1 point of $S$ the general fiber is not a multiple one, which is implied by adjunction and the fact that $K_X$ is linearly equivalent to $0$ rather than torsion, cf. Remark~\ref{rmk.TS.ell.CY}.

\subsection{The cone conjecture}

Consider a Calabi--Yau fiber space $f \colon X \rar Y$.
Then, as explained in \S~\ref{ssect:cy.fibre.sp}, the cone of effective movable divisors ${M}^e(X/Y)$ admits a decomposition into chambers ${A}^e(X/Y,\alpha)$, where $\alpha \colon X \drar X'$ is some marked minimal model of $X$ over $Y$.
Under this decomposition, either ${A}^e(X/Y,\alpha)={A}^e(X/Y)$ and $\alpha$ is an isomorphism, or $\alpha_\ast  \mathrm{Int}({A}^e(X/Y)) \cap \mathrm{Int}({A}^e(X'/Y))= \emptyset$, where ${\rm Int}$ indicates the interior of a set, see \cite{Kaw97}*{Lemma 1.5}.

Therefore, to study all the possible minimal models of $f \colon X \rar Y$ we can analyze the cones ${M}^e(X/Y)$ and ${A}^e(X/Y)$.
It can happen that a minimal model $X'$ is isomorphic to $X$, while the rational map over $Y, \; \alpha \colon X \drar X'$ is not an isomorphism \cite{Kaw97}*{Example 3.8.(2)}.
Thus, we may have more chambers corresponding to the same isomorphism class of varieties.
Therefore, if we are only interested in the isomorphism classes as schemes over $Y$ of the relative minimal models of $X \drar Y$, we should study when different marked minimal models are actually isomorphic over $Y$.

The so-called Kawamata--Morrison cone conjecture \cite{Tot10}*{Conjecture 2.1} addresses the discrepancy mentioned above between isomorphism classes of varieties $X'$ that appear as total spaces of a relatively minimal model $f' \colon X' \rar Y$ of $f$ and isomorphism classes over $Y$ of relatively minimal models of $f$.
\begin{conconj}[Kawamata--Morrison]\label{km.conj}
Let $f \colon X \rar Y$ be a projective morphism with connected fibers between normal varieties. Let $(X,\Delta)$ be a klt pair such that $\K X. + \Delta \equiv 0/Y$.
Let $A^e(X/Y)$ and $M^e(X/Y)$ be defined as in \S~\ref{cones.of.divs}.
Then, the following holds.
\begin{itemize}
    \item[1] The number of ${\rm Aut}(X/Y,\Delta)$-equivalence classes of faces of the cone $A^e(X/Y)$ corresponding to birational contractions or fiber space structures is finite.
    Moreover, there exists a rational polyhedral cone $\Pi$ which is a fundamental domain for the action of ${\rm Aut}(X/Y,\Delta)$ on $A^e(X/Y)$ in the sense that
    \begin{itemize}
        \item [a] $A^e(X/Y)=\bigcup \subs g \in {\rm Aut}(X/Y,\Delta). g_\ast  \Pi$; and
        \item[b] ${\rm Int}\Pi \cap g_\ast {\rm Int}\Pi = \emptyset$ unless $g_\ast =1$.
    \end{itemize}
    \item[2] The number of ${\rm PsAut}(X/Y,\Delta)$-equivalence classes of chambers $A^e(X/Y,\alpha)$ in $M^e(X/Y)$ corresponding to marked small $\qq$-factorial modifications $X' \rar Y$ of $X \rar Y$ is finite.
    Equivalently, the number of isomorphism classes over $Y$ of small $\qq$-factorial modifications of $X$ over $Y$ (ignoring the birational identification with $X$) is finite.
    Moreover, there exists a rational polyhedral cone $\Pi'$ which is a fundamental domain for the action of ${\rm PsAut}(X/Y,\Delta)$ on $M^e(X/Y)$.
\end{itemize}
\end{conconj}

In the statement of the conjecture, ${\rm PsAut}(X/Y,\Delta)$ denotes the gorup of pseudo-automorphisms of the pair $(X,\Delta)$ relative to $Y$.
Here, pseudo-automorphism means a birational automorphism that does not contract nor extract any divisor.
In particular, if $X \rar Y$ is a Calabi--Yau fiber space as defined in \S~\ref{ssect:cy.fibre.sp}, we have $\mathrm{Bir}(X/Y)={\rm PsAut}(X/Y)$, see \cite{Kaw97}*{\S~1}.

This is a very deep conjecture connecting the birational geometry of a log Calabi--Yau fibration to the structure of the (birational) automorphism group.
The intuition behind such connection is rooted in mirror symmetry and physics, see, for example \cite{MR1265317}, but it is still unclear how exactly to determine the existence of automorphism starting from the geometry of the cone of divisors.
Conjecture \ref{km.conj} is known to hold just in very few cases: Totaro proved it in dimension 2 \cite{Tot10}, Kawamata proved the relative case (i.e., $\dim Y > 0$) for threefold Calabi--Yau fiber spaces \cite{Kaw97}, and there are a few other cases known in dimension $>2$.

\section{Finiteness of models for elliptic Calabi--Yau fiber spaces} 
\label{section cone conj}

The results in this section are a higher-dimensional generalization of the results of \cite{Kaw97}*{\S~3}, originally stated for elliptic threefolds.
The main subtlety in passing to dimension higher than 3 is that the base of the elliptic fibration has a more complicated birational geometry:
in particular, such base may admit birational modifications in codimension 2, while in the case of elliptic threefolds the base is a surface and its birational geometry is completely determined by the set of exceptional divisors in the birational morphisms of interest.

\begin{lemma} 
\label{lemma factorization general}
Let 
$f \colon X \to Y$ 
be a Calabi--Yau fiber space of relative dimension 1.
Let 
$(f' \colon 
X' \to Y, 
\alpha)$ 
be a marked minimal model of $f$.
Let 
\begin{align*}
\xymatrix{
X' \ar[r]^{g'}
\ar@/^16pt/[rr]^{f'}
&
Y' \ar[r]^{h'} &
Y
}
\end{align*}
be a factorization of 
$f$
satisfying the conclusions of Lemma~\ref{lemma_vertical_divs}.
Then, the following hold:
\begin{enumerate}
\item
\label{opt:lemma.factorization.general.maximal}
given any marked minimal model 
$(\bar f \colon \overline X \to Y, \bar \alpha)$
of 
$f$,
there exists a uniquely determined factorization 
\begin{align}
\label{eqn:max.factorization}
\xymatrix{
\overline X \ar[r]^{\bar g} \ar@/^16pt/[rr]^{\bar f}& 
\overline Y  \ar[r]^{\bar h} & 
Y},
\end{align}
where 
$\bar g$ 
is a Calabi--Yau fiber space of relative dimension 1 and 
$\bar h$ 
is a projective birational morphism which satisfies the following maximality property:
any other factorization 
$\xymatrix{
\overline X \ar[r]^{\bar g'} & 
\overline Y'  \ar[r]^{\bar h'} & 
Y}$ 
with 
$\bar g'$
a Calabi--Yau fiber space of relative dimension 1 and 
$\bar h'$
a projective birational morphism
factors through $\overline{g}$,
i.e., 
there exists a projective birational morphism 
$l \colon \overline Y \to \overline Y'$ 
and a factorization
\begin{align*}
\xymatrix{
\overline X \ar[r]^{\bar g} \ar@/^35pt/[rrr]^{\bar f}
\ar@/^25pt/[rr]_{\bar g'}
& 
\overline Y \ar[r]^{l} 
\ar@/_20pt/[rr]_{\bar h}& 
\overline Y' \ar[r]^{\bar h'} & 
Y}.
\end{align*}
In particular, any factorization 
$\xymatrix{
X'' \ar[r]^{g''}
\ar@/^18pt/[rr]^{f''}
&
Y'' \ar[r]^{h''} &
Y
}$
of a marked minimal model 
$(f'' \colon X'' \to Y, \alpha')$
of 
$f$
satisfying the conclusions of 
Lemma~\ref{lemma_vertical_divs} also satisfies the maximality property just described;

\item
\label{opt:lemma.factorization.general.factorization}
there exists a birational contraction 
$\beta \colon Y' \drar \overline Y$
making the following diagram commute
\begin{align*}
\xymatrix{
X' \ar@{-->}[rrrr]^{\bar \alpha^{-1} \circ \alpha} \ar[d]^{g'}
& & & &
\overline X \ar[d]_{\bar g}
\\
Y' 
\ar@{-->}[rrrr]^{\beta}
\ar[rrd]^{h'}
& & & &
\overline Y
\ar[lld]_{\bar h'}
\\
& & Y & &
}
\end{align*}
where $\xymatrix{
\overline X \ar[r]^{\bar g} & 
\overline Y}$ is as in \eqref{eqn:max.factorization}.
Furthermore, up to replacing 
$g'\colon X' \to Y'$ 
with another Calabi--Yau fiber space 
$g''' \colon X''' \to Y'''$ 
satisfying the same properties and assumptions of the lemma, and such that 
$X'''$ (resp. 
$Y'''$) is isomorphic in codimension 1 to $X'$ (resp. $Y'$), then we can assume the map $\beta$ above is a morphism; 
\item
\label{opt:lemma.factorization.general.flops}
let 
$(\tilde f \colon \widetilde X \to Y, \tilde \alpha)$
be a marked minimal model of 
$f$
admitting a factorization 
$\xymatrix{
\widetilde X \ar[r]^{\tilde g}
\ar@/^16pt/[rr]^{\tilde f}
&
\widetilde Y \ar[r]^{\tilde h} &
Y
}
$
satisfying the conclusions of 
Lemma~\ref{lemma_vertical_divs}.
Then 
$\widetilde Y$ and $Y'$ are isomorphic in codimension 1.
In particular, they are connected by a sequence of flops over 
$Y$
with respect to any klt pair 
$(Y', \Delta')$
(resp. 
$(\widetilde Y, \widetilde \Delta)$)
induced by the canonical bundle formula for 
$f'$
(resp.
$\tilde f$); 
and,
\item
\label{opt:lemma.factorization.general.contraction}
if $D'$ is a $g'$-vertical prime divisor and $[D'] \neq 0 \in \nsr(X'/Y')$, then $D'$ is the exceptional divisor of a birational contraction of $X'$ over $Y'$.
\end{enumerate}
\end{lemma}

\begin{proof}
\begin{enumerate}
\item 
Since 
$\bar f \colon 
\overline X \to Y$ 
is a Calabi--Yau fiber space of relative dimension 1, a divisor is 
$\bar f$-semi-ample 
if and only if it is 
$\bar f$-nef 
and 
$\bar f$-effective.
Furthermore, if two $\bar f$-semi-ample divisors are not 
$\bar f$-big, 
neither is their sum, as 
$\bar f$-bigness 
is characterized by the intersection with a general fiber.
Thus, if 
$D_1$, 
$D_2$ 
are 
$\bar f$-semi-ample divisors that are not $f$-big, 
then so is 
$D_1+D_2$.
Thus, 
$D_1+D_2$ 
induces a factorization that dominates the ones induced by 
$D_1$ 
and 
$D_2$, 
respectively.
In particular, any two factorizations 
$\xymatrix{
\overline X \ar[r] & \overline Y_1 \ar[r] &
Y}$ 
and 
$\xymatrix{
\overline X \ar[r] & \overline Y_2 \ar[r] &
Y}$, 
where 
$Y_i$ 
is birational to 
$Y$, 
for $i=1, 2$, are dominated by a third factorization 
$\xymatrix{
\overline X \ar[r] & \overline Y_3 \ar[r] &
Y}$.
This shows the uniqueness of the maximal element.
Assuming that 
\begin{align*}
\xymatrix{
\overline X \ar[r] & \overline Y_4 \ar[r] &
\overline Y_5 \ar[r] &
Y}
\end{align*}
is a non-trivial factorization of 
$\bar f$, where 
$\xymatrix{
\overline Y_4 \ar[r] &
\overline Y_5 \ar[r] &
Y}$ 
is a composition of  birational morphisms, 
then
$\rho(\overline X/\overline Y_4)<
\rho(\overline X/\overline Y_5)<
\rho(\overline X/Y)$.
Since the relative Picard number is a positive integer, a maximal element must exist.

To show that any factorization 
$\xymatrix{
X'' \ar[r]^{g''}
\ar@/^18pt/[rr]^{f''}
&
Y'' \ar[r]^{h''} &
Y
}$
of a marked minimal model 
$(f'' \colon X'' \to Y, \alpha')$
of 
$f$
satisfying the conclusions of 
Lemma~\ref{lemma_vertical_divs} also satisfies the maximality property introduced in the statement of the lemma, it suffices to notice that Lemma~\ref{lemma_vertical_divs} implies that any  
$g''$-vertical 
divisor dominates a divisor on 
$Y''$.
Then, if there was a further factorization of the form 
$\xymatrix{
X'' \ar[r]^{g_m}
\ar@/^19pt/[rrr]^{f''}
&
Y^m \ar[r]^{h_m} &
Y'' \ar[r]^{h''} &
Y
}$,
since 
$Y''$
is 
$\mathbb Q$-factorial, 
$h_m$
would have to contract a prime divisor
$D_m$
and then 
$D''\coloneqq g_m^{-1}(D_m)$
would be a 
$g''$-vertical 
divisor such that 
$g''(D'')$
has codimension at least 
$2$
in 
$Y''$.

\item 
Let 
$\overline H$ 
be a relatively ample divisor on 
$\overline Y$ 
over 
$Y$.
Setting 
$M \coloneqq 
(\bar \alpha^{-1} \circ \alpha)_\ast ^{-1} \bar g^\ast  
\overline H$, 
then 
$M$ 
is movable over 
$Y$, 
as 
$\bar g^\ast \overline H$ 
is semi-ample and 
$(\bar \alpha^{-1} \circ \alpha)$ 
is a small birational map; 
a fortiori, 
$M$ 
is movable also over 
$Y'$, see Lemma~\ref{tower movable}.
Therefore, there is a sequence of flops 
$\gamma \colon X'\dashrightarrow \widehat X$ 
over 
$Y'$, 
making the strict 
transform
$\widehat M$
of 
$M$ 
on 
$\widehat X$ 
nef over 
$Y'$. 
Thus, 
$\widehat M$ 
is semi-ample over 
$Y'$.
Let 
$\widehat X \to \widehat Y \to Y'$ 
be the corresponding morphism. 
Since every divisor that is vertical for $X' \to Y'$ 
dominates a divisor in 
$Y'$, 
and since 
$\gamma$ 
is a small birational map, 
then any divisor that is vertical for 
$\widehat X \to Y'$ 
must also dominate a divisor on 
$Y'$.
Thus, for dimensional reasons, the morphism 
$\widehat{Y} \to Y'$ 
cannot contract any divisor, that is,  
$\widetilde{Y} \to Y'$ 
is a small birational morphism.
Since 
$Y'$ 
is 
$\mathbb Q$-factorial 
by assumption, see Lemma~\ref{lemma_vertical_divs}, then 
$\widehat{Y} \to Y'$ 
is an isomorphism.
But then $M = (g')^\ast  H '$ holds for some divisor ${H}'$ on $Y'$.
As $M$ is movable over $Y$, the same holds for ${H}'$.
Let $(Y', \Delta')$ be a klt pair induced by the canonical bundle formula for $g' \colon X' \to Y'$;
in particular $K_{Y'}+ \Delta' \sim_{\mathbb Q, Y}0$.
As 
$H'$ 
is movable over 
$Y$, 
running a relative 
$(K_{Y'}+ \Delta'+\epsilon H')$-MMP, 
for 
$0< \epsilon \ll 1$, 
over $Y$, 
we obtain a sequence of 
$(K_{Y'} + \Delta')$-flops 
$Y' \drar Y'''$ 
over 
$Y$ 
such that the strict transform 
$H'''$ 
of 
$H'$ 
on 
$Y'''$ 
is semi-ample over 
$Y$.
In particular, 
$Y' \drar Y'''$ 
is a birational contraction.
By construction, taking the relatively ample model of $H'''$ over 
$Y$ 
induces a morphism $Y''' \to \overline Y$.

Furthermore, by repeatedly applying~\cite{Fil20}*{Proposition 2.9}, there exists a 
$\mathbb Q$-factorial 
Calabi--Yau fiber space 
$g''' \colon X''' \to Y'''$ 
such that 
$X'''$ 
is isomorphic to 
$X'$ 
in codimension 1.

\item 
By
Lemma \ref{lemma_vertical_divs},
$Y'$ and $\widetilde Y$ are $\mathbb Q$-factorial.
Moreover, as both 
$Y'$ and 
$\widehat Y$ 
satisfy the assumptions of part \eqref{opt:lemma.factorization.general.factorization}, 
there exist birational contractions 
$\widetilde Y \drar \widehat Y$ 
and 
$\widehat Y \drar \widetilde Y$
which implies that they are isomorphic in codimension 1.
If
$(Y', \Delta')$
(resp. 
$(\widetilde Y, \widetilde \Delta)$)
is  induced by the canonical bundle formula for 
$f'$
(resp.
$\tilde f$), 
then 
$K_Y'+\Delta' \sim_{\mathbb Q, Y} 0$
(resp. 
$K_{\widetilde Y}+\widetilde \Delta \sim_{\mathbb Q, Y} 0$) holds, 
and 
$Y'$ and
$\widetilde Y$
are connected by a sequence of 
$(K_Y'+\Delta')$-flops
(resp.
$(K_{\widetilde Y}+\widetilde \Delta )$-flops).

\item
Let $D'$ be a prime and $g'$-vertical divisor such that $[D'] \neq 0 \in \nsr(X'/Y')$.
Then, $g'(D')=\bar D$ is a divisor on $Y'$ and $D'\ne \lambda  f^\ast\bar D$ for any $\lambda \in \rr$.
Thus, we may assume that $D'+D''=\lambda_0 f^\ast \bar D$, where $\lambda_0 \in \rr_{>0}$, $D', D'' \geq 0$ and they do not have components in common.
Moreover, every component of $D''$ dominates $\bar D$.
Then, $D'$ is $g'$-very exceptional in the sense of~\cite{Bir12}*{Definition 3.1} and it is contracted by running a $(\K X'. + \epsilon D')$-MMP over $Y'$, for $0 < \epsilon \ll 1$, since $\K X'. + \epsilon D' \sim_{\qq,Y'} \epsilon D'$.
\end{enumerate}
\end{proof}

\begin{remark} \label{rmk models 2 general}
We use the setup and the notation of 
Lemma~\ref{lemma factorization general}.
Proposition~\ref{prop models}
implies that there exist only finitely many marked birational models 
$\xymatrix{
\widetilde Y 
\ar[r]^{\tilde h} &
Y}$ 
that appear in a factorization of the form
$\xymatrix{
\widetilde X \ar[r]^{\tilde g} & 
\widetilde Y 
\ar[r]^{\tilde h} &
Y}$, 
where 
${\tilde h} \circ {\tilde g} \colon \widetilde X \rar Y$ 
is a relatively minimal model of $f$.
\end{remark}

Given an elliptic fibration 
$f \colon X \rar Y$, 
and a class 
$z \in \nsr(X/Y)$, 
we define
$\deg(z) \in \mathbb R$
to be the intersection number
$D \cdot F$
where 
$D$
is a 
$\mathbb R$-Cartier divisor 
such that 
$[D] = z \in \nsr(X/Y)$
and 
$F$
is a smooth fiber of 
$f$.

\begin{lemma} 
\label{lemma compact}
Let $X$ be a terminal $\qq$-factorial variety, and let 
$f \colon X \rar Y$ 
be a Calabi--Yau fiber space of relative dimension 1.
Let 
$\sigma \colon \mathrm{Bir}(X/Y) \rar \mathrm{GL}(\nsr(X/Y),\mathbb{Z})$ 
be the induced representation.
Then, the image of $\sigma$ contains an Abelian subgroup $G(X/Y)$ which is the image of a finite index subgroup of $H< \mathrm{Bir}(X/Y)$ that acts on the affine space 
$W(X/Y) \coloneqq \lbrace z \in \nsr(X/Y)/V(X/Y)\vert\deg(z)=1 \rbrace$ 
as a group of translations. 
Moreover, the quotient space 
$W(X/Y)/G(X/Y)$ 
is a real torus.
\end{lemma}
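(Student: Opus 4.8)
The plan is to reduce to the analogous statement for the movable cone of the base of a suitable factorization, where the classical Kawamata--Morrison theorem for the relative case of threefolds does not directly apply but the structure of the problem becomes that of a finitely generated linear group acting on a cone. First I would invoke Lemma~\ref{lemma_vertical_divs} and Lemma~\ref{lemma factorization general} to fix, up to replacing $X$ by a marked minimal model isomorphic in codimension $1$, a factorization $X \xrightarrow{g} Y'' \xrightarrow{h} Y$ with $g$ a Calabi--Yau fiber space, $h$ birational, $Y''$ $\qq$-factorial, and every $g$-vertical prime divisor dominating a divisor on $Y''$, with $Y''$ maximal among such factorizations. By Lemma~\ref{lemma_rel_bir}, $\mathrm{Bir}(X/Y)=\mathrm{Bir}(X/Y'')$, so we may work over $Y''$; by maximality and part (i) of Lemma~\ref{lemma factorization general}, this maximal factorization is preserved by all of $\mathrm{Bir}(X/Y)$, hence $\mathrm{Bir}(X/Y)$ acts on $Y''$ birationally over $Y$ and the exact sequence of Lemma~\ref{SES vertical divisors},
\[
0 \rar \nsr(Y''/Y) \rar V(X/Y) \rar V(X/Y'') \rar 0,
\]
is $\mathrm{Bir}(X/Y)$-equivariant. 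Since $g$ has relative dimension one and every vertical divisor dominates a divisor on $Y''$, the space $V(X/Y'')$ is spanned by such horizontal-over-$Y''$ prime divisors; this is where the fibered structure enters and gives control on $\nsr(X/Y)/V(X/Y)$.

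Next I would analyze the quotient $\nsr(X/Y)/V(X/Y)$. The point is that modulo vertical divisors, the relative Néron--Severi group of $X$ over $Y$ is controlled by the geometry of the generic fibre $X_\eta$, which is a genus-one curve over $k(Y)$: the degree map $z\mapsto \deg z$ on $\nsr(X/Y)/V(X/Y)$ (intersection with a general fibre of $f$) is well-defined, and the affine hyperplane $W(X/Y)=\{z : \deg z =1\}$ is the natural parameter space on which the Mordell--Weil / Tate--Shafarevich action of $\mathrm{Bir}(X/Y)=\mathrm{Aut}_{k(Y)}(X_\eta)$ is visible. I would then show that the action of $\mathrm{Bir}(X/Y)$ on $\nsr(X/Y)/V(X/Y)$ factors, after passing to a finite-index subgroup $H$, through a group of translations of $W(X/Y)$: the linear part of each $\phi_*$ on the quotient fixes $\deg$ and, because $\phi$ is a pseudo-automorphism preserving the finitely many extremal faces of $\overline M(X/Y)$ corresponding to fiber space structures (Corollary~\ref{corollary finite fiber spaces general}) and the chamber decomposition, its action on the rational structure of the quotient has finite order; passing to the kernel of the induced (necessarily finite) homomorphism to the automorphisms of these combinatorial data yields $H$ of finite index acting by translations on $W(X/Y)$. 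Set $G(X/Y)\coloneqq \sigma(H)\subset \mathrm{GL}(\nsr(X/Y),\zz)$, which is then Abelian.

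It remains to see that $W(X/Y)/G(X/Y)$ is a real torus, i.e., that the translation lattice $G(X/Y)$ acting on the affine space $W(X/Y)$ is a full-rank lattice (cocompact). Here I would combine two inputs: on the one hand, $G(X/Y)$ is finitely generated, being a subgroup of $\mathrm{GL}(\nsr(X/Y),\zz)$ and the image of a finite-index subgroup of $\mathrm{Bir}(X/Y)$, and its translation vectors lie in a fixed rational subspace of $\nsr(X/Y)/V(X/Y)$ (the directions tangent to $W(X/Y)$), so $G(X/Y)$ is a free Abelian group of finite rank; on the other hand, cocompactness — equivalently, that the rank of $G(X/Y)$ equals $\dim W(X/Y)$ — follows from the fact that the chamber decomposition of $\overline M(X/Y)$ restricted to the big cone is locally finite (Lemma~\ref{lemma chambers}) together with Corollary~\ref{corollary finite fiber spaces general}: if the translations did not span, there would be a rational ray in $W(X/Y)$ transverse to every orbit, producing infinitely many distinct chambers accumulating in the big locus with no $\mathrm{Bir}(X/Y)$-translate bringing them into a fixed compact set, contradicting local finiteness modulo the action. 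Thus $W(X/Y)/G(X/Y)$ is compact, and being a quotient of an affine space by a free Abelian group of translations of full rank, it is a real torus.

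\emph{Main obstacle.} The delicate point I expect is the last step: extracting cocompactness of the translation lattice from the finiteness/local-finiteness statements available. In Kawamata's threefold argument this is where the classification of the base surface and its Mordell--Weil lattice is used; in arbitrary relative dimension one over a higher-dimensional base one must instead run the argument purely through the behaviour of $\overline M(X/Y)$ and the finitely many fiber-space faces, being careful that passing to the maximal factorization $Y''$ (so that all vertical divisors are non-exceptional over $Y''$, killing the "codimension $\geq 2$" pathologies that do not occur for surfaces) genuinely reduces us to a situation formally parallel to Kawamata's. Making the reduction via Lemmas~\ref{lemma_vertical_divs}--\ref{lemma factorization general} fully rigorous, and checking the equivariance of all the cone identifications under $\mathrm{Bir}(X/Y)$, is the technical heart of the proof.
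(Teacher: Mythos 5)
Your proposal diverges significantly from the paper's argument, and the divergence introduces gaps that I do not think can be filled along the lines you sketch.

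The paper's proof is direct and arithmetic, following Kawamata: it identifies $\mathrm{Bir}(X/Y)$ with $\mathrm{Aut}_{k(Y)}(X_\eta)$, where $X_\eta$ is a genus-one curve over $k(Y)$, and takes $H$ to be the Mordell--Weil group $M$ of rational sections (or, when no section exists, the group of $k(Y)$-points of the Jacobian $J_\eta$); $M$ has finite index because $\mathrm{Aut}(X_\eta)$ modulo translations is a finite group. That $M$ acts on $W(X/Y)$ by \emph{genuine} translations is then a geometric fact about the elliptic curve $X_\eta$: if $\theta$ is translation by a point, then $\theta_\ast D_\eta - D_\eta \sim D_{1,\eta}-D_{0,\eta}$ for any degree-one $D$, with the right-hand side independent of $D$. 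Finally, the cocompactness, i.e.\ that the translation subgroup has full rank $\rho(X/Y)-v(X/Y)-1$, comes from Riemann--Roch on $X_\eta$: for every Weil divisor $D$ one normalizes to $D_{\deg=1}$ (or $D_{\deg=d}$ in Case 2, using the theorem of the square and the degree-$d^2$ isogeny $\phi_{D_{0,\eta}}$) and shows its class modulo $V(X/Y)$ comes from a section, hence lies in $\mathrm{Im}(\sigma')$. No factorization through a higher model of $Y$ is used.

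Your approach has two gaps that I don't see how to close. First, the claim that $H$ can be taken to be the kernel of the action on "the combinatorial data" (the finitely many fiber-space faces and the chamber decomposition) does not give translations: the chamber decomposition has, in general, infinitely many chambers with no a priori reason for the permutation action to factor through a finite group, so the "induced (necessarily finite) homomorphism" you invoke need not be finite; moreover even a subgroup fixing finitely many faces set-wise can have non-trivial linear part on the degree-zero hyperplane. The triviality of the linear part really does rely on the specific structure of translations of an elliptic curve, not on cone combinatorics. Second, and more seriously, your cocompactness argument is circular: the "local finiteness modulo the action" you would need is precisely Theorem~\ref{thm KM conj general}, whose proof \emph{uses} the present Lemma~\ref{lemma compact} to produce a compact $W(X/Y)/G(X/Y)$. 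Lemma~\ref{lemma chambers} on its own only gives local finiteness inside the big cone, not finiteness of chambers, and Corollary~\ref{corollary finite fiber spaces general} concerns faces on the boundary of the big cone; neither implies that a rank-deficient translation lattice would be contradicted by chamber geometry. Indeed, if one had no arithmetic input, nothing prevents a single huge chamber from containing an entire unbounded ray in $W(X/Y)$, so no contradiction arises. The full-rank statement is genuinely a Riemann--Roch / Mordell--Weil lattice statement about $X_\eta$, which is where the argument must go.
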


\begin{proof}
We follow the strategy of proof of~\cite{Kaw97}*{Lemma~3.5}.

Let $\eta \in Y$ be the generic point.
For a Weil divisor $D$ on $X$, we shall denote by $D_\eta$ its restriction to the schematic fiber $X_\eta$ of $X$ over $\eta$.
Since $\mathrm{Bir}(X/Y)={\rm Aut}(X_\eta)$, the degree of any divisor on $X$ is preserved under the push-forward by elements of $\mathrm{Bir}(X/Y)$.
Similarly, the subspace $V(X/Y)$ is fixed by the push-forward action by elements of $\mathrm{Bir}(X/Y)$.

{\bf Case 1}. 
{\it 
We prove the lemma under the additional assumption that $f$ has a rational section}.

Fix such a section $D_0$, which will serve as the origin for $X_\eta$.
By the structure of the automorphism group of an elliptic curve, the group of rational sections $M$, known as the Mordell--Weil group, can be identified as a subgroup $H$ of finite index of ${\rm Aut}(X_\eta)$ and hence of  $\mathrm{Bir}(X/Y)$ that acts via translations.

Let $\theta \in \mathrm{Bir}(X/Y)$ be an element corresponding to a rational section $D_1=\theta_\ast  D_0$.
For a $\mathbb K$-divisor $D$ with $\deg(D)=1$, 
$\theta_\ast  D_\eta - D_\eta \sim_{\mathbb K} D \subs 1,\eta. - D \subs 0,\eta.$ 
on $X_\eta$.
Thus, $\theta$ acts on $W(X/Y)$ as the translation by $[D_1 - D_0]$.
We define the map 
\begin{align*}
\xymatrix @R=.1pc 
{
\sigma' \colon
M \ar[r] & (\nsr(X/Y)/V(X/Y))_0
\\
D_1 \ar@{|->}[r] &
[D_1-D_0],
}
\end{align*}
where 
\[
(\nsr(X/Y)/V(X/Y))_0 \coloneqq 
\left \{
\gamma \in \nsr(X/Y)/V(X/Y)
\ \vert \
\deg(\gamma)=0
\right \}.
\]
It is immediate from its definition that $\dim (\nsr(X/Y)/V(X/Y))_0= \rho(X/Y)-v(X/Y)-1$.
\newline

\noindent
{\bf Claim}. 
Under the assumption that $D_0$ corresponds to the identity of $M$, $\sigma'$ is a homomorphism of Abelian groups and its image $G(X/Y)$ is a finitely generated subgroup.

\begin{proof}[Proof of the Claim]
Let $D_1$ and $D_2$ be two rational sections, and let $\theta_1$ and $\theta_2$ be the corresponding birational automorphisms.
To avoid confusion with the summation between divisors, we denote by $D_1 \star D_2$ the sum of the two sections in the Mordell--Weil group of $f$, that is, the group law of the elliptic curve $X_\eta$.
Since we have fixed $D_{0,\eta}$ as the identity of $X_\eta$, then $D_1 \star D_2\sim (D_1-D_0)_\eta +(D_2-D_0)_\eta +D_{0,\eta}=(D_1+D_2-D_0)_\eta$. Thus
\begin{align*}
D_1 \star D_2 \sim (D_1+D_2-D_0)_\eta,
\end{align*}
or, equivalently,
\begin{align} 
\label{eq_lin_eq}
(D_1 \star D_2)-D_{0,\eta} \sim (D_1-D_0)_\eta +(D_2-D_0)_\eta.
\end{align}
Since the linear equivalence in \eqref{eq_lin_eq} holds over an open subset of $Y$, and since we are considering the vector space $\nsr(X/Y)/V(X/Y)$, that is vertical divisors are negligible, then
\begin{align*}
[(D_1 \star D_2) - D_0]=[D_1-D_0]+[D_2-D_0] \in (\nsr(X/Y)/V(X/Y))_0.
\end{align*}
It just remains to show that $G$ is finitely generated.
Since 
$X$
is 
$\mathbb Q$-factorial and by definition of numerical equivalence,
the relative (over 
$Y$) 
first Chern class map is well defined on 
${\rm Cl}(X)$ with values in 
$\nsr(X/Y)$ 
and its image 
$G_1$ is a full rank lattice.
Moreover, the degree function also yields a group homorphism
$\deg \colon {\rm Cl}(X) \rar \mathbb Z$.
We denote by 
${\rm Cl}(X)_0$ 
its kernel.
Let us also notice that, by its definition, 
$V(X/Y)$ 
is spanned over 
$\mathbb R$
by classes of Weil divisors that have degree $0$.
Thus, the image 
$G_2$ 
of 
${\rm Cl}(X)_0$
via the first Chern class map into the quotient 
$(\nsr(X/Y)/V(X/Y))_0$
yields in turn a full rank lattice.
By the definition of 
$\sigma'$ 
then 
$G(X/Y):={\rm Im}(\sigma')$ 
is a subgroup contained in 
$G_2$. 
Thus, $G$ is finitely generated.
\end{proof}

For any Weil divisor $D$ on $X$, we define $D_{\deg=1} \coloneqq D-(\deg(D)-1)D_0$.
Then, $\deg(D_{\deg=1})=1$ and $f_\ast \O X.(D_{\deg=1})$ is a torsion free sheaf of rank 1 on $Y$.
In particular, by Riemann--Roch on $X_\eta$, 
there exists a rational section 
$S_{D_{\deg=1}}$ 
such that 
$[D_{\deg=1}]=[S_{D_{\deg=1}}] \in \nsr(X/Y)/V(X/Y)$.
Thus, 
$[D_{\deg=1}-D_0]=[S_{D_{\deg=1}} -D_0] \in G$.
Since we are free to choose 
$D$ 
to be any Weil divisor on 
$X$, 
it follows that $G(X/Y)$ 
is a 
$\zz$-module of maximal rank in $(\nsr(X/Y)/V(X/Y))_0$, i.e.,  $\rank{\rm Im}(\sigma')=\rho(X/Y)-v(X/Y)-1$.
Therefore, as $W(X/Y)$ is an affine space under the (fully faithful) action of $(\nsr(X/Y)/V(X/Y))_0$, the statement of the lemma follows.

{\bf Case 2}. 
{\it 
We prove the lemma without assuming the existence of a section}.

Let $d$ be the minimal positive integer such that $X_\eta$ has a divisor of degree $d$ defined over $k(Y)$.
Fix $D_0$ a horizontal divisor on $X$ such that $D_{0, \eta}$ has degree $d$.
Let $J_\eta$ denote the Jacobian of $X_\eta$.
As before, we will denote by $M$ 
the group 
of 
$k(Y)$-rational points of 
$J_\eta$.
Since 
$M$
acts on $X_\eta$ as a group of translation, 
$M$ 
naturally embeds in 
$\mathrm{Bir}(X/Y)$
and its image 
$H \subset \mathrm{Bir}(X/Y)$ 
has finite index.
Let 
$\theta \in M$ 
and let 
$D$ 
be a divisor with $\deg(D)=d$.
Then, $\theta_\ast  D_\eta -D_\eta \sim \theta _\ast  D \subs 0,\eta. - D \subs 0,\eta.$ on $X_\eta$.
Hence, dividing by $d$, we deduce that $\theta$ acts as a translation by $\frac{1}{d}[\theta_\ast D_0-D_0]$ on $W(X/Y)$.
As in Case 1, we define the morphism 
$\sigma' \colon M \rar (\nsr(X/Y)/V(X/Y))_0$
by 
$\sigma'(\theta)
\coloneqq
\frac{1}{d}[\theta_\ast  D_0-D_0]$.
As in Case 1, $\sigma'$ 
is a group homomorphism and its image 
$G(X/Y) \subset (\nsr(X/Y)/V(X/Y))_0$
is a full rank lattice.
Let us consider the group homomorphism 
$\phi \subs D_{0,\eta}. \colon J_\eta \rar J_\eta$ 
defined by 
$\phi \subs D_{0,\eta}.(x)
\coloneqq
\tau_x(D \subs 0,\eta.) - D \subs 0,\eta.$, 
where 
$\tau_x$ 
denotes the translation by 
$x \in J_\eta$.
Then, $\phi \subs D_{0,\eta}.$ is an \'{e}tale morphism of degree $d^2$, by the Theorem of the square for Abelian varieties, see \cite{Mum70}*{p. 59}.
For every (integral) divisor $D$ on $X$, we define $\qq$-divisor $D_{\deg=d}=D-(\frac{1}{d}\deg(D)-1)D_0$.
Then, $\deg(D_{\deg=d})=d$ and, by the minimality of $d$, $d\vert\deg(D)$, so that $D_{\deg=d}$ is actually a Weil divisor.
Again, by Riemann--Roch on $X_\eta$ and the minimality of $d$, the class $[D_{\deg=d}] \in \nsr(X/Y)/V(X/Y)$ can be represented by a prime divisor $S_{D_{\deg=d}}$ on $X$.
We regard the degree $0$ 
Weil divisor
$S_{D_{\deg=d}, \eta} - D \subs 0,\eta.$
on 
$X_\eta$
as a 
$k(Y)$-rational point
$\specialpoint
\in
J_\eta$.
The fiber 
$F_p 
\coloneqq
\phi \sups -1. \subs D_{0,\eta}.(\specialpoint)$ 
is a 
$0$-dimensional scheme defined over 
$k(Y)$
of length 
$d^2$.
In 
$J_{\overline{\eta}}
\coloneqq 
J_\eta 
\times_{{\rm Spec} \ k(Y)}
{\rm Spec} \ \overline{k(Y)}$, 
where 
$\overline{k(Y)}$
is the algebraic closure of 
$k(Y)$, 
$F_p  \times_{{\rm Spec} \ k(Y)}
{\rm Spec} \ \bar{k(Y)}$ 
is an effective divisor
of degree 
$d^2$.
On the other hand,
since 
$F_p$ 
is defined over $k(Y)$, 
the sum in $J_{\overline{\eta}}$of these
$d^2$
points is in turn a closed point of 
$J_{\overline{\eta}}$
defined over 
$k(Y)$, 
that is,
it is a 
$k(Y)$-rational point of 
$J_\eta$
which we denote by
$p'$.
The point 
$p'$ 
is by definition an element of 
$M$.
By definition, $\tau_{p'}(D \subs 0,\eta.) - D \subs 0,\eta. \sim d^2 (S_{D_{\deg=d},\eta} - D \subs 0,\eta.)$.
As 
$\tau_{p'}(D \subs 0,\eta.) - D \subs 0,\eta.=
d \sigma'({p'})$, then one can now conclude exactly as at the end of Case 1.
\end{proof}

\begin{theorem} 
\label{thm KM conj general}
Let $f \colon X \rar Y$ be a Calabi--Yau fiber space of relative dimension 1.
Then, there are only finitely many orbits for the action of $\mathrm{Bir}(X/Y)$ on:
\begin{enumerate}
    \item 
the set of chambers of 
${M}^e(X/Y)$:
\begin{align*}
\left\{ 
{A}^{e}(X/Y, \alpha)
\ 
\middle \vert 
\
(f' \colon X' \rar Y, \alpha)
\text{ is a marked minimal model of 
}
f
\right\};
\text{and,}
\end{align*} 

    \item 
the set of extremal faces of 
${M}^e(X/Y)$ 
induced by non-trivial factorizations of marked minimal models of 
$f$: 
\begin{align*}
\left\{
\alpha_\ast (g')^\ast
A^{e}(Z'/Y)
\
\middle \vert 
\
\begin{array}{l}
\xymatrix{
X'  \ar[r]_{g'} \ar@/^7pt/[rr]^{f'} &  Z' \ar[r]_{h'} & Y
}
\text{ is a non-trivial factorization}
\\
\text{of a marked minimal model
$(f' \colon X' \to Y, \alpha)$ 
of 
$f$}
\end{array}
\right\}.
\end{align*}  
\end{enumerate}
\end{theorem}

\begin{proof}
For the reader's convenience, we divide the proof into several steps.

We observe that:
\begin{itemize}
\item 
 replacing $f$ with a marked relatively minimal model of $f$ does not affect the conclusions of the theorem, cf.~\S~\ref{ssect:cy.fibre.sp};
\item 
since $f$ is a contraction of relative dimension 1, the only contractions that can factor $f$ are either birational models of $X$ or birational models of $Y$.
\end{itemize}

{\bf Step 0}.
{\it In this step, we make a first reduction and then we introduce the strategy of proof.}

By Lemma \ref{lemma_vertical_divs}, up to replacing $f$ with a relatively minimal model, we can assume that $f$ factors as 
\begin{align}
\label{diag.fact.Y'}
\xymatrix{X \ar@/^10pt/[rr]^f\ar[r]_g & Y' \ar[r]_h & Y,}
\end{align}
where every divisor that is $g$-vertical dominates a divisor in $Y'$, $h$ is birational, and $Y'$ is $\qq$-factorial.
To keep the notation light, we define 
\[
v\coloneqq v(X/Y), 
\quad
\rho\coloneqq\rho(X/Y), 
\quad 
\text{and}
\quad
k \coloneqq \rho(Y'/Y).
\]
By Lemma \ref{SES vertical divisors}, $g^\ast \nsr(Y'/Y) \subset V(X/Y)$.
If 
\begin{align*}
\label{diag.fact.Y'.2}
\xymatrix{
X 
\ar@/^10pt/[rr]^f
\ar[r]_{g'} & 
Y'' 
\ar[r]_{h'} & 
Y,}
\end{align*}
is another factorization of 
$f$ 
satisfying the same properties as the one 
in~\eqref{diag.fact.Y'}, 
then by 
Lemma~\ref{lemma_cones2} 
and 
Lemma~\ref{lemma factorization general}.\ref{opt:lemma.factorization.general.flops}, $Y', Y''$ are isomorphic in codimension 1 and 
$g^\ast \nsr(Y'/Y) = (g')^\ast \nsr(Y''/Y)$ in $\nsr(X/Y)$.
In particular, $g^\ast \nsr(Y'/Y)$ is an intrinsically defined $k$-dimensional subspace of $V(X/Y)$.
Let 
$(y_1, \ldots , y_k)$ 
be a basis of 
$g^\ast\nsr(Y'/Y)$.
Then, we may complete it to a basis 
$(y_1, \dots, y_k, y_{k+1}, \dots ,y_v)$ 
of 
$V(X/Y)$.
In turn, we complete this basis to a basis 
$(y_1, \dots, y_k, y_{k+1}, \dots ,y_v, y_{v+1}, \dots,  y_\rho)$ 
of 
$\nsr(X/Y)$.

By Corollary~\ref{corollary finite fiber spaces general}, there exist only finitely many extremal faces of 
$M^e(X/Y)$ 
corresponding to fiber space structures of a marked minimal model of 
$f$:
that proves the finiteness of the extremal faces of 
$M^e(X/Y)$ 
corresponding to a factorization of a marked minimal model of 
$f$ 
which lie on the boundary of the big cone.
Thus, in the remainder of the proof, we will focus on the extremal faces of 
$M^e(X/Y)$ 
corresponding to a birational contraction factoring 
$f$, 
that is, those extremal faces that intersect
%M^e(X/Y) \cap B(X/Y)$.
$B(X/Y)$.

Our strategy for the proof of the theorem is to now proceed by induction on $v$.
Let us recall that in Lemma~\ref{lemma compact} we defined $W(X/Y) \coloneqq \lbrace z \in \nsr(X/Y)/V(X/Y)\vert\deg(z)=1 \rbrace$.
\newline

{\bf Step 1.}
{\it In this step, we prove the base case of the induction, that is, the case where $v=0$.}

If 
$v=0$, 
then 
$W(X/Y)=\{z\in \nsr(X/Y)\vert\deg z=1\}$.
We then prove the following claim.

\medskip

\noindent
{\bf Claim 1}.
If 
$v=0$, 
then 
$W(X/Y)\subset M^e(X/Y)\cap B(X/Y)$.
\begin{proof}[Proof of Claim 1]
Let 
$z\in W(X/Y)$.
As 
$\deg z=1>0$, 
then 
$z\in B(X/Y)$, 
thus its class can be represented by an effective $\mathbb Q$-divisor $D$, that is, 
$z=[D]\in B^e(X/Y)$.

Now, let 
$D$ 
be a divisor with 
$\deg D>0$. 
If 
$F$ 
is a component of the relative stable base locus of 
$D$ 
over 
$Y$, 
then 
$F$ 
is a vertical divisor. 
As 
$v=0$, 
all vertical divisors are movable over 
$Y$, 
since they are numerically equivalent to the pull-back of a divisor on 
$Y$. 
It then follows easily that 
$D$ 
itself is movable over 
$Y$.
\end{proof}
To prove the statement of this step, as $W(X/Y)/G(X/Y)$ is compact, it suffices to invoke Lemma~\ref{lemma chambers}.
\newline

We will now proceed to prove the inductive step:
we assume that $v>0$ and that the inductive hypothesis holds.
We define $I(X/Y)$ to be the collection of all cones in $M^e(X/Y)$ of the form 
$\alpha_\ast (g_{\widetilde Z}^\ast {A}(\widetilde Z/Y))$
for a marked minimal model 
$(\widetilde f \colon \widetilde X \rar Y,\alpha)$ of $f$ 
factoring as 
\begin{equation}
\label{diag.fact.f.tilde2}
\xymatrix{
\widetilde{X} \ar[r]_{g_{ \widetilde Z}} \ar@/^10pt/[rrr]^{\widetilde{f}} & 
\widetilde{Z} \ar[r] & 
\widetilde{Y} \ar[r]_{h_{\widetilde Y}} & 
Y,
}
\end{equation}
where 
$\widetilde Y \rar Y$ 
is a birational contraction that is not an isomorphism, whereas
$\widetilde X \rar \widetilde Z$ 
is a birational morphism, where, in this last case, an isomorphism is also allowed.
If 
$\widetilde X \rar \widetilde Z$ 
is an isomorphism, then 
$\alpha_\ast (g_{\widetilde Z}^\ast {A}(\widetilde Z/Y))=A(X/Y, \alpha)$
which is the interior of a chamber of $M^e(X/Y)$.
If $\widetilde X \rar \widetilde Z$ is not an isomorphism, then the cone $\alpha_\ast (g_{\widetilde Z}^\ast {A}(\widetilde Z/Y))$ is the relative interior of an extremal face of $M^e(X/Y)$ corresponding to a birational contraction, that is, the extremal face intersects $B(X/Y)$.
\newline

{\bf Step 2.}
{\it In this step, we show that the theorem holds for those chambers and extremal faces of $M^e(X/Y)$ belonging to the collection $I(X/Y)$.}

Let $(\widetilde f \colon \widetilde X \rar Y,\alpha)$ be a marked minimal model together with a factorization of $\widetilde f$ as in~\eqref{diag.fact.f.tilde2}.
Then, the cone 
$\alpha_\ast (g_{\widetilde Z}^\ast {A}(\widetilde Z/Y))$ 
belongs to 
$I(X/Y)$.
By Lemma~\ref{SES vertical divisors}, 
$v(\widetilde X/\widetilde Y)< v(\widetilde X /Y)=v$.
Thus, we can apply the inductive hypothesis to the morphism
$\widetilde X \rar \widetilde Y$.
Then, there are only finitely many orbits for the action of $\mathrm{Bir}( \widetilde X/ \widetilde Y)$ on:
\begin{enumerate}
    \item[(i)] 
the chambers of $M^e(\widetilde X/ \widetilde Y)$; and
    \item[(ii)] 
the extremal faces of $M^e( \widetilde X/ \widetilde Y)$ induced by factorizations of a marked minimal model 
of $\widetilde  X \rar \widetilde Y$.
\end{enumerate}
By the inductive hypothesis and since 
\begin{align*}
\lefteqn{\overbrace{\phantom{\mathrm{Bir}(X/Y)=\mathrm{Bir}(\widetilde X/Y)}}^{\text{$\alpha$ is an isom. in codim. 1}}}
\mathrm{Bir}(X/Y)=\underbrace{\mathrm{Bir}(\widetilde X/Y)=\mathrm{Bir}(\widetilde X/ \widetilde Y)}_{\text{by Lemma~\ref{lemma_rel_bir}}},
\end{align*}
then there are only finitely many orbits in (i-ii) also for the action of $\mathrm{Bir}(X/Y)$ on $\nsr(\widetilde X/ \widetilde Y)$.
Hence, up to this action, there are finitely many marked minimal models
$(f_i \colon X_i \rar Y, \alpha_i)$, $i=1, \dots, s$  of $f$ admitting a factorization 
%\begin{align}
%\label{diag.fact.high.bir.mod}
$$
\xymatrix{X_i \ar[r] \ar@/^10pt/[rr]^{f_i} & \widetilde Y \ar[r]_{h_{\widetilde Y}} & Y}.
%\end{align}
$$
Since by Lemma \ref{finiteness models} there exist only finitely many birational models ${h_{\widetilde Y}} \colon \widetilde Y \rar Y$ that may appear in a factorization of marked minimal models of $f$, then there are finitely many orbits of the action of $\mathrm{Bir}(X/Y)$ on 
\begin{enumerate}
\item[(i')]
the chambers of $M^e(X/Y)$ corresponding to marked minimal models of $f$ admitting a non-trivial factorization through a higher birational model of $Y$; and
\item[(ii')]
the extremal faces of $M^e(X/Y)$ induced by factorizations of a marked minimal model as in~\eqref{diag.fact.f.tilde2}.
\end{enumerate}
By Remark \ref{rmk models 2 general}, we may restrict our attention to the chambers and the faces thereof not corresponding to fiber space structures.
\newline

To conclude the proof, we study orbits the of 
$\mathrm{Bir}(X/Y)$ 
on the extremal faces and chambers of 
$M^e(X/Y)$ 
not contained in 
$I(X/Y)$.
To this end, we define 
$\widehat{I}(X/Y) \subset \nsr(X/Y)$ 
to be the union of all the cones that are contained in 
$I(X/Y)$, 
that is, of all the cones of the form $\alpha_\ast(g_{\widetilde Z}^\ast A(\widetilde Z /Y))$,
where
$(\widetilde f \colon \widetilde X \rar Y,\alpha)$ 
is a marked minimal model of 
$f$
admitting a factorization as in \eqref{diag.fact.f.tilde2}.
Moreover, we define
\begin{align*}
J(X/Y) \coloneqq \lbrace z \in \nsr(X/Y)\vert\deg(z)=1, \; z \in {M}^e(X/Y) \setminus \widehat{I}(X/Y) \rbrace.
\end{align*}

{\bf Step 3}. 
{\it In this step, we show that $J(X/Y)$ is closed in $\nsr(X/Y)$}.

The condition $\deg (z)=1$ is clearly a closed one. 
The set $\{z\in  {M}^e(X/Y)\vert\deg (z)=1\}$ is also closed, as every element is big over $Y$ and hence $\rr$-linearly equivalent to an effective divisor over $Y$.

Let 
$z$ 
be a point in the closure of 
$J(X/Y)$: 
we will show that 
$z \not \in \widehat I (X/Y)$.
To this end, we assume that $z \in \widehat I (X/Y)$ and we shall proceed to obtain a contradiction.

By Lemma \ref{lemma chambers}, in a neighborhood $U$ of $z$, $M^e(X/Y)$ decomposes as a finite union of chambers 
\begin{align*}
U \cap M^e(X/Y)
=
\bigcup_{t=1}^b
A^e(X/Y,\alpha_t).
\end{align*}
Let us fix one of the chambers appearing in the decomposition above, which we denote by 
$A^e(X/Y,\alpha_t)$.
If 
$z$ 
is in the interior of 
$A^e(X/Y, \alpha_t)$, 
then 
$A(X/Y, \alpha_t)$ 
is one of the cones in 
$I(X/Y)$, 
since 
$z \in \widehat I (X/Y)$.
In particular, 
$z$ 
belongs to the interior of 
$\widehat I (X/Y)$ 
and we immediately obtain the sought contradiction, as 
$z$ 
is a limit point for 
$J(X/Y)$.
Thus, 
$z$ 
must belong to the relative interior of a face of the form 
$\alpha_\ast l_1^\ast A(X''/Y)
\subset
A^e(X'/Y, \alpha)$ 
induced by a factorization of the form 
$\xymatrix{X' \ar[r]^{l_1} & X'' \ar[r]^{l_2} & Y}$, 
where $l_1$ is a birational morphism which is not the identity.
As $X' \rar Y$ is a Calabi--Yau fiber space of relative dimension 1, then by~\cite{BCHM},
$X''$ can be constructed as the relatively ample model of the relatively big and movable class $z$.
But then, as $z \in \widehat I (X/Y)$, we have that the cone $\alpha_\ast l_1^\ast A(X''/Y)$
is an element of $I(X/Y)$, and $X'' \rar Y$ admits a non-trivial factorization $X'' \rar Y'' \rar Y$, which in turn induces a non-trivial factorization $X' \rar Y'' \rar Y$.
Thus, in a neighborhood of $z$, we have $A^e(X'/Y,\alpha) \subset \widehat I (X/Y)$.
Since the same reasoning can be used for any other chamber $A^e(\tilde X/Y,\alpha)$ arising from the decomposition claimed in Lemma \ref{lemma chambers}, then $z$ must be in the interior of $\widehat I (X/Y)$, thus reaching the sought contradiction.

Let us underline here that, in the definition of 
$I(X/Y)$,
we must use cones of the form 
${A}(\widetilde Z/Y)$ 
and not of the form
${A}^e(\widetilde Z/Y)$:
indeed, if we chose instead to work with ${A}^e(\widetilde Z/Y)$, it would not be true that the set 
$J(X/Y)$ 
is closed in 
$\nsr(X/Y)$.
In fact, to define 
$J(X/Y)$, 
we wish to consider all birational models of 
$X$ 
over 
$Y$
for which the structure morphism to 
$Y$
does not admit a factorization through a higher birational model of 
$Y$.
If 
$I(X/Y)$ 
contained cones of the form 
$\alpha_\ast (g_{\widetilde Z}^\ast A^e(\widetilde Z/Y))$,
where $\widetilde Z$ is part of a factorization as in \eqref{diag.fact.f.tilde2},
it may happen that the boundary of such cone contains as an extremal face 
the nef cone of another birational model 
$\widetilde Z'$ 
of 
$X$
over 
$Y$
which instead cannot be factorized in any way.
\newline

\iffalse

In fact, by Lemma \ref{lemma chambers}, we may assume that $z$ is in the closure of $\alpha_\ast  A(X'/Y)$, for some marked minimal model $(f'\colon X' \rar Y,\alpha)$ of $f$. 
By assumption, there exists a sequence $(z_n)_{n \in \mathbb N}$ such that $z_n \in J(X/Y)$ for every $n$ and $z_n \to z$ in $\nsr(X/Y)$.
As the chamber decomposition is locally finite around $z$ by Lemma \ref{lemma chambers}, up to passing to a subsequence, we may assume that all $z_n$ belong to the same $A^e(X/Y,\alpha)$, where $A^e(X/Y,\alpha) \not \in I(X/Y)$.
Hence, we may choose $\alpha_\ast {A}(X'/Y)$ such that it does not belong to $I(X/Y)$.
In particular, $f'$ does not admit a factorization, as in~\eqref{diag.fact.high.bir.mod}, through a higher birational model of $Y$.
Thus, the boundary of 
$\alpha_\ast  A(X'/Y)$ 
in ${M}^e(X/Y)$ is given by the union of the cones of the form
$\alpha_\ast l'^\ast {A}^e(Z'/Y)$,
where 
\begin{align*}
\xymatrix{X'\ar[r]^{l'} \ar@/^14pt/[rr]^{f'} & Z' \ar[r]^{m'} & Y}
\end{align*}
is a factorization of $f'$ with $l'$ a birational morphism.
On the other hand, if 
$z \in I(X/Y)$, 
then
\begin{align*}
\xymatrix{Z' \ar[r]^{n'} \ar@/^14pt/[rr]^{m'}& \widetilde Y \ar[r]^{o'} & Y},
\end{align*} 
where
$o'$
is a non-isomorphic birational morphism.
That would imply that also 
$f'$ 
admits an analogous factorization, by pre-composing with 
$l'$, 
thus prompting the desired contradiction.
\newline
\fi

{\bf Step 4}.
{\it 
In this step, we show that, in order to prove the theorem, it suffices to show that the map 
$p \colon J(X/Y) \rar W(X/Y)$ 
is proper.
}

By Lemma~\ref{lemma compact}, there exists a finite index subgroup 
$H < \mathrm{Bir}(X/Y)$ 
such that the image 
$G(X/Y)$ 
of $H$ under the natural representation $\sigma \colon \mathrm{Bir}(X/Y) \to\mathrm{GL}(\nsr(X/Y), \mathbb Z)$
acts on $W(X/Y)$ as a group of translations and $W(X/Y)/G(X/Y)$ is a compact torus.
Thus, $G(X/Y)$ acts also on $J(X/Y)$: indeed, the action of $\mathrm{Bir}(X/Y)$ on $\nsr(X/Y)$ preserves
\begin{itemize}
\item 
the degree of a divisor on the generic fiber of $f$; and
\item 
the property that a marked minimal model $(\widetilde f \colon \widetilde X \to Y, \alpha)$ of $f$ admits a non-trivial factorization of $\widetilde f$.
\end{itemize}
Thus, there exists a natural morphism $J(X/Y)/G(X/Y) \rar W(X/Y)/G(X/Y)$, where the latter is compact.
The properness of of the map 
$p \colon J(X/Y) \rar W(X/Y)$, 
in turn, implies the properness of 
$J(X/Y)/G(X/Y) \rar W(X/Y)/G(X/Y)$, 
and, in particular, the compactness of 
$J(X/Y)/G(X/Y)$.
Then, the claim follows by combining the local finiteness of the movable cone inside the big cone and the compactness of 
$J(X/Y)/G(X/Y)$.
In particular, the claim follows from Lemma \ref{lemma chambers}: 
indeed, the faces on the boundary of the big cone are taken care of by Corollary~\ref{corollary finite fiber spaces general}.
\newline

{ \bf Step 5}.
{\it 
In this step, we verify the properness of the map $p$.
}

Since the topological spaces of interest have the Heine--Borel property, we can check the properness of $p \colon J(X/Y) \rar W(X/Y)$ using sequences.
Arguing by contradiction, we assume that there exists a  sequence $(z_n) \subs n \in \mathbb N. \subset J(X/Y)$ such that the sequence $(p(z_n)) \subs n \in \mathbb N.$ converges in $W(X/Y)$ whereas $(z_n) \subs n \in \mathbb N.$ does not admit any convergent subsequence.
Both of these conditions are not affected by passing to a subsequence of $(z_n) \subs n \in \mathbb N.$
and of
$(p(z_n)) \subs n \in \mathbb N.$
relative to the same subset of indeces.

For all $n \in \mathbb N$, we write $z_n = \sum \subs i=1.^\rho a_n^i y_i$ and we set 
%\begin{align}
%\label{eqn.def.wn.xn.tn}    
$$
w_n \coloneqq \sum \subs i=1.^k a^i_n y_i, 
\qquad 
x_n \coloneqq \sum \subs i=k+1.^\rho a^i_n y_i, 
\quad
\text{and}
\quad 
t_n \coloneqq \sum \subs i=v+1.^\rho a^i_n y_i.
%\end{align}
$$
For all 
$n \in \nn$, 
$z_n=x_n+w_n$,  
$z_n$,  
$t_n$,
and 
$x_n$ 
are big over 
$Y$, 
as well as over any birational model 
$\widetilde Y \to Y$
of $Y$.
By construction, $\mathrm{span}(y \subs v+1.,\ldots , y \subs \rho.)$ maps isomorphically onto $\nsr(X/Y)/V(X/Y)$.
\newline

{\bf Step 5.1}.
{\it
In this step, we show that the sequence 
$(x_n)_{n\in \mathbb N}$ 
contains a converging subsequence.
}

Let $Y'$ be the higher model of $Y$ defined in Step 0.
For all $n \in \mathbb N$, $[z_n] \in M^e(X/Y') \subset \nsr(X/Y')= \nsr(X/Y)/\nsr(Y'/Y)$, by Lemma \ref{tower movable}.
Here $[ - ]$ indicates the equivalence class in the quotient.
As $(y_1, \ldots , y_k)$ is a basis of $\nsr(Y'/Y)$, $[z_n]=[x_n] \in \nsr(X/Y')$.
Thus, $([z_n])_{n\in \mathbb N} \subset \nsr(X/Y')$ contains a converging subsequence if and only if $(x_n)_{n\in \mathbb N}$ does.

We then argue as in~\cite{Kaw97}*{proof of Theorem~3.6} and look at the intersection numbers with the general fibers of the $g$-exceptional divisors over their images in $Y'$.
By Lemma \ref{hyperplane movable}, this can be reduced to a lower-dimensional question by considering a very general hyperplane section of $Y'$.
Thus, proceeding inductively, we can reduce to the case when $Y'$ is a surface and $X$ a threefold, which is treated in \cite{Kaw97}*{proof of Theorem 3.6}.
Thus, $(x_n)_{n\in \mathbb N}$ admits a converging subsequence.
\newline

Since we are assuming that $(z_n) \subs n \in \nn.$ contains no convergent subsequence, by Step 5.1, the same must hold for $(w_n) \subs n \in \nn.$, in view of the discussion above.
As both of these conditions are not affected by passing to a subsequence, then we pass to the subsequence of $(x_n)\subs n \in \nn.$ whose existence was shown in Step 5.1 and we also pass to the subsequences of $(z_n) \subs n \in \nn.$, $(w_n) \subs n \in \nn.$ corresponding to the same indices.
Hence, we can assume that 
$(x_n) \subs n \in \nn.$ 
is converging, while 
$(z_n) \subs n \in \nn.$, 
$(w_n) \subs n \in \nn.$
do not contain any convergent subsequence.

For all $n\in \mathbb N$, we set $w'_n$ to be the unique element of $\nsr(Y'/Y)$ such that $g^\ast(w'_n)=w_n$.
As $(w_n) \subs n \in \nn.$
does not contain any convergent subsequence, also
$(w'_n) \subs n \in \nn.$
shall not contain any convergent subsequence.
\newline

{\bf Step 5.2}
{\it 
In this step, we show that there exists a birational contraction 
$Y' \dashrightarrow Y_N$ 
over 
$Y$ 
which is the outcome of a run of the MMP for a suitable subsequence of $(w'_n) \subs n \in \nn.$.
}
\newline
Since $\K Y'. + \Delta' \sim \subs \qq,Y. 0$, $(Y',\Delta')$ is klt, and $Y' \rar Y$ is birational, for any divisor class in $\nsr(Y'/Y)$ we can run a relative MMP over $Y$, by~\cite{BCHM}*{Corollary~1.3.2}.
Hence, for all $n\in \mathbb N$, the $w'_n$-MMP over $Y$ can be run and it must terminate with a relatively minimal model for $w'_n$.
By Lemma~\ref{finiteness models}, there are just finitely many marked birational models of 
$Y$ 
that can appear in those runs of the MMP.
Furthermore, by the negativity lemma, for a fixed 
$n \in \mathbb N$ 
no model can appear more than once in the 
$w'_n$-MMP.
Thus, up to passing to a subsequence  $(w'_{n_k})_{k \in \mathbb N} \subset (w'_n)_{n \in \mathbb N}$, we may assume that there exists a sequence of divisorial contractions and isomorphisms in codimension 1
\begin{align}
\label{diag.MMP.w'_n}
\xymatrix{
Y'=:Y_0 \ar@{-->}[r]^{\psi_0} \ar[drr] &
Y_1 \ar@{-->}[r]^{\psi_1} \ar[dr]&
\dots \ar@{-->}[r]^{\psi_{N-2}} &
Y_{N-1} \ar@{-->}[r]^{\psi_{N-1}} \ar[dl]&
Y_{N} \ar[dll]
\\
& & Y & & 
}
\end{align}
which yields, for any $k\in\mathbb N$, a run of the $w'_{n_k}$-MMP over $Y$.
In particular, for all $k\in\mathbb N$, $(\psi_{N-1} \circ \dots \circ \psi_0)_\ast w'_{n_k}$ is nef over $Y$.
Moreover, for all $i=1, \dots, N$, by~\cite{dCS17}*{Propostion~2.26}, there exists a marked minimal model $(f_{i} \colon X_{i} \rar Y, \alpha_i)$ of $f$ together with a factorization 
\begin{align*}
 \xymatrix{
 X_i \ar[r]^{l_i} \ar@/^14pt/[rr]^{f_i} & 
 Y_i \ar[r] & 
 Y
 }
\end{align*}
such that the induced diagram
\begin{align*}
    \xymatrix{
   X_i \ar[d]^{l_i} 
   \ar@{-->}[rrr]^{\phi_i\coloneqq \alpha_{i+1}^{-1} \circ \alpha_i}_{\text{isom. in codim. 1}}
   & & &
   X_{i+1} \ar[d]^{l_{i+1}}
   \\
 Y_i \ar@{-->}[rrr]_{\psi_i} 
 & & &
 Y_{i+1} 
    }
\end{align*}
is commutative.
Finally, passing to a suitable subsequence $(w'_{n_{k_j}})_{j \in \mathbb N} \subset (w'_n)_{n \in \mathbb N}$, we can assume that, for all $j\in \mathbb N$, the $w'_{n_{k_j}}$ all have the same ample model  $\psi_{N} \colon Y_N \rar Y_{N+1}$ over $Y$.
\newline

We pass to the subsequence of $(w'_n)\subs n \in \nn.$ whose existence was shown in Step 5.2; 
we also pass to the subsequences of
all the other sequences involved in the proof corresponding to the same indices.

We set 
$w_{n,0} \coloneqq w_n$,
$w'_{n,0} \coloneqq w'_n$,
$z_{n,0} \coloneqq z_n$,
and
$x_{n,0} \coloneqq x_n$.
We define inductively 
for $i=0, \dots, N-1$,
\begin{align}
\label{eqn.defn.various.seqs}
&\nsr(X_{i+1}/Y )\ni z_{n, i+1} \coloneqq
(\phi_i)_\ast z_{n, i},
\quad 
&\nsr(Y_{i+1}/Y) \ni w'_{n,i+1}\coloneqq 
(\psi_i)_\ast w'_{n,i}, 
\\ 
\nonumber
&\nsr(X_{i+1}/Y )\ni w_{n,i+1}\coloneqq 
l_{i+1}^\ast w'_{n,i+1}, 
\quad
&\nsr(X_{i+1}/Y) \ni x_{n, i+1}\coloneqq 
(\phi_{i})_\ast(x_{n,i}+w_{n,i})-w_{n,i+1}.
\end{align}
With these definitions, since 
$[z_n, 0]=[x_{n, 0}] \in \nsr(X/Y')$, 
then, for all $i=0, 1, \dots, N$,
\begin{align}
\label{eqn.prop.x_{n,i+1}}
x_{n, i}=
z_{n,i}-w_{n, i}
\qquad 
\text{and}
\qquad 
 [z_{n,i}] = [x_{n,i}] \in \nsr(X/Y_{i}).   
\end{align}
As $\psi_N$ is the ample model for the $w'_{n}$, we define $w'_{n, N+1}$ to be the only element of $\nsr(Y_{N+1}/Y)$
such that
$w'_{n, N}=\psi_{N}^\ast w'_{n, N+1}$; 
thus, we have
%\begin{align}
%    \label{eqn.prop.w'_{n,N+1}}
$$
[z_{n, N}] = [x_{n,N}] \in \nsr(X/Y_{N+1}).
%\end{align}
$$

{\bf Step 5.3.}
{\it
In this step, we show that for all $i=0, 1, \dots, N$, there exits a converging subsequence $(x_{n_k,i}) \subs k \in \nn. \subset (x_{n,i}) \subs n \in \nn.$ 
which can be chosen independently of $i$.
}

We proceed to prove the claim by induction on $i=0, \dots, N$.

By Step 5.1, the claim is true for $i=0$ as $x_{n,0}=x_n$.
Hence, we can assume that $i>0$ and that we have converging subsequences $(x_{n_k, l})_{k \in \mathbb N}$ for all $l=0, \dots, i-1$, corresponding to the same set of indices.

If $i\leq N$ and $\psi_{i-1}$ in~\eqref{diag.MMP.w'_n} is a flip, there is nothing to prove, as $x_{n,i}=(\phi_{i-1})_\ast x_{n,i-1}$, by definition,  in~\eqref{eqn.defn.various.seqs}, 
and since $(\phi_{i-1})_\ast$ descends to an isomorphism between 
$\nsr(X_{i-1}/Y_{i-1})$ 
and 
$\nsr(X_{i}/Y_{i})$, 
cf. Lemma~\ref{lemma_cones2}.

If $i\leq N$ and $\psi_{i-1}$ in~\eqref{diag.MMP.w'_n} is a divisorial contraction, denoting $E_{i-1}$ its exceptional divisor, then 
$w'_{n,i-1}-\psi_i^\ast w'_{n,i}=c_{n,i-1} [E_{i-1}] \in \nsr(Y_{i-1}/Y)$, $c_{n,i-1} >0$.
Since $c_{n, i-1} >0$, by Lemma \ref{movable bir 2} 
$c_{n,i-1} E_{i-1} \not \in \overline{M}(Y_{i-1}/Y_{i})$.
On the other hand, setting 
$F_{i-1}\coloneqq (\psi_{i-2}\circ \dots\circ \psi_1 \circ g)^\ast E_{i-1}$,
$[z_n] = 
[x_{n,i}]= 
[x_{n,i-1} +c_{n,i-1}  F_{i-1}] \in \nsr(X/Y_{i})$.
By Lemma \ref{tower movable}, 
$[x_{n,i-1} +c_{n,i-1}  F_{i-1}] \in \overline{M}(X/Y_{i})$.
Since 
$(x_{n_k, i-1}) \subs k \in \nn.
\subset
(x_{n, i-1}) \subs n \in \nn.$ 
is a convergent subsequence (independent of $i$) and $c_{n, i-1} F_{i-1}$ is not movable, then 
$(c_{n_k, i-1})_{k \in \mathbb N}$ 
must be a sequence of positive real numbers bounded from above:
hence, it contains a converging subsequence
$(c_{n_{k_j}, i-1})_{j \in \mathbb N}$.
Hence, taking the  subsequences
$(x_{n_{k_j}, l})_{k \in \mathbb N}$ for all $l=0, \dots, i$ proves the inductive step in this case.
\newline

We pass to the subsequences of 
$(x_{n, i})\subs n \in \nn.$, 
$i=0, \dots, N$ 
whose existence was shown in Step 5.3;
we also pass, for all $i$, to the subsequences of 
$(z_{n, i}) \subs n \in \nn.$, 
$(w_{n, i}) \subs n \in \nn.$,
$(w'_{n, i})\subs n \in \nn.$, 
corresponding to the same indices.
Since for all $i$,
$(\phi_i)_\ast \colon \nsr(X_i/Y) \rar \nsr(X_{i+1}/Y)$ 
is an isomorphism,
then for all $i$, 
$(z_{n, i}) \subs n \in \nn.$, 
(resp.
$(w_{n, i}) \subs n \in \nn.$,
$(w'_{n, i})\subs n \in \nn.$)
does not contain any converging subsequence.
\newline

{\bf Step 5.4}.
{\it
In this step, we show that $Y_{N+1} \rar Y$ is not an isomorphism.
}

As 
$(x_{n,N}) \subs n \in \nn.$ 
is convergent, and 
$z_{n,N}=x_{n, N} + w_{n, N}$, 
cf.~\eqref{eqn.prop.x_{n,i+1}}, then $Y_{N+1} \rar Y$ is not an isomorphism:
otherwise, $z_{n, N}=x_{n, N} \in \nsr(X_N/Y)$ and
$(z_{n, N})\subs n \in \nn.$ would be convergent.
\newline

{\bf Step 5.5}.
{\it In this step, we show that there exists a marked minimal model 
$(f_{N+1} \colon X_{N+1} \rar Y, \alpha_{N+1})$ 
of $f$ together with a factorization
%\begin{align}
%\label{diag.final.factor}
$$
\xymatrix{
X_{N+1}\ar[r]_{l_{N+1}} \ar@/^10pt/[rr]^{f_{N+1}} & Y_{N+1} \ar[r] & Y}
%\end{align}
$$
such that for infinitely many $n \in \nn$, 
$[x_{n, N+1}]$ 
is nef over $Y_{N+1}$, where
$\phi_{N}\coloneqq 
\alpha_{N+1}^{-1} \circ \alpha_{N}$
and
$x_{n, N+1} \coloneqq (\phi_{N})_\ast x_{n, N}$.
}

Since 
$(x_{n, N})_{n \in \nn}$ 
is convergent, calling 
$\overline x_N \in \nsr(X_{N}/Y)$ 
its limit, then, by construction 
$\overline x_N, x_{n, N}$ 
are big over both 
$Y$, 
$Y_{N+1}$, 
for all 
$n \in \nn$.
Hence, Lemma~\ref{lemma chambers} applied to $X_N \to Y_{N+1}$, implies that there exists a subsequence 
$(x_{n_k, N})_{k \in \nn} \subset (x_{n, N})_{n\in \nn}$ and an isomorphism in codimension 1
$\phi_N \colon X_N \dashrightarrow X_{N+1}$ such that $(\phi_N)_\ast x_{n_k, N}$ is nef for all $k \in \nn$.
\newline

We pass to the subsequence 
$(x_{n_{k}, N+1})\subs k \in \nn.
\subset 
(x_{n, N+1})\subs n \in \nn.$ 
just defined; 
we also pass to the subsequences of
all the other sequences involved in the proof corresponding to the same indices.
We define for all $n \in \nn$,
$z_{n, N+1}\coloneqq
(\phi_{N})_\ast z_{n, N}$,
$w_{n, N+1} \coloneqq 
l_{N+1}^\ast w'_{n, N+1}$.
\newline

{\bf Step 5.6}.
{\it In this step, we show that there exists a positive real number $\epsilon_{N+1}$ such that 
for any curve $C \subset X_{N+1}$ contained in the fiber of $f_{N+1}$, $x_{n, N+1} \cdot C > -\frac{2\dim X_{N+1}}{\epsilon_{N+1}}$ .}

By Step 5.3 and Lemma \ref{lemma divisors} applied to $f_{N+1}$, there exist effective divisors
$D_{n, N+1}$, $n \in \mathbb N$ 
big over $Y$ such that $x_{n, N+1}=[D_{n, N+1}] \in \nsr(X_{N+1}/Y)$ and 
$0 < \epsilon_{N+1} < 1$ 
(independent of $n$) such that 
for all $n \in \nn$, $(X_{N+1},\epsilon_{N+1} D_{n, N+1})$ is klt.
Hence, the conclusion follows by the Cone Theorem.
\newline

We fix an integer 
$T_{N+1} \geq 2\frac{2\dim X_{N+1}}{\epsilon_{N+1}}$,
%\footnote{\color{red} STE: I put a 2 in front of the RHS, so that we can add it twice in Case 5.7.a: the first time guarantees the nefness of $z_{n,N+1}$, but then adding it again guarantees that the ample model of $z_{n,N+1}$ factors though $Y_{N+1}$, which is needed to get the contradiction}
and fix a Cartier divisor $H_{N+1}$ on $Y_{N+1}$ ample over $Y$.
\newline

{\bf Step 5.7}.
{\it
In this step, we show that for infinitely many $n \in \mathbb N$, $z_n \in \widehat I(X/Y)$.
This prompts the desired contradiction, since $z_n \in J(X/Y)$ and hence it cannot be an element of $\widehat I(X/Y)$ and, hence, concludes the proof.
}

We must distinguish two separate cases at this point.
%We define 
%$z_{n, N+1}\coloneqq 
%(\phi_{N})_\ast z_{n, N}$ in $\nsr(X_{N+1}/Y)$.
\newline

{\bf Case 5.7.a}. 
{\it
In this case we assume that there exists a subsequence 
$(w'_{n_k, N+1})\subs k \in \nn.
\subset 
(w'_{n, N+1})\subs n \in \nn.$ 
such that for all 
$k \in \mathbb N$,
$w'_{n_k, N+1}- T_{N+1}H_{N+1}$
is nef over $Y$.
}

Since 
$H_{N+1}$ 
is ample over 
$Y$ 
and Cartier, then for any irreducible curve 
$C' \subset Y_{N+1}$ 
in the fibers of the structure morphism
$Y_{N+1} \to Y$, 
$H_{N+1} \cdot C'$
is a positive integer.
In turn, the assumption in Case 5.7.a implies that for any irreducible curve $C' \subset Y_{N+1}$ in the fibers of $Y_{N+1} \to Y$, 
$w'_{n_k, N+1} \cdot C' \geq T_{N+1}$ holds.
Therefore, for any irreducible curve $C \subset X_{N+1}$ in the fibers of $f_{N+1}$, 
$w_{n_k, N+1} \cdot C'$ is either $0$ or $\geq T_{N+1}$.
As $X_{N+1}$ has been chosen so that $x_{n, N+1}$ is nef over $Y_{N+1}$, cf. Step 5.5, then, by Step 5.6,
%$z_{n, N+1}=x_{n, N+1} + w_{n, N+1}$
$x_{n_k, N+1} + \frac{1}{2}w_{n_k, N+1}$
is nef over $Y$.
%\footnote{\color{red} STE: added this intermediate step, to separate the relative nefness over $Y$ from the fact that the model actually factors through $Y_{N+1}$}
In turn, 
$z_{n_k, N+1}=x_{n_k, N+1} + w_{n_k, N+1}$ is nef and big over $Y$, and hence it is relatively semi-ample over $Y$.
Let 
$f_{a,n_k} \colon X_{a,n_k} \rar Y$
denote its relatively ample model
over 
$Y$.
We claim that it
admits a factorization of the form 
%$\xymatrix{
%X_a
%\ar[r]
%\ar@/^10pt/[rr]^{f_a}
%&
%Y_{N+1}
%\ar[r]
%&
%Y
%}$.
\begin{equation}\label{eq:factorization_contradiction}
\xymatrix{
X_{N+1} 
\ar[r]_{r_{n_k}}
\ar@/^10pt/[rrr]^{f_{N+1}}
&
X_{a, n_k}
\ar[r]
&
Y_{N+1}
\ar[r]
&
Y
}.
\end{equation}
To show that the claim holds it suffices to show that for any curve 
$C'_{N+1} \subset X_{N+1}$ 
contracted by 
$r_{n_k}$,
then 
$w_{n_k, N+1} \cdot C'_{N+1} =0$, 
as that would imply that 
$w_{n_k, N+1}$
descends to 
$X_{a, n_k}$ 
and thus it must induce a morphism 
$X_{a, n_k} \to Y_{N+1}$
inducing the desired factorization, by definition of 
$w_{n_k, N+1}$.
By contradiction, let us assume that 
$w_{n_k, N+1} \cdot C'_{N+1} \neq 0$, 
then 
$w_{n_k, N+1} \cdot C'_{N+1}$
must be positive, since 
$w_{n_k, N+1}$ 
is nef over 
$Y$.
As 
$C_{N+1}'$
is contracted by 
$r_{n_k}$,
then 
$z_{n_k} \cdot C'_1 =0$ holds.
But then these two observations imply  
$(x_{n_k}+\frac 12 w_{n_k}) \cdot C_{N+1}' < 0$, 
which is impossible, since $x_{n_k}+\frac 12 w_{n_k}$ is nef over $Y$.

By definition, cf. the end of Step 2, and by the fact that $Y_{N+1}\rar Y$ is not an isomorphism, cf. Step 5.4, the factorization in \eqref{eq:factorization_contradiction} implies that for all $k \in \nn$, 
$z_{n_k} \subset \widehat I(X/Y)$, 
%by~\eqref{diag.final.factor}
which contradicts our initial choice of the sequence which required
$(z_n)_{n\in \nn} \subset J(X/Y)$
and concludes the proof.
\newline

{\bf Case 5.7.b}. 
{\it Now we deal with the case where the assumption of Case 5.7.a is not satisfied}.

For all 
$n \in \nn$, 
we set
\begin{align*}
\lambda_{n} \coloneqq
\inf \left\{
\lambda \in \mathbb R 
\ \middle \vert \
w'_{n, N+1}+ (\lambda-1) T_{N+1}H_{N+1} \
\text{is nef over $Y$}
\right \},
\end{align*}
and we fix an extremal contraction 
$\psi_{n, N+1} \colon Y_{N+1} \to Y_{n, N+2}$
over $Y$ contracting an extremal ray 
$R_{n} \subset \overline{NE}(Y_{N+1}/Y)$ 
such that 
$R_{n} \cdot 
(w'_{n, N+1}+ (\lambda_{n}-1) T_{N+1}H_{N+1})
=0$.
In particular, there exists a subsequence 
$
(w'_{n_{k}, N+1})_{k \in \mathbb N}
\subset
(w'_{n, N+1})_{n \in \mathbb N}$
such that for all 
$k \in \mathbb N$,
\begin{itemize}
    \item[(a)] 
$\psi_{n_{k}, N+1} $ and $Y_{n_{k}, N+2}$ are the same, by Lemma~\ref{finiteness models}; 
and
    \item[(b)] 
$(\lambda_{n_{k}}) \subs k \in \nn.$ converges, since
for all $n \in \nn$,
$\lambda_{n} \in [0,1]$ holds.
\end{itemize}
To simplify the notation, we pass to the subsequence 
$(w'_{n_{k}})\subs k \in \nn.
\subset 
(w'_n)\subs n \in \nn.$ 
(resp. 
$(\lambda_{n_{k}})\subs k \in \nn. 
\subset
(\lambda_n)\subs n \in \nn.$) 
just defined; 
we also pass to the subsequences of
all the other sequences involved in the proof corresponding to the same indices.
We also define $Y_{N+2}$ to be the variety from property (a) above and 
$\psi_{N+1} \colon Y_{N+1} \rar Y_{N+2}$ 
be the induced morphism.
We set 
$x'_{n, N+1} 
\coloneqq
x_{n, N+1}+(1-\lambda_n)l_{N+1}^\ast T_{N+1}H_{N+1}$ 
in $\nsr(X_{N+1}/Y)$.
Thus, $(x'_{n, N+1}) \subs n \in \nn.$ converges and for all $n\in \nn$. 
Moreover, for all $n\in \nn$,
$w_{n, N+1} -(1-\lambda_n)l_{N+1}^\ast T_{N+1}H_{N+1} \equiv_{Y_{N+2}} 0$.
Hence,
$[z_{n, N+1}]=[x'_{n, N+1}] \in \nsr(X_{N+1}/Y_{N+2})$ 
which implies that 
$Y_{N+2} \rar Y$
is not an isomorphism, as otherwise
$(z_n)_{n \in \nn}$ would be convergent, which contradicts the assumption made before Step 5.2.

At this point, we iterate this procedure.
Exactly the same proof as in Step 5.5 shows that there exist a marked minimal model $(f_{N+2} \colon X_{N+2} \rar Y, \alpha_N)$ and a 
commutative diagram
%\begin{align}
%\label{diag.final.factor2}
$$
\xymatrix{
X_{N+1} 
\ar@{-->}[rrrr]^{\phi_{N+1} \coloneqq \alpha_{N+2}^{-1} \circ \alpha_{N+1}}
\ar[d]^{l_{N+1}}
& & & &
X_{N+2} 
\ar[d]^{l_{N+2}}
\\
Y_{N+1} 
\ar[rrrr]^{\psi_{N+1}}
\ar[drr]
& && &
Y_{N+2} 
\ar[dll]
\\
& & Y & &
}
%\end{align}
$$
such that, defining 
$x'_{n, N+2}\coloneqq (\phi_{N+1})_\ast x'_{n_k, N+1}$,
by construction
$(x'_{n_k, N+2})$ is nef over $Y_{N+2}$ for a suitable subsequence $(x'_{n_k, N+2})_{k \in \nn} \subset (x'_{n, N+2})_{n \in \nn}$.
Proceeding as in Step 5.6, there exists $\epsilon_{N+2} > 0$ such that for any curve $C \subset X_{N+1}$ contained in the fiber of $f_{N+1}$, 
$x_{n, N+2} \cdot C > 
-\frac{2\dim X_{N+2}}{\epsilon_{N+2}}$.
We fix an integer 
$T_{N+2} \geq 2\frac{2\dim(X_{N+2})}{\epsilon_{N+2}}$
%\footnote{\color{red} STE: added the factor 2 at the RHS as in the previous case}
and a Cartier divisor $H_{N+2}$ on $Y_{N+2}$ ample over $Y$ and we set
$w'_{n, N+2}\coloneqq
(\psi_{N+1})_\ast w'_{n, N+1}+ (\lambda_{n}-1) T_{N+1}H_{N+1}$ for all $n\in \nn$.
If there exists a subsequence 
$(w'_{n_k, N+2})\subs k \in \nn.
\subset 
(w'_{n, N+2})\subs n \in \nn.$ 
such that for all 
$k \in \mathbb N$,
$w'_{n_k, N+2}- T_{N+2}H_{N+2}$
is nef over $Y$, we apply Case 5.7.a and reach a contradiction.
Otherwise, we repeat the procedure of Step 5.7.b.

This procedure must stop after finitely many steps.
In fact, our construction yields a commutative diagram of birational morphisms 
\begin{align*}
    \xymatrix{
    Y_{N+1} \ar[r]^{\psi_{N+1}} \ar[drrr]&
    Y_{N+2} \ar[r]^{\psi_{N+2}} \ar[drr]&
    Y_{N+3} \ar[r]^{\psi_{N+3}} \ar[dr]&
    \dots \ar[r]^{\psi_{N+l-1}} &
    Y_{N+l} \ar[r]^{\psi_{N+l}} \ar[dl]&
    Y_{N+l+1} \ar[r]^{\psi_{N+l+1}} \ar[dll]&
    \dots
    \\
    & & & Y & & &
    }
\end{align*}
such that for all $i \geq 1$ morphism $Y_{N+i} \to Y$ is not an isomorphism.
Hence, the finiteness of the diagram follows from Lemma~\ref{finiteness models}.
But this means that on the last step of this procedure, we must be in the situation of case 5.7.a and again we reach a contradiction.
\end{proof}

\section{Deforming divisors in a family} \label{section deform}

In this section, we study how divisor classes deform in families of certain types of $K$-trivial varieties.

We start with the following generalization of \cite{Tot12}*{Theorem 4.1}.
The proof has been kindly suggested by Totaro.

\begin{theorem}
\label{t-BT}
Let $X_0$ be a projective variety with rational singularities.
Assume that $H^1(X_0, \O {X_0}.) =H^2(X_0, \O {X_0}.) = 0$, and that $X_0$ is smooth in codimension 2 and $\qq$-factorial in codimension 3.
Then, given a deformation $X \rar (T,0)$ of $X_0$ over a smooth variety $T$, there is an \'{e}tale morphism $(T',0) \rar (T,0)$ such that the class group of $X_{T'}$ maps split surjectively to the class group of ${\rm Cl}(X_t)$ for all $t\in T'$, and all these surjections have the same kernel.
\end{theorem}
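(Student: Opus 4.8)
The plan is to reduce the statement about divisor class groups to a statement about second integral cohomology groups — where the vanishing $H^2(X_0,\mathcal{O}_{X_0})=0$ makes the Hodge theory especially simple — and then to spread the resulting identifications out over an \'etale neighbourhood of $0$. First I would arrange that the hypotheses hold fibrewise: replacing $T$ by a Zariski open neighbourhood of $0$, one may assume that $f\colon X\rar T$ is flat with all fibres $X_t$ having rational singularities (by Elkik's theorem), with $H^1(X_t,\mathcal{O}_{X_t})=H^2(X_t,\mathcal{O}_{X_t})=0$ (upper semicontinuity), and with each $X_t$ smooth in codimension $2$ and $\qq$-factorial in codimension $3$ (the bad loci are closed in $X$ and miss $X_0$); in particular each $X_t$ is Cohen--Macaulay with $\codim_{X_t}\mathrm{Sing}(X_t)\geq 3$.

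The next step is the cohomological identification. Since the class group does not change when a closed subset of codimension $\geq 2$ is removed, $\mathrm{Cl}(X_t)=\mathrm{Pic}(X_t^{\mathrm{sm}})$. Using Cohen--Macaulayness and the codimension bound on the singular locus, local cohomology vanishing gives $H^i(X_t^{\mathrm{sm}},\mathcal{O})\cong H^i(X_t,\mathcal{O})$ for $i\leq 2$ (so these vanish for $i=1,2$) and $H^i(X_t^{\mathrm{sm}},\zz)\cong H^i(X_t,\zz)$ for $i\leq 4$ (as $\mathrm{Sing}(X_t)$ has real codimension $\geq 6$); one also compares $H^2(X_t,\zz)$ with $H^2$ of a resolution $\widetilde X_t\rar X_t$ via the exceptional divisors, using rational singularities. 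The exponential sequence on the complex manifold $X_t^{\mathrm{sm}}$ then identifies $\mathrm{Pic}^{\mathrm{an}}(X_t^{\mathrm{sm}})$ with $H^2(X_t^{\mathrm{sm}},\zz)$, and the vanishing $H^2(\mathcal{O}_{\widetilde X_t})=0$ forces $H^{2,0}(\widetilde X_t)=H^{0,2}(\widetilde X_t)=0$, so by the Lefschetz $(1,1)$-theorem every integral class on $\widetilde X_t$, hence on $X_t^{\mathrm{sm}}$, is algebraic, with no continuous part since $H^1(\mathcal{O})=0$; this upgrades $\mathrm{Pic}$ to $\mathrm{Pic}^{\mathrm{an}}$. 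The upshot is a natural isomorphism $\mathrm{Cl}(X_t)\cong H^2(X_t,\zz)$ of finitely generated abelian groups, and a parallel analysis — using that $X_0$ is a deformation retract of a neighbourhood in the total space, by properness of $f$ — identifies the class group of the base change of $X$ to the analytic (equivalently, formal) completion of $T$ at $0$ with $H^2(X_0,\zz)$, compatibly with restriction to fibres.

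I would then spread this out: by Artin approximation, the identification over the completion descends to an \'etale morphism $(T',0)\rar(T,0)$, so that $\mathrm{Cl}(X_{T'})$ maps onto each $\mathrm{Cl}(X_t)$; the kernel is the subgroup of classes that fail to deform, which by the construction is the same for all $t$, and the surjection splits because over $T'$ the relative class-group functor is represented by a group algebraic space which is \'etale, hence admits the required sections. The content that controls surjectivity and the $t$-independence of the kernel is the triviality of the Hodge filtration on $H^2$ — equivalently $H^{2,0}(X_t)=0$ for every $t$ — which makes every relevant section of $R^2f_*\zz$ automatically a family of algebraic (reflexive rank-one) divisor classes.

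The step I expect to be the main obstacle is the cohomological identification $\mathrm{Cl}(X_t)\cong H^2(X_t,\zz)$ together with its propagation over the family: in Totaro's original surface setting all fibres are smooth and this is immediate from the exponential sequence, whereas here one must carefully combine the hypotheses ``smooth in codimension $2$'' and ``$\qq$-factorial in codimension $3$'' with rational singularities to control both the coherent and the topological cohomology of the singular fibres and of the total space, uniformly in $t$ — in particular to ensure that the restriction $H^2(X_0,\zz)\rar H^2(X_t,\zz)$ remains split surjective with the same kernel even though nearby fibres may be less singular than $X_0$. This is precisely the higher-dimensional, singular refinement of \cite{Tot12}*{Theorem 4.1}.
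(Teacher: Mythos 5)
Your fibrewise analysis, identifying $\mathrm{Cl}(X_t)$ with $H^2(X_t,\zz)$ via the exponential sequence on the smooth locus, local-cohomology vanishing, and the Lefschetz $(1,1)$ theorem, is essentially a reproof of \cite{Tot12}*{Theorem 3.1}, which the paper simply cites in the form $\mathrm{Cl}(X_t)\cong H_{2n-2}(X_t,\zz)$ (these agree here because $\mathrm{Sing}(X_t)$ has real codimension at least $6$); that part is fine. The genuine gap is in the spreading-out step, which you rightly flag as the main obstacle but do not actually close. You never establish that $t\mapsto\mathrm{Cl}(X_t)$ is locally constant near $0$, which is the real content of the theorem. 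The appeal to Artin approximation is too vague to carry this: it is unclear what finitely presented algebraic datum over the complete local ring you would be approximating, and the identification of a formal/analytic-germ class group with $H^2(X_0,\zz)$ is not set up as such a datum. Worse, the assertion that ``the relative class-group functor is represented by a group algebraic space which is \'etale'' is not a standard fact and you give no argument for it, yet it is being asked to produce the surjectivity, the splitting, and the $t$-independence of the kernel all at once; the ``same kernel'' claim in particular is simply asserted.

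The paper's own proof is much shorter and sidesteps this difficulty by a one-dimensional reduction. After citing \cite{Tot12}*{Theorem 3.1} fibrewise, it observes that $t\mapsto H_{2n-2}(X_t,\zz)$ assembles into a constructible sheaf on $T$, and argues that near $0$ there must be a single stratum of the associated stratification: otherwise a smooth curve through $0$ would cross several strata, producing a one-parameter degeneration over which $\mathrm{Cl}(X_t)$ is not locally constant near $0$, contradicting the already-proven curve case \cite{Tot12}*{Theorem 4.1}. Local constancy on a single stratum then yields the desired \'etale base change and the split surjections with uniform kernel. The key idea your proposal is missing is to use Totaro's one-parameter theorem as a black box and reduce to it via constructibility of the homology sheaf, rather than trying to re-derive the spreading-out from scratch.
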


We summarize the property proven in Theorem~\ref{t-BT} by saying that the divisor class group is unchanged under nearby deformations of $X_0$.

\begin{proof}
Let $X \rar T$ be a deformation of $X_0$ as in the statement.
Under these assumptions, we can apply \cite{Tot12}*{Theorem 3.1} showing that $\mathrm{Cl}(X_t) \rar H \subs 2n-2.(X,\zz)$ is an isomorphism for $t \in T$, where $n = \dim(X_0)$.
By Grothendieck's six functor formalism, these homology groups form a constructible\footnote{In this context, we only mean that the sheaf is locally constant on a suitable stratification, while we do not require that the stalks are finite.} sheaf of Abelian groups on $T$.
Thus, $T$ is stratified by a union of finitely many locally closed algebraic subsets $T = \cup_{i=1}^n T_i$ such that $\mathrm{Cl}(X_t)$ is locally constant on each of the $T_i$.
Hence, if there is just one stratum near $0 \in T$, then the claim follows by the definition of locally constant sheaf.

Let us assume by contradiction that there is more than one stratum near $0 \in T$.
Then, there is a smooth curve $C$ through $0$ that crosses a stratum exactly at $0 \in T$.
Thus, the groups $\mathrm{Cl}(X_t)$ would not be locally constant around $0 \in C$, contradicting \cite{Tot12}*{Theorem~4.1}.
\end{proof}

\begin{theorem} \label{thm cones}
Let $X_0$ be a terminal $\qq$-factorial variety.
Assume that $K_{X_0}\equiv 0$, $H^1(X_0, \O {X_0}.) = 0$ and $H^2(X_0, \O_{X_0}.) = 0$.
Given a deformation $X \to (T,0)$ of $X_0$ over a smooth variety $T$, then $X$ is terminal $\qq$-factorial, $K_{X}\sim _{\qq,T} 0$ over a neighborhood of $0\in T$.
Furthermore, after an \'{e}tale base change, the following facts hold:
\begin{enumerate}
\item \label{thm.cones.incl1}
$\overline{A}(X_\eta )\supset \overline{A}(X_0)$, where the inclusion is possibly strict;
\item \label{thm.cones.incl2}
$\overline{B}(X/T)\subset \overline{B}(X_0)$, where the inclusion is possibly strict; and
\item \label{thm.cones.incl3}
$\overline{M}(X/T)\supset \overline{M}(X_0)$, where the inclusion is possibly strict.
\end{enumerate}
\end{theorem}

The inclusion in \eqref{thm.cones.incl2} in~Theorem \ref{thm cones} can be strict, see Section~\ref{section example} and, in particular, Lemma~\ref{lemma_reflection} for an example of such phenomenon.

\begin{proof}
\begin{enumerate}
\item
By repeatedly applying \cite{dFH11a}*{Corollary 3.2 and Proposition 3.5}, it follows that $X$ is $\qq$-factorial and terminal.
Since $\pm K_{X_0}$ is nef, then so is $\pm K_{X_\eta}$, where $\eta \in T$ denotes the generic point of $T$.
Thus, passing to the algebraic closure $\bar \eta$, $K_{X_{\bar \eta}}\sim _\qq 0$, by~\cite{Gon13}, and hence also $K_{X_{\eta}}\sim _{\qq} 0$.
Therefore, there is an open neighborhood $ T^0\subset T$ such that  $K_{X_{T^0}}\sim _{\qq,T^0} 0$.
By~\cite{Bir12}*{Theorem~1.5}, $0 \in T_0$.
By Theorem \ref{t-BT}, after an \'{e}tale base change $T'\rar T^0$, we may assume that the class group of ${\rm Cl}(X_t)$ is constant for all $t\in T'$.
To simplify the notation, we replace $X \rar T$ by $X_{T'}\rar T'$.
Hence, if $D\vert_{X_0}$ is ample then so is $D\vert_{X_\eta}$. 
On the other hand, Wilson's example \cite{Wil92}*{Example 4.6} shows that we could have a strict inclusion.

\item
Let $\overline{B}(X/T)$ be the relative pseudo-effective cone and $\overline{B}(X_0)$ be the pseudo-effective cone of the central fiber.
By Theorem \ref{t-BT}, There is a natural restriction map $r \colon \overline{B}(X/T)\rar \nsr(X_0)$ and, by semicontinuity \cite{Har77}*{Theorem III.12.8}, the inclusion $\overline{B}(X/T)\subset \overline{B}(X_0)$ is clear.
The inclusion could be strict by the example discussed in \S~\ref{section example}.

\item 
Note that by (1), $\overline{A}(X_0)\subset \overline{A}(X_\eta) \subset \overline{B}(X_\eta)$.
Therefore, $r(\overline{B}(X/T))\cap \overline{M}(X_0)$ contains an open subset.
Indeed, we may consider a rational polyhedral cone $\Pi \subset A(X_0)$, and, up to shrinking $T$ around $0$, every non-zero divisor class in $\Pi$ lifts to a class in $A(X/T)$.

First, we claim that $r(\overline{B}(X/T))\supset \overline{M}(X_0)$.
Assume by contradiction that this is not true.
Then, as these are full dimensional cones with a non-empty and full-dimensional intersection, we may pick a divisor $D$ in the boundary of $\overline{B}(X/T)$ such that $D_0=D\vert_{X_0}$ is in the interior of $\overline{M}(X_0)$ and in particular ${\rm vol}(D_0)>0$.
We will show that ${\rm vol}(D_t) \geq {\rm vol}(D_0)>0$ for every $t\in T$.
This contradicts the assumption that $D$ is in the boundary of $\overline{B}(X/T)$ and so $r(\overline{B}(X/T))\supset \overline{M}(X_0)$.

To see the claim we proceed as follows.
First, as our goal is to show that $\vol(D_t)\geq \vol(D_0)>0$ for every $t \in T$, we may assume that $T$ is a smooth affine curve, as any point in $t$ can be joined to $0$ with a smooth curve.
Let $D_i$ be a sequence of $\qq$-divisors contained in the interior of $\overline{B}(X/T)$ such that $\lim D_i=D$.
For any $i$, we may choose $\qq$-divisors $B_i$ and rational numbers $\beta _i>0$ such that $D_i\sim_\qq \beta _i B_i$ and $(X,X_0+B_i)$ is plt and $(X_0,B_i\vert_{X_0})$ is terminal. 
 Since $B_i$ is big over $T$, there is a minimal model $\phi \colon X\dasharrow X'$ over $T$.
 Since $D_0=D|_{X_0}$ is in the interior of $\overline{M}(X_0)$, then so is $D_i|_{X_0}$ for $i \gg 0$.
 Thus, since the stable base locus of $D_i|_{X_0}$ contains no divisors, each step of this minimal model program is an isomorphism at codimension 1 points of $X_0$. It follows that $\phi$ is an isomorphism on a neighborhood of each codimension 1 point in $X_0$. Since $(X_0,B_i|_{X_0})$ is terminal, it follows easily that $\phi _0 \colon X_0\dasharrow X'_0$ extracts no divisors and so $\phi _0$ is a small birational map. Let $B_i'=\phi _* B_i$, then $(\phi _0)_*(B_i|_{X_0})=B_i'|_{X'_0}$.
 Since $B_i'$ is nef over $T$, the volume of its restriction to any fiber is computed by self intersection.
 Thus, it follows that
 \[ \vol(X_\eta,B_i\vert \subs X_\eta.)=\vol(X'_\eta,B'_i\vert \subs X'_\eta.)= \vol(X'_0,B'_i\vert \subs X'_0.)= \vol(X_0,B_i\vert \subs X_0.),\]
where the last equality follows from the fact that $\phi_0$ is an isomorphism in codimension 1.

Thus, we have the following chain of equalities
\[
\vol(X_\eta,D_i\vert \subs X_\eta.) = \beta_i^d \vol(X_\eta,B_i\vert \subs X_\eta.)=\beta_i^d \vol(X_0,B_i\vert \subs X_0.)= \vol(X_0,D_i\vert \subs X_0.),
\]
where $d=\dim(X_0)$.
As the volume function is a continuous function on the pseudo-effective cone, it follows that
\[
\vol(X_\eta,D\vert \subs X_\eta.) = \vol(X_0,D_0) >0.
\]
By upper-semicontinuity of the volume function, it follows that
\[
\vol(X_t,D\vert \subs X_t.) \geq \vol(X_0,D_0) >0
\]
for every $t$, which is the sought contradiction.

Now assume by contradiction that (3) does not hold.
Then, we may find a divisor $D$ that is not in $\overline{M}(X/T)$ and $D_0$ is in the interior of $\overline{M}(X_0)$.
Since $D_0$ is in the interior of $\overline{M}(X_0)$, it follows that $D$ is in the interior of $\overline{B}(X/T)$. Proceeding as above, there is a $D$ minimal model $\phi \colon X\dasharrow X'$ over $T$ such that $\phi _0 \colon X_0\dasharrow X_0'$ is a small birational map. In particular $\phi _0$ contracts no divisors and hence by semicontinuity of the fiber dimension, $\phi$ also contracts no divisors over a neighborhood of $0\in T$.
Therefore, there cannot be a divisorial component of $\mathrm{Bs}(D/T)$ dominating $T$. Thus $D\in \overline{M}(X/T)$ which is the required contradiction, and (3) follows.
Wilson's example \cite{Wil92}*{Example 4.6} shows that we could have a strict inclusion.
\end{enumerate}
\end{proof}

\begin{remark} 
\label{rmk sections}
The proof of part (3) of Theorem \ref{thm cones} can be adapted to show the following: if $D_0$ is in the interior of $\overline{M}(X_0)$, then for $m>0$ sufficiently divisible the natural morphism $H^0(X,mD) \rar H^0(X_0,mD_0)$ is surjective.
Indeed, working on the model $X'$ constructed in the proof of (3), we can apply the relative Kawamata--Viehweg vanishing theorem on $X'$ to argue that the Euler characteristic $\chi(X'_t,mD'_t)$, which is constant by flatness, is given by $h^0(X'_t,mD'_t)$.
\end{remark}

\section{Interludium: an example} \label{section example}

This section aims to study the behavior of the different cones of divisors in a family of Calabi--Yau threefolds.
We will study the following example in the category of smooth Calabi--Yau threefolds which first appeared in Wilson's work, cf.~\cite{Wil92}.
The example that we explain in this section is an analog for 1-parameter families of Calabi--Yau threefolds of the Atiyah flop that naturally appears in families of K3 surfaces, see, for example,~\cite{huyb.book}*{\S~6.5 and \S~7.5}.

\subsection{Setup}
Let $f \colon X \to C$ be a family of smooth  Calabi--Yau threefolds.
In particular, for any $t \in C$, the log pair $( X, X_t)$ is plt and $X_t$ is terminal.
Let $0 \in C$ be a closed point and $X_0$ the corresponding fiber.
We assume that $X_0$ contains an elliptic ruled surface, that is, $E_0 \to G_0$ is a minimal ruled surface over the elliptic curve $G_0$.
In~\cite{Wil92}, Wilson showed that it is possible to construct families $ X \to C$ in which $E_0$ is rigid, that is, $E_0$ does not deform in the family.
Up to shrinking $C$ around $0$, we can identify the cohomology of $X$ with the cohomology of any fiber, via restriction to $X_0$. 
Furthermore, up to an \'etale base change centered at $0 \in C$, we may assume that the conclusions of Theorem \ref{t-BT} are satisfied.
Under these assumptions, We shall show that the pseudo-effective cone cannot be constant in the fibers of this family.

\begin{lemma}
The surface $E_0$ can be contracted on $X_0$ by means of a $(K_{X_0}+E_0)$-extremal contraction $\pi_0 \colon X_0 \to Y_0$ to an elliptic curve isomorphic to $G_0$.
\end{lemma}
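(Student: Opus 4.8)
The plan is to realise $\pi_0$ as the contraction of a $(K_{X_0}+E_0)$-negative extremal ray of $\overline{NE}(X_0)$ and then to identify it by means of Mori's classification of extremal contractions of smooth threefolds.

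First I would record the relevant numerics. Since $X_0$ is a smooth Calabi--Yau threefold we have $K_{X_0}\sim 0$, and $(X_0,E_0)$ is plt (indeed log smooth, as $E_0$ is a smooth prime divisor). Adjunction gives $\omega_{E_0}\cong\mathcal O_{E_0}(K_{X_0}+E_0)\cong\mathcal O_{E_0}(E_0)$, so $E_0|_{E_0}\equiv K_{E_0}$; in particular, for a fibre $f\cong\mathbb P^1$ of the ruling $\rho\colon E_0\to G_0$, adjunction on $E_0$ yields $(K_{X_0}+E_0)\cdot f=K_{E_0}\cdot f=-2<0$. Hence $[f]$ lies in the $(K_{X_0}+E_0)$-negative part of $\overline{NE}(X_0)$, which by the Cone Theorem for the plt pair $(X_0,E_0)$ is spanned by finitely many extremal rays, each generated by a rational curve. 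Moreover, if $C$ is any irreducible curve with $(K_{X_0}+E_0)\cdot C<0$, then $E_0\cdot C=(K_{X_0}+E_0)\cdot C<0$ since $K_{X_0}\equiv 0$, so $C\subseteq E_0$; thus every $(K_{X_0}+E_0)$-negative extremal ray is represented by a curve lying on $E_0$.

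Next I would argue that the contraction of such a ray is divisorial with exceptional divisor $E_0$. Because $E_0$ is a minimal ruled surface it contains no $(-1)$-curve, and adjunction inside $E_0$ (for $C\cong\mathbb P^1$ one gets $E_0\cdot C=-2-C^2_{E_0}$, and for $p_a(C)\ge 1$ one gets $E_0\cdot C\ge -C^2_{E_0}$) forces every $K_{E_0}$-negative irreducible curve on $E_0$ to have non-negative self-intersection, hence to move in a positive-dimensional family on $E_0$; so the exceptional locus of any $(K_{X_0}+E_0)$-negative extremal contraction is at least two-dimensional, and no such contraction is small. It is not a Mori fibre space either: $-(K_{X_0}+E_0)\equiv -E_0$, and a short analysis of $\dim Y_0\in\{0,1,2\}$ --- using that $E_0$ is effective, that $E_0|_{E_0}\equiv K_{E_0}\not\equiv 0$, and that the contracted curves lie on $E_0$ --- rules this out. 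Therefore the $(K_{X_0}+E_0)$-negative extremal contraction $\pi_0\colon X_0\to Y_0$ is divisorial; its (irreducible) exceptional divisor is swept out by curves contained in $E_0$, hence equals $E_0$, and $Y_0$ is a threefold.

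Finally I would identify $\pi_0(E_0)$ and $\pi_0|_{E_0}$. By Mori's classification of divisorial contractions of smooth threefolds, $\pi_0$ contracts $E_0$ either to a point or to a curve; the case of a point is excluded because the exceptional divisors occurring there ($\mathbb P^2$, a smooth or a singular quadric) are rational, whereas $E_0$ is ruled over an elliptic curve and thus has $q(E_0)=1$. Hence $\pi_0$ is the blow-up of a smooth curve $C\subseteq Y_0$ and $E_0\to C$ is a $\mathbb P^1$-bundle. Any $\mathbb P^1$-fibration on $E_0$ must coincide with $\rho$: its base cannot be rational (otherwise $q(E_0)\le q(\mathbb P^1)+q(\mathbb P^1)=0$), so it has genus $\ge 1$, and then it factors through the Albanese map of $E_0$, which is $\rho$. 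Thus $C\cong G_0$ and $\pi_0|_{E_0}=\rho$, while $\pi_0$ is $(K_{X_0}+E_0)$-negative by construction, as required. I expect the main obstacle to be the middle paragraph --- establishing that the relevant extremal contraction is genuinely divisorial, i.e.\ ruling out both small contractions and Mori fibre space structures --- since this is where minimality of the ruled surface $E_0$ and the special features of Wilson's construction enter in an essential way.
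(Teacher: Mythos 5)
Your route is genuinely different from the paper's. The paper first proves that the pushforward $i_\ast\colon \nsdr(E_0)\to\nsdr(X_0)$ is injective, via a case analysis on the minimal section $\tilde G_0$ (using the classification of ruled surfaces over an elliptic curve and the numerics of an ample class restricted to $E_0$); from this it deduces that the unique $(K_{X_0}+E_0)$-negative extremal ray is $\rr_{\geq 0}[R]$, and everything else then follows at once. You instead try to classify directly what kind of contraction $\pi_0$ can be. Both strategies can be made to work, and yours is conceptually cleaner; but as written it has a real gap.

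The problem is your final step, where you invoke ``Mori's classification of divisorial contractions of smooth threefolds'' to claim that if $E_0$ were contracted to a point then $E_0$ would have to be $\mathbb P^2$ or a quadric, and that otherwise $\pi_0$ is the blow-up of a smooth curve with $E_0$ a $\mathbb P^1$-bundle over it. Mori's classification concerns $K_{X_0}$-\emph{negative} extremal contractions. Here $K_{X_0}\sim 0$, so $\pi_0$ is a $K_{X_0}$-\emph{crepant} extremal contraction, and Mori's list simply does not apply: $Y_0$ is only canonical in general, and the exceptional divisor of a crepant divisorial contraction is not constrained to $\{\mathbb P^2, \text{quadric}\}$. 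Your point-case exclusion can nevertheless be repaired without Mori: if $E_0$ were contracted to a point, then since $\rho(X_0/Y_0)=1$ and $E_0$ is $\pi_0$-negative, the divisor $-E_0$ is $\pi_0$-ample, so $-E_0|_{E_0}=-K_{E_0}$ (crepancy plus adjunction) is ample on $E_0$; thus $E_0$ would be a del Pezzo surface, in particular rational, contradicting $q(E_0)=1$. With this replacement your argument becomes correct and is arguably slicker than the paper's explicit $\tilde G_0$ computation. A smaller imprecision: the inference ``$C^2\geq 0$, hence $C$ moves in a positive-dimensional family'' is false in general for curves of self-intersection zero, though in the present situation the only $K_{E_0}$-negative curve with $C^2=0$ is a fiber of the ruling, so the conclusion (that the exceptional locus cannot be one-dimensional) holds.
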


\begin{proof}
By assumption, $E_0\vert_{E_0}=K_{E_0}$ and $K_{E_0} \cdot R=-2$, where $R$ is a fiber of the projective bundle structure on $E_0$.
As $R^2=0$ as a divisor on $E_0$, then $R$ is an extremal ray both in the nef and the pseudo-effective cones of $E_0$.
Since $E_0$ is a minimal ruled surface, then $\nsdr (E_0)=\mathbb{R}[R]\oplus \mathbb{R}[K_{E_0}]$.
Since $K_{X_0}+E_0$ is not nef, we may consider a $(K_{X_0} + E_0)$-extremal contraction $\pi_0 \colon X_0 \rar Y_0$.
\\ \\
{\bf Claim}. 
The embedding 
$i \colon E_0 \to X_0$ 
induces an embedding 
$i_\ast \colon \nsdr (E_0) \to \nsdr (X_0)$ 
and 
$\pi_0\vert_{E_0}$ 
is the contraction 
$E_0 \to G_0$.
\begin{proof}
Indeed, we may consider the plt pair $(X_0,E_0)$.
By adjunction, $(K_{X_0} + E_0) \cdot R < 0$, and all the $(K_{X_0}+E_0)$-negative curves are $E_0$-negative, hence, contained in $E_0$.
By adjunction and the fact that $R^2=0$ inside $E_0$, we have $(K_{X_0} + E_0) \cdot R = -2$.
Now, let $\tilde G_0$ be the section of $E_0 \to G_0$ of minimal self-intersection.
To show that $i_\ast \colon \nsdr (E_0) \to \nsdr (X_0)$ is an embedding, we will show that $\tilde G_0$ is not numerically equivalent to $R$ in $\nsdr (X_0)$.
By the classification of ruled surfaces over an elliptic curve, see \cite{Har77}*{\S~V.2}, we either have $\tilde G_0 ^2 \leq 0$ or $\tilde G_0 ^2 = 1$.
In the former case, since $g(G_0)=1$, then $\deg (K_{\tilde G_0})=0$ and, by adjunction 
\[(K_X+E_0) \cdot \tilde G_0 = K_{E_0}\cdot \tilde G_0=-\tilde G_0^2\geq 0.
\]
Thus, $R$ and $\tilde G _0$ are linearly independent in $\nsdr (X_0)$.
Now, assume that $\tilde G_0^2=1$.
In this case, we have that $(K_X + E_0)\cdot \tilde G_0 = -1$.
As $(K_X + E_0)\cdot R=-2$, to conclude, it suffices to rule out that $[R]=2[\tilde G_0]$ in $\nsdr (X_0)$.
Now, let $L$ be an ample divisor on $X_0$.
Then, by \cite{Har77}*{Proposition V.2.21}, up to rescaling the numerical class of $L$, we have that $L|_{E_0} \equiv \tilde  G_0 + b R$ with $b > -\frac{1}{2}$.
Thus, we have $L \cdot \tilde G_0 = (\tilde G_0 + b R)\cdot \tilde G_0=1+b$, and $L \cdot R = (\tilde G_0 + b R) \cdot R = 1$.
Thus, we have $[R] \neq 2[\tilde G_0]$, as $b \neq -\frac{1}{2}$.
Hence, we conclude that $i_\ast \colon \nsdr (E_0) \to \nsdr (X_0)$ is an embedding.
Now, since any $(\K X_0. + E_0)$-negative extremal ray is spanned by the class of a rational curve in $E_0$ and since $E_0$ is ruled over a curve of positive genus, it follows that the only possible ray is $\rr \subs \geq 0. [R]$.
Now, as we showed that $\tilde G_0$ is not in $\rr \subs \geq 0. [R]$, $\tilde G_0$ cannot be contracted by $\pi_0$ and no irreducible curve $C \subset E_0$ horizontal over $G_0$ can be contracted, as this would force $E_0$ and hence also $\tilde G_0$ to be contracted.
Thus, $E_0$ cannot be contracted to a point by a $(K_{X_0}+E_0)$-extremal contraction, as otherwise $\dim i^\ast(\nsr (X_0))=1$.
\end{proof}
\noindent
As all rational curves in $E_0$ are vertical above the elliptic base, they must all be numerically equivalent to $R$.
Thus, $R$ is a $(K_{X_0}+E_0)$-extremal curve, and its contraction $\pi_0$ is a divisorial contraction that maps $E_0$ to a curve.
The conclusion on the image of $E_0$ follows from the fact that $E_0$ is the projectivization of a vector bundle over $G_0$ and we are contracting the fibers of this bundle.
\end{proof}

\subsection{Goal} 
The primitive contraction $\pi_0 \colon X_0 \to Y_0$ is the first (and only) step in the $E_0$-MMP on $X_0$.
Let $H_0$ be a big and nef Cartier divisor in the relative interior of the facet of $\overline{A}(X_0)$ given by $\pi^\ast_0(\overline{A}(Y_0))$.
As $\pi_0$ is divisorial, then $Y_0$ is canonical,
and it contains an elliptic curve $G_0$ of canonical singularities.
As the conclusions of Theorem \ref{t-BT} are met for $X \rar C$, any divisor class on $X_0$ comes from the ambient space $ X$.
We denote by 
$H$, 
$E$ 
the corresponding cohomology classes on 
$X$ 
restricting to 
$H_0$, 
$E_0$
on 
$X_0$.
Up to replacing 
$H_0$ 
with a multiple, 
we may and shall assume that 
$H$ 
is a Cartier divisor on 
$X$.

Our goal is now to understand the models that appear on 
$X$ 
when moving along the segment 
$[H, E]$ 
in 
$\nsr (X/C)$.
In \cite{Wil92}*{Proposition 4.4}, Wilson showed that 
$H_0$ 
is big and nef but not ample, while 
$H_t \coloneqq H\vert_{X_t}$ 
is ample for any 
$t\neq 0$.
In view of this, by Kawamata--Viehweg vanishing, for all 
$t \in C$,  
for all 
$m \geq 0$ 
and all 
$i>0$,
$H^i(X_t, mH_t)=0$.
Thus, by cohomology and base change, since 
$X \to C$ 
is flat by construction, 
it follows that for all 
$i>0$ 
and all
$m \geq 0$, 
$R^i f_\ast \mathcal O_X(mH) =0$,
which in turn implies that for all
$m \geq 0$, 
$f_\ast \mathcal O_X(mH)$
is a vector bundle over $C$, 
or, equivalently, 
that the restriction map
\begin{align*}
H^0(X, mH) \to H^0(X_t, mH_t)
\end{align*}
is surjective for any $t \in C$ and for any $m \geq 0$.
Hence, the natural morphism $\pi \colon X \to Y$ over $C$ induced by $H$ lifts $\pi_0$.
For $0< \epsilon \ll 1$, the divisor $H + \epsilon E$ is relatively big over $C$ and relatively ample over $C \setminus \{0\}$.
Therefore, the relative stable base locus of $H + \epsilon E$ is a proper subset of $X_0$, and it follows that $H + \epsilon E$ is relatively movable over $C$.
Now, fix $0 \leq \Delta \sim_{\qq} H + \epsilon E$.
For $0 < \delta \ll 1$, the log pair $(X,X_0+\delta \Delta)$ is plt, and the log pair obtained by adjunction $(X_0,\delta \Delta_0)$ is terminal.
Thus, we can interpret the contraction $\pi \colon X \rar Y$ as a step of a $(\K X. + X_0 + \delta \Delta)$-MMP over $C$, that is, $\pi$ is a flipping contraction for such MMP, as its exceptional locus is small, and the flip 
\[
\xymatrix{
X \ar@{-->}[rr]^\psi  \ar[dr]^{\pi}
& &
X^+ \ar[dl]_{\pi^+}
\\ 
&
Y
&
}
\]
of $\pi$ exists.
We shall denote the strict transform of a Weil divisor $M$ on $X$ by $M^+$ on $X^+$.
\begin{lemma} \label{lemma_iso_special_fibers}
$X_0$ is isomorphic to $X_0^+$.
\end{lemma}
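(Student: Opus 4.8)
The plan is to begin with the obvious reductions. Because $H_t$ is ample on $X_t$ for $t\neq 0$ and $H$ induces $\pi$, the contraction $\pi$, and hence the flip $\psi$, is an isomorphism over $C\setminus\{0\}$. Thus $X^+\to C$ is again a flat family of threefolds, $X_0^+$ is its fibre over $0$ (equivalently, the strict transform of $X_0$ under $\psi$), and $\pi^+$ restricts to a projective birational morphism $\pi_0^+\colon X_0^+\to Y_0$. So it suffices to prove that $\pi_0\colon X_0\to Y_0$ and $\pi_0^+\colon X_0^+\to Y_0$ are isomorphic over $Y_0$.

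Next I would use the Calabi--Yau hypothesis to see that $\psi$ is in fact a $K_X$-flop. Since $X\to C$ is a smooth family of Calabi--Yau threefolds, $K_X\sim_{\mathbb{Q},C}0$, and $X_0\sim_{\mathbb{Q},C}0$ because it is a fibre; hence $K_X\sim_{\mathbb{Q},Y}0$ and $X_0\sim_{\mathbb{Q},Y}0$. Therefore $\pi$ is a small $K_X$-trivial contraction (the $(K_X+X_0+\delta\Delta)$-flipping contraction is a $\delta\Delta$-flipping contraction with $\Delta\equiv_Y\epsilon E$), and $\psi$ is the associated flop. In particular the flop preserves discrepancies over $Y$, so $X^+$ is terminal and $\mathbb{Q}$-factorial, $(X^+,X_0^+)$ is plt, $K_{X^+}=(\pi^+)^\ast K_Y$ and $X_0^+=(\pi^+)^\ast Y_0$. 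Restricting by adjunction to the fibres $X_0$ and $X_0^+$ (both Cartier, with vanishing different, exactly as in the set-up), we obtain that $\pi_0$ and $\pi_0^+$ are crepant over $Y_0$, that $K_{X_0},K_{X_0^+}\sim_{\mathbb{Q}}0$, and that $E_0$, $E_0^+$ are divisors of discrepancy $0$ over $Y_0$ with the same centre $G_0$.

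It remains to identify the two crepant models $X_0$, $X_0^+$ of $Y_0$. Away from $E_0$ the morphism $\pi_0$ is an isomorphism, so $Y_0$ is smooth off $G_0$; along $G_0$, crepancy of the divisorial contraction of the $\mathbb{P}^1$-bundle $E_0\to G_0$ together with $E_0|_{E_0}=K_{E_0}$ forces the transverse singularity of $Y_0$ to be the Du Val point $A_1$. Hence there is a unique divisor of discrepancy $0$ over $Y_0$ with centre $G_0$ (so $E_0$ and $E_0^+$ define the same valuation), and $Y_0$ has a crepant resolution which is smooth; as $X_0$ is smooth, crepant over $Y_0$, and extracts exactly this divisor, $X_0$ is this resolution. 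I would show $X_0^+$ is smooth too: pulling back a general member $Y'$ of a basepoint-free linear system on $Y$ meeting $G_0$ transversally, $X':=\pi^\ast Y'$ is smooth by Bertini and $R$ becomes a rigid $K$-trivial rational curve in it, so $N_{R/X'}\cong\mathcal{O}(-1)^{\oplus2}$ and $X'\to Y'$ is the standard Atiyah flopping contraction; consequently $\psi$ is, near $E_0^+$, an \'etale-locally trivial family of Atiyah flops over $G_0$, so $X^+$ is smooth, and $X_0^+=(\pi^+)^\ast Y_0$, being \'etale-locally the minimal resolution of $A_1\times(\text{disc})$, is smooth. Now $X_0$ and $X_0^+$ are smooth crepant resolutions of $Y_0$ extracting the same divisor over $G_0$, hence isomorphic in codimension one over $Y_0$; and since each has relative Picard number $1$ over $Y_0$, neither admits a small contraction over $Y_0$, so no flop relates them over $Y_0$; therefore $X_0\cong X_0^+$ over $Y_0$, hence $X_0\cong X_0^+$.

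I expect the main obstacle to be the smoothness statement in the last step — equivalently, the local computation that $N_{R/X'}\cong\mathcal{O}(-1)^{\oplus2}$, which pins down the flop as an Atiyah flop and hence forces $X^+$ and $X_0^+$ to be smooth. The rest is adjunction together with the essential uniqueness of crepant resolutions of Du Val singularities, which is genuine uniqueness here because the relative Picard number over $Y_0$ equals $1$.
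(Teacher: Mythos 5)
Your proof reaches the correct conclusion but takes a genuinely different (and in places more elaborate) route than the paper's, and it contains one gap.

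The paper's argument is shorter: after establishing that $X_0^+$ is terminal, minimal, $\mathbb{Q}$-factorial and isomorphic in codimension one to $X_0$ (so the two are connected by a chain of flops), it observes that the indeterminacy of $X_0\dashrightarrow X_0^+$ lies in $E_0$, hence the first flop would have to flop a rational curve of $E_0$; but the only rational curves in $E_0$ are the ruling fibres, which sweep out a divisor, so they cannot be the exceptional locus of a small contraction, and therefore no flop occurs at all. Your final step --- $\rho(X_0^+/Y_0)=1$, so the unique extremal contraction over $Y_0$ is divisorial, hence there is no flop over $Y_0$ --- is essentially the same observation phrased via relative Picard number, and is a clean reformulation. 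The $\mathbb{Q}$-factoriality of $X_0^+$ needed to invoke this is obtained in the paper directly from the extremal contraction structure, without passing through smoothness.

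The gap is in the Atiyah-flop step. You assert that rigidity of $R$ in $X'$ together with $K_{X'}\cdot R=0$ forces $N_{R/X'}\cong\mathcal{O}(-1)^{\oplus2}$. That implication is false: a rigid $K$-trivial rational curve in a smooth threefold can also have normal bundle $\mathcal{O}\oplus\mathcal{O}(-2)$ (Reid's pagodas) or $\mathcal{O}(1)\oplus\mathcal{O}(-3)$; $h^0$ merely bounds the tangent space to the Hilbert scheme, and deformations may be obstructed. In fact the setup already tells you $N_{R/X_0}\cong\mathcal{O}\oplus\mathcal{O}(-2)$ (from $E_0|_{E_0}=K_{E_0}$ and $K_{E_0}\cdot R=-2$), so $R$ is certainly not a $(-1,-1)$-curve inside $X_0$, and whether the extra normal direction inside $X'$ ``twists'' it to $(-1,-1)$ is exactly what would need to be proved. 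The good news is that this step is avoidable: your $\rho(X_0^+/Y_0)=1$ argument only needs $X_0^+$ to be $\mathbb{Q}$-factorial, terminal, and crepant over $Y_0$, and smoothness of $X_0^+$ then falls out as a consequence of the isomorphism $X_0\cong X_0^+$ rather than being an input. Your preliminary observations --- that $Y_0$ has transverse $A_1$ along $G_0$ and that there is a unique crepant valuation with centre $G_0$ --- are correct but also not needed once one argues as in the paper.
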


\begin{proof}
The map $\psi$ is a step of the $(K_X+X_0+\delta \Delta)$-MMP over $C$, $(X, X_0+\delta \Delta)$ is plt, and $(X_0,\delta \Delta_0)$ is terminal.
Then, $(X, X_0^+ + \delta \Delta^+)$ is plt as well;
similarly, $(X_0^+,\delta \Delta_0^+)$ is terminal.
Thus, $\pi_0^+ \colon X^+_0 \to Y_0$ is a terminalization of $Y_0$.
Since $K_{X^+} \equiv 0$, as $\psi$ is an isomorphism in codimension 1, then $K_{X^+_0} \equiv 0$.
So, $X_0$ and $X^+_0$ are isomorphic in codimension 1, as they are both terminal and minimal, see~\cite{KM98}*{Corollary~3.54}.
Since $X_0$ is $\qq$-factorial and $\pi_0$ is an extremal divisorial contraction, then also $Y_0$ is $\qq$-factorial.
Then, as also $\pi_0^+$ is extremal, it follows that $X_0^+$ is $\qq$-factorial as well.
In particular, as $X_0$ and $X_0^+$ are $\qq$-factorial, terminal, and minimal, they are connected by a sequence of flops 
\begin{align*}
\xymatrix{
X_0 \ar@{-->}[rrr]^{\xi} & & & X_0^+.
}
\end{align*}
Thus, also 
$\pi_0^+$ 
is a divisorial contraction; 
we shall denote by 
$\overline{E}_0$ 
its exceptional divisor which is contracted by 
$\pi_0^+$ 
to the curve 
$G_0 \subset Y_0$.

Let $R^+$ denote the class of the curves contracted by $\pi_0^+$.
As $\pi_0^+$ is an isomorphism outside $\overline{E}_0$, the first flop in the chain of flops connecting $X_0$ to $X^+_0$ must flop a rational curve in $E_0$. 
The only rational curves here are the fibers of the projective bundle structure $E_0 \rar G_0$ and they are all contained in a 1-dimensional family.
Hence, they cannot be possibly flopped.
This shows that $X_0$ and $X^+_0$ are isomorphic.
In particular, $\overline{E}_0 \cdot R^+ = -2$.
\end{proof}

Abusing notation, we denote by $\psi_0 \colon X_0 \rar X_0^+$ the isomorphism whose existence is demonstrated in the proof of Lemma~\ref{lemma_iso_special_fibers}.
Given that $\nsr(X_0)$ (resp., $\nsr(X_0^+)$) comes equipped with a natural marking given by the identification with $\nsr(X)$, the two markings cannot possibly be identified by $\psi_0^\ast$:
in fact, for example, $(H +\epsilon E)\vert_{X_0}$ is not ample on $X_0$ for $0 < \epsilon \ll 1$, while the corresponding class $(H^+ +\epsilon E^+)\vert_{X^+_0}$ is ample on $X_0^+$.
Hence, under such markings, the nef cones are not even identified.

\begin{lemma} \label{lemma_intersections}
Let $\overline{E}_0$ 
be the exceptional divisor of the morphism 
$\pi_0^+$.
For any class $D_0 \in \nsr(Y_0)$,
\[
E^+_0 \cdot ((\pi_0^{+})^\ast D_0)^2=0 = \overline{E}_0 \cdot ((\pi_0^{+})^\ast D_0)^2=0 = E_0 \cdot (\pi^\ast_0 D_0)^2.
\]
\end{lemma}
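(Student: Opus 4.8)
The plan is to reduce each of the three vanishings to the single fact that, on a ruled surface over a curve, the class of a fibre is isotropic for the intersection form. I would begin with the term $E_0 \cdot (\pi_0^\ast D_0)^2$. Writing $j \colon E_0 \hookrightarrow X_0$ for the inclusion, recall from the first lemma of this section that $\pi_0$ restricts on $E_0$ to the ruled surface projection $p \colon E_0 \to G_0$ followed by the embedding $G_0 \hookrightarrow Y_0$; hence $j^\ast \pi_0^\ast D_0 = p^\ast(D_0\vert_{G_0})$. Since $G_0$ is a curve, $D_0\vert_{G_0}\equiv (\deg D_0\vert_{G_0})\,\mathrm{pt}$ numerically, so $j^\ast\pi_0^\ast D_0\equiv (\deg D_0\vert_{G_0})\,R$ on $E_0$, where $R$ is a fibre of $p$. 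As $R^2 = 0$ on $E_0$, the projection formula yields
\begin{align*}
E_0 \cdot (\pi_0^\ast D_0)^2 = \int_{E_0}\big(j^\ast \pi_0^\ast D_0\big)^2 = (\deg D_0\vert_{G_0})^2\,R^2 = 0 .
\end{align*}

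The same argument applies verbatim to $\bar E_0$ on $X_0^+$: by Lemma~\ref{lemma_iso_special_fibers} the morphism $\pi_0^+ \colon X_0^+ \to Y_0$ is a divisorial contraction of $\bar E_0 \cong E_0$ onto the same elliptic curve $G_0$, and it restricts on $\bar E_0$ to the induced ruled surface structure, whose fibre class $R^+$ satisfies $(R^+)^2 = 0$; hence $\bar E_0 \cdot (\pi_0^{+\ast} D_0)^2 = 0$. For the remaining term I would identify $E_0^+ := E^+\vert_{X_0^+}$ with $\bar E_0$: the flip $\psi \colon X \dashrightarrow X^+$ is an isomorphism in codimension one whose indeterminacy locus lies in $E_0 \subset X_0$, and it restricts to the isomorphism $\psi_0 \colon X_0 \xrightarrow{\sim} X_0^+$, so the strict transform $E^+$ meets the central fibre $X_0^+$ precisely along $\psi_0(E_0) = \bar E_0$; in particular $[E_0^+] = [\bar E_0]$ in $\nsr(X_0^+)$, and the third vanishing follows from the second.

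The computation itself is elementary; the one point that calls for attention --- and it is not a genuine obstacle --- is the identification $E_0^+ = \bar E_0$, i.e., checking that forming the strict transform of $E$ through the flip and then restricting to the central fibre gives the same divisor class as restricting $E$ to $X_0$ first and transporting it along the isomorphism $\psi_0$. Once this is settled, all three equalities are consequences of the isotropy of the fibre class of the elliptic ruled surface $E_0 \to G_0$.
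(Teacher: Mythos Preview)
Your arguments for $E_0 \cdot (\pi_0^\ast D_0)^2 = 0$ and $\bar E_0 \cdot (\pi_0^{+\ast} D_0)^2 = 0$ are correct and essentially equivalent to the paper's (which instead chooses two general ample divisors on $Y_0$ whose intersection misses $G_0$).

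The third vanishing, however, has a genuine gap: the identification $[E_0^+] = [\bar E_0]$ that you call ``not a genuine obstacle'' is in fact \emph{false}. The very next lemma (Lemma~\ref{lemma_reflection}) shows $[E_0^+] = \lambda [\bar E_0]$ with $\lambda < 0$ (Wilson computes $\lambda = -1$), and the paragraph immediately after Lemma~\ref{lemma_iso_special_fibers} already warns that the two markings of $\nsr(X_0^+)$ --- one coming from restriction of classes on $X^+$, the other from transporting classes on $X_0$ via the isomorphism $\psi_0$ --- do \emph{not} agree. Your reasoning amounts to asserting $(\psi_\ast E)\vert_{X_0^+} = (\psi_0)_\ast(E\vert_{X_0})$, i.e.\ that restriction to the central fibre commutes with pushforward through the flip; but the indeterminacy locus of $\psi$ is exactly the ruling of $E_0$, and passing a class through the flip there genuinely alters it relative to $\psi_0$. (Note also that $E$ is only a cohomology class on $X$, not an effective divisor --- $E_0$ is rigid and does not deform --- so speaking of ``the strict transform $E^+$ meeting $X_0^+$ along $\bar E_0$'' set-theoretically is not well-posed to begin with.) The paper handles $E_0^+$ by deformation invariance instead: from $E_0 \cdot L_0^2 = 0$ one gets $E_t \cdot L_t^2 = 0$ for all $t$; since $\psi$ is an isomorphism over $t \neq 0$ this gives $E_t^+ \cdot (L_t^+)^2 = 0$ for $t \neq 0$; constancy then yields the case $t = 0$. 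You cannot salvage your approach by invoking $E_0^+ = -\bar E_0$, since Lemma~\ref{lemma_reflection} relies on the present lemma in its proof.
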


\begin{proof}
As $\overline{E}_0$ is contracted by $\pi_0^+$ to a curve, for any general ample divisors $J_0$ and $J_0'$ on $Y_0$, we have $\overline{E}_0 \cdot (\pi_0^{+ \ast} J_0) \cdot (\pi_0^{+ \ast} J'_0)=0$, as we may assume that the intersection $J_0 \cap J_0'$ avoids $G_0$.
As the N\'eron--Severi group is generated by ample divisors, the conclusion of the lemma follows for $\overline{E}_0$.
\\
The same reasoning shows also that the conclusion holds for $E_0$ on $X_0$, as $E_0$ is contracted by $\pi_0$.
\\
Let $V \subset \nsr(X/C)$ be the subspace that is generated by the classes that restrict to $\pi_0^\ast (\nsr (Y_0))$ on $X_0$.
By the conclusion of the lemma for $E_0$ and the deformation invariance of intersection products, it follows that, for any $t \in C$ and any $L \in V$, we have $E_t \cdot (L_t)^2=0$.
However, when we apply $\psi$, as nothing happens on $X\setminus \{X_0\}$, then $E^+_t \cdot (L^+_t)^2=0$ for $t \neq 0$.
This also implies, by the constancy of the intersection numbers in the family, that $E^+_0 \cdot ( H_0^+)^2=0$.
\end{proof}

\begin{remark}
\label{rmk:Y_0}
\begin{enumerate}
    \item 
\label{l.c.i.}
We observe that $Y_0$ is a local complete intersection.
Indeed, as observed by Wilson \cite{Wil92}*{p. 567}, $Y_0$ has cDV singularities.
Thus, $Y_0$ has locally analytically hypersurface singularities.
Hence, by \cite{stacks-project}*{Tag 09PY}, $Y_0$ is a local complete intersection.
In particular, cf.~\cite{Laz04a}*{Remark 3.1.34}, the Lefschetz hyperplane theorem can be applied to $Y_0$.
    \item
    \label{intersection.Y_0}
Let 
\begin{align*}
\xymatrix  @R=.4pc{
\nsr(Y_0) \otimes_\mathbb{R} \nsr(Y_0) \ar[r] & 
\nsdr(Y_0)
    \\
\alpha \otimes \beta 
\ar@{|->}[r]& 
\alpha \cdot \beta
}
\end{align*}
be the morphism induced by the intersection pairing on $Y_0$.
Let $M_0 \subset \nsdr(Y_0)$ be the image of this morphism.
We claim that $M_0=\nsdr(Y_0)$, or, equivalently, that $M_0^\perp =\{0\}$, where $M_0^\perp \subset \nsr(Y_0)$.
Let us assume that that is not case, i.e., that there is an element $0\ne v\in \nsr(Y_0)$ such that $v\cdot h\cdot h'=0$ for any divisors classes $h,h' \in \nsr(Y_0)$.
Taking $h=[H]$, where $H$ is a general very ample divisor and hence a surface with canonical singularities, then, $v\cdot h\cdot h'=v\vert_H\cdot h'\vert_H=0$.
By the previous part of this remark, the morphism $H^2(Y_0,\mathbb{Z}) \rar H^2(H,\mathbb{Z})$ is injective, and therefore $v\vert_H \neq 0$.
Now, assume that $h'$ is an ample class.
Then, since $v\vert_H \cdot h'\vert_H =0$, by the Hodge index theorem and the fact that $v\vert_H \neq 0$, it follows that $v\vert_H \cdot v\vert_H <0$.
Thus, we reach the required contradiction, as $v \cdot v \cdot h \neq 0$.
\end{enumerate}
\end{remark}

We will denote by $\overline E$
the class in 
$\nsr(X^+/C)$
whose restriction to 
$X_0^+$
is 
$\overline E_0$.

\begin{lemma} \label{lemma_reflection}
There exists a negative real number $\lambda$ such that $[E^+]=\lambda [\overline{E}]$ in $\nsr(X^+)$.
\end{lemma}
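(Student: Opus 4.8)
The plan is to prove the asserted identity on the central fibre and then spread it out. Since the conclusions of Theorem~\ref{t-BT} apply equally to the family $X^+\rar C$, restriction to $X_0^+$ identifies $\nsr(X^+)$ with $\nsr(X_0^+)$, sending $[E^+]$ and $[\bar E]$ to $[E_0^+]$ and $[\bar E_0]$; so it suffices to find $\lambda<0$ with $[E_0^+]=\lambda[\bar E_0]$ in $\nsr(X_0^+)$. Now $\pi_0^+\colon X_0^+\rar Y_0$ is an extremal divisorial contraction with exceptional divisor $\bar E_0$, so $\pi_0^{+\ast}$ is injective and $\nsr(X_0^+)=\pi_0^{+\ast}\nsr(Y_0)\oplus\rr[\bar E_0]$; accordingly I would write $[E_0^+]=\pi_0^{+\ast}\gamma_0+\lambda[\bar E_0]$ for a unique $\gamma_0\in\nsr(Y_0)$ and $\lambda\in\rr$, and the two things left to check are $\gamma_0=0$ and $\lambda<0$.

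For $\gamma_0=0$ I would feed the decomposition into Lemma~\ref{lemma_intersections}: that lemma gives $E_0^+\cdot(\pi_0^{+\ast}D_0)^2=0$ and $\bar E_0\cdot(\pi_0^{+\ast}D_0)^2=0$ for every $D_0\in\nsr(Y_0)$, hence $\pi_0^{+\ast}\gamma_0\cdot(\pi_0^{+\ast}D_0)^2=0$ for all $D_0$. Since $\pi_0^+$ is birational, the projection formula turns this, after polarising, into $\gamma_0\cdot D_0\cdot D_0'=0$ for all $D_0,D_0'\in\nsr(Y_0)$, i.e.\ $\gamma_0\in M_0^\perp$ in the notation of Remark~\ref{rmk:Y_0}. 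But part~(\ref{intersection.Y_0}) of that remark asserts exactly $M_0^\perp=\{0\}$, so $\gamma_0=0$ and $[E_0^+]=\lambda[\bar E_0]$, equivalently $[E^+]=\lambda[\bar E]$ in $\nsr(X^+)$.

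For $\lambda<0$ I would intersect with a ruling fibre. Let $R^+$ be the class of a fibre of $\bar E_0\rar G_0$, that is, of the curves contracted by $\pi^+\colon X^+\rar Y$. By construction $\psi\colon X\drar X^+$ is the flip of the $(\K X.+X_0+\delta\Delta)$-flipping contraction $\pi$, so $\K X^+.+X_0^++\delta\Delta^+$ is $\pi^+$-ample and $(\K X^+.+X_0^++\delta\Delta^+)\cdot R^+>0$. On the other hand $\K X^+.\cdot R^+=0$ by adjunction, because $X_0^+\cong X_0$ is Calabi--Yau and is a fibre of $X^+\rar C$; likewise $X_0^+\cdot R^+=0$; and $H^+\cdot R^+=0$ since $H^+$ is pulled back from $Y$. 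As $\Delta^+\sim_{\qq}H^++\epsilon E^+$, the above forces $(\K X^+.+X_0^++\delta\Delta^+)\cdot R^+=\delta\epsilon\,(E^+\cdot R^+)$, so $E^+\cdot R^+>0$. Since $\bar E\cdot R^+=\bar E_0\cdot R^+=-2$ by Lemma~\ref{lemma_iso_special_fibers}, intersecting $[E^+]=\lambda[\bar E]$ with $R^+$ gives $E^+\cdot R^+=-2\lambda$, whence $\lambda<0$.

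The only step that is not bookkeeping is $\gamma_0=0$: a priori $E_0^+$ and $\bar E_0$ are unrelated divisors on $X_0^+$, and what must be extracted is that both have vanishing component along $\pi_0^{+\ast}\nsr(Y_0)$. This is precisely the role of Lemma~\ref{lemma_intersections}, once combined with the non-degeneracy of the triple intersection form on $\nsr(Y_0)$ recorded in Remark~\ref{rmk:Y_0} (itself obtained via the Lefschetz hyperplane theorem for the l.c.i.\ threefold $Y_0$ and the Hodge index theorem). Granting $\gamma_0=0$, the sign of $\lambda$ is automatic from the flip, since $E^+$ meets $R^+$ positively while $\bar E$ meets it with multiplicity $-2$.
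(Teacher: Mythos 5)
Your proof is correct and essentially reproduces the paper's argument: both establish that $[E_0^+]$ and $[\bar E_0]$ are proportional by combining Lemma~\ref{lemma_intersections} with the non-degeneracy of the triple intersection form from Remark~\ref{rmk:Y_0} (you via the direct-sum decomposition $\nsr(X_0^+)=\pi_0^{+\ast}\nsr(Y_0)\oplus\rr[\bar E_0]$ and polarisation, the paper by showing $(\pi_0^{+\ast}M_0)^\perp$ is a line containing both classes), and both determine the sign of $\lambda$ by intersecting with the ruling class $R^+$. Your detailed flip-inequality computation for $E^+\cdot R^+>0$ correctly unpacks what the paper dismisses with ``by construction.''
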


\begin{proof}
By Lemma \ref{lemma_intersections}, we know that $E_0^+$, 
$\overline{E}_0 \in (\pi_0^\ast M_0)^\perp$.
As 
$\dim \pi_0^\ast \nsr(Y_0) = \dim \nsr (X^{+}_0) -1$ 
and 
$\pi_0^\ast \nsr (Y_0)\cap (M_0)^\perp =\{0\}$, then $(\pi_0^\ast M_0)^\perp$ is a line generated by either one of the divisors $E_0^+, \overline{E}_0$.
Let $R^+$ be the class contracted by $\pi^+$ ($R^+$ is just the class of any curve in the ruling of $\overline{E}_0$).
Hence, as already observed at the end of the proof of Lemma \ref{lemma_iso_special_fibers}, $\overline{E}_0 \cdot R^+<0$.
On the other hand, by construction, $E^+_0 \cdot R^+>0$, as $E^+$ is the strict transform of $E$ through the flip.
\end{proof}

\begin{remark}
\label{rem.W.example}
Lemma~\ref{lemma_reflection} provides an instance in which the inclusion (2) in~Theorem \ref{thm cones} is strict.
Indeed, using the notation of \S~\ref{section example}, if $E$ were pseudo-effective, then so would $E^+$ be.
Yet, this would imply that $E_0^+$ is pseudo-effective.
Then, by Lemma \ref{lemma_reflection}, $\overline{B}(X/C)$ would contain a line, which is impossible.

The conclusions of Lemma \ref{lemma_reflection} are consistent with the observations made by Wilson in \cite{Wil92}*{\S~5}.
Indeed, Wilson showed that the transformation induced on $\nsr(X_0)$ by the flip is a reflection through the plane generated by pull-back of classes from $Y_0$ which sends $E_0$ to its negative.
In particular, the coefficient $\lambda$ in Lemma \ref{lemma_reflection} is $\lambda = -1$.
\end{remark}

\section{Elliptic threefolds}
\label{sect.ell.3folds}
We recall that, by the notation set in \S~\ref{def_CY}, a Calabi--Yau threefold $X$ is a normal projective threefold with $\mathbb Q$-factorial terminal singularities such that $K_X \sim 0$ and $h^1(X,\mathcal{O}_X)=h^2(X,\mathcal{O}_X)=0$.

\subsection{Elliptic Calabi--Yau threefolds and their bases} 
If $f \colon X \rar S$ is an elliptic Calabi--Yau threefold, the base $S$ is either rational, or it is a surface with Du Val singularities whose minimal resolution is an Enriques surface, see~\cite{Gr91}.

\begin{remark} \label{rmk_surfaces}
Let $X$ be a klt variety with $\K X. \sim \subs \qq. 0$.
Assume that $X$ is endowed with an elliptic fibration $f \colon X \rar S$.
Then, by the canonical bundle formula, cf. Proposition~\ref{prop models}, there exists a generalized pair structure $(S,B_S + M_S)$ on $S$ such that $K_X \sim f^\ast(K_S+B_S+M_S)$.
By~\cite{FM00}, the coefficients of $B_S$ lie in the set $\left\{ 1-\frac 1n \ \vert \ n \in \mathbb N_{>0} \right\} \cup \left\{ \frac 16, \frac 14, \frac 13 \right\}$, and , by Kodaira's canonical bundle formula, we can find an effective and integral divisor $D$ such that $\frac{1}{12}D \sim_\qq M_S$, and $(S,\Delta_S)$ is klt, where $\Delta_S \coloneqq B_S + \frac{1}{12}D$, cf.~\cite{PS09}*{Example~7.16}.
In particular, $\mathrm{coeff}(\Delta_S) \subset \left\{ 1-\frac 1n \ \vert \ n \in \mathbb N_{>0} \right\} \cup \left\{ \frac 16, \frac 14, \frac 13, \frac{1}{12} \right\}$.
\end{remark}

We will denote the set $\left\{ 1-\frac 1n \ \vert \ n \in \mathbb N_{>0} \right\} \cup \left\{ \frac 16, \frac 14, \frac 13, \frac{1}{12} \right\}$ by $\celliptic$.

In order to prove the boundedness of the elliptic Calabi--Yau threefolds, one first needs to address the boundedness of the bases of the corresponding elliptic fibrations.

The set of possible rational bases is bounded by work of Alexeev~\cite{Ale94}.

\begin{proposition} \label{prop_surf_bdd}
The set of log pairs
\begin{align*}
\setbases \coloneqq
\left \{
(S, \Delta_S) \ \middle \vert \ 
\begin{array}{l}
\dim S=2, \
\mathrm{coeff}(\Delta_S) \subset \celliptic, \
K_S+\Delta_S \sim_\mathbb{Q} 0, \\
\text{$S$ is rationally connected, and $(S, \Delta_S)$ is projective klt}
\end{array}
\right\}
\end{align*}
is log bounded.
\end{proposition}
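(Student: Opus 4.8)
The plan is to reduce the statement to Alexeev's boundedness result for log del Pezzo surfaces, exploiting two features of the hypotheses: the coefficients of $\Delta_S$ lie in the fixed DCC (indeed, finite modulo the subsequence $1-\frac 1n$) set $\celliptic$, and $K_S+\Delta_S\sim_\qq 0$ with $S$ rationally connected. First I would record that since $(S,\Delta_S)$ is klt with $K_S+\Delta_S\sim_\qq 0$ and $S$ is rationally connected, $S$ is in particular a klt del Pezzo surface in the weak sense: for any ample $A$, $-K_S\sim_\qq \Delta_S$ is effective, and running a $K_S$-MMP (or simply using that $S$ is rationally connected of dimension two) shows $S$ is a Mori dream space with $-K_S$ big. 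The key point is to produce a \emph{uniform} lower bound $\epsilon_0>0$ on the log discrepancies of the pairs $(S,\Delta_S)$ in the family. This is exactly where the coefficient set $\celliptic$ enters: by the boundedness of the log discrepancies for surfaces — or more directly, since $\mathrm{coeff}(\Delta_S)$ is bounded away from $1$ except along the sequence $1-\frac 1n\to 1$, and $(S,\Delta_S)$ is klt hence automatically $1$ is not attained — I would invoke global ACC / boundedness of complements for surface pairs (e.g. the surface case of Alexeev, or Prokhorov–Shokurov, or Birkar's boundedness of complements in dimension two) to conclude that there is $\epsilon_0>0$ such that every $(S,\Delta_S)\in\setbases$ is $\epsilon_0$-klt \emph{after} possibly replacing $\Delta_S$ by a slightly smaller effective boundary $\Delta_S'$ with $K_S+\Delta_S'\sim_\qq 0$ still; the point is that coefficients close to $1$ force the corresponding components to have controlled geometry.

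Concretely, the main step I would carry out is: show that the set of $S$ (the underlying surfaces, forgetting $\Delta_S$) forms a bounded family. Here one applies \cite{Ale94}: a set of surfaces $S$ admitting an $\epsilon$-klt pair $(S,\Delta)$ with $K_S+\Delta\equiv 0$ and $\Delta\ge 0$, for fixed $\epsilon>0$, is bounded (this is Alexeev's boundedness of $\epsilon$-log del Pezzo surfaces, building on the Borisov–Alexeev–Borisov program in dimension two). Thus once the uniform $\epsilon_0$ is in hand, the surfaces $S$ lie in a bounded family: there is a projective morphism $\mathcal S\to T'$ with $T'$ of finite type such that every such $S$ is a fiber. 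To upgrade this to \emph{log} boundedness of the pairs, I would then argue that the divisors $\Delta_S$ are bounded as well: on a bounded family of surfaces, there is a uniform bound $N$ on the Cartier index and on the degrees (against a relatively ample polarization) of components of any boundary with coefficients in the fixed set $\celliptic$ satisfying $K_S+\Delta_S\sim_\qq 0$ — because $\Delta_S\sim_\qq -K_S$ has bounded degree, and each prime component has positive degree bounded below, so the number of components and their classes range in a finite set; stratifying $T'$ and taking relative Hilbert schemes of the possible components then produces the log bounded family $(\mathcal S,\mathcal B)\to T''$.

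The hard part will be establishing the uniform lower bound $\epsilon_0$ on log discrepancies — i.e. ruling out that, as $S$ varies in $\setbases$, there are log canonical places of $(S,\Delta_S)$ with discrepancy tending to $-1$ (equivalently log discrepancy $\to 0$). Since the non-finite part of $\celliptic$ is exactly $\{1-\frac1n\}$, a component with coefficient $1-\frac1n$ for large $n$ behaves almost like a reduced (lc) component, and one must ensure this does not create a tower of increasingly bad singularities on $S$; this is genuinely the ACC-for-lc-thresholds / boundedness-of-complements circle of ideas specialized to surfaces. I would handle it either by citing the two-dimensional case of \cite{Ale94} together with the global ACC for surfaces, or — following Remark~\ref{rmk_surfaces} and the structure of $\celliptic$ — by noting that after replacing $\frac{1}{12}D$ by an auxiliary bounded modification one reduces to coefficients in a set which is DCC and bounded away from $1$, which by the surface MMP/classification gives the needed $\epsilon_0$. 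Once that input is granted, the rest is the standard packaging of Alexeev's theorem plus Hilbert-scheme stratification sketched above, and no further serious obstacle arises.
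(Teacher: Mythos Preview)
Your proposal is correct and follows essentially the same route as the paper: obtain a uniform $\epsilon_0$-klt bound, apply \cite{Ale94} to bound the underlying surfaces, then bound the boundaries. The one place where the paper is sharper is the step you flag as ``the hard part'': rather than invoking ACC-type results in a somewhat diffuse way, the paper observes that by \cite{PS09}*{Corollary 1.11} there is a fixed $N$ (depending only on $\celliptic$ and dimension $2$) with $N(K_S+\Delta_S)\sim 0$; since $(S,\Delta_S)$ is klt and all discrepancies then lie in $\frac{1}{N}\zz$, the pair is automatically $\frac{1}{N}$-lc. This dispatches in one line what you describe as the main difficulty, and your suggested modification ``replacing $\Delta_S$ by a slightly smaller $\Delta_S'$ with $K_S+\Delta_S'\sim_\qq 0$ still'' is both unnecessary and impossible as stated. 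For the log boundedness of the boundary the paper simply cites \cite{dCSH}*{Theorem 4.1}, which packages the Hilbert-scheme argument you sketch.
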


\begin{proof}
Fix $(S,\Delta_S) \in \setbases$.
By~\cite{PS09}*{Corollary 1.11}, there exists $N \in \nn$, only depending on the data of our problem such that $N(\K S. + \Delta_S) \sim 0$.
Since $(S,\Delta_S)$ is klt, this implies that it is $\frac{1}{N}$-log canonical.
Then, by~\cite{Ale94}*{Theorem 6.9}, the surface $S$ belongs to a bounded family.
The statement about the boundary follows from~\cite{dCSH}*{Theorem 4.1}.
\end{proof}

On the other hand, we could not find in the literature a result showing the boundedness of singular models of Enriques surfaces.
The following statement fills this gap.
We deduce the boundedness of these varieties from the boundedness of Enriques surfaces and the Kawamata--Morrison cone conjecture.

\begin{theorem} \label{bdd enriques}
The set of varieties
\begin{align*}
\setenr \coloneqq
\left \{
S \ \middle \vert \ 
\begin{array}{l}
S \ 
\text{is a projective surface with at worst Du Val singularities}\\
\text{and its minimal resolution is an Enriques surface}
\end{array}
\right\}
\end{align*}
is bounded.
\end{theorem}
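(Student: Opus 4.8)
The plan is to deduce the statement from two inputs: the boundedness of smooth Enriques surfaces, and the Kawamata--Morrison cone conjecture in dimension $2$, which is known by work of Totaro~\cite{Tot10}. First I would recall that Enriques surfaces form a bounded family: after a stratification and a finite \'etale base change, fix finitely many families $g \colon \mathcal E \rar B$ of Enriques surfaces over varieties $B$ of finite type, each carrying a relatively very ample line bundle $\mathcal A$ with $\mathcal A_b^2$ constant, such that every Enriques surface is a fibre of one of them, and such that $\Pic(\mathcal E/B)$ is a constant lattice --- necessarily isometric to the Enriques lattice $U \oplus E_8$, since for an Enriques surface $\Pic = \nsr$ has no continuous part. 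Given $S \in \setenr$, let $\mu \colon \tilde S \rar S$ be its minimal resolution; then $\tilde S$ is Enriques, the $\mu$-exceptional locus is a disjoint union of ADE configurations of $(-2)$-curves, $\mu$ is crepant, and in particular $(S,0)$ is klt with $2K_S \sim 0$ and $K_S \equiv 0$, and $S$ is projective, being the ample model over $\tilde S$ of $\mu^\ast$ of any ample on $S$.

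The first reduction is to bound the self-intersection of a polarization on $S$. Pick an ample Cartier divisor $H_S$ on $S$ and set $H \coloneqq \mu^\ast H_S$, a nef and big class on $\tilde S$. It suffices to choose $H_S$ so that $H^2$ is bounded by a constant depending only on the fixed families: indeed $H^2 = H_S^2$, and Riemann--Roch on the Enriques surface $\tilde S$ (using $H \cdot K_{\tilde S} = 0$, as $K_{\tilde S}$ is torsion) gives $\chi(S, mH_S) = \chi(\tilde S, mH) = \tfrac{m^2 H^2}{2} + 1$, so the Hilbert polynomial of $(S, H_S)$ is bounded; since $\tilde S$ varies in a bounded family and $H$ will be one of only finitely many nef classes of the fixed lattice (see below), the pair $(\tilde S, H)$ varies in a bounded family, and hence so does $S = \operatorname{Proj} \bigoplus_{m \geq 0} H^0(\tilde S, mH)$. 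Alternatively, one may invoke effective very ampleness of a bounded multiple of $H_S$ on the canonical Du Val surface $S$.

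To bound $H^2$, I would use the cone conjecture on $\tilde S$. The contraction $\mu$ corresponds to the face $F_\mu \coloneqq \mu^\ast \overline A(S)$ of the nef cone $\overline A(\tilde S)$, whose orthogonal complement is spanned by the classes of the contracted $(-2)$-curves. Since $(\tilde S, 0)$ is klt with $K_{\tilde S} \equiv 0$ and every nonzero nef divisor on an Enriques surface is effective (Riemann--Roch again, so $A^e(\tilde S) = \overline A(\tilde S)$), Totaro's theorem~\cite{Tot10} provides a rational polyhedral fundamental domain $\Pi$ for the action of $\operatorname{Aut}(\tilde S)$ on $\overline A(\tilde S)$. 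Replacing $\mu$ by $\phi \circ \mu$ for a suitable $\phi \in \operatorname{Aut}(\tilde S)$ --- which replaces $S$ by an isomorphic surface --- we may assume $F_\mu \subseteq \Pi$. Then $F_\mu$ is one of the finitely many faces of $\Pi$, and taking $H_S$ so that $H = \mu^\ast H_S$ is the primitive integral class spanning $F_\mu$ bounds $H^2$ in terms of $\Pi$ and $\mathcal A_b^2$ only; likewise the contracted curves have classes among the finitely many $(-2)$-classes in $\Pi^\perp$, of bounded $\mathcal A_b$-degree.

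I expect the main obstacle to be the uniformity of the fundamental domain $\Pi$ across the bounded family: the cone conjecture, as stated, produces a $\Pi$ for each individual surface with no a priori control as $\tilde S$ varies. I would resolve this by exploiting that Totaro's argument is lattice-theoretic and that the numerical lattice $U \oplus E_8$ is the same for every Enriques surface, together with the classical fact that the image of $\operatorname{Aut}(\tilde S)$ in $O(\nsr(\tilde S))$ always contains a finite-index subgroup of bounded index (e.g.\ a fixed level-$2$ congruence subgroup), so that a fundamental domain of bounded combinatorial complexity can be chosen uniformly in the family. Failing that, one can argue more crudely: the contracted configuration $\sum C_i$ is a negative definite root configuration inside a lattice of rank $\le 10$, hence of one of finitely many ADE types, and for each type the cone conjecture furnishes, up to $\operatorname{Aut}(\tilde S)$ and uniformly over $B$, a representative configuration of $(-2)$-curves of $\mathcal A_b$-degree bounded independently of $\tilde S$; the corresponding contractions can then be performed in families over (a stratification and finite cover of) $B$, yielding the desired bounded family of surfaces $S$.
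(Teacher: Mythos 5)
Your overall strategy---reduce to a bounded family of smooth Enriques surfaces and then invoke the Kawamata--Morrison cone conjecture on the resolution $\tilde S$ to control the contraction $\mu \colon \tilde S \to S$---matches the paper's in spirit, and you correctly identify the uniformity of the fundamental domain across the bounded family as the crux of the matter. But neither of your proposed resolutions of that difficulty works. The claim that the image of $\mathrm{Aut}(\tilde S)$ in $\mathrm{O}(\nsr(\tilde S))$ ``always contains a finite-index subgroup of bounded index (e.g.\ a fixed level-$2$ congruence subgroup)'' is false: by Nikulin's classification there exist Enriques surfaces with finite automorphism group, whose image in $\mathrm{O}(\nsr(\tilde S))$ is finite and so cannot contain any finite-index (hence infinite) subgroup of that infinite group. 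The Barth--Peters-type statement you have in mind holds for the generic Enriques surface, and the failure on special strata is exactly where your uniformity claim breaks.

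More fundamentally, even though the lattice $\nsr(\tilde S)$ is constant across the bounded family, the nef cone $\overline{A}(\tilde S)$ jumps (shrinks) along special strata, so no single rational polyhedral cone $\Pi$ in the fixed lattice serves simultaneously as a fundamental domain for every surface in the family. Your crude fallback via ADE types has the same defect: the cone conjecture for a single $\tilde S$ yields finitely many $\mathrm{Aut}(\tilde S)$-orbits of ADE configurations, but neither this count nor the degree of a cheapest orbit representative has any a priori bound as $\tilde S$ varies. The paper closes exactly this gap by stratifying and base-changing so that the divisor class group is constant across the family (Theorem~\ref{t-BT}) and then establishing $\overline{A}(\mathcal X_0) \subset \overline{A}(\mathcal X_\eta) = \overline{M}(\mathcal X/T)$ (Theorem~\ref{thm cones} together with the argument in the proof), whence a single fundamental domain $\Pi_\eta$ for $\mathrm{Aut}(\mathcal X_\eta)$ acting on $\overline{A}(\mathcal X_\eta)$ works simultaneously for the whole family, and every Du Val model of every special fiber arises as a fiber of one of finitely many relative ample models $Y_i \to T$. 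Without some substitute for these deformation-theoretic inputs your argument has a genuine hole.
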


\begin{proof}
It is well known that Enriques surfaces form a bounded family.
For instance, by~\cite{Cos85}*{Theorem 1}, every Enriques surface admits a birational morphism onto a (possibly singular) projective surface of degree 10 in $\pr 5.$.
Projective surfaces of degree 10 in $\pr 5.$ form a bounded family -- it suffices to consider their Hilbert scheme in $\pr 5.$.
Then, also the set given by their resolutions forms a bounded family, thus proving the boundedness of Enriques surfaces.

Now, we need to show the boundedness of the set of surfaces admitting Du Val singularities whose minimal resolution is Enriques.
Let $\mathcal X \rar T$ be a family that bounds the set of Enriques surfaces.
Up to replacing this family, we may assume that the conclusions of Theorem~\ref{t-BT} and Theorem
\ref{thm cones} hold.
As there are finitely many of these components, in the following we focus on a single one, with the understanding that the same argument has to be repeated on each one of them individually.

Let $\eta \in T$ be the generic point.
By~\cite{Kaw97}*{Remark 2.2}, there is a rational polyhedral cone
$\Pi_\eta \subset \overline{A}(\mathcal X_\eta)$ that serves as fundamental domain of the action of 
$\mathrm{Aut}(\mathcal X_\eta)$ on 
$A^e(\mathcal X_\eta)=
\overline{A}(\mathcal X_\eta)$, where we use the fact that every nef Cartier divisor on an Enriques surface is semi-ample.
Then, the semi-group of lattice points of $\Pi_\eta$ is finitely generated.
Let $M^1_\eta,\ldots , M^k_\eta$ be a set of generators of this semi-group.
We denote by $M^1,\ldots , M^k$ the corresponding classes in $\nsr(\mathcal X/T)$ given by the identification of $\nsr(\mathcal X_\eta)$ and $\nsr(\mathcal X/T)$.
Since any integral nef divisor on an Enriques surface is semi-ample, each $M^i_\eta$ spreads out to a divisor that is relatively semi-ample over a non-empty open subset of $T$.
Thus, up to shrinking $T$ finitely many times (this is allowed by Noetherian induction), we may assume that each $M^i$ is relatively semi-ample.

Let $L_\eta$ be a divisor in $\Pi_\eta$, and let $L \in \nsr(\mathcal X/T)$ be the corresponding divisor class.
Then, we claim that $L$ is semi-ample over $T$.
Indeed, as $\Pi_\eta$ is a rational polyhedral cone and $M^1_\eta,\ldots , M^k_\eta$ generate its lattice points over $\mathbb{Z}_{\geq 0}$, then $M^1_\eta,\ldots , M^k_\eta$ generate $\Pi_\eta$ over $\rr_{\geq 0}$.
Thus, $L = \sum_{i=1}^k a_i M^i$, where each $a_i \geq 0$.
Since each $M^i$ is semi-ample over $T$, then so is $L$.

We will now show that, under the assumptions of the previous reductions, $\overline{A}(\mathcal X_\eta)=\overline{M}(\mathcal X/T)$, where we identify the vector spaces $\nsr(\mathcal X_\eta)$ and $\nsr(\mathcal X/T)$.
Clearly, we have $\overline{M}(\mathcal X/T) \subset \overline{A}(\mathcal X_\eta)$, as any relatively movable divisor restricts to a movable divisor on the generic fiber, and movable divisors are nef on surfaces.
Now, let $D_\eta \in \overline{A}(\mathcal X_\eta)$, and let $D$ be the corresponding divisor class in $\nsr(\mathcal X/T)$.
By assumption, there is an automorphism $\phi \in \mathrm{Aut}(\mathcal X_\eta)$ and a divisor $L_\eta \in \mathrm{\Pi_\eta}$ such that $\phi_* D_\eta = L_\eta$.
Then, by regarding $\phi$ as an element of $\mathrm{Bir}(\mathcal X/T)$, we have $\phi_*D=L$.
Since $\mathcal X$ is smooth and relatively minimal over $T$, $\phi$ does not contract nor extract any divisor.
Thus, $\phi_*$ preserves linear equivalence.
Thus, as $L$ is semi-ample and the indeterminacy loci of $\phi$ and $\phi^{-1}$ are small, it follows that the relative stable base locus of $D$ over $T$ does not contain any divisor.
In particular, $D \in \overline{M}(\mathcal X/T)$.

In particular, we have that $\Pi_\eta$ is a fundamental domain for the action of $\mathrm{Aut}(\mathcal X_\eta)=\mathrm{Bir}(\mathcal X/T)$ on $\overline{A}(\mathcal X_\eta)=\overline{M}(\mathcal X/T)$.
Since $\Pi_\eta$ is rational polyhedral, it follows that the cone conjecture holds in this particular setup.
Thus, there are only finitely many chambers for the marked minimal models of $\mathcal X \rar T$ and finitely many faces of them up to the action of $\mathrm{Bir}(\mathcal X/T)$.
In particular, there are finitely many varieties $Y_1 ,\ldots , Y_l$ over $T$ such that, for every divisor $D \in \overline{M}(\mathcal X/T)$, the ample model of $D$ over $T$ is isomorphic over $T$ to some $Y_i$, for $i=1, \ldots , l$.

Now, let $S_0$ be a surface with Du Val singularities whose minimal resolution is $f \colon \mathcal X_0 \rar S_0$ for some $0\in T$.
Let $H_0$ be the pull-back via $f$ of an ample divisor on $S_0$.
Also, let $H$ be the corresponding divisor class in $\nsr(\mathcal X/T)$.
By Proposition~\ref{thm cones}, $H\in \overline{A}(\mathcal X_\eta)$.
Let $Y_i$ be the distinguished model that is isomorphic over $T$ to the relative ample model of $H$.
Then, by construction, we have that the fiber of $Y_i \rar T$ over $0 \in T$ is isomorphic to $S_0$.
In particular, the Du Val models of the Enriques surfaces appearing as fibers of $\mathcal X \rar T$ are bounded, as they all appear as fibers of some $Y_i \rar T$.

By iteration of this argument on all the finitely many components of $T$ and by Noetherian induction on the closed subsets of $T$ removed in the construction, this shows the claim.
\end{proof}

\subsection{Elliptic Calabi--Yau threefolds with a rational base}

In~\cite{Gro94}*{Theorem 1}, Gross proved the following result showing that minimal terminal elliptic Calabi--Yau threefolds with rational base are birationally bounded.
Recall that, in this work, a Calabi--Yau threefold $X$ is a $\mathbb Q$-factorial, terminal threefold with $K_X \sim 0$, and $h^1(X,\mathcal{O}_X)=h^2(X,\mathcal{O}_X)=0$. 

\begin{theorem}[{\cite{Gro94}*{Theorem 1}}] 
\label{thm_gross}
The set of triples
\[
\familyellcyrat
\coloneqq\left\{
(X, S, h) \
\middle \vert \
h \colon X\rar S \  
\text{is an elliptic Calabi--Yau threefold and} \
S \ \text{is normal and rational}
\right\}
\]
is birationally bounded.
\end{theorem}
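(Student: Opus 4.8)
The statement is precisely \cite{Gro94}*{Theorem~1}; I describe the strategy of proof, indicating where the results recalled above enter. Fix a triple $(X,S,h) \in \familyellcyrat$. The plan is to reconstruct $X$, up to birational equivalence over $S$, from a bounded amount of algebraic data: the klt pair $(S,\Delta_S)$ produced by the canonical bundle formula, the Weierstrass data of the associated Jacobian fibration, and a torsor class in a Tate--Shafarevich group. First I would record, as in Remark~\ref{rmk_surfaces}, that the canonical bundle formula yields a klt pair $(S,\Delta_S)$ with $\coeff(\Delta_S) \subset \celliptic$, $K_S + \Delta_S \sim_\qq 0$, and $S$ rationally connected; by Proposition~\ref{prop_surf_bdd} the pair $(S,\Delta_S)$ is then log bounded. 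Hence, after passing to finitely many families and a stratification, we may assume that $S$ is fixed, together with the fundamental divisor class $L = (R^1 h_\ast \mathcal O_X)^\vee$, which by the canonical bundle formula is $\qq$-linearly equivalent to $M_S = -(K_S + B_S)$; in particular $12 L$ is a bounded integral divisor class.

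Next I would pass to the relative Jacobian $j \colon J(X) \rar S$: it carries a section, and its Weierstrass model $W \rar S$ is cut out of the projective bundle $\mathbb{P}(\mathcal O_S \oplus L^{\otimes 2} \oplus L^{\otimes 3})$ by a Weierstrass equation with coefficients in $H^0(S, L^{\otimes 4})$ and $H^0(S, L^{\otimes 6})$, and with discriminant in $H^0(S, L^{\otimes 12})$. Since $S$ and $L$ now vary in a bounded family, these are sections of bounded-dimensional vector spaces cut out of a bounded family of projective bundles; thus the Weierstrass models $W \rar S$---equivalently, the birational classes over $S$ of the Jacobian fibrations $J(X)\rar S$---form a bounded family, which I fix and call $\mathcal J \rar \mathcal S \rar T$.

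The last step is to recover $X$ from $J(X)$. Over the maximal open subset $S^0 \subseteq S$ over which $h$ is smooth, $X^0 \coloneqq h^{-1}(S^0)$ is a torsor under $J^0 \coloneqq j^{-1}(S^0)$, classified by an element of the relative Tate--Shafarevich group $\Sh(J^0/S^0)$; the hypothesis $K_X \sim 0$, rather than merely torsion, ensures (cf. Remark~\ref{rmk.TS.ell.CY}) that $h$ has no multiple fibres over codimension-one points of $S$, so that $X$ is a relatively minimal model over $S$ of a compactification of this torsor, hence determined by the torsor class up to flops over $S$. I would then invoke Gross's analysis that, for a fixed Jacobian fibration in our bounded family, only finitely many torsor classes give rise to a total space with trivial canonical bundle, and that these can be organised into finitely many algebraic families over $T$; combined with the boundedness of $\mathcal J \rar \mathcal S \rar T$, this yields finitely many families exhausting $\familyellcyrat$ up to birational equivalence over the base, which is the content of Definition~\ref{def bdd fibrations} for triples with trivial boundary.

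The principal obstacle is this final step, the control of the Tate--Shafarevich group: one must show that the set of torsor classes over $S^0$ producing a Calabi--Yau model is finite---which rests on a local analysis of the admissible torsor structures at the codimension-one points of $S$, bounded in turn by the (now bounded) discriminant divisor---and that the resulting total spaces fit into an algebraic family, which is delicate because $X$ is obtained from $J(X)$ only after compactifying over $S \setminus S^0$ and passing to a relatively minimal model. Bounding the base $S$, by contrast, is routine given Proposition~\ref{prop_surf_bdd}.
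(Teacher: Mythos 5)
This statement is cited from \cite{Gro94}*{Theorem~1} and not reproved in the paper, so there is no proof here to compare against; your sketch accurately reconstructs Gross's original argument, and it is also precisely the template the paper follows in \S\ref{sect.ell.3folds} when it proves the analogous Theorem~\ref{enriques 3fold bir bdd} for the Enriques-base case (via Propositions~\ref{Jac is CY}, \ref{punctured Shafarevich}, \ref{finite Shafarevich}, and \ref{family Miranda models}). Two small points worth making explicit if you were to flesh this out: the ``bounded family of Weierstrass models'' step really requires passing to Miranda models over a blowup of $S$ (to make the discriminant simple normal crossing and the fibration flat), and the finiteness of the Tate--Shafarevich group is forced by the numerical coincidence $b_2 = \rho$ on both the (resolved) Jacobian and the base, which in turn comes from $h^2(\mathcal{O})=0$ on a Calabi--Yau threefold and on a rational surface --- you correctly flag this as the crux but leave the mechanism implicit.
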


By the above theorem, together with Definition~\ref{def bdd fibrations}, passing to a resolution of a bounding family of fibrations, we can assume that there exist quasi-projective varieties 
$\mathcal X, \mathcal S, T$ 
and a commutative diagram
\begin{align}
\label{diag.bound.ell.rat}
\xymatrix{
\mathcal X \ar[rrrr]^f \ar[drr]_\pi& & & & \mathcal S \ar[dll]^g \\
& & T & &
}
\end{align}
of projective morphisms satisfying the following properties:
\begin{enumerate}
    \item 
$\pi$ and $g$ are smooth;
    \item 
for every $t\in T$, $\mathcal X_t$ is birational to a Calabi--Yau threefold, $\mathcal S_t$ is a smooth rational surface and the general fiber of $f_t$ is an elliptic curve; and 
    \item
if $h \colon X\to S$ is an elliptic Calabi--Yau threefold over a rational surface $S$, then $X\to S$ is birationally equivalent to $\mathcal X_t\to \mathcal S_t$ for some $t\in T$, that is, there exists a commutative diagram
\begin{align}
\label{comm.diag.bir.fibr}
\xymatrix{
X \ar@{-->}[rrr]^\phi \ar[d]_h & & & \mathcal X_t \ar[d]^{f_t}\\
S \ar@{-->}[rrr]^\psi  & & & \mathcal S_t
}
\end{align}
where the horizontal arrows are birational maps.
\end{enumerate}
Using techniques of the MMP, up to stratifying the base $T$, we can modify birationally the family in~\eqref{diag.bound.ell.rat} to obtain a new family of elliptic fibrations
\begin{align}
\label{diag.bound.ell.rat2}
\xymatrix{
\mathcal X' \ar[rrrr]^{f'} \ar[drr]_{\pi'} & & & & \mathcal S' \ar[dll]^{g'} \\
& & T & &
}
\end{align}
such that, for every $t \in T$, $\mathcal{X}_t$ and  $\mathcal{X}'_t$ (resp. $\mathcal{S}_t$ and $\mathcal{S}'_t$) are birationally equivalent, and $\mathcal{X}'_t$ is a minimal model for $\mathcal{X}_t$ (in particular, $\mathcal{X}'_t$ is terminal and $\mathbb Q$-factorial, but not necessarily smooth).
In particular, this implies that $\familyellcyrat$ is log bounded in codimension 1 since the rational map $\phi \colon X \drar \mathcal{X}'_t$ is a sequence of $K_X$-flops, cf.~Definition~\ref{def bdd fibrations}.3.
On the other hand, the rational contraction $f_t \circ \phi \colon X \drar \mathcal{S}'_t$ is not necessarily a morphism, as the birational map $S \drar \mathcal{S}'_t$ may extract some divisor.
Hence, the sequence of flops connecting $X$ and $\mathcal{X}'_t$ is not necessarily a sequence of flops relative to a 2-dimensional base.

To remedy this issue, we can prove the following more precise version of the boundedness in codimension 1 of elliptic Calabi--Yau threefolds.

\begin{proposition} 
\label{prop_bdd_flops}
There exist quasi-projective varieties 
$\mathcal X, \mathcal S, T$ 
and a commutative diagram
\[
\xymatrix{
\mathcal X \ar[rrrr]^f \ar[drr]_\pi& & & & \mathcal S \ar[dll]^g \\
& & T & &
}
\] 
of projective morphisms satisfying the following properties:
\begin{enumerate}
    \item
$\pi$ is a flat family of threefolds and $g$ is a flat family of surfaces;
    \item
for every $t \in T$, $\mathcal X_t$ is a Calabi--Yau threefold.
In particular, $\mathcal{X}_t$ has terminal $\mathbb Q$-factorial singularities; and 
    \item
for every terminal elliptic Calabi--Yau threefold  with rational base $h \colon X \rar S$, there exists $t \in T$
together with an isomorphism in codimension 1 $\phi \colon X \dashrightarrow \mathcal X_t$
such that $\mathcal S_t$ and $S$ are isomorphic and $\phi$ is a birational morphism over $S$.
\end{enumerate}
\end{proposition}

The main feature of the statement of Proposition~\ref{prop_bdd_flops} is given by property (3), i.e., by the fact that the family will contain every base of an elliptic fibration.
This will be a useful feature when trying to prove the boundedness of elliptic fibrations.

To do so, we will apply Theorem~\ref{thm KM conj general} which allows controlling the number of birational models of an elliptic fibration with a fixed base. 
As the statement of Theorem~\ref{thm KM conj general} works for any Calabi--Yau fiber space of relative dimension 1, it can also be applied to control the birational models of a given family of elliptic fibrations.
We utilize  Theorem~\ref{thm KM conj general} to turn the statement of Proposition~\ref{prop_bdd_flops} on boundedness in codimension 1 of elliptic Calabi--Yau threefolds into a full boundedness statement.
In order to use Theorem~\ref{thm KM conj general} effectively, 
we need to guarantee that in the above sketch, the birational map $S \drar \mathcal{S}'_t$ is an isomorphism, that is, that all bases of elliptic Calabi--Yau varieties appear in a bounding family such as the one in~\eqref{diag.bound.ell.rat2}.
One possible way to achieve this would be to adapt the proof of Theorem~\ref{thm_gross} to start from the families of surfaces guaranteed by Proposition~\ref{prop_surf_bdd}, rather than considering suitable smooth models of such surfaces as in~\cite{Gro94}.
A more direct approach, which still relies on the ideas of~\cite{Gro94}, is given by the results of~\cite{Fil20}.

\begin{proof}[Proof of Proposition~\ref{prop_bdd_flops}]
Let $h \colon X \rar S$ be an elliptic Calabi--Yau threefold with rational base.
By Proposition~\ref{prop_surf_bdd}, $S$ belongs to a bounded family.
Therefore, there exist $v \in \nn$ (independent of $S$) and a very ample divisor $H_S$ on $S$ such that $H^2 \leq v$ and $(S,\frac{1}{2}H_S)$ is klt.
By Theorem~\ref{thm_gross}, $(X, S, h)\in \familyellcyrat$ and the latter is birationally bounded.
In particular, $h$ admits a rational $d$-section, for some $d \in \mathbb Z_{>0}$ bounded from above.
Then, the claim follows by applying~\cite{Fil20}*{Theorem 1.1} to the log pair $(X,\frac{1}{2}h^\ast H_S)$.
Finally, the fact that we may assume that every fiber $\mathcal X_t$ is a Calabi--Yau threefold follows readily from Theorem~\ref{thm cones}.
\end{proof}

\subsection{Elliptic Calabi--Yau threefolds with non-rational base}
Let $f \colon X \rar S$ be an elliptic Calabi--Yau threefold such that
$S$ is a surface with at worst Du Val singularities whose minimal resolution is an Enriques surface.
Then, the fibration $f$ is isotrivial.
Furthermore, by~\cite{KL09}*{Theorem 14}, after a quasi-\'etale cover, $X$ splits as the product $E \times Y$, where $E$ is an elliptic curve and $Y$ is either a K3 surface or an Abelian surface.
Thus, the structure of such elliptic Calabi--Yau varieties is rather clear.
On the other hand, the boundedness of these varieties has not been addressed before.
For this purpose, we perform an analysis of this case that is similar to the one carried out by Gross in~\cite{Gro94} for rational bases.

First, we start by analyzing Jacobian fibrations.
Given an elliptic fibration $f \colon X \rar Y$, the generic fiber $X_{\eta}$ is a smooth curve of genus 1;
we denote by $J(X)_{\eta}$ its Jacobian, which is then a smooth curve defined over $\mathbb C(Y)$ of genus 1 with a $\mathbb C(Y)$-rational point.
Then, the Jacobian fibration of $f$ is defined birationally as any elliptic fibration $j \colon J(X) \rar Y$ such that the generic fiber of $j$ is $J(X)_{\eta}$, cf. \cite{Gro94}*{Definition 1.4}.
In general, we may assume that $j$ is relatively minimal over $Y$:
indeed, by passing to a log resolution, we may first assume $J(X)$ is smooth;
then, we may run a relative minimal model program over $Y$, which terminates with a good minimal model by \cite{HX13}*{Theorem 1.1}.
Thus, in general, we may choose a representative $j \colon J(X) \rar Y$ for the Jacobian fibration of $f$ such that $J(X)$ is terminal, $\qq$-factorial, and $\K J(X).$ is semi-ample over $Y$.
In particular, for $m \gg 1$, the relative linear system $|m \K J(X)./Y|$ induces a factorization $J(X) \rar Y' \rar Y$, where $Y' \rar Y$ is birational.

If the base $Y$ is a curve, then the Jacobian fibration is an elliptic fibration with a section, and there is extensive literature about the Weierstrass models of these fibrations.
Furthermore, Weierstrass models for elliptic fibrations with a rational section still exist if the base is a smooth surface, see \cite{DG94}*{Proposition 2.4}.
In particular, if $j \colon J(X) \rar S$ is a Jacobian fibration over a surface and $S'$ is any smooth birational model of $S$ mapping to $S$, we may construct a Weierstrass model of $j$ with base $S'$.
Then, by further blowing up the discriminant locus of the fibration in $S'$, we may further improve the geometry of the fibration to guarantee that the total space of the Weierstrass model is smooth and the corresponding morphism is flat, see \cite{Gro94}*{p. 276}.
These special models are called Miranda models, see Definition \ref{def miranda}.

\begin{definition} \label{def miranda}
Let $f \colon X \rar S$ be an elliptic fibration.
A Miranda model of $f$ is an elliptic fibration $f' \colon X' \rar S'$ such that
\begin{itemize}
    \item 
$f'$ is birationally equivalent to $f$ in the sense of~\eqref{comm.diag.bir.fibr};
    \item 
$X'$ and $S'$ are regular;
    \item     
$f'$ is flat and it admits a section;
    \item 
the discriminant locus $\Sigma=\{s\in S'|X'_s \; \text{is not regular} \}$ 
is simple normal crossing; and
    \item 
all fibers over the singular points of $\Sigma$ have Kodaira type $I_{M_1}+I_{M_2}$, $I_{M_1}+I_{M_2}^*$, $II+IV$, $II+I_0^*$, $II+IV^*$, $IV+I_0^*$, or $III+I_0^*$.
\end{itemize}
\end{definition}

In \cite{Gro94}, the singular points of the simple normal crossing divisor $\Sigma$ are called collision points.
This terminology reflects the fact that, as $\Sigma$ has simple normal crossings, its singularities come from different components $\Sigma_1$ and $\Sigma_2$ meeting transversally.
Then, by definition, the Kodaira type of the fiber over a point $p \in \Sigma_1 \cap \Sigma_2$ is determined by the type of fiber over the general point of the two components.
For instance, if the general fiber over $\Sigma_1$ has Kodaira type $I_{M_1}$ and the general fiber over $\Sigma_2$ has Kodaira type $I_{M_2}$, we say that the fiber over $p$ has Kodaira type $I_{M_1}+I_{M_2}$.
Then, all the types of fibers appearing in the last item of Definition \ref{def miranda} are explained analogously.

In this subsection we will analyze the Jacobian fibration of an elliptic Calabi--Yau threefold with Enriques base to prove the birational boundedness of the latter, cf. Theorem~\ref{enriques 3fold bir bdd}.
We start our analysis by showing that the Jacobian of such an elliptic Calabi--Yau variety is Calabi--Yau as well, cf.~\cite{GW}.

\begin{proposition} \label{Jac is CY}
Let $f \colon X \rar S$ be an elliptic Calabi--Yau threefold.
Assume that the minimal resolution of $S$ is an Enriques surface.
Let $j \colon J(X) \rar S$ be a relatively minimal model over $S$ of the Jacobian fibration of $f$.
Then, $J(X)$ is a Calabi--Yau threefold.
\end{proposition}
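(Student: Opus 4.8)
The plan is to verify the three conditions (CY1)--(CY3) defining a Calabi--Yau variety for $J(X)$. Condition (CY2) is free: by the construction recalled just before the statement, we may (and do) take $j\colon J(X)\rar S$ so that $J(X)$ is terminal, $\qq$-factorial and $\K J(X).$ is semi-ample over $S$; in particular $\K J(X).\equiv_S 0$, hence $\K J(X).\sim_{\qq,S}0$, so the canonical bundle formula of Proposition~\ref{prop models} applies to $j$ as well as to $f$. It remains to prove that $\K J(X).\sim 0$ and that $h^1(J(X),\mathcal{O}_{J(X)})=h^2(J(X),\mathcal{O}_{J(X)})=0$.

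The key input is that $f\colon X\rar S$ and $j\colon J(X)\rar S$ carry the same codimension-one data. Since $X$ is a Calabi--Yau threefold, $f$ has no multiple fibre over any codimension-one point of $S$ (this is precisely the role of (CY1) for $X$, cf.\ Remark~\ref{rmk.TS.ell.CY}); and since $j$ has a rational section, the valuative criterion of properness gives an honest section over each codimension-one point, so $j$ has no multiple fibre there either. Hence over every prime divisor of $S$ the fibres of $f$ and of $j$ have the same Kodaira type, and both $R^1f_\ast\mathcal{O}_X$ and $R^1j_\ast\mathcal{O}_{J(X)}$ are torsion-free line bundles on the normal surface $S$ agreeing in codimension one, hence isomorphic. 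Comparing the canonical bundle formula for $f$ and $j$, the moduli part depends only on the variation of the fibres and $J(X)_\eta$ is the Jacobian of $X_\eta$, so the two moduli parts coincide, while the boundary parts are read off the local monodromy and therefore also coincide. Thus $\K X.\sim f^\ast(\K S.+B_S+M_S)$ and $\K J(X).\sim j^\ast(\K S.+B_S+M_S)$ for the same $(B_S,M_S)$; since $\K X.\sim 0$ and $f_\ast\mathcal{O}_X=\mathcal{O}_S$, pulling back forces $\K S.+B_S+M_S\sim 0$, and therefore $\K J(X).\sim 0$, which is (CY1). (This is essentially the content of \cite{GW}, building on \cite{Gro94}.)

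For (CY3) I would run the Leray spectral sequence $E_2^{p,q}=H^p(S,R^qj_\ast\mathcal{O}_{J(X)})\Rightarrow H^{p+q}(J(X),\mathcal{O}_{J(X)})$. As $j$ has relative dimension one, $R^qj_\ast\mathcal{O}_{J(X)}=0$ for $q\geq 2$; moreover $R^0j_\ast\mathcal{O}_{J(X)}=\mathcal{O}_S$, and, using $\K J(X).\sim 0$ so that $\omega_{J(X)/S}\cong j^\ast\omega_S^{-1}$, relative duality combined with the absence of multiple fibres yields $(R^1j_\ast\mathcal{O}_{J(X)})^\vee\cong j_\ast\omega_{J(X)/S}\cong\omega_S^{-1}$, hence $R^1j_\ast\mathcal{O}_{J(X)}\cong\omega_S$. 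Now the minimal resolution $\mu\colon\widetilde S\rar S$ is an Enriques surface and the Du Val singularities of $S$ are rational, so $H^p(S,\mathcal{O}_S)=H^p(\widetilde S,\mathcal{O}_{\widetilde S})$ equals $\cc,0,0$ for $p=0,1,2$, and by Serre duality on the Gorenstein surface $S$, $H^p(S,\omega_S)=H^{2-p}(S,\mathcal{O}_S)^\vee$ equals $0,0,\cc$. Hence the only nonzero terms of $E_2$ are $E_2^{0,0}\cong\cc$ and $E_2^{2,1}\cong\cc$, which sit in total degrees $0$ and $3$; every differential therefore vanishes, giving $h^0=h^3=1$ and $h^1=h^2=0$ for $J(X)$, which is (CY3). (Equivalently: since $R^\bullet f_\ast\mathcal{O}_X\cong R^\bullet j_\ast\mathcal{O}_{J(X)}$ and $H^2(S,\mathcal{O}_S)=0$, the two spectral sequences identify $h^i(J(X),\mathcal{O}_{J(X)})=h^i(X,\mathcal{O}_X)=0$ for $i=1,2$.)

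The hard part is the comparison made in the second paragraph: making rigorous that $X$ and $J(X)$ share the same canonical bundle formula data and the same higher direct images of the structure sheaf. Over codimension-one points of $S$ this is Kodaira's classical description of the Jacobian of a genus-one fibration, available because neither $f$ nor $j$ has multiple fibres there; the delicate point is to control the behaviour over the Du Val points of $S$, and over higher-codimension points in general, where one needs to know that the relatively minimal Jacobian model remains flat with fibres of arithmetic genus one --- so that $R^1j_\ast\mathcal{O}_{J(X)}$ is a line bundle computed by relative duality --- and introduces no multiple fibres. This is exactly where the local theory of elliptic fibrations enters, and is the technical heart of the argument.
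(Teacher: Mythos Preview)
Your argument has a genuine gap at the very first step: from ``$\K J(X).$ is semi-ample over $S$'' you conclude ``in particular $\K J(X).\equiv_S 0$''. This does not follow. Semi-ampleness over $S$ only gives a factorization $J(X)\xrightarrow{j'} S'\xrightarrow{\tau} S$ with $\tau$ birational and $\K J(X).\equiv_{S'}0$; a priori $S'\neq S$, so $\K J(X).$ need not be numerically trivial over $S$, and you cannot yet apply the canonical bundle formula to $j$ with base $S$. Everything downstream --- the comparison of $(B_S,M_S)$ for $f$ and $j$, and the Leray computation using $\omega_{J(X)/S}\cong j^\ast\omega_S^{-1}$ --- rests on this unproved assertion. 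You flag the ``delicate point'' as controlling flatness over the Du Val points, but the real missing idea is earlier: ruling out a nontrivial $S'$.

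This is precisely what the paper proves, and it is the heart of the argument. One applies the canonical bundle formula to $j'\colon J(X)\rar S'$, writing $\K J(X).\sim_{\qq}(j')^\ast(\K S'.+\Delta')$ with $\K S'.+\Delta'$ $\tau$-ample. Since $\K X.\sim 0$ and $\K S.\sim_{\qq}0$, the canonical bundle formula for $f$ has trivial boundary and moduli parts; by \cite{Gro94}*{Lemma~1.6} the Jacobian inherits the same data over a big open set of $S$, so $\Delta'$ is $\tau$-exceptional. As $S$ has Du Val singularities, $\K S'.\sim_{\qq,S}E\geq 0$ with $E$ exceptional, hence $\K S'.+\Delta'\sim_{\qq,S}E+\Delta'$ is effective, $\tau$-exceptional, and $\tau$-ample. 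The negativity lemma then forces $\tau$ to be an isomorphism, giving $\K J(X).\sim_{\qq}0$. For (CY3) and $h^0(\K J(X).)=1$ the paper simply invokes \cite{GW}*{Corollary~29}; your Leray computation is fine in spirit once $\K J(X).\sim 0$ is established, modulo the flatness issue you already noted.
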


\begin{proof}
By assumption, $J(X)$ is terminal and $\qq$-factorial, as it is a relatively minimal model of a smooth variety.
By~\cite{GW}*{Corollary 29}, then $h^0(J(X),\K J(X).)=1$, $\kappa(J(X))=0$, and $h^1(J(X),\O J(X).)=h^2(J(X),\O J(X).)=0$.
Thus, to conclude, it suffices to show that $\K J(X). \sim_\qq 0$, i.e., that $J(X)$ is minimal.

Let 
\[
\xymatrix{& J(X) \ar[dl]_{j'} \ar[dr]^{j}&
\\
S' \ar[rr]^{\tau} & &S}
\]
be the relatively ample model of $J(X)$ over $S$.
By the canonical bundle formula, $\K J(X). \sim_\qq (j')^*(\K S'. + \Delta')$, where $(S',\Delta')$ is a klt pair.
Then, $\K S'. + \Delta'$ is $\tau$-ample.
By assumption, the canonical bundle formula applied to the morphism $f \colon X \rar S$ induces trivial boundary part and trivial moduli part.
Thus, by~\cite{Gro94}*{Lemma 1.6} applied over a big open set of $S$, it follows that $\Delta'$ is $\pi$-exceptional.

Since $S$ has Du Val singularities, then $\K S'. \sim_{\qq,S} E \geq 0$, where $E$ is $\pi$-exceptional.
Thus, $\K S'. + \Delta' \sim \subs \qq,S. E + \Delta'$, where $E + \Delta'$ is effective, $\pi$-exceptional, and $\pi$-ample.
The negativity lemma then implies that $\tau$ is the identity morphism.
\end{proof}

In order to retrieve birational boundedness of the original models $f \colon X \rar S$ from the boundedness in codimension 1 of the Jacobian fibrations $j \colon J(X) \rar S$, we need to understand how many smooth curves of genus 1 over $k(S)$ admit the same Jacobian $J(X)_\eta$.
This association is controlled by the Weil--Ch\^atelet group $WC(J(X)_\eta)$, see~\cite{DG94}.
In particular, $WC(J(X)_\eta)$ parametrizes birational equivalence classes of elliptic firbations over $S$ whose generic fiber has prescribed Jacobian.
On the other hand, an elliptic fibration that arises from a Calabi--Yau variety has very restrictive geometric conditions, which then restrict the class of generic fibers that can possibly arise.
These fibrations can be parametrized by a much smaller subgroup of the Weil--Ch\^atelet group, known as the Tate--Shafarevich group $\Sh_{S}(J(X)_\eta)$.
We refer to~\cites{DG94,Gro94} for a systematic treatment of this topic and for the formal definitions of these groups using \'etale cohomology.
Here, we limit ourselves to the following geometric characterization of $\Sh_{S}(J(X)_\eta)$ as a set:
\[
\Sh_{S}(J(X)_\eta) = \lbrace C \in WC(J(X)_\eta) | X_C \rar S \; \text{has a rational section \'etale locally at $s$ for every point}\;s \in S \rbrace,
\]
where $X_C \rar S$ is some proper model of the curve $C$ defined over $k(S)$, see \cite{Gro94}*{\S~3}.
Thus, $\Sh_{S}(J(X)_\eta)$ imposes pretty restrictive conditions on the type of singular fibers that can occur.
In particular, as there is no local obstruction to admitting a rational section, multiple fibers cannot occur over codimension 1 points of the base.
Lastly, we observe that, as in the case of $WC(J(X)_\eta)$, $\Sh_{S}(J(X)_\eta)$ parametrizes birational equivalence classes of elliptic firbations over $S$.
%Roughly speaking, $\Sh_{S}(J(X)_\eta)$ parametrizes fibrations $f \colon X \rar S$ with prescribed Jacobian fibration and such that, for every $s \in S$, $f$ admits an \'etale local section over $s$.
%In particular, such a fibration does not admit multiple fibers over codimension 1 points of the base $S$.
\begin{remark}
\label{rmk.TS.ell.CY}
When considering an elliptic Calabi--Yau threefold $f \colon X \rar S$, the morphism $f$ does not admit multiple fibers over codimension 1 points of the base $S$, as $K_X\sim 0$.
Thus, $f \colon X \rar S$ corresponds to a class in the Tate--Shafarevich group $\Sh_{U}(J(X)_\eta)$, for a big open subset $U \subset S$.
For more details, see \cite{Gro94}*{p. 276}.
\end{remark}
A first step towards proving birational boundedness of elliptic Calabi--Yau threefolds with a non-rational base is to show that their Tate-Shafarevich groups are finite.

\begin{proposition} \label{punctured Shafarevich}
Let $f \colon X \rar S$ be an elliptic Calabi--Yau threefold.
Assume that the minimal resolution of $S$ is an Enriques surface.
Then, $X \in \Sh_{S \setminus S_{\rm sing}}(J(X)_\eta)$.
\end{proposition}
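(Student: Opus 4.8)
The plan is the following. Since $S$ has Du Val singularities, the singular locus $S_{\rm sing}$ is a finite set of points, so $S^{\circ}\coloneqq S\setminus S_{\rm sing}$ is a smooth quasi-projective surface and a big open subset of $S$. By Remark~\ref{rmk.TS.ell.CY}, the hypothesis $K_X\sim 0$ --- linear, and not merely $\qq$-linear, equivalence --- implies that $f$ has no multiple fibre over any codimension-one point of $S$, and hence that $X$ defines a class in $\Sh_U(J(X)_\eta)$ for some big open $U\subset S$. Replacing $U$ by $U\cap S^{\circ}$, we may assume $U\subseteq S^{\circ}$, so that $Z\coloneqq S^{\circ}\setminus U$ is a finite set of smooth points $s_1,\dots,s_r$ of $S$. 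It then suffices to upgrade the \'etale-local triviality of the $J(X)$-torsor $X\rar S$ from $U$ to each point $s_i$: concretely, to produce a section of $X\rar S$ over $\Spec(\mathcal{O}^{\rm sh}_{S,s_i})$.

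To handle a fixed $s=s_i$ I would work locally, using that $(S,s)$ is regular of dimension two and that $X$ is trivial, as a torsor, over the punctured spectrum in codimension one. In the Ogg--Shafarevich framework of Dolgachev--Gross, the Tate--Shafarevich group over a regular base is governed by the Brauer group of (the smooth locus of) the Jacobian fibration, and a trivialisation of the torsor in codimension one propagates across the isolated codimension-two point $s$ by cohomological purity for the Brauer group of a regular scheme --- removing a closed subset of codimension at least two leaves it unchanged. An alternative, more geometric route --- which makes the role of the Calabi--Yau condition explicit --- is to cut $S$ by a general very ample curve $C$ through $s$ contained in $S^{\circ}$: then $f_C\colon X_C\coloneqq X\times_S C\rar C$ is an elliptic surface, by adjunction $K_{X_C}\sim_{\qq} f_C^{\ast}D_C$ with $D_C\geq 0$ an effective divisor on $C$ that may be chosen to avoid $s$ (using $K_X\sim 0$), and the canonical bundle formula for $f_C$ then rules out a multiple fibre of $f_C$ at $s$; the one-dimensional Ogg--Shafarevich theory supplies an \'etale-local section of $f_C$ at $s$, and hence of $f$ at $s$, provided membership in $\Sh$ can be detected after restriction to a sufficiently general curve. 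Either way one concludes $X\in\Sh_{S^{\circ}}(J(X)_\eta)=\Sh_{S\setminus S_{\rm sing}}(J(X)_\eta)$.

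The step I expect to be the main obstacle is precisely this passage from codimension one to the finitely many codimension-two points. In the Brauer-group formulation one has to isolate the exact purity statement (Gabber's theorem on Brauer groups of regular schemes) and to reconcile it with the fact that the sheaf of sections of the N\'eron model of $J(X)_\eta$ fails to be locally constant over the bad fibres; in the curve-restriction formulation one has to check that $\Sh$-membership really is tested on a general curve and that the canonical bundle formula applies after replacing $X_C$ by a good minimal model of the elliptic surface, and that the adjunction computation excludes a fibre multiple \emph{at the point} $s$ rather than merely along a curve through $s$. All of this is local at the $s_i$ and is insensitive to the Du Val points of $S$, which were discarded from the outset.
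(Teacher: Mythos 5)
Your proposal takes a genuinely different route from the paper, and both of the alternatives you sketch have a gap precisely at the place you flag as the main obstacle; the gap is real and your sketch does not close it.

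The paper's proof does not argue at the level of the Tate--Shafarevich group at all. It first flops $X$ over $S$ (using Lemma~\ref{lemma_vertical_divs}) to reduce to the case where $f$ is \emph{equidimensional}, with a careful bookkeeping of how $\Sh_{S'\setminus S'_{\rm sing}}$ sits inside $\Sh_{S\setminus S_{\rm sing}}$ for the partial resolution $S'\rar S$. It then applies Koll\'ar--Larsen's structure theorem \cite{KL09}*{Theorem 14}: after a quasi-\'etale cover $\psi\colon \tilde F\times\tilde S\to X$ (with $\tilde F$ an elliptic curve and $\tilde S$ a K3 or abelian surface), the induced map on bases $\phi\colon\tilde S\to S$ is finite, \'etale away from $S_{\rm sing}$, of the same degree as $\psi$. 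Comparing degrees forces $\psi$ to be \'etale over $S\setminus S_{\rm sing}$, hence $f$ is actually a \emph{smooth} elliptic fibration there, which immediately gives $X\in\Sh_{S\setminus S_{\rm sing}}(J(X)_\eta)$. This is the only place the Enriques/isotrivial hypothesis is used, and it is used decisively: it is a structural statement, not a cohomological one.

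Your approach never invokes the Enriques hypothesis beyond its rationality-free consequence that $S_{\rm sing}$ is finite, which is a warning sign, since the analogous statement is simply false for rational bases (where multiple fibres over codimension-two points do occur and are the whole reason the Tate--Shafarevich group appears in Gross's work). Concretely: (i) your purity argument would need purity for $H^1_{\rm et}$ with coefficients in the sheaf of sections of the N\'eron model, and that sheaf is neither locally constant nor constructed from a smooth group scheme near the discriminant, so Gabber's absolute purity for Brauer groups does not apply verbatim and there is no general purity statement of the needed form; (ii) in the curve-cutting variant, restriction to a general curve $C$ induces a map $\Sh_{S^\circ}(J_\eta)\to WC(J_\eta|_C)$ that is not injective in general, so "membership in $\Sh$ is detected on general curves" is precisely the thing to be proved, not a hypothesis one may assume; (iii) an \'etale-local section of $f_C$ over $\Spec\O^{sh}_{C,s}$ does not yield a section of $f$ over $\Spec\O^{sh}_{S,s}$ -- the "hence of $f$ at $s$" in your sketch is a non sequitur; and (iv) you do not perform the reduction to the equidimensional case, yet the curve $C$ through a point $s$ over which $f$ has a two-dimensional fibre does not cut out an elliptic surface, so $f_C$ need not even be the object you are applying Ogg--Shafarevich theory to. In short, the obstacle you identify at the end is the whole content of the proposition, and the geometric input that resolves it in the paper -- the Koll\'ar--Larsen splitting and its equidimensionality preprocessing -- is absent from your proposal.
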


\begin{proof}
First, we show that we may assume that $f$ is equidimensional.
Let $f \colon X \rar S$ be an elliptic Calabi--Yau threefold and assume that the minimal resolution $S'$ of $S$ is an Enriques surface.
Then, as $K_X \sim 0$ and $K_S \sim_\qq 0$, in the canonical bundle formula $K_X\sim f^\ast (\K S.+ B_S+M_S)$ the boundary part $B_S=0$ while the moduli part $M_S \equiv 0$.
Up to flopping $X$ over $S$, by Lemma~\ref{lemma_vertical_divs}, we may assume that $f$ factors as $X \rar S' \rar S$, where $f' \colon X \rar S'$ is equidimensional, and $\pi \colon S' \rar S$ is birational.
Since $K_X \sim 0$ and $K_S \sim_\qq 0$, it follows that $S' \rar S$ is a partial resolution.
Let $\{ p_1, \ldots ,p_k \}$ be the singular locus of $S$, and let $\pi$ be an isomorphism over $p_i$ for $1 \leq i \leq l$ for some $1 \leq l \leq k$.
Then, $S'_{\rm sing}= \{p_1, \ldots , p_l,q_1,\ldots, q_m \}$, where $q_j \in \mathrm{Ex}(\pi)$ for all $j$.
For each $i=l+1,\ldots k$, let $E_i$ be the (possibly reducible) $\pi$-exceptional curve mapping to $p_i$.
Then, $S \setminus \{p_1, \ldots , p_k \} = (S' \setminus \{ p_1,\ldots ,p_l\})\setminus (\cup_{i=l+1}^kE_i)$.
Then, assuming the claim in the equidimensional case, $X \in \Sh_{S' \setminus S'_{\rm sing}}(J(X)_\eta)$.
As $S'_{\rm sing}= \{p_1, \ldots , p_l,q_1,\ldots, q_m \}$, it then follows that
\[
\Sh_{S' \setminus S'_{\rm sing}}(J(X)_\eta) \subset \Sh_{(S' \setminus \{ p_1,\ldots ,p_l\})\setminus (\cup_{i=l+1}^kE_i)}(J(X)_\eta) = \Sh_{S \setminus S_{\rm sing}}(J(X)_\eta).
\]
Thus, in the following, we may assume that $f$ is equidimensional.
By~\cite{KL09}*{Theorem 14}, there exists a commutative diagram
\begin{center}
\begin{tikzcd}
X \arrow[d, "f"'] & \tilde{X}=\tilde{F}\times \tilde{S} \arrow[d, "g"] \arrow[l, "\psi"'] \\
S                 & \tilde{S} \arrow[l, "\phi"']
\end{tikzcd}
\end{center}
where $\psi$ is \'etale in codimension 1, 
$K_{\bar{X}} \equiv 0$, 
and $\phi$ is a generically finite rational map.  
As $X$ is terminal, then so is $\tilde X$.
Note then that $\tilde F$ is an elliptic curve and $\tilde S$ is a smooth surface with $K_{\tilde S}\sim 0$.
From the above diagram it follows that $\phi$ is a morphism, \'etale over the regular locus of $S$, such that $\deg(\phi)=\deg(\psi)$.
Since $f$ and $g$ are equidimensional, it follows that $\phi$ is finite.
Since $\phi$ is \'etale outside of $S_{\rm sing}$ and $\deg(\phi)=\deg(\psi)$, it follows that $\psi$ is \'etale over the complement of $S_{\rm sing}$.
In particular, $f$ is a smooth fibration over the complement of $S_{\rm sing}$.
Thus, $X \in \Sh_{S \setminus S_{\rm sing}}(J(X)_\eta)$.
\end{proof}

Miranda models are particularly important for the direct computation of the Tate--Shafarevich group of an elliptic fibration of a Calabi--Yau threefold.

\begin{proposition} \label{finite Shafarevich}
Let $f \colon X \rar S$ be an elliptic Calabi--Yau threefold.
Assume that the minimal resolution of $S$ is an Enriques surface.
Let $\tilde{j} \colon \tilde{J}(X) \rar \tilde{S}$ be a Miranda model of the Jacobian fibration $j \colon J(X) \rar S$ of $f$. 
Let $\tilde{E}$ be the exceptional locus of the birational morphism $\tilde{S} \rar S$ with the reduced structure.
Then, $X \in \Sh_{\tilde{S} \setminus \tilde{E}}(J(X)_\eta)$, and this group is finite.
\end{proposition}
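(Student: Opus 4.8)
The plan is to prove the two assertions of the proposition separately. The containment $X \in \Sh_{\tilde{S} \setminus \tilde{E}}(J(X)_\eta)$ will follow formally from Proposition~\ref{punctured Shafarevich}, whereas the finiteness of this group will be deduced from the Ogg--Shafarevich theory of Miranda models of Dolgachev--Gross, \cite{DG94}, once we feed in that $\tilde{S}$ is birational to an Enriques surface and that the Jacobian fibration is Calabi--Yau, as established in Proposition~\ref{Jac is CY}.

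For the containment, write $\nu \colon \tilde{S} \rar S$ for the given birational morphism. By the construction of a Miranda model, $\nu$ factors as a resolution of the Du Val singularities of $S$ followed by further blow-ups of points along the discriminant, so it is a composition of blow-ups of (possibly infinitely near) points; in particular, every irreducible component of the reduced exceptional locus $\tilde{E}$ is a rational curve. Since $S$ is normal and $\nu$ is proper and birational, Zariski's main theorem forces the fibre of $\nu$ over any point of $S_{\rm sing}$ to be positive-dimensional, so $\nu(\tilde{E}) \supseteq S_{\rm sing}$ and $\nu$ restricts to an isomorphism $\tilde{S} \setminus \tilde{E} \cong S \setminus \nu(\tilde{E})$, an open subscheme of the regular locus $S \setminus S_{\rm sing}$. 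By Proposition~\ref{punctured Shafarevich} the torsor $X_\eta$ under $J(X)_\eta$ has \'etale-local sections at every point of $S \setminus S_{\rm sing}$, hence, a fortiori, at every point of the smaller open set $\tilde{S} \setminus \tilde{E}$; as $X_\eta$ is the same genus-one curve over $k(\tilde{S}) = k(S)$, this is exactly the statement $X \in \Sh_{\tilde{S} \setminus \tilde{E}}(J(X)_\eta)$.

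For the finiteness, I would invoke the structure of the Tate--Shafarevich group of a Miranda model from \cite{DG94}: it is assembled, by a local-to-global argument over the base, from a \emph{divisible part} governed by $H^1(\tilde{S},\mathcal{O}_{\tilde{S}})$ and $H^2(\tilde{S},\mathcal{O}_{\tilde{S}})$ (equivalently, by $\Pic^0$ and the transcendental part of the Brauer group of the base) together with \emph{finite parts}, namely torsion \'etale cohomology of the surface --- finite because the surface is of finite type over $\mathbb{C}$ --- and the local Tate--Shafarevich contributions at the singular fibres, which are finite because a Miranda model admits only the finitely many, explicitly listed, fibre types of Definition~\ref{def miranda}. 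Now $\tilde{S}$ is a blow-up of a resolution of $S$, hence birational to an Enriques surface, so $H^1(\tilde{S},\mathcal{O}_{\tilde{S}}) = H^2(\tilde{S},\mathcal{O}_{\tilde{S}}) = 0$; consequently $\Pic^0(\tilde{S}) = 0$ and $\operatorname{Br}(\tilde{S})$ is finite --- this is precisely the place where it matters that the base is Enriques rather than, say, a K3 surface, whose Brauer group would be infinite. Moreover, passing from $\tilde{S}$ to the open set $\tilde{S} \setminus \tilde{E}$ introduces no new divisible contribution, because every component of $\tilde{E}$ is a rational curve and hence contributes nothing to $H^1(-,\mathbb{Q}/\mathbb{Z})$ in the relevant Gysin sequences. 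Combining this with Proposition~\ref{Jac is CY}, which guarantees that the fibration is genuinely Calabi--Yau and hence that the divisible part drops out, one concludes that $\Sh_{\tilde{S} \setminus \tilde{E}}(J(X)_\eta)$ is finite.

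The main obstacle is the finiteness statement: one must extract from \cite{DG94} the precise exact sequence computing the Tate--Shafarevich group of a Miranda model over an \emph{open} base $\tilde{S} \setminus \tilde{E}$ --- not merely over a projective surface --- and then check, term by term, that the Calabi--Yau condition of Proposition~\ref{Jac is CY} together with $h^1(\mathcal{O}_{\tilde{S}}) = h^2(\mathcal{O}_{\tilde{S}}) = 0$ kills the divisible contributions, while the finitely many Miranda fibre types and the finiteness of \'etale cohomology with finite coefficients control everything else. Keeping track of the effect of removing $\tilde{E}$, rather than working over all of $\tilde{S}$, is the delicate point, and it relies on the observation that all components of $\tilde{E}$ are rational curves.
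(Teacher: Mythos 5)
Your overall strategy is the same as the paper's (first deal with the containment via Proposition~\ref{punctured Shafarevich}, then show finiteness using the structure of the Tate--Shafarevich group for Miranda models together with the Calabi--Yau condition), and your argument for the containment is fine: $\tilde S \setminus \tilde E$ maps isomorphically to an open subset of $S \setminus S_{\rm sing}$, and Proposition~\ref{punctured Shafarevich} then immediately gives $X \in \Sh_{\tilde S \setminus \tilde E}(J(X)_\eta)$. (The paper actually shows more, namely $\tilde E = \nu^{-1}(S_{\rm sing})$ and hence $\tilde S \setminus \tilde E \cong S \setminus S_{\rm sing}$ exactly, but for the containment your weaker observation suffices.)

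The finiteness argument, however, has a genuine gap, and you are right to flag it as ``the main obstacle.'' You sketch a local-to-global structure for $\Sh_{\tilde S \setminus \tilde E}$ and propose to check term by term, but you never pin down the exact sequence and you reconstruct informally what already exists in the literature. The paper instead argues in two clean, separable steps. First, by \cite{DG94}*{Theorem~2.24}, $\Sh_{\tilde S}(J(X)_\eta)$ is an extension of $(\mathbb Q/\mathbb Z)^r$ by a finite group with the explicit rank
\[
r \;=\; \bigl(b_2(\tilde J(X)) - \rho(\tilde J(X))\bigr) \;-\; \bigl(b_2(\tilde S) - \rho(\tilde S)\bigr).
\]
Since $\tilde J(X)$ is a resolution of a Calabi--Yau threefold (Proposition~\ref{Jac is CY}) and $\tilde S$ is birational to an Enriques surface, one has $h^2(\tilde J(X), \mathcal O_{\tilde J(X)}) = h^2(\tilde S, \mathcal O_{\tilde S}) = 0$, hence $b_2 = \rho$ for both and $r = 0$, so $\Sh_{\tilde S}$ is finite. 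Second, the passage from $\Sh_{\tilde S}$ to $\Sh_{\tilde S \setminus \tilde E}$ is handled by \cite{Gro94}*{Proposition~3.2} (which is essentially the Gysin-sequence observation you gesture at, but already proved there). Note that your discussion of the divisible part invokes only the cohomology of the base $\tilde S$ and its Brauer group; but the formula for $r$ involves the total space $\tilde J(X)$ just as much, and it is precisely the vanishing $h^2(\tilde J(X), \mathcal O_{\tilde J(X)}) = 0$ — the concrete payoff of Proposition~\ref{Jac is CY} — that kills the other half of $r$. Without making that explicit, the Calabi--Yau hypothesis appears in your sketch only as an unspecified reason why ``the divisible part drops out,'' which is where the proof is actually happening.
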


\begin{proof}
By~\cite{DG94}*{Theorem 2.24}, $\Sh_{\tilde S}(J(X)_\eta)$ is an extension of $(\qq/\zz)^r$ by a finite group, where
\[
r=b_2(\tilde{J}(X))-\rho(\tilde{J}(X))-(b_2(\tilde S)- \rho(\tilde S)).
\]
By Proposition~\ref{Jac is CY}, $\tilde{J}(X)$ is a resolution of a Calabi--Yau variety, and the minimal model of $\tilde{S}$ is an Enriques surface.
Thus, it follows that $h^2(\tilde{J}(X),\O \tilde J (X).)= h^2(\tilde S, \O \tilde S.)=0$.
Therefore, $b_2(\tilde{J}(X))=\rho(\tilde{J}(X))$ and $b_2(\tilde S) = \rho(\tilde S)$.
In particular, $r=0$ and $\Sh_{\tilde S}(J(X)_\eta)$ is a finite group.

Let $S'$ denote the minimal resolution of $S$.
Then, $\tilde{S} \rar S$ factors through $S'$, as $\tilde S$ is smooth.
By~\cite{DG94}*{Proposition 2.4}, $J(X)$ admits a model over $S'$ that is a Weierstrass fibration.
Then, by~\cite{DG94}*{proof of Theorem 2.8}, we may assume that $\tilde{S}$ is obtained by blowing up the discriminant locus of the Weierstrass model.
By the proof of Proposition~\ref{punctured Shafarevich}, $J(X) \rar S$ is smooth over the complement of $S_{\rm sing}$.
In particular, $\tilde{E}$ is the inverse image of $S_{\rm sing}$.
Then, by Proposition~\ref{punctured Shafarevich}, $X \in \Sh_{S \setminus S_{\rm sing}}(J(X)_\eta)=\Sh_{\tilde{S}\setminus \tilde{E}}(J(X)_\eta)$.

To conclude, we need to show that $\Sh_{\tilde{S}\setminus \tilde{E}}(J(X)_\eta)$ is finite.
This follows from the finiteness of $\Sh_{\tilde S}(J(X)_\eta)$ and~\cite{Gro94}*{Proposition 3.2}.
\end{proof}
Since only
finitely many birational classes of Calabi--Yau threefolds can admit the same Jacobian fibration,
to prove the boundedness of elliptic Calabi--Yau threefold with base a singular Enriques surface, we need to show that the assignment ``fibration to Jacobian'' can be inverted in a family in a finite-to-one way, rather than just on a fixed model.
For this purpose, one needs to arrange for a family of Jacobian fibrations with some special geometric properties, to guarantee that the Tate--Shafarevich group behaves well in the family.

\begin{proposition} \label{family Miranda models}
There exist quasi-projective varieties 
$\tilde{\mathcal J}, \tilde{\mathcal S}, T$ 
and a commutative diagram
\[
\xymatrix{
\tilde{\mathcal{J}} \ar[rrrr]^{\tilde{f}} \ar[drr]_{\tilde{\pi}}& & & & \tilde{\mathcal{S}} \ar[dll]^{\tilde{g}} \\
& & T & &
}
\] 
of projective morphisms satisfying the following properties:
\begin{enumerate}
    \item
$\tilde{\pi}$ is a smooth family of threefolds and $\tilde g$ is a smooth family of surfaces;
    \item 
$\tilde f$ admits a section;
%    \item 
    \item 
for every elliptic Calabi--Yau threefold $h \colon J \rar S$ admitting a rational section and for which the minimal resolution of $S$ is an Enriques surface, there exists a closed point $t \in T$ such that $\tilde{\mathcal{J}}_t \rar \tilde{\mathcal{S}}_t$ is isomorphic to a Miranda model $\tilde J \rar \tilde S$ of $J \rar S$ as in Proposition~\ref{finite Shafarevich}; and
    \item 
there exists a reduced divisor $\tilde{\mathcal{E}} \subset \tilde{\mathcal{S}}$ that is log smooth over $T$ such that any of the isomorphisms $\tilde{\mathcal S_t} \rar \tilde{S}$, whose existence is claimed in (3), maps $\tilde{\mathcal E}_t$ (considered with its reduced structure) on the reduced exceptional locus $\tilde E$ of $\tilde{\mathcal S} \rar S$.
\end{enumerate}
\end{proposition}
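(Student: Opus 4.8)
\textbf{The plan} is to realise the required family explicitly as a relative Weierstrass fibration over a bounded family of Enriques surfaces; the key point is that over an Enriques base the Miranda model of a Jacobian fibration is forced to be an isotrivial smooth elliptic fibration with \emph{empty} discriminant, so that no blow-up of the base occurs and $\tilde S$ is just the minimal resolution of $S$. Concretely, combining Theorem~\ref{bdd enriques} with the classical boundedness of Enriques surfaces, I would first fix a smooth projective family $\mathcal S'\rar T_0$ of Enriques surfaces in which every Enriques surface appears as a fibre. Since an Enriques surface carries only finitely many ADE configurations of $(-2)$-curves, after passing to a component of the relevant relative Hilbert scheme and a Noetherian stratification of $T_0$, I may assume there is a reduced divisor $\mathcal E'\subset\mathcal S'$, log smooth over $T_0$, such that every pair $(S',\Gamma)$ consisting of an Enriques surface $S'$ and a Du Val configuration $\Gamma$ of $(-2)$-curves on it occurs as a fibre $(\mathcal S'_{t_0},\mathcal E'_{t_0})$; contracting $\mathcal E'_{t_0}$ produces the Du Val Enriques surface $S$ whose minimal resolution is $S'$ and whose reduced exceptional locus is $\Gamma$.

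Next I would analyse a single elliptic Calabi--Yau threefold $h\colon J\rar S$ with a rational section, $S$ Du Val Enriques with minimal resolution $\sigma\colon S'\rar S$. Since $h$ has a rational section, $J$ is its own Jacobian fibration; replacing $J$ by its relatively minimal model over $S$, Proposition~\ref{Jac is CY} lets me assume $J$ is a Calabi--Yau threefold. By \cite{DG94}*{Proposition~2.4} the fibration has a Weierstrass model $\pi_W\colon W\rar S'$ with data a line bundle $\mathcal L$ on $S'$ and coefficients $a\in H^0(S',\mathcal L^{\otimes 4})$, $b\in H^0(S',\mathcal L^{\otimes 6})$, so that $K_W=\pi_W^\ast(K_{S'}+\mathcal L)$. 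As $W$ is birational over $S$ to the Calabi--Yau threefold $J$, one gets $\kappa(S',K_{S'}+\mathcal L)=\kappa(W)=\kappa(J)=0$, and since $\mathcal L$ is nef (as $h$ is relatively minimal) while $K_{S'}\equiv 0$, this forces $K_{S'}+\mathcal L\equiv 0$; combined with $\Pic^0(S')=0$ (because $h^1(\O {S'}.)=0$) and $\Pic(S')_{\mathrm{tors}}=\langle\omega_{S'}\rangle\cong\zz/2\zz$, we conclude $\mathcal L\in\{\O {S'}.,\omega_{S'}\}$. Hence $\mathcal L^{\otimes 4}$ and $\mathcal L^{\otimes 6}$ are trivial, so $a$ and $b$ are scalars, and --- since $J$ is Calabi--Yau, so the generic fibre of $h$ is a smooth genus-one curve --- $4a^3+27b^2\neq 0$. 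Therefore $W\rar S'$ is a smooth isotrivial elliptic fibration with fibre the elliptic curve $E\colon y^2=x^3+ax+b$; it is flat, carries the zero section, and has empty (hence trivially simple normal crossings) discriminant. In other words $W\rar S'$ is already a Miranda model of $h$ as in Proposition~\ref{finite Shafarevich}, with $\tilde S=S'$ and $\tilde E$ the reduced exceptional locus of $\sigma$.

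Finally I would assemble the universal datum. Put $U\coloneqq\{(a,b)\in\af 2.\mid 4a^3+27b^2\neq 0\}$, $T\coloneqq(T_0\times U)\sqcup(T_0\times U)$, $\tilde{\mathcal S}\coloneqq\mathcal S'\times_{T_0}T$, $\tilde{\mathcal E}\coloneqq\mathcal E'\times_{T_0}T$, and let $\tilde{\mathcal J}\rar\tilde{\mathcal S}$ be the relative Weierstrass fibration whose data on the two copies is $\O {\mathcal S'}.$ and $\omega_{\mathcal S'/T_0}$ respectively, with the tautological coefficients $(a,b)$ pulled back from $U$. Then $\tilde g\colon\tilde{\mathcal S}\rar T$ and $\tilde\pi\colon\tilde{\mathcal J}\rar T$ are smooth of relative dimensions $2$ and $3$ (the fibres of $\tilde\pi$ being smooth threefolds), $\tilde f$ carries the zero section, and $\tilde{\mathcal E}$ is a reduced divisor log smooth over $T$; property~(3) holds because, given $h\colon J\rar S$ as above, the Weierstrass model $W\rar S'$ of the previous paragraph occurs as the fibre over the point of $T$ determined by $(S',\Gamma)$ and by $(a,b)$, and property~(4) holds because the isomorphism $\tilde{\mathcal S}_t\cong S'$ carries $\tilde{\mathcal E}_t$ onto $\Gamma=\tilde E$. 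The main obstacle is the second paragraph: pinning down the Weierstrass data $\mathcal L$ genuinely uses $K_J\sim_{\qq}0$ (Proposition~\ref{Jac is CY}), the structure of $\Pic$ of an Enriques surface, and some care with the fact that a Weierstrass model is a priori only canonical Gorenstein rather than smooth (so that the identity $K_W=\pi_W^\ast(K_{S'}+\mathcal L)$ and the passage to a minimal model must be handled properly); once the data is identified, the rest is routine manipulation of bounded families, Hilbert schemes, and Noetherian stratification.
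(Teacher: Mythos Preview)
Your argument is essentially correct and takes a genuinely different, more explicit route than the paper's.

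The paper proceeds by soft boundedness: it first bounds the Jacobian fibrations $J\rar S$ in codimension~1 using Theorem~\ref{bdd enriques} together with \cite{Fil20}*{Theorems~1.1 and~7.8} (which also bounds a rational section), obtaining a family $\mathcal J\rar\mathcal S\rar T$; then, by Noetherian induction on $T$, it constructs a Miranda model over the geometric generic fibre, spreads it out, and shrinks $T$ so that the construction is fibrewise. No claim is made that the base $\tilde S$ of the Miranda model coincides with the minimal resolution $S'$.

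Your approach instead establishes the stronger structural fact that the Miranda model is forced to satisfy $\tilde S=S'$: the minimal Weierstrass data over $S'$ has $\mathcal L\in\{\O S'.,\omega_{S'}\}$, hence $a,b$ are scalars and the discriminant never vanishes. This lets you write down an explicit universal family over $T_0\times U$ (two copies, one for each choice of $\mathcal L$), which is conceptually cleaner and yields more information --- in particular that these Miranda models are smooth isotrivial $\pr 2.$-bundle hypersurfaces. The paper's argument is more portable (it would apply verbatim to other bounded classes of bases), while yours exploits the specific features of the Enriques case (Koll\'ar--Larsen via Proposition~\ref{punctured Shafarevich}, the torsion structure of $\Pic(S')$, and the even lattice).

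One point of imprecision: the reason you give for ``$\mathcal L$ is nef'' --- ``as $h$ is relatively minimal'' --- is not the right one, since $h$ is minimal over $S$, not over $S'$. What you actually need is that the \emph{minimal Weierstrass model} $W\rar S'$ has canonical (rational Gorenstein) singularities, whence $\mathcal L=\pi_{W\ast}\omega_{W/S'}$ is nef by Fujita--Kawamata semi-positivity; combined with $\kappa(S',\mathcal L)=\kappa(S',K_{S'}+\mathcal L)=\kappa(W)=0$ and abundance on the Enriques surface, this forces $\mathcal L\equiv 0$ and hence $\mathcal L\in\{\O S'.,\omega_{S'}\}$. You correctly flag this step as the main obstacle, and once stated this way it goes through. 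A minor additional remark: rather than invoking the relative Hilbert scheme of $(-2)$-configurations on a family of Enriques surfaces, it is slightly cleaner to start from the bounded family of Du Val Enriques surfaces provided by Theorem~\ref{bdd enriques}, resolve it simultaneously, and take $\tilde{\mathcal E}$ to be the exceptional locus.
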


\begin{proof}
Let $h \colon J \rar S$ be an elliptic Calabi--Yau threefold admitting a rational section and for which $S$ is an Enriques surface.
By Theorem~\ref{bdd enriques}, $S$ belongs to a bounded family.
Therefore, there exists $C \in \nn$ and a very ample divisor $H$ on $S$ such that $H^2 \leq C$ and $(S,\frac{1}{2}H)$ is klt.
By assumption, $h$ admits a rational section.
Then, we can apply~\cite{Fil20}*{Theorem 1.1} to the log pair $(J,\frac{1}{2}\phi^\ast H)$ -- in this case, we are taking $d=1$ with respect to the notation of~\cite{Fil20}*{Theorem 1.1}.
Furthermore, since we are considering fibrations with a rational section, their boundedness in codimension 1 actually follows from~\cite{Fil20}*{Theorem 7.8}.
In particular, by~\cite{Fil20}*{step 6 of proof of Theorem 7.8}, also the rational section of $J \rar S$ is bounded in codimension 1.
Therefore, the fibrations $f \colon J \rar S$ are bounded in codimension 1 together with a rational section.
Let $\mathcal{J} \rar \mathcal{S} \rar T$ be the family thus obtained.
Then, as the rational section of the fibrations is bounded as well, it follows that $\mathcal{J} \rar \mathcal{S}$ has a rational section which is defined over every $t \in T$.
In particular, $\mathcal{J}_\eta \rar S_\eta$ has a rational section, where $\eta$ denotes the generic point of an irreducible component of $T$.
The fact that we may assume that every fiber $\mathcal J_t$ is a Calabi--Yau threefold follows easily from Theorem~\ref{thm cones}.
To prove the statement of the claim, we will stratify and resolve the family thus obtained.

In the following, we will focus on one irreducible component of $T$ at a time, and we will possibly stratify and resolve such a component.
Since $T$ is of finite type, by Noetherian induction the following process has to be repeated only finitely many times.
By abusing notation, in the following, we will assume $T$ is irreducible.

Let $\eta$ denote the generic point of $T$.
Then, the geometric generic fiber $\mathcal{J}_{\overline{\eta}} \rar \mathcal{S}_{\overline{\eta}}$ admits a Miranda model.
Up to a finite cover of $T$, then so does $\mathcal{J}_{{\eta}} \rar \mathcal{S}_{{\eta}}$.
Thus, we may assume that $\mathcal{J}_{{\eta}} \rar \mathcal{S}_{{\eta}}$ has a birational model $\tilde{\mathcal{J}}_{{\eta}} \rar \tilde{\mathcal{S}}_{{\eta}}$ as in Proposition~\ref{finite Shafarevich}.
We denote by $\tilde{E}_\eta$ the exceptional divisor of $\tilde{\mathcal{S}}_\eta \rar \mathcal{S}_\eta$.
Up to shrinking $T$, we may assume that the generic fiber spreads out, and we obtain a tower of morphisms $\tilde{\mathcal{J}} \rar \tilde{\mathcal{S}} \rar T$, where $\tilde{\mathcal{J}}\rar T$ and $\tilde{\mathcal{S}} \rar T$ are smooth and $\tilde{\mathcal{E}} \rar T$ is log smooth.
Up to shrinking $T$, we may also assume that $\tilde{\mathcal{J}} \rar \tilde{\mathcal{S}}$ has a section.
Thus, we obtain a family of Miranda models as in Proposition~\ref{finite Shafarevich}, and the claim follows.
\end{proof}

\begin{theorem} \label{enriques 3fold bir bdd}
The set of triples
\[
\familyellcyenr
\coloneqq
\left\{
(X, S, f) \
\middle \vert \
\begin{array}{l}
\text{$X$ is a Calabi--Yau threefold, $h \colon X \rar S$ is an elliptic fibration,}
\\
\text{and the minimal resolution of $S$ is an Enriques surface}
\end{array}
\right\}
\]
is birationally bounded.
\end{theorem}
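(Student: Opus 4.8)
\medskip
\noindent\emph{Plan of proof.} The plan is to reduce the problem to elliptic Calabi--Yau threefolds admitting a rational section, via the Jacobian construction, and then to reconstruct the original fibrations inside a bounded family by a family version of the Tate--Shafarevich analysis of~\cite{DG94} and~\cite{Gro94}*{\S3}. First, given $(X,S,h) \in \familyellcyenr$, I would pass to the Jacobian fibration $j \colon J(X) \rar S$, taken relatively minimal over $S$. By Proposition~\ref{Jac is CY}, $J(X)$ is a Calabi--Yau threefold, and $j$ admits a rational section by construction, while the minimal resolution of $S$ is still an Enriques surface. Hence $(J(X),S,j)$ belongs to the class of elliptic Calabi--Yau threefolds with rational section over an Enriques base, and Proposition~\ref{family Miranda models} produces a bounding family $\tilde{\mathcal J} \rar \tilde{\mathcal S} \rar T$ of Miranda models together with a log smooth divisor $\tilde{\mathcal E} \subset \tilde{\mathcal S}$: there is a closed point $t \in T$ with $\tilde{\mathcal J}_t \rar \tilde{\mathcal S}_t$ isomorphic to a Miranda model $\tilde J \rar \tilde S$ of $J(X) \rar S$ as in Proposition~\ref{finite Shafarevich}, the isomorphism carrying $\tilde{\mathcal E}_t$ onto the reduced exceptional locus $\tilde E$ of $\tilde S \rar S$. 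Since $\tilde S$ is birational to $S$, it then suffices to produce, for each Miranda model occurring in the family, finitely many families of fibrations over $\tilde{\mathcal S}$ accounting birationally for every torsor with that Jacobian.

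Next, I would control the Tate--Shafarevich group along the family. Working one irreducible component of $T$ at a time and stratifying by Noetherian induction, I would show that the order of $\Sh_{\tilde{\mathcal S}_t \setminus \tilde{\mathcal E}_t}((\tilde{\mathcal J}_t)_\eta)$ is constant along strata and that, after a finite \'etale base change, these groups assemble into a single finite group $G$. Finiteness on each fibre is Proposition~\ref{finite Shafarevich}; the uniformity comes from the vanishing, identically in $t$, of the ``divisible part'' governed by $r = b_2(\tilde{\mathcal J}_t) - \rho(\tilde{\mathcal J}_t) - (b_2(\tilde{\mathcal S}_t) - \rho(\tilde{\mathcal S}_t))$ of~\cite{DG94}*{Theorem 2.24} (exactly as in Proposition~\ref{finite Shafarevich}, using $b_2=\rho$ for a resolution of a Calabi--Yau threefold and for a surface with Enriques minimal model), together with the fact that the remaining finite contribution, as well as the correction of~\cite{Gro94}*{Proposition 3.2} relating $\Sh_{\tilde{\mathcal S}_t}$ to $\Sh_{\tilde{\mathcal S}_t \setminus \tilde{\mathcal E}_t}$, is determined by the Kodaira types of the fibres of the Miranda model along its discriminant, which are locally constant on $T$ after stratification.

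Then, for each class $\beta \in G$, I would build a projective family $\mathcal X^\beta \rar \tilde{\mathcal S} \rar T$ realising the corresponding torsors. Over the open subset $\tilde{\mathcal S}^\circ \coloneqq \tilde{\mathcal S} \setminus \tilde{\mathcal E}$, on which the Miranda model is smooth with a section (cf.\ the proof of Proposition~\ref{punctured Shafarevich}), the class $\beta$ determines a torsor under the relative Jacobian; after spreading out and shrinking $T$, all these fibrewise torsors are simultaneously realised by a single torsor $\mathcal U^\beta \rar \tilde{\mathcal S}^\circ$ over $T$, and one then takes a relative projective closure $\mathcal U^\beta \hookrightarrow \mathcal X^\beta$ over $\tilde{\mathcal S}$ and resolves. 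By Proposition~\ref{punctured Shafarevich} the original $X \rar S$ represents a class in $\Sh_{S\setminus S_{\rm sing}}(J(X)_\eta) = \Sh_{\tilde S\setminus \tilde E}(J(X)_\eta)$ (the last identity as in the proof of Proposition~\ref{finite Shafarevich}), hence it equals $\beta$ for some $\beta \in G$ and some $t$, so $X$ is birational to $\mathcal X^\beta_t$ over $\tilde S$, hence over $S$, compatibly with the fibrations. Since there are finitely many components of $T$, finitely many strata, finitely many \'etale covers, and finitely many classes $\beta \in G$, the union of the resulting families $\mathcal X^\beta \rar \tilde{\mathcal S} \rar T$ (with the projections $\tilde{\mathcal S} \rar T$) witnesses the birational boundedness of $\familyellcyenr$.

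The main obstacle will be the passage from a Tate--Shafarevich class to an honest family of torsors over $T$: one must check that the relevant higher direct image sheaf on $\tilde{\mathcal S}^\circ$ (the relative identity component of the N\'eron model of the generic fibre) has constructible $R^1$, and that Tate--Shafarevich classes spread out over a dense open, respectively stratified, base — essentially a relative enhancement of the fibrewise constructions of~\cite{DG94} and~\cite{Gro94}*{\S3}. The input that makes this uniform control possible is the constancy of Betti and Picard numbers in the family, guaranteed by Theorem~\ref{t-BT} and Theorem~\ref{thm cones} and already built into Proposition~\ref{family Miranda models}.
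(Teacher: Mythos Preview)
Your proposal is correct and follows the same overall strategy as the paper: pass to the Jacobian (Proposition~\ref{Jac is CY}), bound the Miranda models in a family (Proposition~\ref{family Miranda models}), and then reconstruct the original torsors fibrewise using the finiteness of the relevant Tate--Shafarevich group (Proposition~\ref{finite Shafarevich}). The difference is one of packaging. The paper's proof is two sentences: once Proposition~\ref{family Miranda models} supplies the family of Miranda models and Proposition~\ref{finite Shafarevich} supplies the finiteness of $\Sh_{\tilde S \setminus \tilde E}(J(X)_\eta)$, the paper invokes~\cite{Gro94}*{Theorem 4.3} as a black box, since that theorem already carries out precisely the family version of the Tate--Shafarevich construction you sketch (stratification, spreading out torsor classes, and building the finitely many projective families $\mathcal X^\beta$). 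Your third and fourth paragraphs are essentially an outline of the proof of~\cite{Gro94}*{Theorem 4.3} specialised to this setting; what you identify as the ``main obstacle'' is exactly what Gross's theorem handles. So you are not taking a different route, but rather unpacking a cited result. If you were writing this up, it would suffice to verify the hypotheses of~\cite{Gro94}*{Theorem 4.3} and cite it, which is what the paper does.
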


\begin{proof}
By Proposition~\ref{family Miranda models}, the set of corresponding Jacobian fibrations is birationally bounded by a family of Miranda models.
Then, the claim follows by~\cite{Gro94}*{Theorem 4.3}, since the condition on the Tate--Shafarevich group in~\cite{Gro94}*{Theorem 4.3} is guaranteed to hold by Proposition~\ref{finite Shafarevich}.
\end{proof}

We can also prove an analogue of Proposition~\ref{prop_bdd_flops} for the elliptic fibrations in $\familyellcyenr$.

\begin{proposition} \label{prop_bdd_flops enriques base}
There exist quasi-projective varieties 
$\mathcal X, \mathcal S, T$ 
and a commutative diagram
\[
\xymatrix{
\mathcal X \ar[rrrr]^f \ar[drr]_\pi& & & & \mathcal S \ar[dll]^g \\
& & T & &
}
\] 
of projective morphisms satisfying the following properties:
\begin{enumerate}
    \item
$\pi$ is a flat family of threefolds and $g$ is a flat family of surfaces;
    \item
for every $t \in T$, $\mathcal X_t$ is a Calabi--Yau threefold.
In particular, $\mathcal{X}_t$ has terminal $\mathbb Q$-factorial singularities; and 
    \item
for every terminal elliptic Calabi--Yau threefold with non-rational base $h \colon X \rar S$, there exists $t \in T$
together with an isomorphism in codimension 1 $\phi \colon X \dashrightarrow \mathcal X_t$
such that $\mathcal S_t$ and $S$ are isomorphic and $\phi$ is a birational morphism over $S$.
\end{enumerate}
\end{proposition}

\begin{proof}
The proof is identical to the one of Proposition~\ref{prop_bdd_flops}, where we replace Theorem~\ref{thm_gross} with Theorem~\ref{enriques 3fold bir bdd}. 
\end{proof}

\subsection{Threefolds of Kodaira dimension 2}

The tools developed in \S~\ref{section cone conj} can also be applied to study the boundedness of birationally bounded elliptic varieties that are not of Calabi--Yau type.
In this case, the difficulty is that dropping the Calabi--Yau condition, it may be difficult to show that flops deform, as needed in the proof of Theorem~\ref{metatheorem}.
Here, we consider minimal terminal threefolds of Kodaira dimension 2.
By~\cite{Fil20}, it is known that these are bounded in codimension 1 under certain natural and necessary geometric conditions.
In order to show the boundedness of these varieties, we rely on work of Koll\'ar and Mori, showing that the deformation of flops of $\qq$-factorial terminal threefolds is locally unobstructed, see~\cite{KM92}*{\S~11}.

\begin{theorem}
\label{T-extflop}
Let $\mathcal X  \rightarrow T$ be a flat projective family of minimal terminal $\mathbb{Q}$-factorial threefolds of Kodaira dimension 2.
Then, up to stratifying $T$ into a finite union of locally closed Zariski subsets and taking finite covers, the following holds:
\\
Let $0 \in T$ be any closed point, and let $\psi_0 \colon\mathcal X_0 \dashrightarrow \mathcal X_0^+$ be a $K_{\mathcal X_0}$-flop.
Then, there exists a $K_{\mathcal X}$-flop $\mathcal X \dashrightarrow \mathcal X^+$  over $T$ extending $\mathcal X_0 \dashrightarrow \mathcal X_0^+$. 
\end{theorem}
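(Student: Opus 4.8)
The plan is to reduce the statement to the local deformation theory of threefold flops developed in \cite{KM92}*{\S 11}. First I would record the standard setup: after stratifying $T$ and passing to finite covers, the Iitaka fibration deforms in the family, so $\mathcal X \rar T$ factors as $\mathcal X \xrightarrow{g} \mathcal Y \xrightarrow{} T$ with $g$ a family of elliptic fibrations $g_t \colon \mathcal X_t \rar \mathcal Y_t$ the Iitaka fibration of $\mathcal X_t$ for every $t$; we also arrange that $K_{\mathcal X} \sim_{\qq,T} g^\ast(\text{relative adjoint divisor})$ via the canonical bundle formula. Since a $K_{\mathcal X_0}$-flop is necessarily a flop over $\mathcal Y_0$ (it contracts curves in fibers of $g_0$, on which $K_{\mathcal X_0}$ is numerically trivial), the data of $\psi_0$ is the same as the data of a flopping contraction $\varphi_0 \colon \mathcal X_0 \rar \mathcal Z_0$ over $\mathcal Y_0$ together with its flop $\varphi_0^+ \colon \mathcal X_0^+ \rar \mathcal Z_0$. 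So the whole problem is local over $\mathcal Y$, and in fact local over $\mathcal Z_0$.

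The heart of the argument is then: extend the flopping contraction $\varphi_0$ to a flopping contraction $\varphi \colon \mathcal X \rar \mathcal Z$ over $T$ (shrinking/stratifying $T$ as needed), and then flop it. For the extension of $\varphi_0$, I would use that $\mathcal X_0$ is terminal $\qq$-factorial and that $\mathcal X_0 \rar \mathcal X_0^+$ is a flop of a projective threefold; by \cite{KM92}*{Theorem 11.10} (local unobstructedness of flops of terminal threefolds), the flopping contraction and its flop deform over the base of the family, at least formally/locally around $0$; one then uses Artin approximation and the projectivity of the family (taking relatively ample models) to produce an honest flopping contraction $\varphi \colon \mathcal X \rar \mathcal Z$ over $T$ whose restriction over $0$ is $\varphi_0$, after possibly shrinking $T$ around $0$ and then — by Noetherian induction / stratification — over all of $T$. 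Here one must check that $\mathcal X \rar T$ and $\mathcal Z \rar T$ remain flat and that the fibers stay terminal $\qq$-factorial threefolds of Kodaira dimension $2$; flatness and preservation of terminal singularities follow from \cite{dFH11a}*{Corollary 3.2, Proposition 3.5} (already invoked in the proof of Theorem~\ref{thm cones}) and the locally closed nature of these conditions, so they hold after a further stratification. The existence of the flop $\varphi^+ \colon \mathcal X^+ \rar \mathcal Z$ over $T$ then follows by running a relative MMP (the flip/flop of $\varphi$ exists by \cite{BCHM}), and base change gives that $\mathcal X^+_0$ is the flop of $\varphi_0$, i.e.\ $\mathcal X^+_0 = \mathcal X_0^+$, again up to shrinking and stratifying.

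The main obstacle I expect is controlling the stratification uniformly: \emph{a priori} the required stratification of $T$ could depend on the choice of the closed point $0 \in T$ and the flop $\psi_0$, whereas the statement asks for a single finite stratification working for all $0$ and all flops simultaneously. To handle this I would argue that, over a fixed irreducible stratum, the relevant data — the finitely many extremal faces of the relative movable cone $\overline M(\mathcal X_\eta/\mathcal Y_\eta)$ corresponding to flopping contractions, which are finite in number by Corollary~\ref{corollary finite fiber spaces general} applied to the generic fiber — spread out over a dense open subset, and by Noetherian induction one reduces to finitely many strata on each of which \emph{every} $K_{\mathcal X_t}$-flop is the restriction of a relative flopping contraction already present in the family. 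This is where one genuinely uses that the relative dimension is one and that the geometry is special to threefolds: in higher dimension or higher relative dimension the base of the Iitaka fibration has richer birational geometry and the local unobstructedness statement \cite{KM92}*{Theorem 11.10} is not available. The remaining verifications — that $\psi_0$ is automatically a flop over $\mathcal Y_0$, that the extended contraction contracts the right curves, and that terminality/$\qq$-factoriality are preserved in the flop — are routine given the results already cited in the excerpt.
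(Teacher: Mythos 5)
There is a genuine gap in your proposal, and it is exactly the phenomenon that the paper identifies in its introduction as the reason this theorem requires special threefold techniques: for minimal threefolds of Kodaira dimension $2$ the Picard rank of the fibers can jump, so the flopping curve and the flop-inducing divisor class on $\mathcal X_0$ need not be visible from the generic fiber. Your proposed uniformity argument --- spreading out the finitely many extremal faces of $\overline M(\mathcal X_\eta/\mathcal Y_\eta)$ from the generic fiber via Noetherian induction --- cannot capture a $K_{\mathcal X_0}$-flop whose flopping curve lies in a part of $N_1(\mathcal X_0)$ not coming from $N_1(\mathcal X_\eta)$; precisely such flops are the interesting ones. (This is in sharp contrast with the Calabi--Yau case of Theorem~\ref{thm intro}, where $h^1=h^2=0$ forces the class group to be locally constant after an \'etale cover by Theorem~\ref{t-BT}.) The paper's proof gets around this not by working with $N^1$ and $N_1$ but with the local systems $\mathcal{GN}^1(\mathcal X/T)$ and $\mathcal{GN}_1(\mathcal X/T)$ of \cite{KM92}*{\S 12}, which remain constant after a finite base change (Step~0, item~(5)) and come with a natural injection $\overline{NE}(\mathcal X/T)\hookrightarrow\overline{NE}(\mathcal X_0)$ by \cite{KM92}*{Proposition~12.2.6}; it is exactly this that lets one realize the extremal ray $R_0$ on $\mathcal X_0$ as (the image of) an extremal ray $\widetilde R_0$ on $\mathcal X$ over $T$ in Step~8.

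Relatedly, the sentence ``one then uses Artin approximation and the projectivity of the family to produce an honest flopping contraction $\varphi\colon\mathcal X\to\mathcal Z$ over $T$'' passes over what is really most of the proof. \cite{KM92}*{Theorem~11.10} produces an \emph{analytic} deformation of the flop over a polydisk, and the total spaces $\mathcal X^+_{\Delta^k},\ \mathcal Z$ are a priori only analytic; Artin approximation does not by itself convert them into algebraic objects. In the paper this gap is bridged by (i) invoking \cite{KM92}*{Theorem~12.2.10} to get projectivity of $\mathcal X^+_t$ for general $t$ only (Step~2); (ii) using constancy of $\mathcal{GN}^1(\mathcal X/T)$ to lift the ample class on $\mathcal X^+_t$ to a genuine effective $\qq$-divisor $\mathcal D$ on the algebraic family $\mathcal X$ (Step~3); (iii) running the relative $(K_{\mathcal X}+\epsilon\mathcal D)$-MMP over $\mathcal S$ to produce a candidate algebraic model $\mathcal X'$ and proving, via the ``very exceptional'' condition (4) of Step~0 together with \cite{HMX18}*{Lemma~3.2}, that this MMP is an isomorphism in codimension one on every fiber (Step~4, Claim~1); and (iv) proving that $\mathcal D^+_0$ is ample over $\mathcal S_0$ by a contradiction argument using \cite{KM92}*{Corollaries~11.10--11.11} (Step~6), so that $\mathcal X'$ and the analytic $\mathcal X^+_{\Delta^k}$ agree over an open set (Step~7), and finally that $\Phi$ is a single extremal flop (Step~8). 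None of these steps are routine; without (ii) you have no algebraic divisor to run an MMP, and without (iii)--(iv) you cannot identify the output of the MMP with the given flop on $\mathcal X_0$.
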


\begin{remark}
After the stratification, $T$ is the disjoint union of finitely many irreducible components.
Thus, the $K_{\mathcal X_0}$-flop extends over the irreducible component of $T$ containing the point $0$.
\end{remark}

\begin{proof}
Let 
\begin{align}
    \label{diag.flop.ext}
\xymatrix{
\mathcal{X}_0 \ar@{-->}[rr]^{\psi_0} \ar[dr]^{r_0} & &  \mathcal{X}_0^+ \ar[dl]^{r_0^+}\\
& Z_0 &
}
\end{align}
be a $K_{\mathcal X_0}$-flop associated to the contraction of an extremal ray $R_0 \subset \neone \mathcal X_0.$.
We now divide the rest of the proof into steps for the reader's convenience.\\

{\bf Step 0}.
{\it In this step, we make some reductions.}
\\
Up to stratifying $T$ into a union of locally closed subsets, we may assume that $T$ is smooth.
By~\cite{FM20}*{Proposition 2.9} and~\cite{Fil20}*{cf. proof of Theorem 6.1}
\begin{enumerate}
\item $\mathcal X$ is a terminal $\mathbb Q$-factorial variety;
\item there exists a commutative diagram
\[
\xymatrix{
\mathcal X \ar[rr]^f \ar[dr] & &
\mathcal S \ar[dl]
\\
& T &
}
\]such that $\mathcal S={\rm Proj}_T\ R(K_{\mathcal X})$ and $f$ is the relative Iitaka fibration $\mathcal X$ over $T$. 
In particular, $\K \mathcal X. = f^\ast H_\mathcal{S}$ where $H_\mathcal{S}$ is a $\mathbb Q$-divisor which is a relatively ample over $T$;
\item for every $t\in T$, $\mathcal S_t={\rm Proj} \ R(K_{\mathcal X_t})$;
\item if $E \subset \mathcal X_t$ is a prime $f_t$-very exceptional divisor in the sense of~\cite{Bir12}*{Definition 3.1}, then there exists a prime divisor $\mathcal E \subset \mathcal X$ horizontal over the connected component $\bar{T}$ of $T$ containing $t$ such that $\mathcal E_t=kE$ for some $k >0$;
Indeed, up to a finite cover, we may assume that all the divisors that are very exceptional for the morphism of geometric generic fibers $\mathcal{X}_{\overline{\eta}} \rar \mathcal{S}_{\overline{\eta}}$ is defined over $k(T)$.
Then, by \cite{Kol13}*{4.38}, we may assume that any such prime divisor restricts to a prime divisor fiber by fiber.
Lastly, we may shrink $T$ around $\eta$ so that, for every $t \in T$, every $f_t$-very exceptional divisor is the restriction of the closure of one of the divisors that are very exceptional for $\mathcal{X}_\eta \rar \mathcal{S}_\eta$; and,
\item 
the local systems $\mathcal{GN}^1(\mathcal X/T), \mathcal{GN}_1(\mathcal X/T)$ defined in~\cite{KM92}*{Definitions 12.2.4 and 12.2.7} are constant:
indeed, by ~\cite{KM92}*{Propositions 12.2.5 and 12.2.8} those have finite monodromy;
hence, substituting $T$ with a suitable finite cover, we can assume that their monodromy is trivial.
\end{enumerate}
Moreover, let us recall that, for a very general $t \in T$, $\mathcal{GN}_1(\mathcal X/T)\vert_{\mathcal X_t} = \nsdr (\mathcal X_t)$, see~\cite{KM92}*{Propositions 12.2.5 and 12.2.8}.
Thus, by (5), we may assume that $\nsr(\mathcal X_t)=\mathcal{GN}^1(\mathcal X/T)\vert_{\mathcal X_t}$ for a very general $t \in T$.
\\
Finally, since  $T$ has finitely many irreducible components, restricting to one of these components we may also assume that $T$ is irreducible, by Noetherian induction. \\

{\bf Step 1}.
{\it 
In this step, we show that there exists a polydisk $0 \in \Delta^k \subset T$ over which the flop $\mathcal X_0 \dashrightarrow \mathcal X_0^+$ deforms (in the analytic topology).
}
\\
By~\cite{KM92}*{Theorem 11.10}, the flop $\mathcal X_0 \dashrightarrow \mathcal X_0^+$ deforms over a germ of $0 \in T$ in the analytic topology: 
the deformation is obtained as base change of a flop of a miniversal deformation space of $\mathcal X_0$, cf.~\cite{KM92}*{Theorem 11.10}.
More precisely, as $T$ is smooth, there exists a polydisk $0 \in \Delta^k \subset T$ over which the flop of $\mathcal X_0$ deforms, that is, there exists a commutative diagram of analytic spaces
\begin{align}
\label{diag.flop.ext2}
\xymatrix{
\mathcal X_{\Delta^k}
\ar@{-->}[rrrr]^{\Psi}
\ar[ddrr] \ar[drr]^{r_{\Delta^k}}
& & & & 
\mathcal X^+_{\Delta^k} \ar[ddll] \ar[dll]_{r_{\Delta^k}^+}\\
& & \mathcal Z \ar[d] & &\\
& & \Delta^k & &
}
\end{align}
where $\mathcal X_{\Delta^k} \coloneqq \mathcal X \times_{\Delta^k} \Delta^k$, and the following properties are satisfied:
\begin{enumerate}
    \item[(a)]
the restrictions of the maps in \eqref{diag.flop.ext2} to $\mathcal X_0$ yield the diagram in \eqref{diag.flop.ext};
    \item[(b)]
$\forall t \in \Delta^k$, $\Psi\vert_{\mathcal X_t}$ is an isomorphism in codimension 1; and 
\item[(c)]
$\forall t \in \Delta^k$, $\mathcal X^+_t$ is $\mathbb Q$-factorial.
\end{enumerate}
\vspace{.3cm}

For $t \in \Delta^k$ we shall consider the restriction of the diagram in~\eqref{diag.flop.ext2} to $\mathcal X_t$
%\begin{align}
%    \label{diag.flop.ext3}
$$
\xymatrix{
\mathcal X_t \ar@{-->}[rr]^{\psi_t\coloneqq \Psi\vert_{\mathcal X_t}} 
\ar[dr]_{r_t} 
& & 
\mathcal X^+_t\ar[dl]^{r^+_t} \\
&\mathcal Z_t &
}
%\end{align}
$$
where $\mathcal X^+_t\coloneqq  \mathcal X^+_{\Delta^k, t}$,
$\psi_t\coloneqq \Psi\vert_{\mathcal X_t}\colon \mathcal X_t \dashrightarrow \mathcal X_t^+$ denotes the induced isomorphism in codimension 1 and 
\[
r_t\coloneqq  \left (r_{\Delta^k}\middle )\right\vert_{\mathcal X_t}, 
\qquad 
r_t^+\coloneqq  \left (r_{\Delta^k}^+\middle )\right\vert_{\mathcal X_t}.
\]

{\bf Step 2}.
{\it In this step, we show that  
\begin{enumerate}
    \item[(i)]
$\mathcal X^+_t$ and $\mathcal Z_t$ are projective for $t$  general in (the analytic Zariski topology of) $\Delta^k$; and
    \item[(ii)]
for $t \in \Delta_t$ very general, any irreducible curve $C_t$ contracted by $r_t$ specializes to a curve in $\mathcal X_0$ contracted by $r_0$; in particular, $C_t \cdot K_{\mathcal X_t}=0$.
\end{enumerate}
}
Since $\Delta^k$ is open in the Euclidean topology, $\Delta^k$ contains a point $t \in T$, very general in the Zariski topology, such that $\mathcal{GN}_1(\mathcal X/T)\vert_{\mathcal X_t} = \nsdr (\mathcal X_t)$.
\begin{enumerate}
\item[(i)]
By assumption, $\mathcal X_0^+, \mathcal Z_0$ are projective; 
thus, by~\cite{KM92}*{Theorem 12.2.10}, for $t \in \Delta^k$ general, $\mathcal X_t^+,\mathcal Z_t$ are projective.
\item[(ii)]
This is just a consequence of the countability of the components the relative Douady space of $\mathcal X$ over $\mathcal Z$ together with the fact that those are proper over $\mathcal Z$, cf.~\cite{Fujiki}.
\end{enumerate}
\vspace{.3cm}

{\bf Step 3}.
{\it
In this step, we construct an effective divisor $\mathcal D$ on $\mathcal X$ such that for general $v \in \Delta^k$ (in the analytic topology), $\mathcal X^+_v$ is the ample model of both $\mathcal D_v$ and also of $K_{\mathcal X_v}+\epsilon D_v$ for any choice of $\epsilon >0$.
%Moreover, the last conclusion holds also over $\mathcal S_t$. 
In particular, $\mathcal X^+_v$ is the relatively ample model of $K_{\mathcal X_v}+\epsilon D_v$ over $\mathcal{S}_v$.
}
\\
Let $t \in\Delta^k$ be a very general point for which properties (i-ii) of Step 2 hold; 
then, $K_{\mathcal{X}_t} \sim_{\mathbb Q, \mathcal Z_t} 0$, by~\cite{KM92}*{Proposition~12.1.4}.
Hence, $K_{\mathcal X_t}=r_t^\ast K_{\mathcal Z_t}$ and $K_{\mathcal X^+_t}=(r_t^+)^\ast K_{\mathcal Z_t}$, as $\psi_t$ is an isomorphism in codimension 1. 
In particular, $\psi_t$ is crepant birational with respect to $K_{\mathcal X_t}$ and $K_{\mathcal X^+_t}$ is nef.
Hence, given an ample divisor $D^+_t$ on $\mathcal X^+_t$, which exists by (i) in Step 2, $K_{\mathcal X^+_t}+\epsilon D^+_t$ is ample for all positive real numbers $\epsilon$.
We define $D_t\coloneqq (\psi^{-1}_t)_\ast D_t^+$.
Choosing $0 < \epsilon \ll 1$ and $D^+_t$ general in its $\mathbb Q$-linear series, then $(\mathcal X_t, \epsilon D_t)$ is terminal and $\psi_t$ is the outcome of the run of a $(K_{\mathcal X_t}+D_t)$-MMP.
As $\psi_t$ is crepant birational with respect to $K_{\mathcal X_t}$ and $K_{\mathcal X_t} \sim_{\mathbb Q, \mathcal S_t} 0$, then $\psi_t$ can also be obtained as a run of the relative $D_t$-MMP over $\mathcal S_t$.

Since $\nsr(\mathcal X_t)=\mathcal{GN}^1(\mathcal{X}/T)\vert_{\mathcal X_t}$, as $t$ is very general, there exists a divisor $\mathcal D$ on $\mathcal X$ such that the numerical class of $\mathcal D$ restricts to $D_t$ on $\mathcal X_t$.
Furthermore, as the identification $\nsr(\mathcal X_t)=\mathcal{GN}^1(\mathcal{X}/T)\vert_{\mathcal X_t}$ relies on the relative Hilbert scheme of $\mathcal X$ over $T$, cf.~\cite{KM92}*{proof of Proposition 12.2.5}, we may assume that $\mathcal{D}$ is itself effective, flat over $T$, and $\mathcal{D}|_{\mathcal{X}_t}=D_t$ as divisors.
In particular, we can assume that $\mathcal{D}$ does not contain any fiber.
We set $\mathcal{D}_{\Delta^k}\coloneqq \mathcal{D}|_{\mathcal{X}_{\Delta^k}}$. 
Let $\mathcal{D}^+_{\Delta^k}$ be the strict transform of $\mathcal D_{\Delta^k}$ on $\mathcal{X}^+_{\Delta_k}$;
to simplify the notation, for $v \in \Delta^k$, we set $\mathcal{D}^+_{v}\coloneqq \mathcal{D}^+_{\Delta^k}|_{\mathcal{X}_v^+}$.
For our choice of $t \in \Delta^k$, $\mathcal{D}^+_{ t}=
D_t^+$.
Hence, $\mathcal{D}^+_{ t}$ is ample on $\mathcal X^+_t$.
As ampleness is an open property, cf. \cite{KM92}*{proof of Theorem 12.2.10}, then, for $v \in \Delta^k$ general (in the analytic Zariski topology) $\mathcal{D}^+_v$ is ample on $\mathcal X^+_v$.
\\

{\bf Step 4.}
{\it In this step, we show that 
\begin{enumerate}
    \item[(A)] 
for all $v \in T$,
$\mathcal{D}_v$ (resp. $\K \mathcal X_v. + \mathcal D_v$) is big;
    \item[(B)]
there exist a positive real number $\epsilon_0$ 
such that for all $0 \leq \epsilon\leq \epsilon_0$, $(\mathcal X_v, \epsilon \mathcal D_v)$ is terminal for all $v \in T$;
    \item[(C)]
for all $v \in T$, $\mathcal{D}_v$ is movable over $\mathcal{S}_v$.
\end{enumerate}
}
\begin{enumerate}
    \item[(A)] 
By Step 3, $\mathcal{D}_v$ is big for $v \in \Delta^k$ general in the analytic Zariski topology.
By the semi-continuity theorem \cite{Har77}*{Theorem III.12.8}, for $v \in T$ very general in the Zariski topology, $h^0(\mathcal X_v, \mathcal O_{\mathcal X_v}(m\mathcal{D}_v))$ is constant for any fixed choice of $m \in \mathbb N$.
Hence, for very general $v \in T$, $\mathcal{D}_v$ is big;
finally, applying the semi-continuity theorem again, we can conclude that  $\mathcal{D}_v$ is big for all $v \in T$.
The exact same argument applies to prove the bigness of $\K \mathcal X_v. + \mathcal D_v$ for all $v \in T$.
\item[(B)] 
As for all $t \in T$, $X_t$ is terminal and $\mathbb Q$-factorial, and $\mathcal D_t$ is effective, then the conclusion simply follows by Noetherian induction on $T$, thanks to~\cite{Kol97}*{Theorem~4.8} and the fact that being terminal is an open condition in a family, see \cite{dFH11a}*{Proposition 3.5}.
\item[(C)]
By part (B) of this step and since $\K \mathcal{X}.$ is $\mathbb Q$-linearly equivalent to the pull-back of an ample divisor on $\mathcal{S}$, cf. Step 0, for all $0 < \epsilon \ll 1$, all $(\K \mathcal{X}. + \epsilon\mathcal{D})$-negative curves are contained in the fibers of $\mathcal{X} \rar \mathcal{S}$.
Hence, we may assume that for all $0<\epsilon \ll 1$, any run of the relative $(\K \mathcal{X}. + \epsilon \mathcal{D})$-MMP over $T$ with scaling of an ample divisor is also a run of the relative $(\K \mathcal{X}. + \epsilon \mathcal{D})$-MMP over $\mathcal{S}$.
Furthermore, with the same choice of $\epsilon$, $(\K \mathcal{X}. + \epsilon \mathcal{D}) \sim_{\mathbb Q, \mathcal S} \epsilon \mathcal{D}$; 
hence, the way a relative $(\K \mathcal{X}. + \epsilon \mathcal{D})$-MMP is run will be independent of $\epsilon$ for $0 < \epsilon \ll 1$.
As $\K \mathcal{X}. + \epsilon \mathcal{D}$ is big over $T$ for all positive values of $\epsilon$, then any run of the $(\K \mathcal X.+\epsilon\mathcal D)$-MMP must terminate with a good minimal model, see~\cite{BCHM}.
Thus, for $0 < \epsilon \ll 1$, let 
\begin{align}
    \label{diag.flop.ext4}
\xymatrix{
\mathcal{X} =: \mathcal X_0
\ar@{-->}[r]_{\Phi_0} \ar@/_/[drrr]
\ar@/^1.9pc/[rrrrrr]_{\Phi=\Phi_{n-1}\circ \Phi_{n-1} \circ \dots \circ \Phi_1 \circ \Phi_0}
&
\mathcal X_1
\ar@{-->}[r]_{\Phi_1}
\ar@/_/[drr]
&
\mathcal X_2
\ar@{-->}[r]_{\Phi_2} \ar[dr]
&
\dots
\ar@{-->}[r]_{\Phi_{n-3}}
&
\mathcal X_{n-2}
\ar@{-->}[r]_{\Phi_{n-2}} \ar[dl]
&
\mathcal X_{n-1}
\ar@{-->}[r]_{\Phi_{n-1}} \ar@/^/[dll]
&
\mathcal X_n=:\mathcal{X}' \ar@/^1pc/[dlll]
\\
& & &\mathcal S \ar[d] & & &
\\
& & & T & & &
}
\end{align}
be one such run of the relative $(\K \mathcal X.+\epsilon \mathcal{D})$-MMP with scaling of an ample divisor -- over $T$ or $\mathcal S$, equivalently.
We define $\mathcal D'\coloneqq \Phi_\ast \mathcal D$ and $\phi_t \coloneqq  \Phi\vert_{\mathcal X_t}$.
As $\mathcal D'$ is nef and big over $\mathcal S$ by construction, then to conclude the proof of (C) it suffices to prove the following claim.
\\

{\it Claim 1}.
For all $v \in T$, $\phi_v \colon \mathcal X_v \dashrightarrow \mathcal X'_v$ is an isomorphism in codimension 1.
\begin{proof}
Let us assume, by contradiction, that for some $v \in T$, $\phi_v \colon \mathcal X_v \dashrightarrow \mathcal X'_v$ is not an isomorphism in codimension 1.
By Steps~2-3, for any $t \in \Delta^k$ very general (in the analytic Zariski topology)
\begin{enumerate}
    \item[(I)] 
$\mathcal X^+_t$ is $\qq$-factorial, cf. Step 1, and for every $\epsilon> 0$, $\K \mathcal X^+_t.+ \epsilon \mathcal D^+_t$ ample on $\mathcal X^+_t$;
    \item[(II)]
for all $0< \epsilon \ll 1$, $(\mathcal X'_t, \epsilon \mathcal D'_t)$ is terminal and $\K \mathcal X'_t.+\epsilon \mathcal D'_t$ is big and semi-ample on $\mathcal X'_t$; and,
    \item[(III)]
$\phi_t \colon \mathcal X_t \dashrightarrow \mathcal X'_t$ is an isomorphism in codimension 1, by~\cite{HMX18}*{Lemma~3.2}:
indeed, with reference to the statement of~\cite{HMX18}*{Lemma~3.2}, it suffices to take 
\begin{itemize}
    \item 
$X \coloneqq \mathcal X$, $U \coloneqq T$, $0 \coloneqq t$;
    \item 
$\Delta \coloneqq \epsilon \mathcal D$ for $0< \epsilon \ll 1$ such that $(\mathcal X, \Delta)$ is terminal, the same holds for $(\mathcal X_t, \Delta_t)$ so that property $(2)$ in the statement of the lemma is satisfied. 
Furthermore, for such choice of $\epsilon$, the $\mathbb Q$-linear system of $\K \mathcal X_t.+ \Delta_t$ is movable by (I), thus property $(3)$ in the statement of the lemma is satisfied; and 
    \item 
$D_1, \dots, D_{\dim T}$, $\dim T$ sufficiently general effective divisors meeting transversely at $0$ .
\end{itemize}
\end{enumerate}
%Hence, by Lemma~\ref{lem.kaw.type}, $\phi_t$ is an isomorphism in codimension 1, since $\psi_t$ is.
As the indeterminacy locus of $\Phi$ is Zariski closed and its exceptional locus is locally closed in the Zariski topology, it follows that $\phi_u$ must be is an isomorphism in codimension 1 for $u \in T$ general.
On the other hand, as $\phi_v$ is a birational map over $\mathcal{S}_v$, any divisor contracted by $\phi_v$ is very exceptional with respect to $f_v \colon \mathcal{X}_v \rar \mathcal{S}_v$.
By condition (4) in Step 0, there exists a prime divisor $\mathcal E \subset \mathcal X$ horizontal over $T$ such that $\mathcal E_t=kE$ for some $k > 0$.
It suffices to show that $\mathcal E$ must be contained in the exceptional locus of $\Phi$ to obtain the sought contradiction.
But, if that were not the case, there would exist an integer $i \in \{0, 1, \dots, n-1\}$ such that for the extremal contraction $\xi_i \colon \mathcal X_i \to \mathcal Z_i$ in the $i$-th step of the $(\K \mathcal X.+\mathcal D)$-MMP in~\eqref{diag.flop.ext4}, then $\dim \xi_i(\mathcal E_i)_t=2$ for $t \in T$ general, whereas $\dim \xi(\mathcal E_i)_v=1$; here, $\mathcal E_i$ is the strict transform of $\mathcal E$ on $\mathcal X_i$.
This is clearly impossible, by the upper semi-continuity of fiber dimension.
\end{proof}
\end{enumerate}

{\bf Step 5}.
{\it
In this step, we show that there exists a positive real number $\epsilon_1$ such that for all $0 < \epsilon \leq \epsilon_1$, $\K \mathcal X'_v.+ \epsilon \mathcal{D}'_v$ is big and semi-ample on $\mathcal X'_v$ for all $v \in T$, where the model $\mathcal{X}'$ is the one constructed in \eqref{diag.flop.ext4}.
}
\\
The MMP in~\eqref{diag.flop.ext4} terminates with a good minimal model $\mathcal X'$ over $\mathcal S$. 
Thus, for all $u \in T$, $\mathcal{D}'_u$ is big and semi-ample over $\mathcal{S}_u$.
As $\K \mathcal X.$ is the pull-back of a $\mathbb Q$-divisor on $\mathcal S$ ample over $T$, then $\K \mathcal{X}'_u.+ \epsilon \mathcal{D}'_u$ is big and semi-ample on $\mathcal X'_u$.
\newline

{\bf Step 6.}
{\it 
In this step, we show that $\mathcal{D}_{0}^+$ is ample over $\mathcal{S}_0$.}
\\
We first show that $\mathcal{D}_{0}^+$ is nef over $\mathcal{S}_0$.
Let us assume by contradiction that this is not the case.
Thus, there must exist a $\mathcal{D}^+_0$-negative curve $\Gamma_0^+$ vertical over $\mathcal{S}_0$ spanning an extremal ray of the effective cone of curves on $\mathcal X_0^+$.
As $\mathcal K_{\mathcal X_0^+}\sim_{\mathbb Q, \mathcal S_0}0$, then $\Gamma_0^+$ is also a $(\K \mathcal{X}_0^+. + \mathcal{D}_0^+)$-negative curve.
Since in Step 4 we showed that $\mathcal D_0$ is movable over $\mathcal{S}_0$, then the same conclusion must hold for $\mathcal D^+_{0}$ over $\mathcal S_0$, as $\psi_0$ is an isomorphism in codimension 1 over $\mathcal S_0$, by construction.
Hence, the contraction of $\Gamma_0^+$ gives rise to a relative $(\K \mathcal{X}_0^+. + \mathcal{D}_0^+)$-flipping contraction $\mu_0 \colon \mathcal X_0^+ \rar \mathcal Y_0$ over $\mathcal S_0$ which is also a relative $\K \mathcal{X}_0^+.$-flop.
Then, \cite{KM92}*{Corollary 11.11} implies that up to possibly passing to a positive multiple, for very general $t \in \Delta^k$ (in the analytic Zariski topology) there exists a curve $C_t \subset \mathcal X^+_t$ specializing to a curve supported on the exceptional locus of $\mu_0$.
But then, for some irreducible component $C^i_t$ of $C_t$, $\mathcal D^+_t \cdot C_t^i<0$:
this yields the desired contradiction as $\mathcal{D}_t^+$ is ample for $t \in T$ general by construction, cf. Step~3.

The same reasoning, utilizing~\cite{KM92}*{Corollary~11.10}, shows that if $\mathcal D_{0}^+$ is nef, then the effective cone of curves of $\mathcal X^+_0$ does not admit a $\mathcal D_{0}^+$-trivial extremal ray over $\mathcal S_0$.
Hence, $\mathcal D_{0}^+$ is ample over $\mathcal S_0$.\\

{\bf Step 7}.
{\it In this step, we show that there exists an analytic Zariski open neighborhood $U' \subset \Delta^k$ of $0$ and an isomorphism over $U'$ between the restrictions of $\mathcal X'\vert_{U'}$ and $\mathcal X^+_{\Delta^k}\vert_{U'}$}.
\\
As $\mathcal D^+_{0}$ is ample over $\mathcal S_0$ and $\K \mathcal X^+_0.=f_0^\ast H_{\mathcal S_0}$, with $H_{\mathcal S_0}$ ample, then for all $0< \epsilon \ll 1$, $\K \mathcal X^+_0. + \epsilon \mathcal D^+_{0}$ is ample on $\mathcal X^+_0$.
By the openness of ampleness, then $\K \mathcal X^+_v. + \epsilon \mathcal D^+_{v}$ is ample on $\mathcal X^+_v$ for $v \in \Delta^k$ general (for the analytic Zariski topology) and $\mathcal X^+_{\Delta^k} \rar \Delta^k$ is projective over an analytic Zariski open neighborhood $U \subset \Delta^k$ of $0$.
Hence, over $U$ we have the following commutative diagram
\[
\xymatrix{
& \mathcal X \vert_U \ar@{-->}[dr]^{\Psi\vert_U} & 
\\
\mathcal X'\vert_U \ar@{-->}[ur]^{\Phi^{-1}\vert_{U}}
\ar@{-->}[rr]^{\Xi_U}
\ar[dr]
& & 
\mathcal X^+_{\Delta^k}\vert_U 
\ar[dl]
& & 
\Xi_U\coloneqq  (\Psi\vert_U) \circ (\Phi^{-1}\vert_{U}).
\\
& U &
}
\]
By Claim 1 and the construction of $\Psi$, cf. Step~1, $\Xi_U\vert_{\mathcal X'_0}$ is an isomorphism in codimension 1.
As 
$(\Xi_U\vert_{\mathcal X'_0}^{-1})_\ast( \K \mathcal X^+_0.+ \mathcal D^+_0)=
\K \mathcal X'_0.+ \mathcal D'_0$, $\K \mathcal X'_0.+ \mathcal D'_0$ 
is big and semi-ample on $\mathcal X'_0$, and $\mathcal X^+_0$ is $\mathbb Q$-factorial, Lemma~\ref{lem.kaw.type} implies that
$\K \mathcal X'_0.+ \mathcal D'_0$ 
is ample and that $\Xi_U\vert_{\mathcal X'_0}$ is an isomorphism.
Thus, $\K \mathcal X'.+ \mathcal D'$ and $\K \mathcal X^+_{\Delta^k}.+ \mathcal D^+_{\Delta^k}$ are both ample over a common analytic Zariski open $U' \subset U$. 
Hence, $(\Xi_U)\vert_{U'}$ is an isomorphism over $U'$ since $(\Xi_U)_\ast (\K \mathcal X'.+ \mathcal D')\vert_U = 
(\K \mathcal X^+_{\Delta^k}.+ \mathcal D^+_{\Delta^k})\vert_U$.
\\

{\bf Step 8}.
{\it Conclusion of the proof}.
\\
To conclude the proof we just need to show that $\Phi$ is just given by a $\K \mathcal X.$-flop over $T$.
\\
As mentioned at the start of~\cite{KM92}*{proof of Theorem~12.2.10}, the restriction to $\mathcal X_0$ induces a natural injection, cf.~\cite{KM92}*{Proposition~12.2.6},
\[
\mathfrak r_0 \colon 
\neone \mathcal X/ T. 
\hookrightarrow
\neone \mathcal X_0. . 
\]
Recall that $\psi_0 \colon \mathcal X_0 \dashrightarrow \mathcal X^+_0$ is the flop of an extremal ray $R_0 \subset \overline{{\rm NE}(\mathcal X_0)}$.
%Moreover, Step~7 implies that $R_0 \subset Im(\mathfrak r_0)$.
Moreover, \cite{KM92}*{Corollary 12.3.3} implies that $R_0 \subset Im(\mathfrak r_0)$.
We set $\widetilde{R}_0:=\mathfrak r_0^{-1}(R_0)$.
Then, $\widetilde{R}_0$ is an extremal ray in $\overline{{\rm NE}(\mathcal X/ T)}$. 
Moreover, by specialization, 
$\K \mathcal X. \cdot \widetilde{R}_0 =0$ and 
$\mathcal D \cdot \widetilde{R}_0 < 0$.
Since $\K \mathcal X.+\epsilon \mathcal D$ is big, by Step 4, then there exists the flop of $\widetilde R_0$ over $T$,
\begin{align}
     \label{diag.flop.ext5}
\xymatrix{
\mathcal X \ar@{-->}[rr]^\Upsilon \ar[dr]_{s} 
& &
\mathcal X'' \ar[dl]^{s''} 
\\
& \mathcal Z'.
& 
}
\end{align}
Since $\mathfrak r_0(\widetilde R_0)=R_0$, then the curves contracted by $\mathcal X_0 \rar \mathcal Z'_0$ are all the curves in $R_0$.
By construction then, denoting $s_0:=s\vert_{\mathcal X_0}$, $s''_0:=s\vert_{\mathcal X''_0}$ $s_0=r_0$ and, by the uniqueness of flops, it follows then also that $s''_0=r_0^+$. 
Hence, the flop $\Upsilon$ in~\eqref{diag.flop.ext5} yields a flop of $\mathcal X$ lifting the flop $\psi_0$ in~\eqref{diag.flop.ext} on $\mathcal X_0$ as claimed in the statement of the theorem.

To conclude, we argue that $\mathcal{X}''=\mathcal{X}'$ and $\Upsilon=\Phi$.
Let $\mathcal{D}''$ denote the strict transform of $\mathcal{D}$ to $\mathcal{X}''$, and let $\mathcal{D}''_0$ denote its restriction to $\mathcal{X}''_0$.
By construction, we have $\upsilon_0=\psi_0=\phi_0$, where $\upsilon_0$ denotes the restriction of $\Upsilon$ to the special fiber.
Then, as $\mathcal{X}''$ is $\mathbb Q$-factorial and ampleness is an open condition, $\mathcal{D}''$ is ample over an open neighborhood of $0$.
Thus, $\mathcal{X}''=\mathcal{X}'$ and $\Upsilon=\Phi$ hold true over a non-empty open set of $T$ containing $0$.

By construction, $\mathcal{X}'$ and $\mathcal{X}''$ are $\mathbb Q$-factorial and isomorphic in codimension 1 to $\mathcal{X}$.
Thus, $\mathcal{X}'$ and $\mathcal{X}''$ are connected by a sequence of flops, which is also a sequence of flips for a suitable pair structure.
As $\mathcal{X}''=\mathcal{X}'$ holds true generically over the base, these flips are concentrated over a proper closed subset $G$ of $T$.
Yet, the results of \cite{KM92}*{\S~11 and \S~12} imply that extremal curves deform over the base under our assumptions.
Thus, it follows that $G= \emptyset$ and $\mathcal{X}''=\mathcal{X}'$ and $\Upsilon=\Phi$.
\end{proof}

The following is an immediate corollary of Theorem~\ref{T-extflop}.

\begin{corollary}
\label{C-extflop}
Let $\mathcal X  \rightarrow T$ be a flat projective family of minimal terminal $\mathbb{Q}$-factorial threefolds with Kodaira dimension 2.
Then, up to stratifying $T$ into a finite union of locally closed Zariski subsets and taking finite covers, the following holds:
\\
Let $0 \in T$ be any closed point, and let $\psi_0 \colon\mathcal X_0 \dashrightarrow \mathcal X_0^+$ be a finite sequence of $K_{\mathcal X_0}$-flops.
Then, there exists a finite sequence of $K_{\mathcal X}$-flops $\mathcal X \dashrightarrow \mathcal X^+$  over $T$ extending $\mathcal X_0 \dashrightarrow \mathcal X_0^+$.
\end{corollary}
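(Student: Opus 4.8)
The plan is to induct on the length $n$ of the given sequence of flops $\psi_0 \colon \mathcal X_0 \dashrightarrow \mathcal X_0^+$. The case $n = 0$ is trivial and the case $n = 1$ is exactly Theorem~\ref{T-extflop}, so assume $n \geq 2$ and factor $\psi_0$ as a single $K_{\mathcal X_0}$-flop $\psi_0^{(1)} \colon \mathcal X_0 \dashrightarrow \mathcal X_0^{(1)}$ followed by a composition $\vartheta_0 \colon \mathcal X_0^{(1)} \dashrightarrow \mathcal X_0^+$ of the remaining $n-1$ flops. First I would apply Theorem~\ref{T-extflop} to $\psi_0^{(1)}$: after replacing $T$ by a finite union of locally closed subsets and by finite covers, and $0$ by one of its preimages (over which the fiber is still $\mathcal X_0$), one obtains a $K_{\mathcal X}$-flop $\Upsilon^{(1)} \colon \mathcal X \dashrightarrow \mathcal X^{(1)}$ over $T$ whose restriction to the central fiber is $\psi_0^{(1)}$.

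The key step is to verify that $\mathcal X^{(1)} \to T$ is again a flat projective family of minimal terminal $\mathbb Q$-factorial threefolds of Kodaira dimension $2$, so that the inductive hypothesis applies to it. Since $\Upsilon^{(1)}$ is a flop over $T$, the total space $\mathcal X^{(1)}$ is terminal and $\mathbb Q$-factorial, and every fiber $\mathcal X^{(1)}_t$ is isomorphic in codimension one to $\mathcal X_t$; hence $\mathcal X^{(1)}_t$ is terminal and $\mathbb Q$-factorial with $\kappa(\mathcal X^{(1)}_t) = \kappa(\mathcal X_t) = 2$, as the canonical ring of a normal variety depends only on its codimension-one behaviour. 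As in Step~8 of the proof of Theorem~\ref{T-extflop}, the flopped extremal ray is numerically trivial against $K_{\mathcal X} = f^\ast H_{\mathcal S}$ with $H_{\mathcal S}$ relatively ample over $T$, so the ray is vertical over $\mathcal S$ and $\Upsilon^{(1)}$ is in fact a flop over $\mathcal S$; thus $\mathcal X^{(1)}$ still fits in a tower $\mathcal X^{(1)} \to \mathcal S \to T$ with $K_{\mathcal X^{(1)}}$ the pullback of $H_{\mathcal S}$, and each $K_{\mathcal X^{(1)}_t}$ is the pullback of an ample class, hence nef, so the fibers are minimal. Projectivity of $\mathcal X^{(1)}$ over $T$ follows since $\mathcal X^{(1)}$ is projective over $\mathcal S$ (the flop being produced by an MMP over $\mathcal S$) and $\mathcal S$ is projective over $T$; flatness of $\mathcal X^{(1)} \to T$ follows from miracle flatness, as $\mathcal X^{(1)}$ is Cohen--Macaulay (being terminal), $T$ is smooth after the stratification, and the fibers are equidimensional of dimension $3$.

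Finally I would apply the inductive hypothesis to the family $\mathcal X^{(1)} \to T$ and the sequence $\vartheta_0 \colon \mathcal X^{(1)}_0 \dashrightarrow \mathcal X_0^+$ of $n-1$ flops: after a further stratification and finite covers, $\vartheta_0$ extends to a sequence of $K_{\mathcal X^{(1)}}$-flops $\mathcal X^{(1)} \dashrightarrow \mathcal X^+$ over $T$, and composing with $\Upsilon^{(1)}$ yields the desired sequence of $K_{\mathcal X}$-flops $\mathcal X \dashrightarrow \mathcal X^+$ over $T$ restricting to $\psi_0$ on the central fiber. Only $n$ applications of Theorem~\ref{T-extflop} are made, so only finitely many stratifications and finite covers are performed overall. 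The main obstacle is precisely the second step: one must ensure that a single flop over $T$ does not leave the class of families to which Theorem~\ref{T-extflop} applies — the delicate points being the preservation of projectivity and flatness over $T$, together with the observation that a $K_{\mathcal X}$-flop over $T$ of such a family is automatically a flop over $\mathcal S$, so that the relative Iitaka fibration structure $\mathcal X \to \mathcal S \to T$ underlying the proof of Theorem~\ref{T-extflop} is retained.
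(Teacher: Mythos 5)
Your inductive strategy is the natural one, and most of the verifications are on the right track: flops preserve terminality, $\qq$-factoriality, and (being isomorphisms in codimension one) Kodaira dimension; the observation that a $K_{\mathcal X}$-flop over $T$ is automatically a flop over $\mathcal S$ because $K_{\mathcal X}\sim_{\qq}f^{\ast}H_{\mathcal S}$ with $H_{\mathcal S}$ relatively ample is exactly right, and it does imply the fibers remain minimal and that projectivity over $T$ is retained; the miracle-flatness argument for $\mathcal X^{(1)}\to T$ is also the right tool (though you should justify equidimensionality of fibers, which follows from the fact that the constructed flop restricts to an isomorphism in codimension one on every fiber — this is contained in the proof of Theorem~\ref{T-extflop}, cf. Claim~1 in Step~4 and the construction in Step~8).

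The genuine gap is in the uniformity of the stratification. As stated, the corollary asserts that there is \emph{one} stratification and finite cover of $T$ after which the extension exists for \emph{every} closed point $0$ and \emph{every} finite sequence $\psi_0$ of $K_{\mathcal X_0}$-flops, of arbitrary length. Your proof, however, produces a stratification that depends on the chosen sequence: each application of Theorem~\ref{T-extflop} generates its own stratification and finite cover, and the family to which you apply it at step $i+1$, namely $\mathcal X^{(i)}$, depends on the choice of the $i$-th flop. Your remark that ``only $n$ applications of Theorem~\ref{T-extflop} are made, so only finitely many stratifications and finite covers are performed overall'' is valid only for a fixed $\psi_0$ of fixed length $n$; it does not yield the single uniform stratification the statement requires, since $n$ ranges over all positive integers and the intermediate families range over all possible flops. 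To close this gap you should invoke Theorem~\ref{thm KM conj general} (applied to the Calabi--Yau fiber space $\mathcal X\to\mathcal S$): it gives finitely many isomorphism classes over $\mathcal S$ of relatively minimal models $\mathcal X_1,\dots,\mathcal X_k$, and any sequence of flops over $T$ only ever passes through models isomorphic over $\mathcal S$ (hence over $T$) to one of these. One then applies Theorem~\ref{T-extflop} once to each $\mathcal X_i\to T$ and takes the common refinement of the resulting stratifications and covers; after this single uniform preparation, the inductive extension you describe goes through for sequences of arbitrary length. Without this finiteness input the induction as written is incomplete.
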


In the proof of Theorem~\ref{T-extflop} we used the following easy consequence of~\cite{Kaw97}*{Lemma~1.5}.

\begin{lemma}
\label{lem.kaw.type}
Let $(Y_1, D_1)$, $(Y_2, D_2)$ be projective klt pairs.
Assume that 
\begin{enumerate}
    \item 
$\K Y_1.+D_1$ is ample and $\K Y_2.+D_2$ is nef and big;
    \item
$Y_1$ is $\mathbb Q$-factorial;
    \item 
there exists a birational map 
$\lambda \colon Y_1 \dashrightarrow Y_2$ which is an 
isomorphism in codimension 1; and
    \item 
$\lambda_\ast(\K Y_1.+D_1)=\K Y_2.+D_2$.
\end{enumerate}
Then $\lambda$ is an isomorphism.
\end{lemma}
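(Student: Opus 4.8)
The statement is a mild variant of Kawamata's rigidity lemma \cite{Kaw97}*{Lemma~1.5}, with the two pairs replaced by their total spaces and the ampleness of $K_{Y_2}+D_2$ relaxed to nef and big. The plan is to reduce to the setting where Kawamata's lemma applies directly. First I would observe that, since $\lambda\colon Y_1\dashrightarrow Y_2$ is an isomorphism in codimension one and $\lambda_\ast(\K Y_1.+D_1)=\K Y_2.+D_2$, the pair $(Y_1,D_1)$ may be viewed as a marked small modification of $(Y_2,D_2)$: the birational map is crepant for the two log canonical divisors, in the sense that a common resolution $p\colon W\to Y_1$, $q\colon W\to Y_2$ satisfies $p^\ast(\K Y_1.+D_1)=q^\ast(\K Y_2.+D_2)$. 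This is the hypothesis needed to invoke \cite{Kaw97}*{Lemma~1.5}, whose conclusion is precisely that two marked minimal models whose ample cones (inside the movable cone) have a common point must be isomorphic via the marking.

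The key steps, in order, are: \emph{(i)} Record that $\K Y_2.+D_2$ is nef and big, hence semiample by the base-point-free theorem (as $(Y_2,D_2)$ is klt), so it defines a birational contraction $\mu_2\colon Y_2\to Z$ to its ample model; since $\K Y_1.+D_1=\lambda^\ast(\K Y_2.+D_2)$ up to the identification in codimension one and $\K Y_1.+D_1$ is already ample, the composite $\mu_2\circ\lambda\colon Y_1\dashrightarrow Z$ contracts no divisor (as $\lambda$ does not, and $Y_1$ is $\qq$-factorial so a nontrivial contraction would be divisorial unless it is small; but $\K Y_1.+D_1$ ample forces $\mu_2\circ\lambda$ to be finite onto its image, hence $Z$ and $Y_1$ agree in codimension one). \emph{(ii)} Deduce that $\mu_2$ itself is an isomorphism in codimension one; being a birational contraction of normal varieties that is small, and with $Y_2$ having at worst klt singularities, one concludes $\mu_2$ is an isomorphism exactly when $Y_2$ is $\qq$-factorial — which need not be assumed, so instead I would argue directly that $\lambda$ extends to an isomorphism by comparing the two semiample classes. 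Concretely: on a common resolution $W$, both $p^\ast(\K Y_1.+D_1)$ and $q^\ast(\K Y_2.+D_2)$ are the same divisor class $L_W$; $L_W$ is semiample and its ample model is on one hand $Y_1$ (since $\K Y_1.+D_1$ is ample and $Y_1$ is $\qq$-factorial, so $Y_1$ \emph{is} the ample model) and on the other hand the ample model of $\K Y_2.+D_2$. \emph{(iii)} Since the ample model of a semiample divisor is unique, $Y_1\cong\operatorname{Proj}$ of the section ring equals the ample model of $\K Y_2.+D_2$, and the natural map $Y_2\to Y_1$ is the ample-model morphism; as $\K Y_2.+D_2=\lambda^{-1}_\ast(\K Y_1.+D_1)$ and $\lambda$ is an isomorphism in codimension one, this map is small birational, and since the source $Y_2$ maps isomorphically in codimension one onto the normal $\qq$-factorial $Y_1$, it is an isomorphism. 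Unwinding, $\lambda=\mu_2^{-1}$ is an isomorphism.

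The main obstacle is step \emph{(ii)}: the hypotheses only give $Y_1$ $\qq$-factorial, not $Y_2$, so I cannot immediately quote a version of \cite{Kaw97}*{Lemma~1.5} that is symmetric in the two pairs. The clean way around this is exactly the common-resolution / ample-model argument sketched above, using that the ample model of a semiample $\rr$-divisor is unique and that a $\qq$-factorial variety equipped with an ample log canonical class is canonically its own ample model; then the map $Y_2\to Y_1$ is forced to be a small birational morphism onto a $\qq$-factorial variety, and such a morphism is an isomorphism (any exceptional curve would have to be contracted, but the target being $\qq$-factorial and the map small contradicts the negativity lemma unless the map is trivial). I would present this last point carefully, since it is where the $\qq$-factoriality hypothesis on $Y_1$ is genuinely used. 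Once $Y_2\to Y_1$ is an isomorphism, $\lambda$ is its inverse and we are done.
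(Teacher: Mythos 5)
Your proof is correct and takes a genuinely different route from the paper's. The paper passes to a $\qq$-factorialization $\tau\colon Y_3\to Y_2$, applies Kawamata's rigidity lemma \cite{Kaw97}*{Lemma~1.5} to the small birational map $\eta=\tau^{-1}\circ\lambda$ between the $\qq$-factorial varieties $Y_1$ and $Y_3$ (their marked ample cones meet, so $\eta$ is an isomorphism), and then uses ampleness of $\K Y_1.+D_1=\lambda^\ast(\K Y_2.+D_2)$ to rule out any curve contracted by the resulting birational morphism $\lambda=\tau\circ\eta$. You instead construct the ample model $\mu_2\colon Y_2\to Z$ of $\K Y_2.+D_2$ via base-point-freeness, identify $Z\cong Y_1$ by matching section rings (possible because $\lambda$ is an isomorphism in codimension one and $\lambda_\ast$ identifies the two log canonical divisors), and conclude that $\mu_2=\lambda^{-1}$ is a small birational morphism onto the $\qq$-factorial $Y_1$, hence an isomorphism by the negativity lemma. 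Your route avoids $\qq$-factorializing $Y_2$ and does not quote Kawamata's lemma at all, at the modest cost of invoking BPF and the ample-model machinery; both arguments use $\qq$-factoriality of $Y_1$, but in different places (the paper inside Kawamata's lemma, you in the final ``small birational onto $\qq$-factorial is an isomorphism'' step). Two points to tighten in the write-up: the assertion $p^\ast(\K Y_1.+D_1)=q^\ast(\K Y_2.+D_2)$ on a common resolution is stated without proof --- it does hold, via two applications of the negativity lemma (using nefness of $\K Y_2.+D_2$ for one direction and ampleness of $\K Y_1.+D_1$ for the other), but you can also bypass crepancy entirely, since the codimension-one isomorphism alone identifies the pluri-log-canonical section rings by Hartogs. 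Also, your step (i) contains a circular-looking argument (using ampleness of $\K Y_1.+D_1$ to claim $\mu_2\circ\lambda$ is finite before knowing $\lambda$ is a morphism); you correctly replace it by the clean argument in (ii)--(iii), so step (i) can simply be deleted.
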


\begin{proof}
Let 
$\tau \colon Y_3 \rar Y_2$
be a $\mathbb Q$-factorialization of $Y_2$.
We set 
$D_3\coloneqq \tau^{-1}_\ast D_2$;
thus, 
$\K Y_3.+D_3=\tau^\ast (\K Y_2.+D_2)$ so that $\K Y_3.+D_3$ is big and nef.
Then,
$\eta\coloneqq 
\tau^{-1} \circ \lambda \colon 
Y_1 \dashrightarrow Y_3$
is an isomorphism in codimension 1 of $\mathbb Q$-factorial varieties such that
$\eta_\ast (\K Y_1.+D_1) = \K Y_3.+D_3$.
But then,~\cite{Kaw97}*{Lemma~1.5} implies that $\eta$ is an isomorphism since the interior of ${\rm Nef}(Y_3)$ and of $\eta_\ast {\rm Nef}(Y_1)$ have non-empty intersection.
Hence, since 
$\lambda = \tau \circ \eta$, then $\lambda$ is a morphism and 
$\K Y_1.+D_1=\lambda^\ast(\K Y_2.+D_2)$.
As $\K Y_1.+D_1$ is ample on $Y_1$, then $\lambda$ can only be an isomorphism.
\end{proof}

\subsection{Towards progress in higher dimension}

The following result is the main technical result in the proof of the boundedness of elliptic Calabi--Yau threefolds.
It shows how the results of \S~\ref{section cone conj} can be used to prove the boundedness of certain elliptically fibered varieties once we know that they are bounded up to flops over the base.

\begin{theorem} 
\label{metatheorem}
Fix a positive natural number $d$.
Let $\mathfrak{F}$ be a set of triples $((X, 0), (Y, 0), f)$.
Let
$
\mathfrak B \coloneqq
\left\{
Y \
\middle \vert \
\exists (X, Y, f)\in \mathfrak F
\right\}
$.
Assume that all triples
$(X, Y, f) \in \mathfrak F$ satisfy the following properties:
\begin{itemize}
    \item 
$X$ is a  projective terminal $\qq$-factorial variety of dimension $d$;
    \item 
$h^1(X,\O X.)=h^2(X, \O X.)=0$; and
    \item 
$f \colon X \rar Y$ is a relatively minimal elliptic fibration.
\end{itemize}
If $\mathfrak F$ is bounded in codimension 1 and $\mathfrak B$ is bounded, then $\mathfrak{F}$ is bounded.
\end{theorem}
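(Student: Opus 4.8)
The plan is to realize $\mathfrak{F}$ inside a bounded family by combining the two ingredients advertised in the strategy discussion: the finiteness of relatively minimal models supplied by \S\ref{section cone conj}, and the deformation-invariance of divisor classes and cones supplied by \S\ref{section deform}. First I would use the boundedness of $\mathfrak{B}$ to fix a finite-type base $T_0$ and a flat family $\mathcal{S}_0 \to T_0$ of surfaces (more precisely, of the bases $Y$). Using that $\mathfrak{F}$ is bounded in codimension one, after stratifying $T_0$ and passing to an open dense (and possibly to a \'etale cover of each stratum) I may assume there is a flat family of threefolds (resp. $d$-folds) $\tilde\pi\colon \mathcal{X}_0 \to T_0$ together with a morphism $\tilde f_0\colon \mathcal{X}_0 \to \mathcal{S}_0$ over $T_0$ such that every $f\colon X \to Y$ in $\mathfrak{F}$ is, for a suitable closed point $t$, related to $\mathcal{X}_{0,t} \to \mathcal{S}_{0,t}$ by an isomorphism $Y \cong \mathcal{S}_{0,t}$ together with an isomorphism in codimension one $X \dashrightarrow \mathcal{X}_{0,t}$ which is a \emph{morphism over $Y$} — this last point uses the refined codimension-one boundedness (as in Proposition~\ref{prop_bdd_flops}), i.e. that all bases appear in the family, so that $X \dashrightarrow \mathcal{X}_{0,t}$ is a sequence of $K_{\mathcal{X}_{0,t}}$-flops \emph{over} $\mathcal{S}_{0,t}$. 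Applying \cite{dFH11a}*{Corollary 3.2 and Proposition 3.5} and shrinking, I may take each $\mathcal{X}_{0,t}$ to be terminal, $\qq$-factorial with $K_{\mathcal{X}_{0,t}} \sim_{\qq, \mathcal{S}_{0,t}} 0$, so each fiber of $\tilde f_0$ is a Calabi--Yau fiber space of relative dimension one.

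Next I would pass from $\mathcal{X}_0 \to \mathcal{S}_0 \to T_0$ to a relatively minimal model. Working one component of $T_0$ at a time and invoking Noetherian induction, by running a relative MMP for $\mathcal{X}_0$ over $\mathcal{S}_0$ (feasible since $K_{\mathcal{X}_0}$ is relatively trivial over $\mathcal{S}_0$, up to adding a small ample boundary) one arrives at a family $\mathcal{X} \to \mathcal{S} \to T$ where $\mathcal{X} \to \mathcal{S}$ is a Calabi--Yau fiber space of relative dimension one, $\pi$ is flat with Calabi--Yau (after Theorem~\ref{thm cones} and an \'etale base change) fibers, $g$ is flat. The key structural input is now Theorem~\ref{thm KM conj general} applied to $\mathcal{X} \to \mathcal{S}$: there are only finitely many orbits under $\mathrm{Bir}(\mathcal{X}/\mathcal{S})$ of chambers of $\overline{M}(\mathcal{X}/\mathcal{S})$ corresponding to marked relatively minimal models, hence only finitely many isomorphism classes over $\mathcal{S}$ of relatively minimal models of $\mathcal{X} \to \mathcal{S}$; call them $\mathcal{X}_1,\dots,\mathcal{X}_k$, each with a flat projective morphism $\mathcal{X}_i \to T$. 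This is where the hypotheses $h^1(X,\O X.)=h^2(X,\O X.)=0$ enter: after an \'etale base change, Theorem~\ref{t-BT} and Theorem~\ref{thm cones} guarantee that $\nsr(\mathcal{X}/\mathcal{S})$ surjects onto $\nsr(\mathcal{X}_t/\mathcal{S}_t)$ for all $t$ with constant kernel, so a sequence of $K_{\mathcal{X}_t}$-flops over $\mathcal{S}_t$ corresponds to a path in $\overline{M}(\mathcal{X}/\mathcal{S})$ through chambers, and the flops \emph{lift} to $K_{\mathcal{X}}$-flops over $\mathcal{S}$; the target of the lifted sequence is some $\mathcal{X}_i$.

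The conclusion then assembles as in the strategy sketch. Given $f\colon X \to Y$ in $\mathfrak{F}$, pick $t$ with $\mathcal{S}_t \cong Y$ and $\mathcal{X}_t \dashrightarrow X$ a sequence of $K_{\mathcal{X}_t}$-flops over $Y = \mathcal{S}_t$; lift it to a sequence of $K_{\mathcal{X}}$-flops over $\mathcal{S}$ landing in some $\mathcal{X}_i$; since $\mathcal{X}_i$ is isomorphic over $\mathcal{S}$ (hence over $T$) to the terminus of the lifted flop sequence, and that terminus has $t$-fiber isomorphic to $X$, we get $X \cong \mathcal{X}_{i,t}$, so $\mathfrak{F}$ is bounded by $\coprod_{i=1}^k \mathcal{X}_i \to T$. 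The main obstacle is the lifting step — showing that a relative flop of the special fiber $\mathcal{X}_t$ over $\mathcal{S}_t$ genuinely extends to a relative flop of $\mathcal{X}$ over $\mathcal{S}$ — and this is precisely where one must exploit that, for Calabi--Yau fibrations, after an \'etale base change the relative N\'eron--Severi group is identified with that of every fiber (Theorem~\ref{t-BT}) and the nef/movable cones behave well in the family (Theorem~\ref{thm cones}), so that the chamber structure on $\overline{M}(\mathcal{X}/\mathcal{S})$ restricts fiberwise and the flop of a wall downstairs is the restriction of the flop of the corresponding wall upstairs; combined with the finiteness of chambers up to $\mathrm{Bir}(\mathcal{X}/\mathcal{S})$ from Theorem~\ref{thm KM conj general}, this closes the argument. (The subtler Kodaira-dimension-2 case, where fiberwise constancy of the Picard rank fails, instead relies on Corollary~\ref{C-extflop}; for the Calabi--Yau case treated here the cohomological vanishing hypotheses make the cleaner argument above available.)
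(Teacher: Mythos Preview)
Your proposal is correct in overall strategy and matches the paper's approach: assemble a bounding family $\mathcal{X} \to \mathcal{Y} \to T$ from the hypotheses (the paper does this via \cite{Fil20}*{Theorem 1.1}, using boundedness of $\mathfrak{B}$ to produce a uniformly bounded very ample $H_Y$ and the codimension-one boundedness of $\mathfrak{F}$ to bound the degree of a multisection), apply Theorem~\ref{thm KM conj general} to obtain finitely many relatively minimal models $\mathcal{X}_1,\dots,\mathcal{X}_k$ of $\mathcal{X}/\mathcal{Y}$, and then argue that every $X$ in $\mathfrak{F}$ appears as a fiber of some $\mathcal{X}_i$.

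The one point where your execution diverges from the paper, and where you should be more careful, is the lifting step. You frame it as lifting the sequence of $K_{\mathcal{X}_t}$-flops wall by wall, asserting that ``the chamber structure on $\overline{M}(\mathcal{X}/\mathcal{S})$ restricts fiberwise and the flop of a wall downstairs is the restriction of the flop of the corresponding wall upstairs.'' But Theorem~\ref{thm cones} only yields $\overline{M}(\mathcal{X}/T) \supset \overline{M}(\mathcal{X}_0)$ and $\overline{A}(\mathcal{X}_\eta) \supset \overline{A}(\mathcal{X}_0)$, with possibly strict inclusions (cf.\ \S\ref{section example} and Remark~\ref{rem.W.example}); walls on the special fiber need not lift to walls on the total space, so a flop-by-flop lift is not straightforward. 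The paper avoids this by instead choosing a single class $D_t$ in the interior of the target chamber $\psi_\ast \overline{A}(X/Y) \subset \overline{M}(\mathcal{X}_t/\mathcal{Y}_t)$ and in the interior of $\overline{M}(\mathcal{X}_t)$, lifting it to $\mathcal{D} \in \overline{M}(\mathcal{X}/T)$ via Theorem~\ref{thm cones}(3), and running one $\mathcal{D}$-MMP over $\mathcal{Y}$ to reach some $\mathcal{X}' \cong \mathcal{X}_i$. By \cite{HMX18}*{\S 3} this MMP restricts to a small $K_{\mathcal{X}_t}$-trivial map on the fiber, and since $D_t$ then lies both in the interior of $\psi_\ast \overline{A}(X/Y)$ and in $\overline{A}(\mathcal{X}'_t/\mathcal{Y}_t)$, \cite{Kaw97}*{Lemma 1.5} forces $\mathcal{X}'_t \cong X$. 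This endpoint comparison sidesteps any need to match chambers or individual flops between fiber and total space.
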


The assumption $h^1(X,\O X.)=0=h^2(X, \O X.)$ in Theorem~\ref{metatheorem} is needed to apply the results in \S~\ref{section deform}, which allow extending flops from a special fiber to the whole family.

\begin{proof}
Let $(X, Y, f) \in \mathfrak F$.
As $Y$ belongs to the bounded set $\mathfrak B$, there exist $v \in \nn$ and a very ample divisor $H_Y$ on $Y$ such that $H_Y^{\dim Y} \leq v$ and $(Y,\frac{1}{2}H_Y)$ is klt.
Such choice of $v$ is independent of $Y \in \mathfrak B$.
Up to replacing $H_Y$ with a multiple only depending on $\mathfrak F$, by the boundedness of the extremal rays, we can assume that $\K X. + \frac{1}{2}f^*H$ is semi-ample with $\kappa (X,\frac{1}{2}f^*H)=n-1$.
Then, arguing as in Step 0 of the proof of Theorem~\ref{T-extflop}, up to a stratification of a family bounding $\mathfrak F$ in codimension 1, we may partition $\mathfrak F$ into a finite number of classes such that $h^0(\mathcal O(m(\K X.+\frac{1}{2}f^*H)))$ only depends on $m$ sufficiently divisible for all $X$ in one of the given classes partitioning $\mathcal F$.
In particular, as the stratification is finite, $\vol(X,\K X. + \frac{1}{2}f^*H)$ can only attain finitely many values.
Moreover, since, by assumption, $\mathfrak F$ 
is birationally bounded, then there exists a positive integer 
$C$, 
independent of the triple 
$(X, Y, f)$, 
such that $f$ admits a rational $l$-section, for some $d \leq C$. 
Thus, we can apply~\cite{Fil20}*{Theorem 1.1} to deduce that the set of pairs
\begin{align*}
\left\{
(X,\frac{1}{2}f^\ast H_Y) \
\middle \vert 
(X, Y, f) \in \mathfrak F \
\text{and $H_Y$ is the very ample divisor on $Y$ constructed above}
\right \}
\end{align*}
is log bounded.
Even better,~\cite{Fil20}*{Theorem 1.1} implies that
there exist quasi-projective varieties 
$\mathcal X, \mathcal Y, T$ 
and a commutative diagram
\begin{align}
\label{diag.meta.thm}
\xymatrix{
\mathcal X \ar[rrrr]^f \ar[drr]_\pi& & & & \mathcal Y \ar[dll]^g \\
& & T & &
}
\end{align}
of projective morphisms such that for any triple $(X, Y, f)$ in $\mathfrak{F}$ there exists a closed point $t \in T$ such that 
\begin{enumerate}
    \item 
$Y \simeq \mathcal{Y}_t$;     \item 
$X$ and $\mathcal{X}_t$ are connected by a sequence of flops over $Y=\mathcal{Y}_t$.
\end{enumerate}

Up to passing to a stratification and an \'{e}tale base change of the original parameter space $T$, we may assume that Theorem \ref{thm KM conj general} and Theorem \ref{thm cones} apply to the pull-back of the morphisms in~\eqref{diag.meta.thm} to each of the finitely many irreducible components of $T$.
Furthermore, we may assume that each irreducible component of $T$ is affine.
As there are finitely many of these components, in the following we focus on a single one, with the understanding that the same argument has to be repeated on each one of them individually.
By abusing notation, we will denote this irreducible component by $T$.

By Theorem \ref{thm KM conj general}, $\mathcal{X} \rar \mathcal{Y}$ admits finitely many minimal models $\mathcal{X}_1, \ldots , \mathcal{X}_k$ over $\mathcal{Y}$, up to isomorphism over $\mathcal Y$.
For any $(X, Y, f) \in \mathfrak F$,
there exist $t \in T$ and an isomorphism in codimension 1 $\psi \colon X \dashrightarrow \mathcal{X}_t$ which can be factored into a sequence of flops over $Y=\mathcal{Y}_t$.
The cones $\overline{M}(\mathcal{X}_t)$ and $\overline{M}(X)$ are naturally identified by $\psi_\ast$ and the same holds also for $\overline{M}(\mathcal{X}_t/\mathcal{Y}_t)$ and $\overline{M}(X/Y)$.
Then, there exists a class $D_t \in \overline{M}(\mathcal{X}_t)$ such that the rational map $\psi^{-1} \colon \mathcal{X}_t \drar X$ is a $D_t$-MMP over $\mathcal{Y}_t$.
Furthermore, we may assume that $D_t$ lies in the interior of both $\overline{M}(\mathcal{X}_t)$ and of $\psi^{-1}_\ast(\overline{A}(X/Y))$ in the decomposition of $\overline{M}(\mathcal{X}_t/\mathcal{Y}_t)$.
By Theorem \ref{thm cones}, there exists $\mathcal D \in \overline{M}(\mathcal{X}/T)$ such that $\mathcal D\vert \subs \mathcal{X}_t.=D_t$.
Let $\Phi \colon \mathcal{X} \drar \mathcal{X}'$ be a $\mathcal D$-MMP over $\mathcal{Y}$.
By Theorem \ref{thm KM conj general}, there is $1 \leq i \leq k$ such that $\mathcal{X}'$ and $\mathcal{X}_i$ are isomorphic over $\mathcal{Y}$.

Then, by \cite{HMX18}*{\S~3}, the $\mathcal D$-MMP $\Phi$ above restricts to a $D_t$-negative birational map $\Phi\vert_{\mathcal X_t} \colon \mathcal X_t \dashrightarrow \mathcal X'_t$ on the fiber $\mathcal{X}_t$ which can be factored into a sequence of small $K_{\mathcal X_t}$-trivial birational maps.
By Remark \ref{rmk sections} and the fact that $D_t$ is in the interior of $\overline{M}(\mathcal{X}_t)$, then, up to rescaling by a positive rational number, there exists $\mathcal D$ such that $(\mathcal{X}, \mathcal D)$ and $(\mathcal{X}_t, D_t)$ are both terminal.
Thus, $\mathcal{X}'_t$ is itself terminal and it is connected to $\mathcal{X}_t$ by a sequence of flops.
Since 
$D_t \in \psi_\ast(\overline{A}(X/{Y})) \cap (\Phi\vert_{\mathcal X_t})_{ \ast}(\overline{A}(\mathcal{X}'_t/\mathcal{Y}_t))$, and it is in the interior of $\psi_\ast(\overline{A}(X/{Y}))$, then by \cite{Kaw97}*{Lemma 1.5} $\mathcal{X}'_t$ and $X$ are isomorphic.
In turn, $\mathcal{X}_{i,t}$ and $X$ are also isomorphic and since there are only finitely many models $\mathcal{X}_1, \ldots , \mathcal{X}_k$, the claim follows.
\end{proof}

\begin{remark}
While Theorem \ref{metatheorem} is stated in the setting of elliptically fibered varieties, its underlying philosophy is quite general.
In fact, as soon as we have boundedness modulo flops for a set of $K$-trivial varieties (resp. a set of Calabi--Yau fiber spaces), one could try to prove the Kawamata--Morrison cone conjecture for that particular situation.
If that is successful and one can argue that flops can be extended from a fiber to the total space, then the corresponding analog of Theorem \ref{metatheorem} would follow.
\end{remark}

As an immediate corollary of Theorem~\ref{metatheorem} we are able to bound a large class of elliptic Calabi--Yau fibrations in higher dimension.

\begin{corollary}
\label{cor.bound.cy.high.dim}
Fix positive integers $d, C$.
Let $\familyellfano$ be the set of triples
\begin{align*}
\familyellfano \coloneqq
\left\{
(X, Y, f) \
\middle \vert \ 
\begin{array}{l} 
\text{$X$ is a terminal projective Calabi--Yau variety of dimension $d$,}  
\\
\text{$f \colon X \to Y$ is an elliptic fibration admitting a rational $l$-section}\\
\text{of degree $l\leq C$, and $Y$ is a log Fano variety}.
\end{array}
\right\}
\end{align*}
Then $\familyellfano$ is bounded.
\end{corollary}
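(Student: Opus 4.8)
The plan is to deduce the statement directly from Theorem~\ref{metatheorem}, applied with $\mathfrak F=\familyellfano$ and $\mathfrak B=\left\{Y \mid \exists\,(X,Y,f)\in\familyellfano\right\}$. First I would dispose of the low-dimensional cases: for $d=1$ the members of $\familyellfano$ are elliptic curves, and for $d=2$ conditions (CY1)--(CY3) force $X$ to be a $K3$ surface and $Y\cong\pr 1.$, so boundedness is classical. Assume then $d\ge 3$. In this range the pointwise hypotheses of Theorem~\ref{metatheorem} hold automatically: by (CY1)--(CY2) each $X$ is a projective terminal $\qq$-factorial variety of dimension $d$; by (CY3), since $0<1<d$ and $0<2<d$, we get $h^1(X,\O X.)=h^2(X,\O X.)=0$; and since $\K X.\sim 0$ forces $\K X.\equiv_f 0$, the elliptic fibration $f\colon X\rar Y$ is relatively minimal. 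It therefore remains to check the two global hypotheses of Theorem~\ref{metatheorem}: that $\mathfrak B$ is bounded, and that $\familyellfano$ is bounded in codimension one.

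To see that $\mathfrak B$ is bounded I would exploit the canonical bundle formula together with the log Fano assumption. Each $Y\in\mathfrak B$ is of Fano type with $-\K Y.$ big. By the canonical bundle formula (cf.\ Proposition~\ref{prop models} and~\cite{FM00}) there is a generalized klt pair $(Y,B_Y+M_Y)$ with $\K X.\sim f^\ast(\K Y.+B_Y+M_Y)$, hence $\K Y.+B_Y+M_Y\sim_\qq 0$; the coefficients of $B_Y$ lie in a set depending only on the fact that $f$ is an elliptic fibration, and by~\cite{PS09}*{Example~7.16} a fixed multiple of $M_Y$ descends to a base-point-free divisor, so choosing a general section as in Remark~\ref{rmk_surfaces} yields a klt pair $(Y,\Delta_Y)$ with $\K Y.+\Delta_Y\sim_\qq 0$ and $\mathrm{coeff}(\Delta_Y)\subset\celliptic$. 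Since $\dim Y\ge 1$ and $-\K Y.$ is big, $\Delta_Y\sim_\qq-\K Y.$ is big, in particular $\Delta_Y\ne 0$. By boundedness of complements for Fano type varieties there is $N=N(d)$ with $N(\K Y.+\Delta_Y)\sim 0$, so the coefficients of $\Delta_Y$ are bounded away from $1$; running a $(-\K Y.)$-MMP and applying the Borisov--Alexeev--Borisov theorem to the resulting uniformly $\epsilon$-log canonical weak log Fano model, together with the finiteness of the MMP, one concludes that $\mathfrak B$ is bounded. (Alternatively, one may cite directly a boundedness statement for $\epsilon$-log canonical log Calabi--Yau pairs of Fano type.)

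With $\mathfrak B$ bounded, I would fix, uniformly over $Y\in\mathfrak B$, a very ample divisor $H_Y$ with $H_Y^{d-1}\le v$ and $(Y,\tfrac12 H_Y)$ klt, for a constant $v$ independent of $Y$. The hypothesis that $f$ admits a rational $l$-section with $l\le C$ then lets us apply~\cite{Fil20}*{Theorem~1.1} to the log pair $(X,\tfrac12 f^\ast H_Y)$; this produces a bounding family exhibiting $\familyellfano$ as bounded in codimension one, with the bases varying in the bounded family already constructed. This is exactly the remaining input required, so Theorem~\ref{metatheorem} applies (with the given $C$ serving as the degree bound for the rational multisection), and $\familyellfano$ is bounded.

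The step I expect to be the main obstacle is the boundedness of the family of bases $\mathfrak B$: turning the qualitative hypothesis ``$Y$ is log Fano'' into a uniform bound genuinely uses the deepest available inputs -- boundedness of complements, to pin down the index of the log Calabi--Yau structure furnished by the canonical bundle formula, and the Borisov--Alexeev--Borisov theorem, to conclude boundedness once uniform $\epsilon$-log canonicity is in hand. Everything else is either classical (low dimensions), immediate from the Calabi--Yau conditions (the pointwise hypotheses of Theorem~\ref{metatheorem}), or a direct citation of~\cite{Fil20}*{Theorem~1.1}.
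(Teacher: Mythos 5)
Your proposal is correct and follows essentially the same strategy as the paper: verify the pointwise hypotheses of Theorem~\ref{metatheorem}, show $\mathfrak B$ is bounded using the canonical bundle formula plus BAB, establish boundedness in codimension one via~\cite{Fil20}, and conclude by Theorem~\ref{metatheorem}. The one place where you diverge is in how you extract a uniform $\epsilon$: you invoke boundedness of complements to claim $N(K_Y+\Delta_Y)\sim 0$ for the given $\Delta_Y$, but the complement theorem only furnishes \emph{some} bounded complement $\Delta^+\geq\Delta_Y$, not a bound on the index of $\Delta_Y$ itself, so that step as written is not quite airtight. The cleaner route — the one the paper takes — is to observe that the coefficients of $\Delta_Y$ lie in the DCC set $\celliptic$ and that $K_Y+\Delta_Y\sim_\mathbb{Q}0$, so the global ACC theorem (\cite{HMX14}*{Theorem~1.5}) forces the coefficients into a finite set, giving a uniform $\epsilon_0$ directly; then $\Delta_Y$ big (from the log Fano hypothesis) puts $Y\in\logfanoepszero$, which is bounded by~\cite{Bir21}*{Theorem~1.4} with no need to run an MMP. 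Your parenthetical ``alternatively, one may cite a boundedness statement for $\epsilon$-log canonical log Calabi--Yau pairs of Fano type'' is precisely what the paper does. (The paper cites~\cite{Fil20}*{Theorem~7.2} rather than Theorem~1.1 for codimension-one boundedness, but since you supply the bounded very ample polarisation on $Y$ and the degree-$\leq C$ multisection, the argument via Theorem~1.1 goes through as well.)
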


\begin{proof}
Let $\logfanoeps$ be the set of varieties
\begin{align*}
\logfanoeps \coloneqq
\left\{
Y \
\middle \vert \ 
\begin{array}{l}
\dim Y= d-1, \ \text{and there exists an effective divisor $\Delta$ on $Y$}
\\
\text{such that $(Y, \Delta)$ is $\epsilon$-klt}, \
K_Y+\Delta_Y \sim_\mathbb{Q} 0,\
\text{$\Delta$ is big}
\end{array}
\right\}.
\end{align*}
By~\cite{Bir21}*{Theorem~1.4}, for any fixed real number $\epsilon > 0$, $\logfanoeps$ is log bounded.

Let $(X, Y, f) \in \familyellfano$.
By Proposition~\ref{prop models} and Remark~\ref{rmk_surfaces}, given an elliptic Calabi--Yau variety $f \colon X \rar Y$, there exists a boundary $\Delta_Y$ on $Y$ with coefficients in $\celliptic$ such that $(Y, \Delta_Y)$ is klt and $K_Y+\Delta_Y\sim_{\mathbb Q}0$.
By~\cite{HMX14}*{Theorem~1.5}, there exists a positive real number $\epsilon_0 $ such that $(Y, \Delta_Y)$ is $\epsilon_0$-klt.
As $Y$ is log Fano, then, $\Delta_Y$ is big and, hence, $Y \in \logfanoepszero$.
Hence, the set $\setbaseslf$ of varieties
\begin{align*}
\setbaseslf\coloneqq
\left\{
Y \
\middle \vert \
\exists (X, Y, f) \in \familyellfano
\right \}
\end{align*}
is bounded.

On the other hand, $\familyellfano$ is bounded in codimension 1 by~\cite{Fil20}*{Theorem~7.2}.
Hence, we can apply Theorem~\ref{metatheorem} with $\mathfrak F=\familyellfano$ and $\mathfrak B=\setbaseslf$.
\end{proof}

\section{Proof of the main results}

\begin{proof}[Proof of Theorem \ref{thm intro}]
This follows immediately from Proposition \ref{prop_bdd_flops}, Proposition \ref{prop_bdd_flops enriques base}, and Theorem \ref{metatheorem}. 
\end{proof}

\begin{remark}
Theorem \ref{metatheorem} can be used to deduce analogs of Theorem \ref{thm intro} in higher dimension.
So far, there have been several results addressing the boundedness in codimension 1 of elliptic Calabi--Yau varieties admitting a rational section in any dimension, see \cites{BDCS,FS, dCS17}.
Unfortunately, for $n \geq 4$, the current state of the art regarding elliptic Calabi--Yau $n$-folds $f \colon X \rar Y$ can only guarantee that $Y$ is bounded in codimension 1.
Once we are able to address the actual boundedness of the set of bases $Y$, the statements in \cites{BDCS,dCS17} could be enhanced to full boundedness using the tools here discussed.
\end{remark}

\begin{proof}[Proof of Corollary~\ref{cor.ell.top}]
This follows immediately from Theorem~\ref{thm intro} and Verdier’s generalization of Ehresmann’s theorem \cite{Ver76}*{Corollaire~5.1}.
 \end{proof}

\begin{proof}[Proof of Theorem \ref{thm intro2}]
By \cite{Fil20}*{Theorem 1}, $\familykodtwo$ is bounded up to flops.
Moreover, by~\cite{Fil18}*{Theorem~1.14}
the set of varieties
\[
\left\{
S \ 
\middle \vert \ 
\exists (X, S, f) \in \familykodtwo
\right\}
\]
is bounded, cf. Proposition~\ref{prop models}.
Thus, there exist $N \in \nn$ and a very ample divisor effective $H_S$ on $S$ such that $H_S^{2} = Nv$ and $(Y,\frac{1}{2}H_Y)$ is klt.
Such choice of $N$ is independent of the triple $(X, S, f) \in \familykodtwo$.
As we are assuming the existence of a degree $C$ rational section of $f$, we can apply~\cite{Fil20}*{Theorem 1.1} to deduce that the set of pairs
\[
\left\{
(X,\frac{1}{2}f^\ast H_S) \
\middle \vert 
(X, S, f) \in \mathfrak F \
\text{and $H_Y$ is the very ample divisor on $Y$ constructed above}
\right \}
\]
is log bounded in codimension 1.
Even better,~\cite{Fil20}*{Theorem 1.1} implies that there exist quasi-projective varieties $\mathcal X, \mathcal S, T$ and a commutative diagram of projective morphisms
\[
\xymatrix{
\mathcal X 
\ar[drr]^{\pi} \ar[rrrr]^{\Phi}
& & & & 
\mathcal S \ar[dll]^g
\\
& & T & &
}
\] 
such that for any $(X, S, f) \in \familykodtwo$ there exists $t \in T$ such that $S\cong \mathcal S_t$ and $f$ is birational to $\Phi _t\colon\mathcal X _t\to \mathcal S _t$ over $S$ for some $t\in T$. Note that $X\dasharrow \mathcal X _t$ is given by a sequence of flops over $S=\mathcal S_t$.
Moreover, by \cite{HX15}*{Proposition 2.4}, \cite{dFH11a}*{Proposition 3.5}, and \cite{KM92}*{Theorem 12.1.10},
we may assume that $\mathcal X$ is $\qq$-factorial and all fibers $\mathcal X_t$ are terminal $\qq$-factorial projective varieties;
furthermore, up to an additional stratification, we may assume that all fibers are varieties of Kodaira dimension 2 and $\mathcal S_t \cong {\rm Proj}(R(K_{\mathcal X_t}))$ for every $t\in T$, see \cite{Fil20}*{cf. proof of Theorem 6.1}.

By Theorem~\ref{thm KM conj general}, there exist $k \in \mathbb Z_{>0}$ and finitely many marked minimal models $\phi _i\colon \mathcal X\dasharrow \mathcal X_i$ over $\mathcal S$, for $i \in \{ 1, 2, \dots,  k\}$, such that for any $\mathbb Q$-divisor $\mathcal D$ pseudo-effective over $\mathcal S$ and any minimal model $\phi\colon\mathcal X\dasharrow \mathcal X'$ over $\mathcal S$, there exists $1\leq i\leq k$ for which $\phi _i$ is birationally equivalent  over $\mathcal S$ to $\phi$. In particular, if $\phi\colon\mathcal X\dasharrow \mathcal X'$ is a sequence of flops, then up to isomorphism over $\mathcal S$, $\phi=\phi_i$ for some $1\leq i\leq k$.
Let $X\in \familykodtwo$ and $\mathcal X_t\dasharrow X$ the sequence of flops mentioned above. By Theorem \ref{T-extflop}, we may assume that this extends to a sequence of flops $\mathcal X \dasharrow \mathcal X'$ and there exists a birational isomorphism $\mathcal X'\cong \mathcal X _i$ over $\mathcal S$ for some $1\leq i\leq k$, hence $X\cong \mathcal X _{i,t}$.
 \end{proof}
 
\begin{bibdiv}
\begin{biblist}

\bib{Ale94}{article}{
   author={Alexeev, V.},
   title={Boundedness and $K^2$ for log surfaces},
   journal={Internat. J. Math.},
   volume={5},
   date={1994},
   number={6},
   pages={779--810},
   issn={0129-167X},
   review={\MR{1298994}},
   doi={10.1142/S0129167X94000395},
}

\bib{Amb05}{article}{
   author={Ambro, F.},
   title={The moduli $b$-divisor of an lc-trivial fibration},
   journal={Compos. Math.},
   volume={141},
   date={2005},
   number={2},
   pages={385--403},
   issn={0010-437X},
   review={\MR{2134273}},
   doi={10.1112/S0010437X04001071},
}

\bib{BCHM}{article}{
   author={Birkar, C.},
   author={Cascini, P.},
   author={Hacon, C. D.},
   author={McKernan, J.},
   title={Existence of minimal models for varieties of log general type},
   journal={J. Amer. Math. Soc.},
   volume={23},
   date={2010},
   number={2},
   pages={405--468},
   issn={0894-0347},
   review={\MR{2601039}},
   doi={10.1090/S0894-0347-09-00649-3},
}

\bib{Bir12}{article}{
   author={Birkar, C.},
   title={Existence of log canonical flips and a special LMMP},
   journal={Publ. Math. Inst. Hautes \'{E}tudes Sci.},
   volume={115},
   date={2012},
   pages={325--368},
   issn={0073-8301},
   review={\MR{2929730}},
   doi={10.1007/s10240-012-0039-5},
}

\bib{Bir21}{article}{
   author={Birkar, C.},
   title={Singularities of linear systems and boundedness of Fano varieties},
   journal={Ann. of Math. (2)},
   volume={193},
   date={2021},
   number={2},
   pages={347--405},
   issn={0003-486X},
   review={\MR{4224714}},
   doi={10.4007/annals.2021.193.2.1},
}

\bib{BDCS}{misc}{
  author ={Birkar, C.},
  author ={Di Cerbo, G.},
  author ={Svaldi, R.},
  title={Boundedness of elliptic Calabi--Yau varieties with a rational section},
  year = {2020},
  note = {ArXiv e-print, \href{https://arxiv.org/abs/2010.09769}{arXiv:2010.09769v1}, to appear in J. Differential Geom..},
}

\bib{dCSH}{article}{
   author={Chen, W.},
   author={Di Cerbo, G.},
   author={Han, J.},
   author={Jiang, C.},
   author={Svaldi, R.},
   title={Birational boundedness of rationally connected Calabi--Yau 3-folds},
   journal={Adv. Math.},
   volume={378},
   date={2021},
   pages={107541, 32},
   issn={0001-8708},
   review={\MR{4191257}},
   doi={10.1016/j.aim.2020.107541},
}

\bib{Cos85}{article}{
   author={Cossec, F. R.},
   title={On the Picard group of Enriques surfaces},
   journal={Math. Ann.},
   volume={271},
   date={1985},
   number={4},
   pages={577--600},
   issn={0025-5831},
   review={\MR{790116}},
   doi={10.1007/BF01456135},
}

\bib{dCS17}{article}{
   author={Di Cerbo, G.},
   author={Svaldi, R.},
   title={Birational boundedness of low-dimensional elliptic Calabi--Yau
   varieties with a section},
   journal={Compos. Math.},
   volume={157},
   date={2021},
   number={8},
   pages={1766--1806},
   issn={0010-437X},
   review={\MR{4292177}},
   doi={10.1112/S0010437X2100717X},
}

\bib{dFH11a}{article}{
   author={de Fernex, T.},
   author={Hacon, C. D.},
   title={Deformations of canonical pairs and Fano varieties},
   journal={J. Reine Angew. Math.},
   volume={651},
   date={2011},
   pages={97--126},
   issn={0075-4102},
   review={\MR{2774312}},
   doi={10.1515/CRELLE.2011.010},
}

%\bib{dFH11b}{article}{
%   author={de Fernex, T.},
%   author={Hacon, C. D.},
%   title={Rigidity properties of Fano varieties},
%   conference={
%      title={Current developments in algebraic geometry},
%   },
%   book={
%      series={Math. Sci. Res. Inst. Publ.},
%      volume={59},
%      publisher={Cambridge Univ. Press, Cambridge},
%   },
%   date={2012},
%   pages={113--127},
%   review={\MR{2931867}},
%}

\bib{DG94}{article}{
   author={Dolgachev, I.},
   author={Gross, M.},
   title={Elliptic threefolds. I. Ogg-Shafarevich theory},
   journal={J. Algebraic Geom.},
   volume={3},
   date={1994},
   number={1},
   pages={39--80},
   issn={1056-3911},
   review={\MR{1242006}},
}

%\bib{EP12}{article}{
%   author={Ein, L.},
%   author={Popa, M.},
%   title={Extension of sections via %adjoint ideals},
%   journal={Math. Ann.},
%   volume={352},
%   date={2012},
%   number={2},
%   pages={373--408},
%   issn={0025-5831},
%   review={\MR{2874961}},
%   doi={10.1007/s00208-011-0639-2},
%}

\bib{Fil18}{article}{
   author={Filipazzi, S.},
   title={Boundedness of log canonical surface generalized polarized pairs},
   journal={Taiwanese J. Math.},
   volume={22},
   date={2018},
   number={4},
   pages={813--850},
   issn={1027-5487},
   review={\MR{3830822}},
   doi={10.11650/tjm/171204},
}

\bib{Fil20}{misc}{
  author ={Filipazzi, S.},
  title={On the boundedness of $n$-folds with $\kappa (X)=n-1$},
  year = {2020},
  note = {ArXiv e-print, \href{https://arxiv.org/abs/2005.05508}{arXiv:2005.05508v3}, to appear in Algebr. Geom.},
}

\bib{FM20}{article}{
   author={Filipazzi, S.},
   author={Moraga, J.},
   title={Strong $(\delta,n)$-complements for semi-stable morphisms},
   journal={Doc. Math.},
   volume={25},
   date={2020},
   pages={1953--1996},
   issn={1431-0635},
   review={\MR{4187715}},
}

\bib{FS}{article}{
   author={Filipazzi, S.},
   author={Svaldi, R.},
   title={Invariance of plurigenera and boundedness for generalized pairs},
   journal={Mat. Contemp.},
   volume={47},
   date={2020},
   pages={114--150},
   issn={0103-9059},
    review={\MR{4191137}},
}

\bib{FS.conn}{article}{
   author={Filipazzi, S.},
   author={Svaldi, R.},
   title={On the connectedness principle and dual complexes for generalized
   pairs},
   journal={Forum Math. Sigma},
   volume={11},
   date={2023},
   pages={Paper No. e33, 39},
   review={\MR{4580302}},
   doi={10.1017/fms.2023.25},
}

\bib{Fujiki}{article}{
   author={Fujiki, A.},
   title={On the Douady space of a compact complex space in the category
   ${\scr C}$. II},
   journal={Publ. Res. Inst. Math. Sci.},
   volume={20},
   date={1984},
   number={3},
   pages={461--489},
   issn={0034-5318},
   review={\MR{759679}},
   doi={10.2977/prims/1195181408},
}

\bib{FM00}{article}{
   author={Fujino, O.},
   author={Mori, S.},
   title={A canonical bundle formula},
   journal={J. Differential Geom.},
   volume={56},
   date={2000},
   number={1},
   pages={167--188},
   issn={0022-040X},
   review={\MR{1863025}},
}

\bib{Gon13}{article}{
   author={Gongyo, Y.},
   title={Abundance theorem for numerically trivial log canonical divisors
   of semi-log canonical pairs},
   journal={J. Algebraic Geom.},
   volume={22},
   date={2013},
   number={3},
   pages={549--564},
   issn={1056-3911},
   review={\MR{3048544}},
   doi={10.1090/S1056-3911-2012-00593-1},
}

\bib{Gr91}{article}{
   author={Grassi, A.},
   title={On minimal models of elliptic threefolds},
   journal={Math. Ann.},
   volume={290},
   date={1991},
   number={2},
   pages={287--301},
   issn={0025-5831},
   review={\MR{1109635}},
   doi={10.1007/BF01459246},
}

\bib{g23}{misc}{
      title={Spectrum bounds in geometry}, 
      author={Grassi, A.},
      year={2023},
      note = {ArXiv e-print, \href{https://arxiv.org/abs/2304.07819}{arXiv:2304.07819v1}.},
}

\bib{GW}{article}{
   author={Grassi, A.},
   author={Wen, D.},
   title={Higher dimensional elliptic fibrations and Zariski decompositions},
   journal={Commun. Contemp. Math.},
   volume={24},
   date={2022},
   number={4},
   pages={Paper No. 2150024, 25},
   issn={0219-1997},
   review={\MR{4414166}},
   doi={10.1142/S0219199721500243},
}

\bib{Gro94}{article}{
   author={Gross, M.},
   title={A finiteness theorem for elliptic Calabi--Yau threefolds},
   journal={Duke Math. J.},
   volume={74},
   date={1994},
   number={2},
   pages={271--299},
   issn={0012-7094},
   review={\MR{1272978}},
   doi={10.1215/S0012-7094-94-07414-0},
}

\bib{Han91}{article}{
   author={Hanamura, M.},
   title={Relative birational automorphisms of algebraic fiber spaces},
   journal={Duke Math. J.},
   volume={62},
   date={1991},
   number={3},
   pages={551--573},
   issn={0012-7094},
   review={\MR{1104807}},
   doi={10.1215/S0012-7094-91-06223-X},
}

\bib{Har77}{book}{
   author={Hartshorne, R.},
   title={Algebraic geometry},
   note={Graduate Texts in Mathematics, No. 52},
   publisher={Springer-Verlag, New York-Heidelberg},
   date={1977},
   pages={xvi+496},
   isbn={0-387-90244-9},
   review={\MR{0463157}},
}

\bib{HMX14}{article}{
   author={Hacon, C. D.},
   author={McKernan, J.},
   author={Xu, C.},
   title={ACC for log canonical thresholds},
   journal={Ann. of Math. (2)},
   volume={180},
   date={2014},
   number={2},
   pages={523--571},
   issn={0003-486X},
   review={\MR{3224718}},
   doi={10.4007/annals.2014.180.2.3},
}

\bib{HMX18}{article}{
   author={Hacon, C. D.},
   author={McKernan, J.},
   author={Xu, C.},
   title={Boundedness of moduli of varieties of general type},
   journal={J. Eur. Math. Soc. (JEMS)},
   volume={20},
   date={2018},
   number={4},
   pages={865--901},
   issn={1435-9855},
   review={\MR{3779687}},
   doi={10.4171/JEMS/778},
}

\bib{HX13}{article}{
   author={Hacon, C. D.},
   author={Xu, C.},
   title={Existence of log canonical closures},
   journal={Invent. Math.},
   volume={192},
   date={2013},
   number={1},
   pages={161--195},
   issn={0020-9910},
   review={\MR{3032329}},
   doi={10.1007/s00222-012-0409-0},
}

\bib{HX15}{article}{
   author={Hacon, C. D.},
   author={Xu, C.},
   title={Boundedness of log Calabi--Yau pairs of Fano type},
   journal={Math. Res. Lett.},
   volume={22},
   date={2015},
   number={6},
   pages={1699--1716},
   issn={1073-2780},
   review={\MR{3507257}},
   doi={10.4310/MRL.2015.v22.n6.a8},
}

\bib{huyb.book}{book}{
    AUTHOR = {Huybrechts, D.},
     TITLE = {Lectures on {K}3 surfaces},
    SERIES = {Cambridge Studies in Advanced Mathematics},
    VOLUME = {158},
 PUBLISHER = {Cambridge University Press, Cambridge},
      YEAR = {2016},
     PAGES = {xi+485},
      ISBN = {978-1-107-15304-2},
       DOI = {10.1017/CBO9781316594193},
       URL = {https://doi.org/10.1017/CBO9781316594193},
}

%\bib{Kaw88}{article}{
%   author={Kawamata, Y.},
%   title={Crepant blowing-up of $3$-dimensional canonical singularities and
%   its application to degenerations of surfaces},
%   journal={Ann. of Math. (2)},
%   volume={127},
%   date={1988},
%   number={1},
%   pages={93--163},
%   issn={0003-486X},
%   review={\MR{924674}},
%   doi={10.2307/1971417},
%}

\bib{Kaw97}{article}{
   author={Kawamata, Y.},
   title={On the cone of divisors of Calabi--Yau fiber spaces},
   journal={Internat. J. Math.},
   volume={8},
   date={1997},
   number={5},
   pages={665--687},
   issn={0129-167X},
   review={\MR{1468356}},
   doi={10.1142/S0129167X97000354},
}

\bib{Kaw08}{article}{
   author={Kawamata, Yujiro},
   title={Flops connect minimal models},
   journal={Publ. Res. Inst. Math. Sci.},
   volume={44},
   date={2008},
   number={2},
   pages={419--423},
   issn={0034-5318},
   review={\MR{2426353}},
   doi={10.2977/prims/1210167332},
}

\bib{Kol97}{article}{
   author={Koll\'{a}r, J\'{a}nos},
   title={Singularities of pairs},
   conference={
      title={Algebraic geometry---Santa Cruz 1995},
   },
   book={
      series={Proc. Sympos. Pure Math.},
      volume={62},
      publisher={Amer. Math. Soc., Providence, RI},
   },
   date={1997},
   pages={221--287},
   review={\MR{1492525}},
}

\bib{Kol13}{book}{
   author={Koll\'{a}r, J.},
   title={Singularities of the minimal model program},
   series={Cambridge Tracts in Mathematics},
   volume={200},
   note={With a collaboration of S\'{a}ndor Kov\'{a}cs},
   publisher={Cambridge University Press, Cambridge},
   date={2013},
   pages={x+370},
   isbn={978-1-107-03534-8},
   review={\MR{3057950}},
   doi={10.1017/CBO9781139547895},
}

\bib{KL09}{article}{
   author={Koll\'{a}r, J.},
   author={Larsen, M.},
   title={Quotients of Calabi--Yau varieties},
   conference={
      title={Algebra, arithmetic, and geometry: in honor of Yu. I. Manin.
      Vol. II},
   },
   book={
      series={Progr. Math.},
      volume={270},
      publisher={Birkh\"{a}user Boston, Boston, MA},
   },
   date={2009},
   pages={179--211},
   review={\MR{2641190}},
   doi={10.1007/978-0-8176-4747-6\_6},
}

\bib{KM92}{article}{
   author={Koll\'{a}r, J.},
   author={Mori, S.},
   title={Classification of three-dimensional flips},
   journal={J. Amer. Math. Soc.},
   volume={5},
   date={1992},
   number={3},
   pages={533--703},
   issn={0894-0347},
   review={\MR{1149195}},
   doi={10.2307/2152704},
}

\bib{KM98}{book}{
   author={Koll\'{a}r, J.},
   author={Mori, S.},
   title={Birational geometry of algebraic varieties},
   series={Cambridge Tracts in Mathematics},
   volume={134},
   note={With the collaboration of C. H. Clemens and A. Corti;
   Translated from the 1998 Japanese original},
   publisher={Cambridge University Press, Cambridge},
   date={1998},
   pages={viii+254},
   isbn={0-521-63277-3},
   review={\MR{1658959}},
   doi={10.1017/CBO9780511662560},
}

\bib{Lai11}{article}{
   author={Lai, C.-J.},
   title={Varieties fibered by good minimal models},
   journal={Math. Ann.},
   volume={350},
   date={2011},
   number={3},
   pages={533--547},
   issn={0025-5831},
   review={\MR{2805635}},
   doi={10.1007/s00208-010-0574-7},
}

\bib{Laz04a}{book}{
   author={Lazarsfeld, Robert},
   title={Positivity in algebraic geometry. I},
   series={Ergebnisse der Mathematik und ihrer Grenzgebiete. 3. Folge. A
   Series of Modern Surveys in Mathematics [Results in Mathematics and
   Related Areas. 3rd Series. A Series of Modern Surveys in Mathematics]},
   volume={48},
   note={Classical setting: line bundles and linear series},
   publisher={Springer-Verlag, Berlin},
   date={2004},
   pages={xviii+387},
   isbn={3-540-22533-1},
   review={\MR{2095471}},
   doi={10.1007/978-3-642-18808-4},
}

%\bib{MST}{article}{
%   author={Martinelli, D.},
%   author={Schreieder, S.},
%   author={Tasin, L.},
%   title={On the number and boundedness of log minimal models of general
%   type},
  % language={English, with English and French summaries},
%   journal={Ann. Sci. \'{E}c. Norm. Sup\'{e}r. (4)},
%   volume={53},
%   date={2020},
%   number={5},
%   pages={1183--1207},
%   issn={0012-9593},
%   review={\MR{4174853}},
%   doi={10.24033/asens.244},
%}

\bib{MR1265317}{article}{
   author={Morrison, David R.},
   title={Compactifications of moduli spaces inspired by mirror symmetry},
   note={Journ\'{e}es de G\'{e}om\'{e}trie Alg\'{e}brique d'Orsay (Orsay, 1992)},
   journal={Ast\'{e}risque},
   number={218},
   date={1993},
   pages={243--271},
   issn={0303-1179},
   review={\MR{1265317}},
}

\bib{Mum70}{book}{
   author={Mumford, David},
   title={Abelian varieties},
   series={Tata Institute of Fundamental Research Studies in Mathematics},
   volume={5},
   publisher={Published for the Tata Institute of Fundamental Research,
   Bombay; Oxford University Press, London},
   date={1970},
   pages={viii+242},
   review={\MR{0282985}},
}

\bib{Nak88}{article}{
   author={Nakayama, N.},
   title={On Weierstrass models},
   conference={
      title={Algebraic geometry and commutative algebra, Vol. II},
   },
   book={
      publisher={Kinokuniya, Tokyo},
   },
   date={1988},
   pages={405--431},
   review={\MR{977771}},
}

\bib{PS09}{article}{
   author={Prokhorov, Yu. G.},
   author={Shokurov, V. V.},
   title={Towards the second main theorem on complements},
   journal={J. Algebraic Geom.},
   volume={18},
   date={2009},
   number={1},
   pages={151--199},
   issn={1056-3911},
   review={\MR{2448282}},
   doi={10.1090/S1056-3911-08-00498-0},
}

\bib{reid}{article}{
   author={Reid, Miles},
   title={The moduli space of $3$-folds with $K=0$ may nevertheless be
   irreducible},
   journal={Math. Ann.},
   volume={278},
   date={1987},
   number={1-4},
   pages={329--334},
   issn={0025-5831},
   review={\MR{0909231}},
   doi={10.1007/BF01458074},
}

\bib{Tot10}{article}{
   author={Totaro, B.},
   title={The cone conjecture for Calabi--Yau pairs in dimension 2},
   journal={Duke Math. J.},
   volume={154},
   date={2010},
   number={2},
   pages={241--263},
   issn={0012-7094},
   review={\MR{2682184}},
   doi={10.1215/00127094-2010-039},
}

\bib{Tot12}{article}{
   author={Totaro, B.},
   title={Jumping of the nef cone for Fano varieties},
   journal={J. Algebraic Geom.},
   volume={21},
   date={2012},
   number={2},
   pages={375--396},
   issn={1056-3911},
   review={\MR{2877439}},
   doi={10.1090/S1056-3911-2011-00557-2},
}

\bib{stacks-project}{misc}{
  author       = {The {Stacks project authors}},
  title        = {The Stacks project},
  eprint = {\url{https://stacks.math.columbia.edu}},
  year         = {2021},
}

\bib{Ver76}{article}{
   author={Verdier, J.-L.},
   title={Stratifications de Whitney et th\'{e}or\`eme de Bertini-Sard},
   language={French},
   journal={Invent. Math.},
   volume={36},
   date={1976},
   pages={295--312},
   issn={0020-9910},
   review={\MR{481096}},
   doi={10.1007/BF01390015},
}

\bib{Wil92}{article}{
   author={Wilson, P. M. H.},
   title={The K\"{a}hler cone on Calabi--Yau threefolds},
   journal={Invent. Math.},
   volume={107},
   date={1992},
   number={3},
   pages={561--583},
   issn={0020-9910},
   review={\MR{1150602}},
   doi={10.1007/BF01231902},
}

\bib{Wil17}{article}{
 AUTHOR = {Wilson, P. M. H.},
     TITLE = {Boundedness questions for {C}alabi-{Y}au threefolds},
   JOURNAL = {J. Algebraic Geom.},
%  FJOURNAL = {Journal of Algebraic Geometry},
    VOLUME = {30},
      YEAR = {2021},
    NUMBER = {4},
     PAGES = {631--684},
      ISSN = {1056-3911,1534-7486},
       DOI = {10.1090/jag/781},
       URL = {https://doi.org/10.1090/jag/781},
}

\bib{Wil20}{misc}{
Author = {Wilson, P. M. H.},
Title = {Calabi--Yau threefolds with Picard number three},
  year = {2020},
  note = {ArXiv e-print, \href{https://arxiv.org/abs/2011.12876}{arXiv:2011.12876v3}.},
}

\bib{Yau}{article}{
   author={Yau, S.-T.},
   title={A survey of Calabi--Yau manifolds},
   conference={
      title={Surveys in differential geometry. Vol. XIII. Geometry,
      analysis, and algebraic geometry: forty years of the Journal of
      Differential Geometry},
   },
   book={
      series={Surv. Differ. Geom.},
      volume={13},
      publisher={Int. Press, Somerville, MA},
   },
   date={2009},
   pages={277--318},
   review={\MR{2537089}},
   doi={10.4310/SDG.2008.v13.n1.a9},
}

\end{biblist}
\end{bibdiv}
\end{document}